\DeclareSymbolFontAlphabet{\mathbb}{AMSb}
\DeclareSymbolFontAlphabet{\mathbbl}{bbold}
\newlength\shlength
\newcommand\xshlongvec[2][0]{\setlength\shlength{#1pt}%
  \stackengine{-5pt}{$#2$}{\smash{$\kern\shlength%
    \stackengine{7.1pt}{$\mathchar"017E$}%
      {\rule{\widthof{$#2$}}{.57pt}\kern.4pt}{O}{r}{F}{F}{L}\kern-\shlength$}}%
      {O}{c}{F}{T}{S}}
\numberwithin{equation}{section}
\crefname{thm}{Theorem}{Theorems}
\crefname{cor}{Corollary}{Corollaries}
\crefname{lem}{Lemma}{Lemmas}
\crefname{sublem}{Sublemma}{Sublemmas}
\crefname{prop}{Proposition}{Propositions}
\crefname{dfn}{Definition}{Definitions}
\crefname{defi}{Definition}{Definitions}
\crefname{ex}{Example}{Examples}
\crefname{claim}{Claim}{Claims}
\crefname{conj}{Conjecture}{Conjectures}
\crefname{conv}{Notation}{Notations}
\crefname{rem}{Remark}{Remarks}
\crefname{rmk}{Remark}{Remarks}
\crefname{figure}{Figure}{Figures}
\crefname{section}{Section}{Sections}
\crefname{appendix}{Appendix}{Appendices}
\newtheorem{thm}{Theorem}[section]
\newtheorem{prop}[thm]{Proposition}
\newtheorem{cor}[thm]{Corollary}
\newtheorem{lem}[thm]{Lemma}
\theoremstyle{definition}
\newtheorem{dfn}[thm]{Definition}
\newtheorem{defi}[thm]{Definition}
\newtheorem{ex}[thm]{Example}
\newtheorem{conj}[thm]{Conjecture}
\newtheorem{conv}[thm]{Notation}
\theoremstyle{remark}
\newtheorem{rmk}[thm]{Remark}
\newtheorem{rem}[thm]{Remark}
\newcommand*{\chom}{\mathcal{H}\kern -.5pt om}
\newcommand{\bZ}{\mathbb{Z}}
\newcommand{\bQ}{\mathbb{Q}}
\newcommand{\bR}{\mathbb{R}}
\newcommand{\bC}{\mathbb{C}}
\newcommand{\bS}{\mathbb{S}}
\newcommand{\bT}{\mathbb{T}}
\newcommand{\bN}{\mathbb{N}}
\newcommand{\bP}{\mathbb{P}}
\newcommand{\bG}{\mathbb{G}}
\newcommand{\bE}{\mathbb{E}}
\newcommand{\A}{\mathcal{A}}
\newcommand{\cA}{\mathcal{A}}
\newcommand{\cC}{\mathcal{C}}
\newcommand{\cE}{\mathcal{E}}
\newcommand{\cH}{\mathcal{H}}
\newcommand{\cL}{\Omega}
\newcommand{\X}{\mathcal{X}}
\newcommand{\cX}{\mathcal{X}}
\newcommand{\Z}{\mathcal{Z}}
\newcommand{\cZ}{\mathcal{Z}}
\newcommand{\fS}{\mathfrak{S}}
\newcommand{\Hom}{\mathrm{Hom}}
\newcommand{\sgn}{\mathrm{sgn}}
\newcommand{\trop}{\mathrm{trop}}
\newcommand{\stab}{\mathrm{stab}}
\newcommand{\tr}{\mathsf{T}}
\newcommand{\bep}{\boldsymbol{\epsilon}}
\newcommand{\Teich}{Teichm\"uller}
\DeclareMathOperator{\interior}{\mathrm{int}}
\DeclareMathOperator{\Spec}{\mathrm{Spec}}
\newcommand{\bs}{{\boldsymbol{s}}}
\newcommand{\indedge}{t \overbar{\indk} t'}
\newcommand{\edge}{t \overbar{k} t'}
\newcommand{\indi}{i}
\newcommand{\indj}{j}
\newcommand{\indk}{k}
\newcommand{\numi}{i}
\newcommand{\const}{K''}
\newcommand{\constt}{K'''}
\newcommand{\degr}{\deg}
\newcommand{\oset}[3][0ex]{%
  \mathrel{\mathop{#3}\limits^{
    \vbox to#1{\kern-2\ex@
    \hbox{$\scriptstyle#2$}\vss}}}}
\newcommand{\overbar}[1]{\oset{#1}{-\!\!\!-\!\!\!-}}
\newcommand{\osetnear}[3][0ex]{%
  \mathrel{\mathop{#3}\limits^{
    \vbox to#1{\kern-.3\ex@
    \hbox{$\scriptstyle#2$}\vss}}}}
\newcommand\qarrow[2]{\draw[->,shorten >=2pt,shorten <=2pt] (#1) -- (#2) [thick];} 
\tikzset{
  mid arrow/.style={postaction={decorate,decoration={
        markings,
        mark=at position .5 with {\arrow[#1]{stealth}}
      }}},
}
\title{Algebraic entropy of sign-stable mutation loops}
\author[Tsukasa Ishibashi]{Tsukasa Ishibashi}
\address{Tsukasa Ishibashi, Research Institute for Mathematical Sciences, Kyoto University, Kitashirakawa Oiwake-cho, Sakyo-ku, Kyoto 606-8502, Japan.}
\email{ishiba@kurims.kyoto-u.ac.jp}
\author[Shunsuke Kano]{Shunsuke Kano}
\address{Shunsuke Kano, Department of Mathematics, Tokyo Institute of Technology, 2-12-1 Ookayama, Meguro, Tokyo 152-8551, Japan.}
\email{kano.s.ab@m.titech.ac.jp}
\date{\today}
\begin{document}
\maketitle

\begin{abstract}
We introduce a property of mutation loops, called the \emph{sign stability}, with a focus on the asymptotic behavior of the iteration of the tropical $\X$-transformation. A sign-stable mutation loop has a numerical invariant which we call the \emph{cluster stretch factor}, in analogy with that of a pseudo-Anosov mapping class on a marked surface. We compute the algebraic entropies of the cluster $\A$- and $\X$-transformations induced by a sign-stable mutation loop, and conclude that these two coincide with the logarithm of the cluster stretch factor. 
\end{abstract}

\tableofcontents

\section{Introduction}

\subsection{Cluster transformations and their algebraic entropy}
Cluster algebras are at the center of research field initiated by Fomin--Zelevinsky \cite{FZ-CA1} and Fock--Goncharov \cite{FG09}. It has been developed with fruitful connections with other areas of mathematics such as discrete integrable systems \cite{GK13,FM}, \Teich~ theory \cite{FG07,FST08}, and so on. 
The central objects of study are seeds and their mutations.  
A seed consists of two tuples of commutative variables called the $\A$-variables and $\X$-variables, and a matrix called the \emph{exchange matrix}. 
A mutation produces a new seed from a given one, transforming the variables according to the rule determined by the exchange matrix, and changing the exchange matrix to another one at the same time. We call the transformation of $\A$-variables (resp. $\X$-variables) the \emph{cluster $\A$-transformation} (resp. \emph{cluster $\X$-transformation}).
Both are birational transformations. 

A mutation sequence is a finite sequence of seed mutations and permutations of indices. It is called a \emph{mutation loop} if it preserves the exchange matrix. A mutation loop defines an autonomous discrete dynamical system, as the composition of cluster transformations and permutations of coordinates. The mutation loops form a group called the \emph{cluster modular group}, and this group acts on some geometric objects called the \emph{cluster $\A$-} and \emph{$\X$-varieties}, and their tropicalizations by a semifield. Thus the discrete dynamical system induced by a mutation loop takes place in these spaces. 
It is known that many interesting discrete dynamical systems emerge in this way, and in some special cases a geometric construction ensures
integrability
\cite{GK13,FM}. 

As a measure of the deviation from discrete integrability, Bellon and Viallet \cite{BV99} introduced the notion of \emph{algebraic entropy} of a birational map. It is defined as the growth rate of the degree of the reduced rational expression of the iteration of a given map. It is widely believed that Liouville--Arnold integrability corresponds to vanishing of algebraic entropy. Indeed, in \cite{Bel99}, Bellon indicated that the vanishing of entropy should be a necessary condition for the integrability in the Liouville--Arnold sense. For more details, see \cite{HLK19} and references \emph{loc. cit}. 
The algebraic entropy of the cluster $\A$- and $\X$-transformations induced by a mutation loop has been studied by several authors \cite{HI14,FH14,HLK19}.
In \cite{FH14,HLK19}, the authors computed the algebraic entropies of mutation loops of length one, which have been classified by Fordy--Marsh \cite{FM11}. They determined the mutation loops with vanishing entropy among the Fordy--Marsh construction. Moreover explicit first integrals are constructed in each of these cases in \cite{FH14}, showing that the Liouville--Arnold integrability is indeed achieved.

\subsection{Sign stability and the main theorem}
As opposed to the integrable mutation loops discussed above, the \Teich--Thurston theory provides a rich source of \lq\lq non-integrable" mutation loops. The \emph{mapping class group} of an oriented marked surface $\Sigma$ is the group of isotopy classes (\emph{mapping classes}) of orientation-perserving homeomorphisms of $\Sigma$. The Nielsen--Thurston classification theory \cite{Th,FLP} classifies the mapping classes into three types: periodic, reducible (\emph{i.e.}, fixes a simple closed curve on $\Sigma$), and pseudo-Anosov. The last one is of generic type, and interesting for us.
A pseudo-Anosov mapping class is characterized by the existence of a pair of invariant foliations on $\Sigma$, and it is known that the topological entropy of a pseudo-Anosov mapping class is positive. Indeed, these invariant foliations are equipped with a transverse measure which is unique up to positive rescalings. The pseudo-Anosov mapping class rescales these measures by a reciprocal factor  called the \emph{stretch factor}, and the topological entropy is given by the logarithm of the stretch factor. See, for instance, \cite[Section 10.4]{FLP}. In this case, the value of the topological entropy itself is an important numerical invariant of a mapping class.

A deep connection between the \Teich--Thurston theory and the cluster algebra has been known. 
From an ideal triangulation of $\Sigma$ we can form a seed, whose mutation class only depends on the topology of $\Sigma$ \cite{FG07,FST08,Penner}. The mapping class group can be embedded into the corresponding cluster modular group, and the manifold of positive real points of the cluster $\A$- (resp. $\X$-)variety can be identified with the decorated (resp. enhanced) \Teich\  space of $\Sigma$ \cite{FG07,Penner}.
The piecewise-linear manifold of real tropical points of the cluster $\A$- (resp. $\X$-)variety can be identified with the space of decorated (resp. enhanced) measured foliations on $\Sigma$ \cite{FG07,PP93}. 

Based on this correspondence, the first author gave an analogue of the Nielsen--Thurston classification for a general cluster modular group in \cite{Ish19}, which classifies the mutation loops into three types: periodic, cluster-reducible and cluster-pseudo-Anosov. However there exists a slight discrepancy between pseudo-Anosov and cluster-pseudo-Anosov even for a mutation loop given by a mapping class: a pseudo-Anosov mapping class provides a cluster-pseudo-Anosov mutation loop, but the converse is not true.
Therefore, the search for generalized ``pseudo-Anosov" properties of mutation loops continues.

As mentioned above, a pseudo-Anosov mapping class has a pair of invariant foliations. 
A combinatorial model of a measured foliation, called a \emph{train track} is commonly used to study the action of a mapping class on measured foliations. The action can be described by a sequence of \emph{splittings} of the corresponding train tracks. See, for instance, \cite{PH}. 
Our observation is that train track splittings can be translated into tropical cluster transformations.
\begin{figure}[h]
    \centering
    \begin{tikzpicture}[auto, scale=0.8]
    \draw [very thick, red](-4.2,-5.5) .. controls (-3.2,-5.5) and (-3.55,-6.6) .. (-1.7,-6.5) .. controls (0.1,-6.6) and (-0.2,-5.5) .. (0.8,-5.5);
    \draw [very thick, red] (5.1,-5.5) .. controls (6.1,-6) and (8.6,-6) .. (9.6,-5.5);
    \draw [very thick, red](5.1,-7.5) .. controls (6.1,-7) and (8.6,-7) .. (9.6,-7.5);
    \draw [very thick, red](6.1,-7.2) .. controls (7.1,-7) and (7.6,-6) .. (8.6,-5.8);
    \begin{scope}[yscale=-1, shift={(0.75,6.5)}]
    \draw [very thick, red](-4.95,1) .. controls (-3.95,1) and (-4.3,-0.1) .. (-2.45,0) .. controls (-0.65,-0.1) and (-0.95,1) .. (0.05,1);
    \draw [very thick, red]  ;
    \end{scope}
    \node (v1) at (1.8,-6.5) {};
    \node (v2) at (3.8,-6.5) {};
    \draw [->, thick] (v1) edge (v2);
    \node [fill, circle, inner sep=1.5pt, blue] (v4) at (-1.75,-4.4) {};
    \node [fill, circle, inner sep=1.5pt, blue] (v6) at (-1.75,-8.6) {};
    \node [fill, circle, inner sep=1.5pt, blue] (v3) at (-4.5,-6.5) {};
    \node [fill, circle, inner sep=1.5pt, blue] (v5) at (1,-6.5) {};
    \node [fill, circle, inner sep=1.5pt, blue] (v7) at (4.5,-6.5) {};
    \node [fill, circle, inner sep=1.5pt, blue] (v8) at (7.15,-4.4) {};
    \node [fill, circle, inner sep=1.5pt, blue] (v9) at (9.8,-6.5) {};
    \node [fill, circle, inner sep=1.5pt, blue] (v10) at (7.15,-8.6) {};
    \draw [blue](v3) -- (v4) -- (v4) -- (v5) -- (v5) -- (v6) -- (v6) -- (v3);
    \draw [blue](v4) -- (v6);
    \draw [blue](v7) -- (v8) -- (v9) -- (v10) -- (v7) -- (v9);
    \end{tikzpicture}
    \caption{Train track splitting}
\end{figure}
More precisely, some variants of train track splittings and their reverse operations can be unified to \lq\lq signed'' mutations \cite{IN14}, which is obtained by generalizing the usual seed mutation by introducing a sign in the formula. 
Based on these observations we introduce a property of mutation loops called the \emph{sign stability}, which is more closely related to being pseudo-Anosov.
An intuitive (but not exact) definition of the sign stability is a stabilization property of the presentation matrix of the piecewise-linear map obtained as the tropicalization of the cluster $\X$-tranformation.
More precisely, given a mutation sequence and a point of $\X(\bR^\trop)$, we define a sequence of signs indicating which presentation matrices (among three choices at each step of mutation) are applied to that point.
A mutation loop is said to be sign-stable if the sign of each orbit stabilizes to a common one.
To a sign-stable mutation loop, associated is a numerical
invariant which we call the \emph{cluster stretch factor}, which is a positive number greater or equal to $1$. 
Now our main theorem is the following. Let $\cE_\phi^a$ (resp. $\cE_\phi^x$) denote the algebraic entropy (\cref{d:entropy}) of the cluster $\A$- (resp. $\X$-)transformation induced by the mutation loop $\phi$. For a matrix $A$, let $\rho(A)$ denote its spectral radius.

\begin{thm}\label{intro: main theorem}
Let $\phi=[\gamma]_{\bs}$ be a mutation loop with a representation path $\gamma:t_0 \to t$ which is sign-stable (\cref{d:sign stability}) on the set $\cL^{\mathrm{can}}_{(t_0)}$. 
Then we have
\begin{align*}
    \log \rho(\check{E}^{(t_0)}_\phi) &\leq \cE_\phi^a \leq \log R^{(t_0)}_\phi,\\
    \log \rho(E^{(t_0)}_\phi) &\leq \cE_\phi^x \leq \log R^{(t_0)}_\phi.
\end{align*}
Here $R^{(t_0)}_\phi:=\max\{\rho(E^{(t_0)}_\phi),\rho(\check{E}^{(t_0)}_\phi)\}$, where $E^{(t_0)}_\phi$ is the stable presentation matrix (\cref{d:stretch factor}) and $\check{E}^{(t_0)}_\phi:=((E^{(t_0)}_\phi)^\tr)^{-1}$. 
\end{thm}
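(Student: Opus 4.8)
The plan is to translate both algebraic entropies into growth rates of tropical invariants of the iterated seeds, and then to let sign-stability collapse these growth rates onto the eigenvalue data of a single matrix. Fix a representation path $\gamma\colon t_0\to t$ for $\phi$ as in the statement, let $t_n$ be the seed obtained from $t_0$ by applying $\gamma$ $n$ times, and write $\phi_a,\phi_x$ for the induced cluster $\A$- and $\X$-transformations, with tropicalizations $\phi_a^{\trop},\phi_x^{\trop}$. Let $c_i^{(n)},g_i^{(n)}$ ($1\le i\le N$) be the $c$- and $g$-vectors of $t_n$ rooted at $t_0$; up to the permutation part of $\phi$ they are $(\phi_x^{\trop})^n(e_i)$ and $(\phi_a^{\trop})^n(e_i)$, and $\{c_i^{(n)}\}_i$, $\{g_i^{(n)}\}_i$ are $\bZ$-bases of $\bZ^N$. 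By the Fomin--Zelevinsky separation-of-additions formula, the $i$-th cluster variable at $t_n$ written in the initial cluster is $\bigl(\prod_j A_j^{g^{(n)}_{ji}}\bigr)F_i^{(n)}(\widehat y)$ with $\widehat y_j=\prod_k A_k^{b_{kj}}$, and the $i$-th $\X$-variable is $\prod_j X_j^{c^{(n)}_{ji}}$ times a ratio of products of the $F_j^{(n)}$ with integer exponents bounded uniformly in $n$. Combining this with Laurent positivity of cluster variables and sign-coherence of $c$-vectors, one obtains, for a constant $C=C(\phi)$ (and using that the $F$-polynomial Newton polytopes are controlled by the $c$- and $g$-vectors of the seeds along $\gamma$, so that $\max_j\deg F_j^{(n)}\le C(\max_i\|c_i^{(n)}\|+\max_i\|g_i^{(n)}\|)$),
\[
C^{-1}\|g_i^{(n)}\|\le \deg A_i^{(n)}\le C\bigl(\|g_i^{(n)}\|+\|c_i^{(n)}\|\bigr),\qquad
C^{-1}\|c_i^{(n)}\|\le\deg X_i^{(n)}\le C\bigl(\|g_i^{(n)}\|+\|c_i^{(n)}\|\bigr),
\]
the left-hand inequalities because the Laurent monomial carrying the exponent $g_i^{(n)}$ (resp. the sign-coherent monomial carrying $c_i^{(n)}$) is a vertex of the honest Newton polytope of the numerator and is coprime to the relevant denominator, and the right-hand ones because passing to a reduced expression only lowers the degree. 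Taking the maximum over $i$ yields $\deg\phi_a^n$ and $\deg\phi_x^n$.

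Next I invoke sign-stability. Since $e_1,\dots,e_N\in\cL^{\mathrm{can}}_{(t_0)}$ and $\phi$ is sign-stable there, the sign of the $\phi_x^{\trop}$-orbit of each $e_i$ is eventually the common stable sign, so there is $n_0$ with (up to relabelling) $c_i^{(n+1)}=E^{(t_0)}_\phi c_i^{(n)}$ for all $i$ and $n\ge n_0$, i.e. $c_i^{(n)}=(E^{(t_0)}_\phi)^{\,n-n_0}c_i^{(n_0)}$. Writing this in matrix form $C^{(n)}=[c_1^{(n)}|\cdots|c_N^{(n)}]$ and using the tropical $C$--$G$ duality $G^{(n)}=((C^{(n)})^{\tr})^{-1}$, the recursion transposes-and-inverts to $G^{(n)}=\check E^{(t_0)}_\phi G^{(n-1)}$ for $n>n_0$, so $g_i^{(n)}=(\check E^{(t_0)}_\phi)^{\,n-n_0}g_i^{(n_0)}$. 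Hence $\|c_i^{(n)}\|$ is $O\bigl((\rho(E^{(t_0)}_\phi)+\varepsilon)^n\bigr)$ and $\|g_i^{(n)}\|$ is $O\bigl((\rho(\check E^{(t_0)}_\phi)+\varepsilon)^n\bigr)$ for every $\varepsilon>0$; feeding these into the right-hand estimates above and taking $\max_i$ gives $\deg\phi_a^n,\deg\phi_x^n=O\bigl((R^{(t_0)}_\phi+\varepsilon)^n\bigr)$, and $\varepsilon\to0$ gives the two upper bounds $\cE_\phi^a,\cE_\phi^x\le\log R^{(t_0)}_\phi$.

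For the lower bounds, recall the elementary fact that if a square matrix $M$ has $\limsup_m\|M^m v\|^{1/m}<\rho(M)$ for every $v$ in a set $S$, then $S$ lies in the sum of the generalized eigenspaces of $M$ for the eigenvalues of modulus $<\rho(M)$, a proper subspace. As $\{c_i^{(n_0)}\}_i$ and $\{g_i^{(n_0)}\}_i$ span $\bQ^N$, applying this to $M=E^{(t_0)}_\phi$ and $M=\check E^{(t_0)}_\phi$ gives $\limsup_n\max_i\|c_i^{(n)}\|^{1/n}=\rho(E^{(t_0)}_\phi)$ and $\limsup_n\max_i\|g_i^{(n)}\|^{1/n}=\rho(\check E^{(t_0)}_\phi)$. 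Since $\deg\phi_x^n\ge C^{-1}\max_i\|c_i^{(n)}\|$ and $\deg\phi_a^n\ge C^{-1}\max_i\|g_i^{(n)}\|$ for all $n$ and the algebraic entropies are genuine limits, we conclude $\cE_\phi^x\ge\log\rho(E^{(t_0)}_\phi)$ and $\cE_\phi^a\ge\log\rho(\check E^{(t_0)}_\phi)$, completing all four inequalities.

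The step I expect to be the main obstacle is making the left-hand (lower) degree estimates rigorous, i.e. ruling out cancellation along the iteration that would erode the $c$- or $g$-vector contribution in the reduced rational expression. For $\phi_x$ this is relatively clean: sign-coherence makes $\prod_j X_j^{c^{(n)}_{ji}}$ a genuine Laurent monomial which is coprime to the denominator of the accompanying $F$-polynomial factor (the latter having trivial monomial content), so the full degree $\|c_i^{(n)}\|_1$ survives. For $\phi_a$ one must instead use Laurent positivity to identify $\mathrm{Newt}(A_i^{(n)})=g_i^{(n)}+B\cdot\mathrm{Newt}(F_i^{(n)})$, check that $g_i^{(n)}$ remains a vertex (equivalently that $0$ is a vertex of $B\cdot\mathrm{Newt}(F_i^{(n)})$, which follows from the unit constant term of $F_i^{(n)}$), and compare the degree of the reduced homogenization with the extent of this polytope. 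The remaining inputs — the separation formula, tropical $C$--$G$ duality, and the spanning/spectral-radius lemma — are routine; and it is notable that the entire weight of the sign-stability hypothesis is used just once, in the second paragraph, replacing the \emph{a priori} joint-spectral-radius growth of the tropical orbits by the single-matrix growth governed by $E^{(t_0)}_\phi$ and $\check E^{(t_0)}_\phi$.
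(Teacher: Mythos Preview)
Your overall architecture---translate degrees into growth of $C$-, $G$- and $F$-matrices via the separation formula, then let sign stability linearize the recursions---is exactly the paper's. The lower bound for $\cE_\phi^x$ and the stabilization $C^{(n+1)}=E_\phi C^{(n)}$, $G^{(n+1)}=\check E_\phi G^{(n)}$ are fine and match the paper.

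There is, however, a real gap, and it is precisely where the hypothesis ``sign-stable on $\cL^{\mathrm{can}}_{(t_0)}=\interior\cC^+_{(t_0)}\cup\interior\cC^-_{(t_0)}$'' does its work. You only ever use stability on $\interior\cC^+_{(t_0)}$; the $\interior\cC^-_{(t_0)}$ half is never invoked, and it is needed in two places.

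\textbf{Upper bound.} The assertion $\max_j\deg F_j^{(n)}\le C(\max_i\|c_i^{(n)}\|+\max_i\|g_i^{(n)}\|)$ is not a known fact and does not follow from Newton polytope bookkeeping alone (when $B^{(t_0)}$ is singular there is no way to recover $F^{(n)}$ from $C^{(n)},G^{(n)}$ at the same step). The paper controls $\|F^{(n)}\|$ via the Fujiwara--Gyoda mutation rule $F^{\bs;t_0}_{t'}=\check E^{(t)}_{k,\epsilon_k^{(t)}}F^{\bs;t_0}_t+[\epsilon_k^{(t)}C^{-\bs;t_0}_t]_+^{k\bullet}$, iterated along $\gamma$. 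The correction term involves the $C$-matrix for the \emph{opposite} seed pattern $-\bs$, i.e.\ the presentation matrix of $\phi$ on $\cC^-_{(t_0)}$. Sign stability on $\interior\cC^-_{(t_0)}$ (with the \emph{same} stable sign as on $\interior\cC^+_{(t_0)}$) is exactly what forces $\overline C^{(n+1)}=E_\phi\overline C^{(n)}$ and makes the inhomogeneous recursion for $F^{(n)}$ tractable; one then gets $\|F^{(n)}\|\le s_n R_\phi^{\,n}$ with $(s_n)$ arithmetic, hence the bound $\log R_\phi$.

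\textbf{Lower bound for $\cE_\phi^a$.} The claim that $g_i^{(n)}$ being a vertex of $\mathrm{Newt}(A_i^{(n)})$ yields $\deg A_i^{(n)}\ge C^{-1}\|g_i^{(n)}\|$ is false as stated: for the Laurent polynomial $x/y+1$ the vertex $(1,-1)$ has $\ell^1$-norm $2$ but the reduced degree is $1$. Cancellation between the monomial $\prod A_j^{g_{ij}^{(n)}}$ and the denominator of $F_i^{(n)}(p^*X)$ can genuinely erode the $g$-vector contribution. The paper's fix is to do a variable-by-variable case analysis of this cancellation and arrive not at $\|G^{(n)}\|$ but at $\tfrac12\|G^{(n)}+F^{(n)}B^{(t_0)}\|=\tfrac12\|\overline G^{(n)}\|$, the $G$-matrix for $-\bs$. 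Again one needs stability on $\interior\cC^-_{(t_0)}$ to get $\overline G^{(n+1)}=\check E_\phi\overline G^{(n)}$ and hence $\limsup\frac1n\log\|\overline G^{(n)}\|=\log\rho(\check E_\phi)$.

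So your last remark---that sign stability is used ``just once''---undersells the hypothesis: the $\cC^-$ half is essential both for bounding $F$-degrees from above and for the $\A$-side lower bound.
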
  

\begin{cor}\label{intro: main cor}
Moreover if \cref{p:spec_same} holds true, then we get 
\begin{align*}
    \cE_\phi^a=\cE_\phi^x = \log \lambda^{(t_0)}_\phi.
\end{align*}
Here $\lambda^{(t_0)}_\phi \geq 1$ is the cluster stretch factor (\cref{d:stretch factor}).
\end{cor}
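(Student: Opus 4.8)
The plan is to deduce \cref{intro: main cor} from \cref{intro: main theorem} by a two-sided squeeze, with \cref{p:spec_same} used only to collapse the gap between the lower and upper bounds. First I would unwind the notation: by \cref{d:stretch factor} the stable presentation matrix $E^{(t_0)}_\phi$ comes together with the cluster stretch factor $\lambda^{(t_0)}_\phi=\rho(E^{(t_0)}_\phi)$, and $\check{E}^{(t_0)}_\phi=((E^{(t_0)}_\phi)^{\tr})^{-1}$. Assuming \cref{p:spec_same}, which asserts $\rho(E^{(t_0)}_\phi)=\rho(\check{E}^{(t_0)}_\phi)$, we obtain
\[
R^{(t_0)}_\phi=\max\{\rho(E^{(t_0)}_\phi),\rho(\check{E}^{(t_0)}_\phi)\}=\rho(E^{(t_0)}_\phi)=\rho(\check{E}^{(t_0)}_\phi)=\lambda^{(t_0)}_\phi .
\]

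Next I would substitute this common value into the two inequality chains of \cref{intro: main theorem}. On the $\A$-side, $\log\rho(\check{E}^{(t_0)}_\phi)\le\cE_\phi^a\le\log R^{(t_0)}_\phi$ has both outer terms equal to $\log\lambda^{(t_0)}_\phi$, forcing $\cE_\phi^a=\log\lambda^{(t_0)}_\phi$; on the $\X$-side the same reasoning with $\rho(E^{(t_0)}_\phi)$ in place of $\rho(\check{E}^{(t_0)}_\phi)$ gives $\cE_\phi^x=\log\lambda^{(t_0)}_\phi$. This is precisely the assertion, so no further estimate is needed, and indeed the corollary is a purely formal consequence of the two inputs.

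The point worth stressing is that the corollary is stated \emph{conditionally} on \cref{p:spec_same}, and the entire difficulty is concentrated there rather than in the corollary --- the phrasing ``if \cref{p:spec_same} holds true'' signals that this is the genuinely hard (and presumably open) part. Concretely, since a real matrix and its transpose have equal spectral radius, the desired equality $\rho(E^{(t_0)}_\phi)=\rho(\check{E}^{(t_0)}_\phi)$ amounts to $\rho(M)=\rho(M^{-1})$ for $M=(E^{(t_0)}_\phi)^{\tr}$, which fails for general invertible $M$. To prove it one would want a reciprocal-eigenvalue symmetry of $E^{(t_0)}_\phi$ --- e.g.\ that (the relevant block of) its characteristic polynomial is palindromic, as happens for symplectic transformations, so that its nonzero eigenvalues pair up as $\{\mu,\mu^{-1}\}$ --- mirroring the surface picture in which a pseudo-Anosov stretch factor and its reciprocal are the two extreme dilatation eigenvalues on measured foliations. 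Extracting such a symmetry from the sign-stability hypothesis and the combinatorics of the iterated tropical $\X$-transformation (perhaps via a presentation of $E^{(t_0)}_\phi$ as a product of elementary matrices coming from the stable sign pattern) is the step I expect to be the real obstacle.
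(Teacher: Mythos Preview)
Your argument is correct and is exactly the paper's own derivation: right after \cref{t:entropy} the authors note that under \cref{p:spec_same} one has $R_\phi^{(t_0)}=\rho(\check{E}_\phi^{(t_0)})=\rho(E_\phi^{(t_0)})=\lambda_\phi^{(t_0)}$, which collapses the inequalities of \cref{intro: main theorem} to the claimed equalities. Your additional commentary on the palindromicity of the characteristic polynomial matches the paper's remark following \cref{p:spec_same}.
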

This corollary gives a cluster algebraic analogue of the fact that the topological entropy of a pseudo-Anosov mapping class coincides with the logarithm of the stretch factor. Note that the vanishing of the algebraic entropy corresponds to the equality $\lambda^{(t_0)}_\phi=1$.

Moreover we give several methods for checking the sign stability of a given mutation loop, and demonstrate them in concrete examples. 
See \cref{sec:example}. 
We show that one of them can be effectively applied to certain mutation loops of length one arising from the Fordy--Marsh classification mentioned above. As a byproduct, we obtain a partial confirmation of \cite[Conjecture 3.1]{FH14} for these mutation loops.

\subsection{Related topics and future works}

\paragraph{\textbf{Surface case}}
As mentioned earlier, the sign stability is introduced as a generalization of the pseudo-Anosov property. In fact, it is defined by mimicking the convergence property of the \emph{RLS word} of train track splittings \cite{PP87}. It would be possible to obtain a direct relation in the surface case. 
The tasks are:
\begin{enumerate}
    \item to show that the mutation loop obtained by a pseudo-Anosov mapping class is indeed sign-stable,
    \item to give a direct relation between the sign of a mutation sequence given by a pseudo-Anosov mapping class and the $RLS$ word of the splitting sequence of the corresponding invariant train track.
\end{enumerate}
The task (1) is established in \cite{IK20}. 
As a consequence of (1), the algebraic entropy of the mutation loop obtained by a pseudo-Anosov mapping class would coincide with the topological entropy.


\paragraph{\textbf{Invariance of sign stability and relations with other pseudo-Anosov properties}}
Strictly speaking, the sequence of signs is not an invariant of a mutation loop. Indeed, it highly depends on the choice of mutation sequence which represents a given mutation loop. For example, an elimination or addition of a repeated mutations at the same index does not change the mutation loop but changes (even the size of) the sign sequence. Nevertheless, a large number of experiments indicates that the sign stability is invariant under the change of representation sequence of a mutation loop. Thus we have the following conjecture:

\begin{conj}
Let $\gamma_i: t_i \to t'_i$ $(i=1,2)$ be two edge paths in $\bT_I$ which represent the same mutation loop $\phi:=[\gamma_1]_{\bs} = [\gamma_2]_{\bs}$. Then $\gamma_1$ is sign-stable if and only if $\gamma_2$ is.
\end{conj}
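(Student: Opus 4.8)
The plan is to reformulate sign stability as a genericity property of the attracting ray of the tropical $\X$-transformation of $\phi$ --- a datum that is intrinsic to $\phi$ --- and then to transfer this property between representation paths by reducing to the elementary relations of the cluster modular groupoid. Write $\phi^x \colon \X(\bR^\trop) \to \X(\bR^\trop)$ for the tropical $\X$-transformation induced by $\phi$; this piecewise-linear map does not depend on the chosen representation path. For a representation path $\gamma = \mu_{k_L}\cdots\mu_{k_1}$ (with its endpoint identification $\bs$), each step $i$ carries a locally constant function $\sgn^\gamma_i \colon \X(\bR^\trop) \to \{+,0,-\}$ whose zero locus --- a \emph{sign wall} --- is a finite union of codimension-$\geq 1$ rational polyhedral cones, and the sign sequence attached to the $\gamma$-orbit of $w$ is $\big(\sgn^\gamma_i((\phi^x)^n(w))\big)_{n\geq 0,\,1\leq i\leq L}$. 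Thus $\gamma$ is sign-stable on $\cL^{\mathrm{can}}_{(t_0)}$ precisely when this sequence is, for $n\gg0$, independent of $n$ and of $w\in\cL^{\mathrm{can}}_{(t_0)}$.

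The first step is the key reformulation: $\gamma$ is sign-stable on $\cL^{\mathrm{can}}_{(t_0)}$ if and only if (a) there is a single ray $\ell_\phi\in\bP(\X(\bR^\trop))$ with $[(\phi^x)^n(w)]\to\ell_\phi$ for all $w\in\cL^{\mathrm{can}}_{(t_0)}$, and (b) $\ell_\phi$ together with all of its partial translates $\mu_{k_i}\cdots\mu_{k_1}(\ell_\phi)$ lies in the interior of a chamber of the corresponding sign walls. The ``if'' direction follows from continuity of the finitely many piecewise-linear maps $\mu_{k_i}\cdots\mu_{k_1}$ together with the locally constant nature of $\sgn^\gamma_i$; for ``only if'', once the per-period sign pattern is constant and equal to $\bs$ the map $\phi^x$ restricted to the relevant chamber is the \emph{linear} map $E_\phi^{(t_0)}$ (\cref{d:stretch factor}), and an attracting-cone argument for $E_\phi^{(t_0)}$ produces the common limit ray. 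This step requires a \emph{uniform} version of the convergence over $\cL^{\mathrm{can}}_{(t_0)}$, not merely a pointwise one.

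Granting the reformulation, I would prove invariance move by move, using that two representation paths of $\phi$ differ --- after composing with endpoint identifications --- by a finite sequence of (i) insertions/deletions of $\mu_k\mu_k$, (ii) transpositions $\mu_j\mu_k\leftrightarrow\mu_k\mu_j$ when the exchange matrix has no arrow between $j$ and $k$, and (iii) pentagon moves $\mu_j\mu_k\mu_j\leftrightarrow (jk)\,\mu_k\mu_j\mu_k$; this uses completeness of these relations for the cluster modular groupoid, which holds in the surface and finite-type cases. For (i) and (ii) the extra intermediate orbit points are $\mu_k$-images, respectively reorderings, of orbit points already present for the other path, and the extra sign walls are images of old ones under the piecewise-linear maps occurring in $\gamma$, so condition (b) is inherited in both directions. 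Case (iii) is the substantive one: here I would use the $\min$-plus/$\max$-plus form of the tropical $\X$-mutation and its three presentation matrices to express each of the three intermediate signs on either side of the pentagon in terms of $\varepsilon_{jk}$ and the input coordinates $(x_j,x_k)$ at the initial seed, then run a finite polyhedral case analysis showing that the three sign walls on the $\mu_j\mu_k\mu_j$-side (transported along the orbit) contain the translates of $\ell_\phi$ exactly when the three on the $(jk)\,\mu_k\mu_j\mu_k$-side do. The positivity encoded in $\cL^{\mathrm{can}}_{(t_0)}$ --- that $\ell_\phi$ is the Perron ray of the stable presentation matrix, hence interior to the positive (or negative) cone up to the relevant partial words --- should force $\ell_\phi$ to avoid those exceptional walls.

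The main obstacle is twofold. Unconditionally, the reduction above is unavailable because completeness of the involution, commutativity, and pentagon relations for the cluster modular groupoid is open in general; to dispense with it one would instead have to prove directly that sign stability is equivalent to the path-free conditions (a)--(b) for \emph{some}, equivalently \emph{any}, path, and the real difficulty there is to exclude that a cleverly chosen path has a sign wall running exactly through $\ell_\phi$ or one of its translates. Second, even for a fixed path the limit in the second step is delicate: one must upgrade the pointwise convergence $[(\phi^x)^n(w)]\to\ell_\phi$ to convergence uniform on $\cL^{\mathrm{can}}_{(t_0)}$ that keeps the orbit a definite distance from every sign wall for $n\gg0$, and proving this uniformity --- a quantitative attracting-cone estimate for $E_\phi^{(t_0)}$, and likewise for $\check E_\phi^{(t_0)}$ on the $\A$-side --- is where most of the work lies.
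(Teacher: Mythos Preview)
This statement is recorded in the paper as a \emph{conjecture}, not a theorem; the authors offer no proof and explicitly defer even a partial confirmation to later work. So there is no ``paper's proof'' to compare against, and your proposal must stand or fall on its own.

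It does not stand. The central gap is in your reformulation step. You assert that sign stability on $\cL^{\mathrm{can}}_{(t_0)}$ implies the existence of a single attracting ray $\ell_\phi$ to which every projective orbit converges, and you justify this by appealing to an ``attracting-cone argument for $E_\phi^{(t_0)}$''. But the paper's Perron--Frobenius statement (\cref{thm:PF property}) only says that the spectral radius of $E_\phi^{(t_0)}$ is realized by a \emph{positive} eigenvalue with an eigenvector in $\cC_\gamma^{\stab}$; it does not assert that this eigenvalue is simple, nor that it strictly dominates all other eigenvalues in modulus. Without such a spectral gap, orbits in the stable cone need not converge projectively at all (think of a rotation block on the spectral circle), so condition~(a) can fail even when $\gamma$ is sign-stable. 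Your ``only if'' direction therefore requires an unproven strengthening of the Perron--Frobenius property.

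The second gap you already name yourself: the reduction to elementary moves presupposes that involution, commutation, and pentagon relations generate all relations in the cluster modular groupoid, which is open outside finite type and surface type. Your fallback --- proving directly that (a)+(b) is path-independent --- is circular as stated, since condition~(b) is formulated in terms of the partial words of a \emph{specific} path $\gamma$ and hence is not, a priori, an intrinsic condition on $\phi$. Finally, the pentagon case (iii) is only gestured at: the claim that positivity forces $\ell_\phi$ off the exceptional walls is exactly the kind of nondegeneracy statement that would need a genuine argument, and none is supplied. In short, the proposal identifies a plausible strategy and its own obstacles honestly, but it does not close any of them; the statement remains open.
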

A partial confirmation of this conjecture will be worked out elsewhere.
Moreover we will work on the relations between the sign stability and other properties, such as cluster-pseudo-Anosov property and the asymptotic sign coherence property introduced in \cite{GN19}. 

\paragraph{\textbf{Relation with the categorical entropy}}
A quiver with a non-degenerate potential gives a 3-dimensional Calabi--Yau category as a full subcategory of the derived category of a certain dg algebra.
We can consider it as a categorification of a seed data.
When there exists a non-degenerate potential for a given quiver, a quiver mutation can be lifted to a derived equivalence. However this lifting has an ambiguity on the choice of \emph{signs} of derived equivalences associated to mutations (see \cite{KY}), which corresponds to our signs of mutations.
When a mutation loop is sign-stable, the stable sign determines a canonical lifting. 
It will be interesting to compare the algebraic entropy of a sign-stable mutation loop and the categorical entropy \cite{DHKK} of its canonical lift.

\subsection{Organization of the paper}
In Section 2, basic notions in cluster algebra are recollected, basically following the conventions in \cite{FG09,GHKK}. In Section 3, we introduce the sign stablity of mutation loops and state some basic properties. 
In Section 4, we recall the definition of algebraic entropy following \cite{BV99} and give a proof of \cref{intro: main theorem}. In Section 5, several methods for checking the sign stability for a given mutation loop are proposed. Some concrete examples of sign-stable mutation loops are given, and their cluster stretch factors and algebraic entropies are computed.
\bigskip

\noindent \textbf{Acknowledgement.} 
The authors are grateful to Rei Inoue, Yuma Mizuno and Yoshihiko Mitsumatsu for fruitful discussions. T. I. would like to express his gratitude to his supervisor Nariya Kawazumi for his continuous guidance and encouragement. 
S. K. is also deeply grateful to his supervisor Yuji Terashima for his advice and giving him a lot of knowledge.
T. I. is supported by JSPS KAKENHI Grant Number 18J13304 and the Program for Leading Graduate Schools, MEXT, Japan.

\section{Cluster ensembles}
In this section, we recall basic notions in cluster algebra. Basically we follow the conventions in \cite{FG09,GHKK}. As a technical issue, the distinction between a mutation loop (an element of the \emph{cluster modular group} \cite{FG09}) and its representative path is emphasized.

\subsection{Seed patterns}\label{subsec:seed_mut}
We fix a finite index set $I = \{ 1,2, \dots, N \}$ and a regular tree $\bT_I$ of valency $|I| = N$, whose edges are labeled by $I$ so that the set of edges incident to a fixed vertex has distinct labels.

To each vertex $t$ of $\bT_I$, we assign the following data:
\begin{itemize}
    \item A lattice $N^{(t)} = \bigoplus_{\indi \in I} \bZ e_\indi^{(t)}$ with a basis $(e_\indi^{(t)})_{\indi \in I}$.
    \item An integral skew-symmetric matrix $B^{(t)} = (b_{\indi \indj}^{(t)})_{\indi, \indj \in I}$.
\end{itemize}
We call such a pair $(N^{(t)}, B^{(t)})$ of data a \emph{Fock--Goncharov seed} or simply a \emph{seed}.
Let $M^{(t)}:=\Hom (N^{(t)},\bZ)$ be the dual lattice of $N^{(t)}$, and let $(f^{(t)}_\indi)_{\indi \in I}$ be the dual basis of $(e^{(t)}_\indi)_{\indi \in I}$.
We call the matrix $B^{(t)}$ the \emph{exchange matrix}. 
We define a skew-symmetric bilinear form $\{-,-\}: N^{(t)} \times N^{(t)} \to \bZ$ by $\{e_\indi^{(t)},e_\indj^{(t)}\} := b_{\indi\indj}^{(t)}$.
It induces a linear map $p^*: N^{(t)} \to M^{(t)}$, $n \mapsto \{n,-\}$ called the \emph{ensemble map}.
Each triple $(N^{(t)}, \{-,-\}, (e_\indi^{(t)})_{\indi \in I})$ is called a seed in \cite{FG09}.

\begin{rem}\label{r:matrix convention}
\begin{enumerate}
    \item For simplicity, we only consider seeds with skew-symmetric exchange matrix without frozen indices. See \cite{FG09,GHKK} for a more general definition. 
    \item For exchange matrices we use the notation $B$ rather than $\epsilon$, since we want to reserve the latter for signs $\epsilon \in \{+,0,-\}$. Our exchange matrix is related to the one $B^{\mathrm{FZ}}=(b^\mathrm{FZ}_{\indi\indj})_{\indi.\indj \in I}$ used in \cite{FZ-CA4,NZ12} by the transposition $b^{\mathrm{FZ}}_{\indi\indj} = b_{\indj\indi}$.
\end{enumerate}

\end{rem}

We call such an assignment $\bs: t \mapsto (N^{(t)},B^{(t)})$ a \emph{Fock--Goncharov seed pattern} (or simply a \emph{seed pattern}) if for each edge $\indedge$ of $\bT_I$ labeled by $\indk \in I$, the exchange matrices $B^{(t)}=(b_{\indi\indj})$ and $B^{(t')}=(b'_{\indi\indj})$ are related by the \emph{matrix mutation}:
\[ 
b'_{\indi\indj} = 
    \begin{cases}
    -b_{\indi\indj} & \mbox{if $\indi=\indk$ or $\indj=\indk$}, \\
    b_{\indi\indj} + [b_{\indi\indk}]_+ [b_{\indk\indj}]_+ - [-b_{\indi\indk}]_+ [-b_{\indk\indj}]_+ & \mbox{otherwise}.
    \end{cases} 
\]
Here $[a]_+:=\max\{a,0\}$ for $a \in \bR$, throughout this paper.
As a relation between the lattices assigned to $t$ and $t'$, we consider two linear isomorphisms $(\widetilde{\mu}_\indk^\epsilon)^*: N^{(t')} \xrightarrow{\sim} N^{(t)}$ which depend on a sign $\epsilon \in \{+,-\}$ and is given by
\[
e'_\indi \mapsto 
\begin{cases}
    -e_\indk & \mbox{if $\indi = \indk$},\\
    e_\indi + [\epsilon b_{\indi\indk}]_+ e_\indk & \mbox{if $\indi \neq \indk$}.
\end{cases}
\]
Here we write $e_\indi:=e_\indi^{(t)}$ and $e'_\indi:=e_\indi^{(t')}$. It induces a linear isomorphism $(\widetilde{\mu}_\indk^\epsilon)^*: M^{(t')} \xrightarrow{\sim} M^{(t)}$ (denoted by the same symbol) which sends $f'_\indi$ to the dual basis of $(\widetilde{\mu}_\indk^\epsilon)^*(e'_\indi)$. Explicitly, it is given by
\[
f'_\indi \mapsto
\begin{cases}
    -f_\indk + \sum_{\indj \in I} [-\epsilon b_{\indk\indj}]_+ f_\indj & \mbox{if $\indi = \indk$},\\
    f_\indi & \mbox{if $\indi \neq \indk$}.
\end{cases}
\]
We call each map $(\widetilde{\mu}_\indk^\epsilon)^*$ the \emph{signed seed mutation} at $\indk \in I$.

One can check the following lemma by a direct calculation:
\begin{lem}
The signed mutations are compatible with matrix mutations. Namely, for any $k \in I$ and $\epsilon \in \{+,-\}$, we have
\[
\{(\widetilde{\mu}_\indk^\epsilon)^*(e'_\indi),(\widetilde{\mu}_\indk^\epsilon)^*(e'_\indj)\} = b'_{\indi\indj}.
\]
\end{lem}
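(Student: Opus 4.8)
The plan is to prove the identity $\{(\widetilde{\mu}_\indk^\epsilon)^*(e'_\indi),(\widetilde{\mu}_\indk^\epsilon)^*(e'_\indj)\} = b'_{\indi\indj}$ by a direct bilinear expansion, reducing to a small number of cases according to whether $\indi$ or $\indj$ equals $\indk$. Throughout I would abbreviate $\mu := (\widetilde{\mu}_\indk^\epsilon)^*$, write $e_\indi = e_\indi^{(t)}$, $e'_\indi = e_\indi^{(t')}$, $b_{\indi\indj} = b_{\indi\indj}^{(t)}$, and use that $\{-,-\}$ is bilinear and skew-symmetric with $\{e_\indi,e_\indj\} = b_{\indi\indj}$, in particular $\{e_\indk,e_\indk\} = 0$.

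First I would handle the cases where one index is $\indk$. If $\indi = \indk$ and $\indj \neq \indk$, then $\mu(e'_\indk) = -e_\indk$ and $\mu(e'_\indj) = e_\indj + [\epsilon b_{\indj\indk}]_+ e_\indk$, so by bilinearity $\{\mu(e'_\indk),\mu(e'_\indj)\} = -\{e_\indk, e_\indj\} - [\epsilon b_{\indj\indk}]_+\{e_\indk,e_\indk\} = -b_{\indk\indj} = b'_{\indk\indj}$, matching the first branch of the matrix mutation rule. The case $\indi \neq \indk$, $\indj = \indk$ follows by skew-symmetry, and the case $\indi = \indj = \indk$ is trivially $0 = b'_{\indk\indk}$.

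The substantive case is $\indi,\indj \neq \indk$. Here $\mu(e'_\indi) = e_\indi + [\epsilon b_{\indi\indk}]_+ e_\indk$ and $\mu(e'_\indj) = e_\indj + [\epsilon b_{\indj\indk}]_+ e_\indk$, so expanding bilinearly,
\begin{align*}
\{\mu(e'_\indi),\mu(e'_\indj)\} &= b_{\indi\indj} + [\epsilon b_{\indj\indk}]_+ b_{\indi\indk} + [\epsilon b_{\indi\indk}]_+ b_{\indk\indj}\\
&= b_{\indi\indj} + [\epsilon b_{\indi\indk}]_+ b_{\indk\indj} - [\epsilon b_{\indj\indk}]_+ b_{\indk\indi},
\end{align*}
using $b_{\indi\indk} = -b_{\indk\indi}$. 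It then remains to verify the elementary identity
\[
[\epsilon b_{\indi\indk}]_+ b_{\indk\indj} - [\epsilon b_{\indj\indk}]_+ b_{\indk\indi} = [b_{\indi\indk}]_+[b_{\indk\indj}]_+ - [-b_{\indi\indk}]_+[-b_{\indk\indj}]_+
\]
for each sign $\epsilon \in \{+,-\}$, which recovers the second branch of the matrix mutation rule and completes the proof. For $\epsilon = +$ this is the standard sign-coherence type identity $[a]_+ c - [b]_+(-a) = [a]_+[c]_+ - [-a]_+[-c]_+$ where one substitutes $a = b_{\indi\indk}$, $c = b_{\indk\indj}$ and uses $b = b_{\indj\indk} = -b_{\indk\indj} = -c$; it is checked by splitting on the signs of $a$ and $c$. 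For $\epsilon = -$ one uses $[-a]_+ = [a]_+ - a$ (equivalently $[-x]_+ - [x]_+ = -x$) to rewrite the left side and reduce to the $\epsilon = +$ case.

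I do not expect a serious obstacle: the entire argument is a bookkeeping exercise in bilinearity together with the truncation identities for $[\,\cdot\,]_+$. The only mild subtlety — the place where a careless computation would go wrong — is keeping the skew-symmetry signs straight (distinguishing $b_{\indk\indj}$ from $b_{\indj\indk}$ and $b_{\indi\indk}$ from $b_{\indk\indi}$) and correctly handling the $\epsilon = -$ case, so I would state the scalar identity $[\epsilon a]_+ c - [\epsilon(-c)]_+(-a) = [a]_+[c]_+ - [-a]_+[-c]_+$ cleanly for both $\epsilon$ and verify it by a four-way case split on $\mathrm{sgn}(a)$ and $\mathrm{sgn}(c)$ before substituting $a = b_{\indi\indk}$, $c = b_{\indk\indj}$.
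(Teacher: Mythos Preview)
Your proposal is correct and is precisely the ``direct calculation'' the paper alludes to without spelling out; the paper gives no proof beyond that remark, and your case split together with the scalar identity $[\epsilon a]_+ c + a[-\epsilon c]_+ = [a]_+[c]_+ - [-a]_+[-c]_+$ (valid for both $\epsilon$) is exactly what is needed.
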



\begin{rem}
The map $(\widetilde{\mu}_\indk^+)^*$ corresponds to the seed mutation introduced in \cite{FG09}. However we treat it as a linear isomorphism between two lattices, rather than a base change on a fixed lattice.
\end{rem}

For later discussions, we collect here some properties of the presentation matrices of the signed seed mutation and its dual, with respect to the seed bases. 
For an edge $\indedge$ of $\bT_{I}$ and a sign $\epsilon \in \{+,-\}$, let us consider the matrices $\check{E}^{(t)}_{\indk,\epsilon} = (\check{E}_{\indi\indj})_{\indi,\indj \in I}$ and $E^{(t)}_{\indk,\epsilon} = (E_{\indi\indj})_{\indi,\indj \in I}$, given as follows:
\begin{align*}
    \check{E}_{\indi\indj}:=
    \begin{cases}
        1 & \mbox{if $\indi=\indj \neq \indk$}, \\
        -1 & \mbox{if $\indi=\indj=\indk$}, \\
        [-\epsilon b_{\indk\indj}^{(t)}]_+ & \mbox{if $\indi=\indk$ and $\indj \neq \indk$}, \\
        0 & \mbox{otherwise},
    \end{cases}
\end{align*}

\begin{align*}
    E_{\indi\indj}:=
    \begin{cases}
        1 & \mbox{if $\indi=\indj \neq \indk$}, \\
        -1 & \mbox{if $\indi=\indj=\indk$}, \\
        [\epsilon b_{\indi\indk}^{(t)}]_+ & \mbox{if $\indj = \indk$ and $\indi \neq \indk$}, \\
        0 & \mbox{otherwise}.
    \end{cases}
\end{align*}
Then the transpose of the matrix $E^{(t)}_{\indk,\epsilon}$ gives the presentation matrix of $(\widetilde{\mu}_\indk^\epsilon)^*: N^{(t')} \xrightarrow{\sim} N^{(t)}$ with repsect to the seed bases $(e_\indi^{(t')})$ and $(e_\indi^{(t)})$: $(\widetilde{\mu}_\indk^\epsilon)^*e^{(t')}_\indi= \sum_{\indj \in I}(E^{(t)}_{\indk,\epsilon})_{\indi\indj}e^{(t)}_\indj$.
Similarly the transpose of the matrix $\check{E}^{(t)}_{\indk,\epsilon}$ gives the presentation matrix of $(\widetilde{\mu}_\indk^\epsilon)^*: M^{(t')} \xrightarrow{\sim} M^{(t)}$ with respect to the bases $(f_\indi^{(t')})$ and $(f_\indi^{(t)})$. 

\begin{rmk}\label{rem:elementary matrix}
These matrices look like
\[\check{E}_{\indk,\epsilon}^{(t)} =
\left(\begin{smallmatrix}
1&&&\cdots&&& 0\\
\vdots & \ddots &    &&     && \vdots \\
0&& 1 & 0 &0&& 0 \\
* & \cdots & * & -1 & * & \cdots & *\\
0 & &0& 0 & 1&&0\\
\vdots&&&&& \ddots & \vdots \\
0&&&\cdots&&&1
\end{smallmatrix}\right)
\mbox{\ \ and \ \ }
E_{\indk,\epsilon}^{(t)} =
\left( \begin{smallmatrix}
1&\cdots&0& * &0&\cdots& 0\\
& \ddots && \vdots &&&\\
.&& 1 & * &0&&. \\
\cdot& &0& -1 &0& &\cdot\\
\dot\ & &0& * & 1&&\dot\ \\
&&&\vdots&& \ddots &\\
0&\cdots&0& * &0&\cdots&1
\end{smallmatrix}\right).\]
\end{rmk}

The following are basic properties, which can be checked by a direct computation.
\begin{lem}\label{lem:EF_formulae}
For any edge $\edge$ and $\epsilon \in \{+,-\}$, we have the following equations:
\begin{enumerate}
    \item $(E^{(t)}_{\indk, \epsilon})^{-1} = E^{(t)}_{\indk, \epsilon}$, $(\check{E}^{(t)}_{\indk, \epsilon})^{-1} = \check{E}^{(t)}_{\indk, \epsilon}$.
    \item $(E^{(t)}_{\indk, \epsilon})^{-1} = E^{(t')}_{\indk, -\epsilon}$,
    $(\check{E}^{(t)}_{\indk, \epsilon})^{-1} = \check{E}^{(t')}_{\indk, -\epsilon}$.
    \item $(E^{(t)}_{\indk, \epsilon})^\tr = (\check{E}^{(t)}_{\indk, \epsilon})^{-1}$.
    \item $B^{(t)} \check{E}^{(t)}_{\indk, \epsilon} = {E}^{(t)}_{\indk, \epsilon} B^{(t')}$.
\end{enumerate}
\end{lem}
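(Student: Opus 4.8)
The statement to prove is \cref{lem:EF_formulae}, asserting four matrix identities about the elementary matrices $E^{(t)}_{\indk,\epsilon}$ and $\check{E}^{(t)}_{\indk,\epsilon}$. As the paper itself notes, ``these can be checked by a direct computation.'' Let me sketch how.

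\textbf{Proof proposal.}

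The plan is to verify all four identities by direct entrywise computation, exploiting the fact that both $E_{\indk,\epsilon}^{(t)}$ and $\check{E}_{\indk,\epsilon}^{(t)}$ differ from the identity matrix only along a single row or a single column (the $\indk$-th one, together with the sign flip at the $(\indk,\indk)$ entry), as displayed in \cref{rem:elementary matrix}. This near-identity structure makes every product essentially a rank-one-type update, so all computations reduce to bookkeeping with the Iverson-type brackets $[a]_+$.

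For part (1), I would compute $(E^{(t)}_{\indk,\epsilon})^2$ directly. Write $E = E^{(t)}_{\indk,\epsilon}$. For $\indj \neq \indk$ the $\indj$-th column of $E$ is the standard basis vector $e_\indj$, so the $\indj$-th column of $E^2$ equals $E e_\indj = e_\indj$. For the $\indk$-th column, $E e_\indk$ has entries $[\epsilon b_{\indi\indk}]_+$ for $\indi \neq \indk$ and $-1$ at position $\indk$; applying $E$ again, the off-diagonal contributions $[\epsilon b_{\indi\indk}]_+ \cdot e_\indi$ (with $\indi \neq \indk$) are fixed, while the $-1 \cdot(\text{$\indk$-th column of }E)$ term contributes $-[\epsilon b_{\indi\indk}]_+$ at position $\indi\neq\indk$ and $+1$ at position $\indk$. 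These cancel off-diagonally, leaving $e_\indk$. Hence $E^2 = \mathrm{id}$. The argument for $\check{E}$ is identical after transposing the roles of rows and columns. Equivalently, one notes $E^2 = \mathrm{id}$ iff $(E^\tr)^2 = \mathrm{id}$, and $\check E$ is---up to the bracket arguments---of the same shape as $E^\tr$; but I would spell it out to be safe.

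For part (2), I would combine part (1) with the identity $E^{(t')}_{\indk,-\epsilon} = E^{(t)}_{\indk,\epsilon}$, which is immediate from the definitions: in the formula for $E_{\indi\indj}$ the only nontrivial entries are $[\epsilon b^{(t)}_{\indi\indk}]_+$ at $(\indi,\indk)$, and since $b^{(t')}_{\indi\indk} = -b^{(t)}_{\indi\indk}$ (the matrix mutation rule negates the $\indk$-th row and column), we get $[(-\epsilon) b^{(t')}_{\indi\indk}]_+ = [\epsilon b^{(t)}_{\indi\indk}]_+$. The same substitution handles $\check E$. Then $(E^{(t)}_{\indk,\epsilon})^{-1} = E^{(t)}_{\indk,\epsilon} = E^{(t')}_{\indk,-\epsilon}$ by part (1). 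Part (3) is again pure bookkeeping: transposing $E^{(t)}_{\indk,\epsilon}$ turns the column of entries $[\epsilon b_{\indi\indk}]_+$ at positions $(\indi,\indk)$ into a row of entries $[\epsilon b_{\indi\indk}]_+$ at positions $(\indk,\indi)$; comparing with the definition of $\check E_{\indk,\epsilon}$, whose $\indk$-th row has entries $[-\epsilon b_{\indk\indj}]_+$, and using skew-symmetry $b_{\indk\indj} = -b_{\indj\indk}$, we see $[-\epsilon b_{\indk\indj}]_+ = [\epsilon b_{\indj\indk}]_+$, so $(E^{(t)}_{\indk,\epsilon})^\tr = \check{E}^{(t)}_{\indk,\epsilon} = (\check{E}^{(t)}_{\indk,\epsilon})^{-1}$ by part (1).

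Part (4) is the substantive one and where I expect the real work to lie. I would verify $B^{(t)}\check{E}^{(t)}_{\indk,\epsilon} = E^{(t)}_{\indk,\epsilon}B^{(t')}$ by comparing the $(\indi,\indj)$ entries of both sides, splitting into the cases $\indi=\indk$, $\indj=\indk$, and $\indi,\indj \neq \indk$. The left side has $(B\check E)_{\indi\indj} = \sum_\ell b_{\indi\ell}\check E_{\ell\indj}$; since $\check E_{\ell\indj}$ is nonzero only for $\ell = \indj$ (giving $\pm 1$) and $\ell = \indk$ (giving $[-\epsilon b_{\indk\indj}]_+$ when $\indj\neq\indk$), this collapses to $b_{\indi\indj} + b_{\indi\indk}[-\epsilon b_{\indk\indj}]_+$ for $\indi,\indj\neq\indk$ (with sign adjustments when $\indi$ or $\indj$ equals $\indk$). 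The right side $(EB')_{\indi\indj} = \sum_\ell E_{\indi\ell}b'_{\ell\indj}$ collapses similarly, picking up $b'_{\indi\indj} + [\epsilon b_{\indi\indk}]_+ b'_{\indk\indj}$. Substituting the matrix mutation formula for $b'_{\indi\indj}$ and $b'_{\indk\indj} = -b_{\indk\indj}$, the identity to check reduces, in the generic case $\indi,\indj\neq\indk$, to
\begin{equation*}
b_{\indi\indk}[-\epsilon b_{\indk\indj}]_+ + [\epsilon b_{\indi\indk}]_+ b_{\indk\indj} = [b_{\indi\indk}]_+[b_{\indk\indj}]_+ - [-b_{\indi\indk}]_+[-b_{\indk\indj}]_+,
\end{equation*}
which is a two-variable identity in $a := b_{\indi\indk}$, $b := b_{\indk\indj}$ that I would settle by a short case analysis on the signs of $a$ and $b$ (and the value of $\epsilon$), using $a = [a]_+ - [-a]_+$ and $[\epsilon a]_+ = [a]_+$ or $[-a]_+$ according to $\epsilon = +$ or $-$. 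The boundary cases $\indi = \indk$ or $\indj = \indk$ are easier, since then one side is governed by the ``$-b_{\indi\indj}$'' branch of the mutation rule and the computation is short. The main obstacle is simply organizing this sign case analysis cleanly; there is no conceptual difficulty, but it is the step most prone to sign errors, so I would present the reduction to the displayed two-variable identity explicitly and then dispatch the four sign cases.
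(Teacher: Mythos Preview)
Your proposal is correct and follows exactly the approach the paper has in mind: the paper states only that these identities ``can be checked by a direct computation'' and gives no proof, and your entrywise verification (exploiting the near-identity structure of \cref{rem:elementary matrix}) is precisely such a computation. The two-variable identity you isolate for part (4) is in fact the same one the paper itself invokes later in \cref{subsec:vectors}, so your reduction is on target.
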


\begin{conv}\label{conv:check}
In the sequel, we use the notation $\check{A}:=(A^\tr)^{-1}$ for an invertible matrix $A$. \footnote{Note that this is consistent with the notation $\check{E}^{(t)}_{\indk, \epsilon}$, thanks to \cref{lem:EF_formulae} (3). When one considers a skew-symmetrizable exchange matrix, it should be replaced with $\check{A}:=D(A^\tr)^{-1}D^{-1}$ with a positive integral diagonal matrix $D=\mathrm{diag}(d_1,\dots,d_N)$. }
\end{conv}

\subsection{Seed tori}
We are going to associate several geometric objects to a seed pattern $\bs: t \mapsto (N^{(t)},B^{(t)})$. Let $\bG_m:= \Spec \bZ[z,z^{-1}]$ be the multiplicative group. A reader unfamilier with this notation can recognize it as $\bG_m(\Bbbk) = \Bbbk^*$ by substituting a field $\Bbbk$.
We repeatedly use the following:
\begin{lem}
We have an equivalence of categories
\[
\mathsf{Lattices} \xrightarrow{\sim} \mathsf{Tori}; \quad L \mapsto T_L := \Hom(L^\vee,\bG_m).
\]
Here the former is the category of finite rank lattices and the latter is the category of split algebraic tori; $L^\vee:=\Hom (L,\bZ)$ denotes the dual lattice of $L$.
\end{lem}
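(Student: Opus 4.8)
The plan is to exhibit an explicit quasi-inverse functor together with the two natural isomorphisms, reducing every verification to the rank-one case $L=\bZ$, $T_L=\bG_m$.

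First one checks that $L\mapsto T_L$ is a well-defined functor into split tori: a lattice map $\phi\colon L\to L'$ dualizes to $\phi^\vee\colon (L')^\vee\to L^\vee$, and precomposition with $\phi^\vee$ yields a homomorphism of group schemes $T_\phi\colon T_L=\Hom(L^\vee,\bG_m)\to\Hom((L')^\vee,\bG_m)=T_{L'}$, functorially in $\phi$; since $T_{\bZ^n}\cong\bG_m^n$, the functor is essentially surjective onto split tori. For the quasi-inverse take $G(T):=X_*(T)=\Hom_{\mathrm{gp}}(\bG_m,T)$, the cocharacter group, functorial by post-composition. That $G(T)$ is again a finite-rank lattice follows once we know $\Hom_{\mathrm{gp}}(\bG_m,\bG_m)\cong\bZ$ and that morphisms between finite products of $\bG_m$'s decompose coordinatewise.

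The two natural isomorphisms are: (i) $\eta_L\colon L\xrightarrow{\sim}X_*(T_L)$, sending $\lambda\in L$ to the cocharacter $z\mapsto(\xi\mapsto z^{\langle\xi,\lambda\rangle})$, where $\langle-,-\rangle\colon L^\vee\times L\to\bZ$ is the tautological pairing; and (ii) $\varepsilon_T\colon T_{X_*(T)}\xrightarrow{\sim}T$, built from the canonical identification $X_*(T)^\vee\cong X^*(T)$ (perfectness of the pairing $X^*(T)\times X_*(T)\to\bZ$, $(\chi,\lambda)\mapsto\chi\circ\lambda$, for a split torus) together with the canonical evaluation isomorphism $T\xrightarrow{\sim}\Hom(X^*(T),\bG_m)$, $t\mapsto(\chi\mapsto\chi(t))$. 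Naturality of $\eta$ and $\varepsilon$ is immediate from the definitions, and both are isomorphisms: choosing a basis of $L$, resp. a splitting $T\cong\bG_m^n$, and using the coordinatewise decomposition of morphisms reduces the claim to $L=\bZ$, $T=\bG_m$, where $\eta_{\bZ}$ and $\varepsilon_{\bG_m}$ are the identities. The compatibility of the resulting bijections on $\Hom$-sets with composition is then the usual bookkeeping identifying composition of torus morphisms $\bG_m^m\to\bG_m^n\to\bG_m^p$ with multiplication of integer matrices.

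The only non-formal input is the rank-one computation $\Hom_{\mathrm{gp}}(\bG_m,\bG_m)\cong\bZ$, equivalently $X^*(\bG_m)\cong\bZ$: a group-scheme endomorphism of $\bG_m=\Spec\bZ[z^{\pm1}]$ is a Hopf-algebra endomorphism $z\mapsto u$ with $u$ a unit of $\bZ[z^{\pm1}]$ satisfying $\Delta u=u\otimes u$ and $\varepsilon(u)=1$; the units are $\pm z^n$, and the grouplike condition forces the sign to be $+$, leaving exactly the power maps. This is the one place where the concrete structure of $\bG_m$ enters, so I expect it to be the main obstacle only in the sense that it is the sole computation — everything else is the formal machinery of duality of finite free $\bZ$-modules and of products. (Alternatively, the whole statement is the restriction to free objects of the classical anti-equivalence between finitely generated abelian groups and diagonalizable affine group schemes, $A\mapsto\Spec\bZ[A]$, precomposed with the self-duality $L\mapsto L^\vee$ of the category of lattices; see, e.g., Waterhouse, \emph{Introduction to affine group schemes}, Section~2.2, or SGA3, Exp.~VIII. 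I would prefer, however, to record the short direct argument above.)
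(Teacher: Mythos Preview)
Your proof is correct and follows essentially the same approach as the paper: both take $T\mapsto X_*(T)=\Hom(\bG_m,T)$ as the quasi-inverse and exhibit the natural isomorphism $L\cong X_*(T_L)$ via the character/cocharacter duality. The paper's argument is only a two-line sketch (recording the inverse functor and the isomorphism $L\cong (X^*(T_L))^\vee\cong X_*(T_L)$ through the character map $n\mapsto\mathrm{ch}_n$), whereas you spell out the rank-one reduction and the Hopf-algebra computation of $\Hom(\bG_m,\bG_m)\cong\bZ$; this extra detail is fine but not strictly needed here.
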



Indeed, the inverse functor is given by $T \mapsto X_*(T) := \Hom(\bG_m,T)$. We have a natural duality $X_*(T) \cong (X^*(T))^\vee$, where $X^*(T):=\Hom (T,\bG_m)$. 
On the other hand, for a lattice $N$, we have a natural isomorphism $N \xrightarrow{\sim} X^*(T_{N^\vee})$ given by $n \mapsto \mathrm{ch}_n$, where $\mathrm{ch}_n(\phi):=\phi(n)$. 
Taking the dual of both sides and letting $N=L^\vee$, we get 
$L \cong (X^*(T_L))^\vee \cong X_*(T_L)$. 
The character $\mathrm{ch}_n$ on $T_{N^\vee}$ used here is called the \emph{character associated with $n \in N$}.

For each $t \in \bT_I$, we have a pair of tori $\X_{(t)} := T_{M^{(t)}}$, $\A_{(t)} := T_{N^{(t)}}$. They are called the \emph{seed $\X$- and $\A$-tori}, respectively. The characters $X_\indi^{(t)}:=\mathrm{ch}_{e_\indi^{(t)}}$ and $A_\indi^{(t)}:=\mathrm{ch}_{f_\indi^{(t)}}$ are called the \emph{cluster $\X$-} and \emph{$\A$-coordinates}, respectively. 
The ensemble map induces a monomial map $p_{(t)}: \A_{(t)} \to \X_{(t)}$, $p_{(t)}^*(X_\indi^{(t)}) = \prod_{\indj \in I} (A_\indj^{(t)})^{b_{\indi\indj}}$.
\subsection{Cluster transformations and cluster varieties}
Consider an edge $\indedge$ of $\bT_I$. 
\begin{conv}\label{conv:coordinates}
Whenever only one edge $\indedge$ of $\mathbb{T}_I$ is concerned, we denote the cluster coordinates by $X_\indi:=X^{(t)}_\indi$, $A_\indi:=A^{(t)}_\indi$,  $X'_\indi:=X^{(t')}_\indi$ and  $A'_\indi:=A^{(t')}_\indi$.
\end{conv}

Note that the signed mutation induces monomial isomorphisms
$\widetilde{\mu}_\indk^\epsilon: \X_{(t)} \xrightarrow{\sim} \X_{(t')}$ and $\widetilde{\mu}_\indk^\epsilon: \A_{(t)} \xrightarrow{\sim} \A_{(t')}$ are given by
\[
(\widetilde{\mu}_\indk^\epsilon)^*X'_\indi :=
\begin{cases}
    X_\indk^{-1} & \mbox{if $\indi=\indk$}, \\
    X_\indi X_\indk^{[\epsilon b_{\indi\indk}]_+} & \mbox{if $\indi \neq \indk$}, 
\end{cases}
\quad
(\widetilde{\mu}_\indk^\epsilon)^*A'_\indi :=
\begin{cases}
    A_\indk^{-1}\prod_{\indj \in I}A_\indj^{[-\epsilon b_{\indk\indj}]_+} & \mbox{if $\indi=\indk$}, \\
    A_\indi & \mbox{if $\indi \neq \indk$}.
\end{cases}
\]
Pre-composing the birational automorphisms $\mu_\indk^{\#,\epsilon}$
given by
\[
(\mu_\indk^{\#,\epsilon})^*X_\indi := X_\indi(1+X_\indk^\epsilon)^{-b_{\indi\indk}} \quad \mbox{and} \quad (\mu_\indk^{\#,\epsilon})^*A_\indi := A_\indi(1+(p^*X_\indk)^\epsilon)^{-\delta_{\indi\indk}},
\]
we get the \emph{cluster transformations} $\mu_\indk := \widetilde{\mu}_\indk^\epsilon \circ \mu_\indk^{\#,\epsilon}$. Explicitly, they are given by
\[
\mu_\indk^*X'_\indi =
\begin{cases}
    X_\indk^{-1} & \mbox{if $\indi=\indk$}, \\
    X_\indi(1+X_\indk^{-\sgn(b_{\indi\indk})})^{-b_{\indi\indk}} & \mbox{if $\indi \neq \indk$}
\end{cases}
\]
and
\[
\mu_\indk^*A'_\indi =
\begin{cases}
    A_\indk^{-1}(\prod_{\indj \in I} A_\indj^{[b_{\indk\indj}]_+} + \prod_{\indj \in I} A_\indj^{[-b_{\indk\indj}]_+}) & \mbox{if $\indi=\indk$}, \\
    A_\indi & \mbox{if $\indi \neq \indk$},
\end{cases}
\]
which do not depend on the sign $\epsilon$. When we stress the distinction between the $\X$- and $\A$-transformations, we write $\mu_\indk^x$ and $\mu_\indk^a$ instead of $\mu_\indk$.

\begin{rem}
The triple $((B^{(t)})^{\mathrm{FZ}}, (A_\indi^{(t)})_{\indi \in I}, (X_\indi^{(t)})_{\indi \in I})$ forms a seed in the sense of \cite{FZ-CA1}. The variables $A_\indi^{(t)}$ and $X_\indi^{(t)}$ are called \emph{$x$-variable} and \emph{$y$-variable} respectively, in the terminology of Fomin--Zelevinsky.
\end{rem}

\begin{dfn}
The \emph{cluster varieties} $\X_{\bs}$ and $\A_{\bs}$ associated with a seed pattern $\bs: t \mapsto (N^{(t)},B^{(t)})$ is defined by gluing the corresponding tori by cluster transformations:
\[
\X_{\bs} := \bigcup_{t \in \bT_I} \X_{(t)}, \quad \A_{\bs} := \bigcup_{t \in \bT_I} \A_{(t)}.
\]
From the definition, each $\X_{(t)}$ is an open subvariety of $\X_{\bs}$. The pair $(\X_{(t)}, (X_\indi^{(t)})_{\indi \in I})$ of the torus $\X_{(t)}$ and the set of characters $(X_\indi^{(t)})_{\indi \in I}$ is called the \emph{cluster $\X$-chart} associated with $t \in \bT_I$. Similarly we have the notion of \emph{cluster $\A$-charts}.
\end{dfn}

\begin{prop}[{\cite[Proposition 2.2]{FG09}}]
The ensemble maps $p_{(t)}: \A_{(t)} \to \X_{(t)}$ for $t \in \bT_I$ commute with cluster transformations. In particular they induces a morphism $p: \A_{\bs} \to \X_{\bs}$.
\end{prop}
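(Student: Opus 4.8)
The plan is to reduce the statement to a single edge $\edge$ of $\bT_I$ labelled by some $k\in I$ and to check that the square with top arrow $\mu_k^a\colon\A_{(t)}\to\A_{(t')}$, bottom arrow $\mu_k^x\colon\X_{(t)}\to\X_{(t')}$ and vertical arrows $p_{(t)},p_{(t')}$ commutes as a diagram of rational maps. Since all four objects are algebraic tori and a rational map between tori is determined by the induced homomorphism of function fields, which in turn is determined by its effect on characters, it suffices to verify that $(\mu_k^a)^{*}\circ p_{(t')}^{*}$ and $p_{(t)}^{*}\circ(\mu_k^x)^{*}$ agree on each cluster $\X$-coordinate $X_i^{(t')}$, $i\in I$. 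Granting this for every edge, the family $\{p_{(t)}\}_{t\in\bT_I}$ is compatible with the gluing that defines $\X_{\bs}=\bigcup_t\X_{(t)}$ and $\A_{\bs}=\bigcup_t\A_{(t)}$ (which is glued precisely along the cluster transformations $\mu_k$), and therefore descends to a morphism $p\colon\A_{\bs}\to\X_{\bs}$.

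To run the edge computation I would use the factorization $\mu_k=\widetilde{\mu}_k^{\epsilon}\circ\mu_k^{\#,\epsilon}$, fixing once and for all, say, $\epsilon=+$ (the composite $\mu_k$ is $\epsilon$-independent, so the choice is immaterial). This splits the square into a \emph{$\#$-square}, in which both horizontal arrows are the birational automorphisms $\mu_k^{\#,\epsilon}$ of $\A_{(t)}$ and $\X_{(t)}$ and both vertical arrows are $p_{(t)}$, followed by a \emph{monomial square}, whose horizontal arrows are the monomial isomorphisms $\widetilde{\mu}_k^{\epsilon}$ and whose vertical arrows are $p_{(t)}$ and $p_{(t')}$; pasting the two commuting squares horizontally then yields the claim. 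For the $\#$-square the check is an immediate substitution: applying $(\mu_k^{\#,\epsilon})^{*}\circ p_{(t)}^{*}$ to $X_i$ and using $(\mu_k^{\#,\epsilon})^{*}A_j=A_j(1+(p^{*}X_k)^{\epsilon})^{-\delta_{jk}}$ together with $p_{(t)}^{*}X_i=\prod_j A_j^{b_{ij}}$ gives $\bigl(\prod_j A_j^{b_{ij}}\bigr)(1+(p^{*}X_k)^{\epsilon})^{-b_{ik}}$, while applying $p_{(t)}^{*}\circ(\mu_k^{\#,\epsilon})^{*}$ to $X_i$ and using $(\mu_k^{\#,\epsilon})^{*}X_i=X_i(1+X_k^{\epsilon})^{-b_{ik}}$ together with $p_{(t)}^{*}X_k=\prod_j A_j^{b_{kj}}$ gives the very same expression.

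For the monomial square I would pass to the induced linear maps on character lattices: the vertical arrows become the ensemble maps $p^{*}\colon N^{(t)}\to M^{(t)}$, $n\mapsto\{n,-\}$, and the horizontal ones become the linear isomorphisms $(\widetilde{\mu}_k^{\epsilon})^{*}$ on the $N$-lattices and on the $M$-lattices. Recalling that $(\widetilde{\mu}_k^{\epsilon})^{*}$ on $M$ is by definition the inverse dual of the one on $N$, the commutativity of this square unwinds to the identity $\{(\widetilde{\mu}_k^{\epsilon})^{*}(e_i^{(t')}),(\widetilde{\mu}_k^{\epsilon})^{*}(e_j^{(t')})\}=b_{ij}^{(t')}$, i.e.\ exactly the lemma asserting compatibility of signed seed mutations with matrix mutations; in matrix form this is the relation $B^{(t)}\check{E}^{(t)}_{k,\epsilon}=E^{(t)}_{k,\epsilon}B^{(t')}$ of \cref{lem:EF_formulae}(4), used together with the skew-symmetry of the exchange matrices and \cref{lem:EF_formulae}(1),(3) to move transposes around. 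I do not expect a genuine obstacle here: the only thing requiring care is the bookkeeping of conventions --- the transposition relating our exchange matrix to the Fomin--Zelevinsky one (\cref{r:matrix convention}), the precise meaning of the presentation matrices $E^{(t)}_{k,\epsilon},\check{E}^{(t)}_{k,\epsilon}$ and of $(\widetilde{\mu}_k^{\epsilon})^{*}$ on $M$, and the harmless auxiliary sign $\epsilon$ entering the factorization. Once these are pinned down, both squares commute by the computations indicated, and the proposition follows.
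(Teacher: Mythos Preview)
The paper does not supply its own proof of this proposition; it is quoted from \cite[Proposition 2.2]{FG09} and stated without argument. Your proposal is a correct and self-contained reconstruction: the reduction to a single edge, the factorization $\mu_k=\widetilde{\mu}_k^{\epsilon}\circ\mu_k^{\#,\epsilon}$, the direct character check for the $\#$-square, and the identification of the monomial square with the compatibility lemma for signed seed mutations (equivalently the matrix identity of \cref{lem:EF_formulae}(4)) are all exactly the right ingredients, and the bookkeeping you flag is indeed the only place requiring care.
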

We call the triple $(\A_{\bs},\X_{\bs},p)$ the \emph{cluster ensemble} associated to the seed pattern $\bs$.


\subsection{Horizontal mutation loops}


In this section, we give a definition of a special class of mutation loops.
In brief, general mutation loops are represented by sequences of indices in $I$ and permutations of $I$, but here, we consider the mutation loops which can be represented without permutations. 
We will refer to mutation loops of this type as \emph{horizontal mutation loops}.
It suffices to consider only such mutation loops for the computation of algebraic entropy (see \eqref{eq:entropy_power}). 
We give a concrete definition below.

Fix a seed pattern $\bs: t \mapsto ( N^{(t)},B^{(t)})$.
We say that two vertices $t, t' \in \bT_I$ are \emph{$\bs$-equivalent} (and write $t \sim_\bs t'$) if both vertices are assigned the same matrix: $B^{(t)} = B^{(t')}$.
Then, the following linear isomorphism gives a \emph{seed isomorphism}:
\[
i_{t,t'}^*: (N^{(t')},B^{(t')}) \to (N^{(t)},B^{(t)});\quad e^{(t')}_\numi \mapsto e^{(t)}_\numi.
\]
Namely, it is an isomorphism of lattices with skew-symmetric bilinear forms.
The $\bs$-equivalence class containing $t$ is denoted by $[t]_{\bs}$. 

An edge path $\gamma$ from $t$ to $t'$ in $\bT_I$ is denoted by $\gamma: t \to t'$. 
For such an edge path, we define the birational map $\mu^z_\gamma: \Z_{(t)} \to \Z_{(t')}$ to be the composition of the birational maps associated to the edges it traverses for $(z,\Z)=(a,\A), (x,\X)$.

\begin{rem}\label{r:endpoints}
The map $\mu^z_\gamma$ only depends on the endpoints $t$ and $t'$, thanks to the fact that each cluster transformation is involutive. 
\end{rem}
Let $\gamma_\nu: t_\nu \to t'_\nu$ be a path in $\bT_I$ such that $t_\nu \sim_\bs t'_\nu$ for $\nu=1,2$. 
We say that $\gamma_1$ and $\gamma_2$ are \emph{$\bs$-equivalent} if the following diagram commutes:
\begin{equation}\label{eq:equivalence of paths}
\begin{tikzcd}
    \cZ_{(t'_1)} \ar[r, "i_{t'_1,t_1}^z"] \ar[d, "\mu_{\delta'}^z"'] & \cZ_{(t_1)} \ar[d, "\mu_{\delta}^z"]\\
    \cZ_{(t'_2)} \ar[r, "i_{t'_2,t_2}^z"'] & \cZ_{(t_2)}, 
\end{tikzcd}
\end{equation}
for paths $\delta: t_1 \to t_2$ and $\delta': t'_1 \to t'_2$. 
Here $(z,\cZ) = (a, \cA), (x, \cX)$ and the horizontal maps are induced by the seed isomorphisms $i_{t'_\nu,t_\nu}^*$ for $\nu=1,2$. Note that the commutativity of the diagram does not depend on the choice of paths $\delta$ and $\delta'$. 
The $\bs$-equivalence class containing an edge path $\gamma$ is denoted by $[\gamma]_{\bs}$. 


\begin{dfn}
A \emph{horizontal mutation loop} is the $\bs$-equivalence class of an edge path $\gamma: t \to t'$ such that $t \sim_\bs t'$.
For a horizontal mutation loop $\phi = [\gamma]_\bs$, the path $\gamma$ is called a \emph{representation path} of $\phi$.
\end{dfn}



\paragraph{\textbf{Action on the cluster varieties.}}

For a horizontal mutation loop $\phi$, take a representation path $\gamma:t \to t'$.
Then we have the following composite of birational isomorphisms:
\begin{align}\label{eq:coord_expression}
    \phi^z_{(t)}: \Z_{(t)} \xrightarrow{\mu_\gamma^z} \Z_{(t')} \xrightarrow{i^z_{t',t}} \Z_{(t)}
\end{align}
for $(z,\Z)=(a,\A), (x,\X)$. 
It induces an automorphism on the cluster variety $\Z_\bs$, as follows:
\begin{equation*}
\begin{tikzcd}
    \cZ_{(t)} \ar[r, "\mu_\gamma^z"] \ar[d] & \cZ_{(t')} \ar[d] \ar[r, "i_{t',t}^z"] &\cZ_{(t)} \ar[d]\\
    \cZ_\bs \ar[r, equal] & \cZ_\bs \ar[r, "\phi^z"'] & \cZ_\bs. 
\end{tikzcd}
\end{equation*}
Here the vertical maps are coordinate embeddings given by definition of the cluster variety. If $\gamma_\nu: t_\nu \to t'_\nu$ for $\nu=1,2$ are two representation paths of $\phi$, then the following diagram commutes:
\begin{equation}\label{eq:action compatible}
\begin{tikzcd}
    \cZ_{(t_1)} \ar[r, "\mu_{\gamma_1}^z"] \ar[d, "\mu_\delta^z"] & \cZ_{(t'_1)} \ar[d, "\mu_{\delta'}^z"] \ar[r, "i_{t'_1,t_1}^z"] &\cZ_{(t_1)} \ar[d, "\mu_\delta^z"]\\
     \cZ_{(t_2)} \ar[r, "\mu_{\gamma_2}^z"'] & \cZ_{(t'_2)} \ar[r, "i_{t'_2,t_2}^z"'] &\cZ_{(t_2)},
\end{tikzcd}
\end{equation}
where $\delta: t_1 \to t_2$ and $\delta': t'_1 \to t'_2$ are arbitrary paths. Indeed, the left square commutes by \cref{r:endpoints} and the right square commutes by \eqref{eq:equivalence of paths}. 
Thus the birational actions on $\Z_{\bs}$ induced by different representation paths are compatible with each other, and hence we get a well-defined action of $\phi$ on $\Z_{\bs}$. We call the birational map \eqref{eq:coord_expression} the \emph{coordinate expression} of $\phi$ at the vertex $t_0 \in \bT_I$, which only depends on the mutation loop $\phi$ and the vertex $t_0$.

Later we will use the following notations:
for an edge path $\gamma:t_0 \overbar{k_0} t_1 \overbar{k_1} \cdots \overbar{k_{h-1}} t_h$,
\begin{itemize}
    \item $\mathbf{k}:=(k_0,\dots,k_{h-1})$ and write the path as $\gamma: t_0 \xrightarrow{\mathbf{k}} t_h$.
    \item $h(\gamma) := h$, which is referred to as the  \emph{length}
    of $\gamma$.
    \item $\gamma^n: t_0 \xrightarrow{\mathbf{k}} t_h \xrightarrow{\mathbf{k}} \cdots \xrightarrow{\mathbf{k}} t_{nh}$ for an integer $n \geq 1$. Note that if $\gamma$ represents a horizontal mutation loop $\phi$, then $\gamma^n$ represents a horizontal mutation loop $\psi$ such that $\psi_{(t_0)}^z = (\phi_{(t_0)}^z)^n$. Therefore we write $\phi^n:=\psi$.
\end{itemize}

\begin{rem}\label{r:change of path}
Changes of representation paths of a mutation loop are divided into the following two types:
\begin{itemize}
    \item[(a)] A change with the initial vertex fixed, the path $\delta$ in \eqref{eq:action compatible} being constant. For example, an elimination or addition of a round trip $t \overbar{k} t' \overbar{k} t$ on an edge preserves the $\bs$-equivalence class from \cref{r:endpoints}. Likewise, one can eliminate or add a path $\delta'$ corresponding to one of the \emph{$(h+2)$-gon relations} \cite{FG09}.
    
    \item[(b)] A change of the initial vertex, the paths $\delta:t_1 \xrightarrow{\mathbf{k}} t_2$ and $\delta':t'_1 \xrightarrow{\mathbf{k}'} t'_2$ in \eqref{eq:action compatible} being related as $\mathbf{k}=\mathbf{k}'$. In this case, the birational maps $\phi_{(t_1)}^z$ and $\phi_{(t_2)}^z$ are related by the conjugation of the map $\mu_\delta^z$. 
\end{itemize}
\end{rem}

\begin{rmk}
The origin of the name \lq\lq horizontal" is clarified in our paper \cite{IK20}. A general mutation loop can be formulated as an equivalence class of an edge path on a graph $\bE_I$, which is an enhancement of $\bT_I$ by the Cayley graph of the symmetric group $\fS_I$.
We will call an edge of $\bE_I$ coming from $\bT_I$ (resp. the Cayley graph of $\fS_I$) a \emph{horizontal edge} (resp. \emph{vertical edge}), in analogy with the terminology used for the \emph{mapping class groupoid} \cite[Section 5]{Penner}. 
As we mentioned at the beginning of this subsection, a horizontal mutation loop can be seen as a particular element of the cluster modular group. The notation $\phi^n$ agrees with the composition law in the cluster modular group.
\end{rmk}

\subsection{Separation formulae and the \texorpdfstring{$c$}{c}, \texorpdfstring{$g$}{g}, \texorpdfstring{$f$}{f}-vectors}\label{subsec:vectors}

Fix $t_0 \in \mathbb{T}_I$. Then we assign the \emph{$C$-matrix} $C_t^{\bs;t_0}=(c^{(t)}_{\indi\indj})_{\indi,\indj \in I}$ to each vertex $t \in \mathbb{T}_I$ by the following rule:
\begin{enumerate}
    \item $C_{t_0}^{\bs;t_0}=\mathrm{Id}$,
    \item For each $\indedge$ of $\mathbb{T}_I$, the matrices $C_t^{\bs;t_0}$ and $C_{t'}^{\bs;t_0}$ are related by
    \begin{align}\label{eq:mut_c-mat}
        c'_{\indi\indj}= 
        \begin{cases}
            -c_{\indi\indj} & \mbox{$\indi=\indk$}, \\
            c_{\indi\indj} + [b^{(t)}_{\indi\indk}]_+[c_{\indk\indj}]_+ - [-b^{(t)}_{\indi\indk}]_+[-c_{\indk\indj}]_+ & \mbox{$\indi \neq \indk$}.
        \end{cases}
    \end{align}
\end{enumerate}
Here we write $c_{\indi\indj}:=c^{(t)}_{\indi\indj}$ and $c'_{\indi\indj}:=c^{(t')}_{\indi\indj}$.
Its row vectors $\mathbf{c}^{(t)}_\indi = (c^{(t)}_{\indi\indj})_{\indj \in I}$ are called \emph{$c$-vectors}\footnote{Note that, due to the conventional difference explained in \cref{r:matrix convention}(2), our $C$-matrices are transpose of those used in \cite{FZ-CA4,NZ12}}. 
The following theorem was firstly conjectured in \cite{FZ-CA4}, proved in \cite{DWZ10} for skew-symmetric case, and in \cite{GHKK} for skew-symmetrizable case.

\begin{thm}[Sign-coherence theorem for $c$-vectors]\label{thm:sign_coherence}
For any $t \in \bT_I$ and $\indi \in I$, $\mathbf{c}_\indi^{(t)} \in \bZ_{\geq 0}^I$ or $\mathbf{c}_\indi^{(t)} \in \bZ_{\leq 0}^I$.
\end{thm}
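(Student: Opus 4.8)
This is the celebrated \emph{sign-coherence theorem}, which is genuinely deep: no elementary induction on mutation sequences is known to prove it, and each of the two available proofs passes through a substantial categorical or combinatorial structure. In the skew-symmetric case it is due to \cite{DWZ10} via the representation theory of quivers with potentials, and in the general skew-symmetrizable case to \cite{GHKK} via scattering diagrams. I would follow the scattering-diagram route, which is the one that covers the full generality stated here and which fits the tropical viewpoint used elsewhere in the paper; for the present work the statement is ultimately imported from \cite{DWZ10,GHKK}.

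The plan is as follows. First, attach to the initial seed $(N^{(t_0)},B^{(t_0)})$ the cluster scattering diagram $\mathfrak{D}_{\bs}$ in $M^{(t_0)}\otimes\bR$: a collection of codimension-one rational cones (\emph{walls}), each supported on a hyperplane $n^\perp$ for some nonzero $n \in N^{(t_0)}$ and decorated with a wall-crossing automorphism of a completed monomial ring, constructed order by order from the ``initial walls'' $e_\indi^\perp$ by adjoining new walls so as to enforce \emph{consistency}, i.e. so that path-ordered products of wall-crossings depend only on the endpoints of the path. The existence and essential uniqueness of this consistent completion is the algebraic heart of the construction. Second, one shows that the mutation combinatorics of $\bT_I$ is mirrored inside $\mathfrak{D}_{\bs}$: each vertex $t$ reachable from $t_0$ yields a chamber $\mathcal{C}_t$ of the $g$-vector fan, and the walls bounding $\mathcal{C}_t$ carry exactly the covectors $\pm\mathbf{c}_\indi^{(t)}$ (under the dictionary of \cite{GHKK} between $c$-vectors and the defining normals of the $g$-vector chambers). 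Third — and this is where sign-coherence emerges — one proves as part of the inductive construction that \emph{every} wall of $\mathfrak{D}_{\bs}$ is supported on a hyperplane $n^\perp$ with $n$ a nonnegative integer combination of the $e_\indi^{(t_0)}$ (or, by the evident symmetry, a nonpositive one): each wall adjoined to repair consistency has defining normal equal to a positive integer combination of the normals of pre-existing walls, so positivity propagates. Specializing to the walls of $\mathcal{C}_t$ gives $\mathbf{c}_\indi^{(t)} \in \bZ_{\geq 0}^I$ or $\mathbf{c}_\indi^{(t)} \in \bZ_{\leq 0}^I$.

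The main obstacle is precisely the construction of $\mathfrak{D}_{\bs}$, its consistency, and the half-space property of its walls: this is the Kontsevich--Soibelman-type order-by-order argument of \cite{GHKK}, and it is not a formal consequence of the seed axioms. As an alternative I would keep in reserve the representation-theoretic route of \cite{DWZ10} in the skew-symmetric case: fix a nondegenerate potential on the quiver of $B^{(t_0)}$, follow decorated representations of the Jacobian algebras along the mutation sequence, and identify, up to an overall sign, each $\mathbf{c}_\indi^{(t)}$ with the dimension vector of an explicit module; since a dimension vector is coordinatewise nonnegative, sign-coherence follows, at the cost of controlling the $E$-invariant and the compatibility of mutation of quivers with potentials with seed mutation. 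Either way, the conclusion is the dichotomy asserted in the theorem.
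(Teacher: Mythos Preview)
Your proposal is correct and matches the paper's treatment: the paper does not prove this theorem but simply cites it, attributing the skew-symmetric case to \cite{DWZ10} and the general skew-symmetrizable case to \cite{GHKK}, exactly as you do. Your sketch of both routes is accurate and more detailed than what the paper provides; note only that since the paper restricts to skew-symmetric exchange matrices (\cref{r:matrix convention}), the \cite{DWZ10} argument already suffices for its purposes.
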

Following \cite{NZ12}, we define the \emph{tropical sign} $\epsilon_{\indi}^{(t)}$ to be $+$ in the former case, and $-$ in the latter case. 

Using the identity $[\epsilon b]_+c + b[-\epsilon c]_+ = [b]_+[c]_+ - [-b]_+[-c]_+$ for real numbers $b,c$ and a sign $\epsilon \in \{+,-\}$, we get 
\begin{align*}
    c'_{\indi\indj} = c_{\indi\indj} + [\epsilon b^{(t)}_{\indi\indk}]_+c_{\indk\indj} + b^{(t)}_{\indi\indk}[-\epsilon c_{\indk\indj}]_+  
\end{align*}
for $\indi \neq \indk$. Substituting $\epsilon=\epsilon^{(t)}_\indk$, we get $c'_{\indi\indj} = c_{\indi\indj} + [\epsilon_\indk^{(t)} b^{(t)}_{\indi\indk}]_+c_{\indk\indj}$. Equivalently, we get the following:
\begin{align}\label{eq:mut_c-mat_2}
C^{\bs;t_0}_{t'} = E_{\indk,\epsilon_\indk^{(t)}}^{(t)} C^{\bs;t_0}_t,
\end{align}
where $E_{\indk,\epsilon_\indk^{(t)}}^{(t)}$ is defined in \cref{subsec:seed_mut}.

Similarly, we assign the \emph{$G$-matrix} $G_t^{\bs;t_0}=(g_{\indi\indj}^{(t)})_{\indi,\indj \in I}$ to each vertex $t \in \mathbb{T}_I$.
The $G$-matrix is originally defined as a grading vector of cluster $\A$-variables. See \cite[Section 6]{FZ-CA4}. Since it involves a bit complicated recurrence relation, we adopt here the simplified recursion given in \cite{NZ12} as the  definition of $G$-matrices:
\begin{enumerate}
    \item $G_{t_0}^{\bs;t_0}$=Id,
    \item For each $\indedge$ of $\mathbb{T}_I$, the matrices $G_t^{\bs;t_0}$ and $G_{t'}^{\bs;t_0}$ are related by
\begin{align}\label{eq:mut_g-mat}
    G^{\bs;t_0}_{t'} = \check{E}_{\indk,\epsilon_\indk^{(t)}}^{(t)} G^{\bs;t_0}_t.
    \end{align}
\end{enumerate}
We refer to the row vectors $\mathbf{g}^{(t)}_\indi$ of $G^{\bs;t_0}_t$ as \emph{$g$-vectors}.
The \emph{tropical duality} 
\begin{align}\label{eq:tropical duality}
    G^{\bs;t_0}_t = \check{C}^{\bs;t_0}_t
\end{align}
is a consequence of \cref{lem:EF_formulae} (3), \eqref{eq:mut_c-mat_2} and \eqref{eq:mut_g-mat}.

In \cite[Section 3]{FZ-CA4}, they introduce the \emph{$F$-polynomials} as the special values of cluster $\A$-variables (called ``$x$-variables'' in \emph{loc. cit.}) with principal coefficients.
In this paper, we adopt the recurrence relation discussed in \cite[Section 5]{FZ-CA4} as the definition of $F$-polynomial.
Fix a vertex $t_0 \in \bT_I$ and $N$ indeterminates $y_1, \dots, y_N$.
We assign the \emph{$i$-th $F$-polynomials} $F^{(t)}_i(y_1, \dots, y_N) \in \bZ[y_1, \dots, y_N]$ for $i \in I$ to each vertex $t \in \bT_I$:
\begin{enumerate}
    \item $F^{(t_0)}_i = 1$ for all $i \in I$,
    \item For each $\edge$ of $\bT_I$, the polynomials $(F^{(t)}_i)_{i \in I}$ and $(F^{(t')}_i)_{i \in I}$ are related by
    \[
    F^{(t')}_i = 
    \begin{cases}
        F^{(t)}_i & \mbox{if } i \neq k,\\
        \displaystyle \frac{\prod_{j \in I} y_j^{[c^{(t)}_{kj}]_+} \prod_{l \in I}(F^{(t)}_l)^{[b^{(t)}_{kl}]_+} + \prod_{j \in I}y_j^{[-c^{(t)}_{kj}]_+} \prod_{l \in I}(F^{(t)}_l)^{[-b^{(t)}_{kl}]_+}}{F^{(t)}_k} & \mbox{if } i = k.
    \end{cases}
    \]
\end{enumerate}
Though it is not clear that $F^{(t)}_i(y_1, \dots, y_N)$ are polynomials from the above definition, one can check it by following the discussions in \cite{FZ-CA4}; it is one of the consequences of the Laurent phenomenon of cluster $\A$-variables \cite[Proposition 3.6]{FZ-CA4}.

Using these concepts, we can separate the rational expression of $\A$- and $\X$-variables in initial variables into its \emph{monomial part} and \emph{polynomial part}. The following formulae are called the \emph{separation formulae}:
\begin{thm}[{\cite[Proposition 3.13, Corollary 6.3]{FZ-CA4}}]\label{t:separation}
Fix a vertex $t_0 \in \bT_I$ and write $A_\indi:=A_\indi^{(t_0)}$ and $X_\indi:=X_\indi^{(t_0)}$ for $\indi \in I$. Then for each $t \in \bT_I$, the variables $A_\indi^{(t)}$ and $X_\indi^{(t)}$ can be written as follows:
\begin{align}
    A_\indi^{(t)} &= \prod_{\indj=1}^N A_\indj^{g_{\indi \indj}^{(t)}} \cdot F_\indi^{(t)} (p^*X_1,\dots,p^*X_N),\label{eq:separation_A} \\
    X_\indi^{(t)} &= \prod_{\indj=1}^N X_\indj^{c_{\indi \indj}^{(t)}}  F_\indj^{(t)}(X_1,\dots,X_N)^{b_{\indj \indi}^{(t)}}.\label{eq:separation_X}
\end{align}
\end{thm}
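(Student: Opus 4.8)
The statement to prove is the separation formulae of Fomin--Zelevinsky, \cref{t:separation}, in the present conventions. Since the paper explicitly adopts the recursive definitions of $C$-, $G$-, and $F$-data rather than the principal-coefficient definitions, the natural route is an induction on the distance from $t_0$ in $\bT_I$, checking that the right-hand sides of \eqref{eq:separation_A} and \eqref{eq:separation_X} satisfy the same mutation recursions as the left-hand sides. I will set up the induction so that the base case $t=t_0$ is immediate from $G_{t_0}=C_{t_0}=\mathrm{Id}$ and $F^{(t_0)}_i=1$, and the inductive step compares the two sides along a single edge $\edge$.

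For the inductive step I would first treat the $\X$-variables. Assume \eqref{eq:separation_X} holds at $t$; I want it at $t'$. For $i\neq k$ the exchange matrix row and the cluster $\X$-transformation are both controlled by $b^{(t)}_{ik}$, and one substitutes the formula $\mu_k^*X'_i = X_i(1+X_k^{-\sgn b_{ik}})^{-b_{ik}}$, expands using the inductive expression for $X^{(t)}_i$ and $X^{(t)}_k$, and matches monomial exponents against the $c$-vector recursion \eqref{eq:mut_c-mat} and the polynomial factors against the $F$-polynomial recursion; the key algebraic identity that makes the signs work is $1 + X_k^{\pm} $ being absorbed into the $F$-polynomial of index $k$ together with the monomial $\prod_j X_j^{c_{kj}}$, exactly as in the defining recursion for $F^{(t')}_k$. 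For $i=k$ one uses $X'_k = X_k^{-1}$ directly against $c'_{kj}=-c_{kj}$ and $F^{(t')}_k$. A clean way to organize this is to recall (or prove along the way) that the $\X$-variables in $\X$-coordinates can be tracked via the \emph{$Y$-seed} mutation and that the separation of $X^{(t)}_i$ is equivalent to the statement $X^{(t)}_i = \prod_j X_j^{c^{(t)}_{ij}} \cdot \widehat{F}$-type factor; I would phrase the induction so that one only needs the $\X$-case, and then derive the $\A$-case.

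For the $\A$-variables I would deduce \eqref{eq:separation_A} from \eqref{eq:separation_X} rather than re-run a parallel induction. The point is that $p^*$ intertwines the cluster $\A$- and $\X$-transformations (\cite[Proposition 2.2]{FG09}, cited as the ensemble-map compatibility), the $\A$-mutation at $k$ only changes $A_k$, and the formula $\mu_k^*A'_k = A_k^{-1}(\prod_j A_j^{[b_{kj}]_+} + \prod_j A_j^{[-b_{kj}]_+})$ can be rewritten, using the inductive monomial part $\prod_j A_j^{g^{(t)}_{ij}}$ and the $G$-matrix recursion \eqref{eq:mut_g-mat} together with the tropical duality \eqref{eq:tropical duality} $G^{\bs;t_0}_t = \check C^{\bs;t_0}_t$, as $\prod_j A_j^{g^{(t')}_{kj}}$ times the index-$k$ $F$-polynomial recursion evaluated at $p^*X_1,\dots,p^*X_N$. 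Here the factor $\prod_j y_j^{[c^{(t)}_{kj}]_+}$ appearing in the $F$-polynomial recursion matches $\prod_j (p^*X_j)^{[c^{(t)}_{kj}]_+}$ coming out of the $\A$-mutation formula after collecting the $g$-vector exponents, which is precisely where the sign-coherence theorem (\cref{thm:sign_coherence}) is used to split the two monomials in the numerator of the $\A$-mutation cleanly into the two terms of the $F$-recursion.

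The main obstacle I anticipate is purely bookkeeping: reconciling the transpose conventions flagged in \cref{r:matrix convention}(2) and \cref{conv:check} ($b^{\mathrm{FZ}}_{ij}=b_{ji}$, $C$-matrices transposed relative to \cite{NZ12}, $\check A := (A^\tr)^{-1}$) so that the exponent indices in \eqref{eq:separation_A}--\eqref{eq:separation_X} land in the right slots, and making sure the identity $[\epsilon b]_+ c + b[-\epsilon c]_+ = [b]_+[c]_+ - [-b]_+[-c]_+$ (already used above to get \eqref{eq:mut_c-mat_2}) is deployed with $\epsilon$ specialized to the tropical sign $\epsilon^{(t)}_k$ at the right moment, since the $F$-polynomial recursion as written uses the \emph{unsigned} $[b_{kl}]_+$ and $[c_{kj}]_+$ while the $C$- and $G$-recursions in \eqref{eq:mut_c-mat_2} and \eqref{eq:mut_g-mat} use the signed elementary matrices. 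Once the conventions are pinned down, everything reduces to the single-edge verification, and no genuinely new idea beyond the Laurent phenomenon (invoked, as in \cite{FZ-CA4}, to know the $F^{(t)}_i$ are honest polynomials and that the resulting expressions are the reduced ones) is required.
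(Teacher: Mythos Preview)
The paper does not give its own proof of \cref{t:separation}; it is stated with the citation \cite[Proposition 3.13, Corollary 6.3]{FZ-CA4} and used as a black box thereafter. So there is nothing in the paper to compare your proposal against.

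That said, your inductive strategy is the standard one and is essentially how the result is proved in \cite{FZ-CA4} (modulo the fact that there the $F$-polynomials and $g$-vectors are \emph{defined} via principal coefficients and the recursions are derived, whereas here the paper takes the recursions as definitions). One clarification: you say you will ``deduce \eqref{eq:separation_A} from \eqref{eq:separation_X}'' via the ensemble map, but what you then describe is a direct single-edge verification using the $\A$-mutation formula and the $G$-matrix recursion \eqref{eq:mut_g-mat}---that is a parallel induction, not a deduction from the $\X$-case. Either route works, but they are different arguments; the ensemble map alone does not recover $A^{(t)}_i$ from $X^{(t)}_i$ since $p$ is generally not injective.
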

The following lemma will be used to give an estimate of the algebraic entropy of the $\X$-transformation from below:

\begin{lem}[{\cite[Proposition 5.2]{FZ-CA4}}]\label{l:no cancellation}
Each of the $F$-polynomials $F^{(t)}_i(y_1,\dots,y_N)$ is not divisible by any $y_j$.
\end{lem}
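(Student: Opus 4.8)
The plan is to prove the stronger statement that each $F$-polynomial $F^{(t)}_i$ has constant term $1$, i.e.\ $F^{(t)}_i(0,\dots,0)=1$, from which the lemma follows at once, since a polynomial with nonzero constant term is divisible by no variable. I would argue by induction on the distance $d(t_0,t)$ from the base vertex in $\bT_I$. The base case $t=t_0$ is immediate because $F^{(t_0)}_i=1$ for all $i$.

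For the inductive step, let $\edge$ be the edge joining $t'$ to its unique neighbor $t$ closer to $t_0$, so that (by the defining recursions of \cref{subsec:vectors}) the $C$-matrix and $F$-polynomials at $t'$ are produced from those at $t$ via mutation at $k$. For $i\neq k$ one has $F^{(t')}_i=F^{(t)}_i$ and there is nothing to check, so assume $i=k$ and let $\epsilon:=\epsilon^{(t)}_k$ be the tropical sign. Writing
\begin{equation*}
\cN:=\prod_{j\in I}y_j^{[c^{(t)}_{kj}]_+}\prod_{l\in I}\bigl(F^{(t)}_l\bigr)^{[b^{(t)}_{kl}]_+}
+\prod_{j\in I}y_j^{[-c^{(t)}_{kj}]_+}\prod_{l\in I}\bigl(F^{(t)}_l\bigr)^{[-b^{(t)}_{kl}]_+}
\end{equation*}
for the numerator in the mutation rule, we have the polynomial identity $F^{(t)}_k\cdot F^{(t')}_k=\cN$, where we use the already-cited fact that the right-hand side of the mutation rule is a polynomial.

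Next I would invoke \cref{thm:sign_coherence}: the $c$-vector $\mathbf{c}^{(t)}_k=(c^{(t)}_{kj})_{j\in I}$ has all of its entries of the single sign $\epsilon$, and it is nonzero because, by \eqref{eq:mut_c-mat_2}, $C^{\bs;t_0}_t$ is a product of matrices $E^{(s)}_{k,\epsilon}$, each invertible by \cref{lem:EF_formulae}(1), so that $C^{\bs;t_0}_t$ is invertible and has no zero row. Consequently exactly one of the two monomials $\prod_{j}y_j^{[c^{(t)}_{kj}]_+}$ and $\prod_{j}y_j^{[-c^{(t)}_{kj}]_+}$ equals $1$ and the other is divisible by some $y_j$. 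Setting all $y_j=0$ therefore kills one summand of $\cN$ and leaves $\prod_{l\in I}\bigl(F^{(t)}_l(0,\dots,0)\bigr)^{[\epsilon b^{(t)}_{kl}]_+}$ from the other, which equals $1$ by the induction hypothesis; comparing constant terms in $F^{(t)}_k\cdot F^{(t')}_k=\cN$ and using $F^{(t)}_k(0,\dots,0)=1$ once more yields $F^{(t')}_k(0,\dots,0)=1$. This closes the induction.

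The computation is essentially bookkeeping; the one step that carries the real content---and the place I would be most careful---is the application of sign-coherence together with the nonvanishing of $\mathbf{c}^{(t)}_k$, which is exactly what guarantees that precisely one of the two monomial prefactors in $\cN$ survives the specialization $y=0$. Without sign-coherence both summands could specialize to zero, and the conclusion would genuinely fail.
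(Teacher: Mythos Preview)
Your argument is correct. The induction on distance from $t_0$, reducing to the constant-term statement $F^{(t)}_i(0,\dots,0)=1$, goes through cleanly once sign-coherence and the invertibility of $C^{\bs;t_0}_t$ guarantee that exactly one of the two monomial prefactors in $\cN$ is trivial. One small slip: the surviving summand at $y=0$ carries exponents $[-\epsilon b^{(t)}_{kl}]_+$, not $[\epsilon b^{(t)}_{kl}]_+$ (when $\epsilon=+$ the second term survives, when $\epsilon=-$ the first does); this is harmless since the inductive hypothesis makes the product equal to $1$ regardless of the exponent.

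As for comparison: the paper gives no proof of its own, simply citing \cite[Proposition 5.2]{FZ-CA4}. It is worth noting that your route is genuinely different from the one in \cite{FZ-CA4}. That paper predates the proof of sign-coherence, so Fomin--Zelevinsky's original argument cannot and does not invoke \cref{thm:sign_coherence}; it instead deduces non-divisibility from the Laurent phenomenon for the principal-coefficient cluster algebra (roughly, specializing the initial $x$-variables in the formula $x_{i;t}=F^{(t)}_i(\widehat{y})\cdot(\text{monomial})$ and observing that divisibility by some $y_j$ would contradict Laurentness from a neighbouring seed). Your argument trades that Laurent-phenomenon manoeuvre for a direct appeal to sign-coherence, which is entirely legitimate here since the present paper already imports \cref{thm:sign_coherence} as a standing tool. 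The upside of your approach is that it is shorter and yields the sharper statement $F^{(t)}_i(0)=1$ in the same breath; the upside of the original is that it is logically prior, needing only the Laurent phenomenon.
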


Fujiwara and Gyoda introduce the \emph{$F$-matrices} as a linearization of $F$-polynomials in \cite{FuGy19}.

\begin{dfn}
Fix a vertex $t_0 \in \bT_I$.
For each $i \in I$ and $t \in \bT$, let $f^{(t)}_{i1}, \dots, f^{(t)}_{iN}$ denote the maximal degrees of $y_1, \dots, y_N$ in the $i$-th $F$-polynomial $F^{(t)}_i(y_1, \dots, y_N)$.
Then $\mathbf{f}_i^{(t)}:= (f^{(t)}_{i1}, \dots, f^{(t)}_{iN})$ is called the $f$-vector and $F^{\bs;t_0}_t := (f^{(t)}_{ij})_{i,j \in I}$ is called the \emph{$F$-matrix} assigned at $t$.
\end{dfn}

Later we use some of the mutation formulae for $F$-matrices derived in \cite{FuGy19}. 
For a seed pattern $\bs$, let $-\bs$ denote the seed pattern $t \mapsto (N^{(t)}, -B^{(t)})$.

\begin{thm}[{\cite[Theorem 2.8]{FuGy19}}]\label{t:CGF_of_-bs}
Let $\bs$ be a seed pattern and fix $t_0\in \bT_I$.
Then for each $t \in \bT_I$, we have the following equations:
\begin{align*}
    C^{-\bs;t_0}_t &= C^{\bs;t_0}_t + B^{(t)} F^{\bs:t_0}_t,\\
    G^{-\bs;t_0}_t &= G^{\bs;t_0}_t +  F^{\bs:t_0}_t B^{(t_0)},\\
    F^{-\bs;t_0}_t &= F^{\bs;t_0}_t.
\end{align*}
\end{thm}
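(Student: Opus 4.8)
The plan is to prove the three identities simultaneously by induction on the graph distance $d(t_0,t)$ in $\bT_I$, after two preliminary observations. First, matrix mutation commutes with the negation $B\mapsto -B$: checking the two cases of the mutation rule (using $[a]_+=a+[-a]_+$) shows that mutating $-B^{(t)}$ at $\indk$ yields $-B^{(t')}$, so $-\bs$ is again a seed pattern with $B^{-\bs,(t)}=-B^{(t)}$, and the elementary matrices of \cref{rem:elementary matrix} satisfy $E^{-\bs,(t)}_{\indk,\epsilon}=E^{(t)}_{\indk,-\epsilon}$ and $\check E^{-\bs,(t)}_{\indk,\epsilon}=\check E^{(t)}_{\indk,-\epsilon}$, because replacing $b^{(t)}_{ik}$ by $-b^{(t)}_{ik}$ turns $[\epsilon b^{(t)}_{ik}]_+$ into $[-\epsilon b^{(t)}_{ik}]_+$. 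Second, at $t=t_0$ all $F$-polynomials equal $1$, so $F^{\bs;t_0}_{t_0}$ is the zero matrix and $C^{\bs;t_0}_{t_0}=G^{\bs;t_0}_{t_0}=\mathrm{Id}$, and likewise for $-\bs$; all three equalities hold trivially there.

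For the inductive step across an edge $\indedge$ with $d(t_0,t)<d(t_0,t')$, the crucial preliminary is the coincidence of tropical signs, $\epsilon^{-\bs,(t)}_{\indk}=\epsilon^{\bs,(t)}_{\indk}=:\epsilon$. By the inductive $C$-identity the $\indk$-th rows of $C^{-\bs;t_0}_t$ and $C^{\bs;t_0}_t$ differ by the $\indk$-th row of $B^{(t)}F^{\bs;t_0}_t$, namely $\sum_l b^{(t)}_{\indk l}\mathbf f^{(t)}_l$, and one must show that adding this to the sign-coherent vector $\mathbf c^{(t)}_{\indk}$ of \cref{thm:sign_coherence} cannot reverse its sign; this can be incorporated into the induction, or deduced from the known fact that the $C$-matrices of $\bs$ and $-\bs$ at a common vertex are simultaneously sign-coherent with equal signs. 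I expect this step to be the main obstacle, the rest being bookkeeping.

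Granting matching signs, write $\mathbf b$ for the $\indk$-th column of $B^{(t)}$ and use the decomposition $E^{(t)}_{\indk,-\epsilon}=E^{(t)}_{\indk,\epsilon}-\epsilon\,\mathbf b\,\mathbf e_{\indk}^{\tr}$. Feeding the inductive hypothesis into the $-\bs$ version of \eqref{eq:mut_c-mat_2} and simplifying with $E^{(t)}_{\indk,\epsilon}C^{\bs;t_0}_t=C^{\bs;t_0}_{t'}$, the relation $E^{(t)}_{\indk,\epsilon}B^{(t)}=B^{(t')}\check E^{(t)}_{\indk,\epsilon}$ (a rearrangement of \cref{lem:EF_formulae}(1) and (4)), and $\mathbf b=-B^{(t')}\mathbf e_{\indk}$, one arrives at
\[
C^{-\bs;t_0}_{t'}=C^{\bs;t_0}_{t'}+B^{(t')}\Bigl(\check E^{(t)}_{\indk,\epsilon}F^{\bs;t_0}_t-\epsilon\,\mathbf e_{\indk}(\mathbf b^{\tr}F^{\bs;t_0}_t-\mathbf c^{(t)}_{\indk})\Bigr).
\]
Thus the $C$-identity at $t'$ reduces to the claim that the bracketed matrix equals $F^{\bs;t_0}_{t'}$, i.e. to the $F$-matrix mutation formula of Fujiwara--Gyoda; the latter is obtained by reading maximal $y_j$-degrees off the $F$-polynomial recursion (using that $\deg_{y_j}$ is additive under polynomial division, together with \cref{l:no cancellation}). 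The $G$-identity then follows from a parallel induction with \eqref{eq:mut_g-mat} and $\check E^{(t)}_{\indk,\pm\epsilon}$ in the roles of \eqref{eq:mut_c-mat_2} and $E^{(t)}_{\indk,\pm\epsilon}$, or from the $C$-identity via the tropical duality \eqref{eq:tropical duality} and the structural identity $B^{(t)}=C^{\bs;t_0}_t B^{(t_0)}(C^{\bs;t_0}_t)^{\tr}$ after a short manipulation.

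Finally, $F^{-\bs;t_0}_{t'}=F^{\bs;t_0}_{t'}$ comes out of the same degree analysis applied to the $F$-polynomial recursion for $-\bs$ (with $b^{(t)}_{\indk l}$ negated and $c^{(t)}_{\indk j}$ replaced by the $(\indk,j)$-entry of $C^{-\bs;t_0}_t$): comparing top $y_j$-degrees and using the inductive $C$-identity to relate the two $\indk$-th $c$-vectors, the maximal degrees are seen to be unchanged — even though the $F$-polynomials of $\bs$ and $-\bs$ genuinely differ, their Newton polytopes being related by the reflection $\mathbf m\mapsto\mathbf f^{(t)}_i-\mathbf m$ rather than by equality (as one checks already for the Kronecker quiver). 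To summarize, once (a) the coincidence of tropical signs for $\pm\bs$ and (b) the $F$-matrix mutation formula (via no cancellation) are available, all three identities follow by a routine simultaneous induction; step (a) is the delicate one.
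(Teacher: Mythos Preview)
The paper does not give its own proof of this statement: it is quoted verbatim from \cite[Theorem 2.8]{FuGy19} and used as a black box (together with the companion \cref{t:FuGy}). So there is nothing to compare against here, and your write-up should be read purely on its own merits.

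Your algebraic bookkeeping is correct. The decomposition $E^{(t)}_{\indk,-\epsilon}=E^{(t)}_{\indk,\epsilon}-\epsilon\,\mathbf b\,\mathbf e_{\indk}^{\tr}$ and the manipulation leading to
\[
C^{-\bs;t_0}_{t'}=C^{\bs;t_0}_{t'}+B^{(t')}\Bigl(\check E^{(t)}_{\indk,\epsilon}F^{\bs;t_0}_t-\epsilon\,\mathbf e_{\indk}(\mathbf b^{\tr}F^{\bs;t_0}_t-\mathbf c^{(t)}_{\indk})\Bigr)
\]
both check out. Using the inductive $C$-identity one has $-\mathbf b^{\tr}F^{\bs;t_0}_t+\mathbf c^{(t)}_{\indk}=\mathbf c^{-\bs,(t)}_{\indk}$, so the bracketed matrix differs from the right-hand side of \cref{t:FuGy} only in that it has $\epsilon\,\mathbf c^{-\bs,(t)}_{\indk}$ in the $\indk$-th row where \cref{t:FuGy} has $[\epsilon\,\mathbf c^{-\bs,(t)}_{\indk}]_+$. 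These agree \emph{if and only if} $\epsilon^{-\bs,(t)}_{\indk}=\epsilon$, i.e.\ precisely the sign coincidence you flagged.

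That is the genuine gap. You say the sign coincidence ``can be incorporated into the induction'', but it cannot: the inductive $C$-identity only tells you that $\mathbf c^{-\bs,(t)}_{\indk}$ and $\mathbf c^{(t)}_{\indk}$ differ by $\sum_l b^{(t)}_{\indk l}\mathbf f^{(t)}_l$, and sign-coherence of each vector separately does not prevent the correction from flipping the sign. Your alternative --- invoking it as a ``known fact'' --- is circular in spirit, since that equality of tropical signs is essentially equivalent to (and in \cite{FuGy19} is packaged together with) the identities you are trying to prove. A self-contained argument needs an extra input: the standard route is via the max-degree/constant-term structure of $F$-polynomials (proved in \cite{DWZ10,GHKK}), which yields $F^{-\bs,(t)}_i(\mathbf y)=\prod_j y_j^{f^{(t)}_{ij}}\cdot F^{\bs,(t)}_i(\mathbf y^{-1})$ directly; the Newton-polytope reflection you describe is the shadow of this identity, and once it is in hand the sign coincidence and hence the whole induction go through. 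Also note that you are invoking \cref{t:FuGy} (Proposition~2.16 of \cite{FuGy19}) to prove Theorem~2.8 of the same paper --- you should check that their logical order allows this, or replace that step by a direct degree count from the $F$-polynomial recursion as you sketch.
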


\begin{thm}[{\cite[Proposition 2.16]{FuGy19}}]\label{t:FuGy}
For any edge $\indedge$ in $\bT_I$, we have
\begin{align*}
    F^{\bs;t_0}_{t'} = \check{E}_{\indk,\epsilon_\indk^{(t)}}^{(t)} F^{\bs;t_0}_t + [\epsilon_\indk^{(t)}C^{-\bs,t_0}_t]_+^{\indk\bullet}.
\end{align*}
Here for an $N \times N$ matrix $A$, 
\begin{align*}
    [A]^{k \bullet}:=\mathrm{diag}(0,\dots,0,\overset{k}{1},0,\dots,0) \cdot A
\end{align*}
and $[A]_+$ is the matrix obtained by applying $[-]_+$ to each entries.
\end{thm}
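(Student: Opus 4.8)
The plan is to verify the asserted matrix identity row by row, reducing it to the recursion defining the $F$-polynomials together with elementary manipulations of $[-]_+$ and the sign-coherence of $c$-vectors (\cref{thm:sign_coherence}). For the rows $i \neq k$ there is nothing to do: the mutation rule for $F$-polynomials gives $F^{(t')}_i = F^{(t)}_i$, hence $\mathbf{f}^{(t')}_i = \mathbf{f}^{(t)}_i$, while on the right-hand side the $i$-th row of $\check{E}^{(t)}_{k,\epsilon^{(t)}_k}$ is the $i$-th standard basis vector (see \cref{rem:elementary matrix}) and the $i$-th row of $[\epsilon^{(t)}_k C^{-\bs;t_0}_t]_+^{k\bullet}$ vanishes, so both sides equal $\mathbf{f}^{(t)}_i$. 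The whole content is therefore concentrated in the $k$-th row.

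To compute the $k$-th row, fix $j \in I$ and abbreviate $\epsilon := \epsilon^{(t)}_k$. The $F$-polynomial recursion reads $F^{(t')}_k\cdot F^{(t)}_k = P_+ + P_-$, where $P_\pm = \prod_i y_i^{[\pm c^{(t)}_{ki}]_+}\prod_l (F^{(t)}_l)^{[\pm b^{(t)}_{kl}]_+}$. Since $\bZ[y_1,\dots,y_N]$ is an integral domain, $\deg_{y_j}$ is additive on products of nonzero elements, so $\deg_{y_j}P_\pm = [\pm c^{(t)}_{kj}]_+ + \sum_l[\pm b^{(t)}_{kl}]_+ f^{(t)}_{lj}$ and $f^{(t')}_{kj} + f^{(t)}_{kj} = \deg_{y_j}(P_+ + P_-)$ (using that $F^{(t')}_k$ is genuinely a polynomial, as noted earlier). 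The step I expect to be the crux is to show that no cancellation occurs in $P_+ + P_-$, so that $\deg_{y_j}(P_+ + P_-) = \max(\deg_{y_j}P_+, \deg_{y_j}P_-)$; for this I would invoke the non-negativity of the coefficients of the $F$-polynomials (\cite{DWZ10} in the skew-symmetric case, \cite{GHKK} in general), whence the leading $y_j$-coefficient of each $P_\pm$ is a nonzero polynomial with non-negative coefficients and a sum of two such cannot vanish. Setting $a_\pm := \sum_l[\pm b^{(t)}_{kl}]_+ f^{(t)}_{lj}\ (\geq 0)$, this yields $f^{(t')}_{kj} = \max\big([c^{(t)}_{kj}]_+ + a_+,\ [-c^{(t)}_{kj}]_+ + a_-\big) - f^{(t)}_{kj}$.

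It then remains to match this with the $(k,j)$-entry of the claimed right-hand side. The $(k,j)$-entry of $\check{E}^{(t)}_{k,\epsilon}F^{\bs;t_0}_t$ equals $-f^{(t)}_{kj} + \sum_l[-\epsilon b^{(t)}_{kl}]_+ f^{(t)}_{lj}$ (the diagonal $-1$ plus the off-diagonal part of the $k$-th row of $\check{E}^{(t)}_{k,\epsilon}$, using $b^{(t)}_{kk} = 0$), which is $-f^{(t)}_{kj} + a_-$ for $\epsilon = +$ and $-f^{(t)}_{kj} + a_+$ for $\epsilon = -$; and by \cref{t:CGF_of_-bs} the $(k,j)$-entry of $C^{-\bs;t_0}_t$ is $c^{(t)}_{kj} + a_+ - a_-$, so that of $[\epsilon C^{-\bs;t_0}_t]_+^{k\bullet}$ is $[\epsilon c^{(t)}_{kj} + \epsilon(a_+ - a_-)]_+$. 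By \cref{thm:sign_coherence}, $\epsilon c^{(t)}_{kj}\geq 0$, so for $\epsilon = +$ one has $[c^{(t)}_{kj}]_+ = c^{(t)}_{kj}$ and $[-c^{(t)}_{kj}]_+ = 0$, and the desired identity collapses to $a_- + [x]_+ = \max(x + a_-, a_-)$ with $x = c^{(t)}_{kj} + a_+ - a_-$; the case $\epsilon = -$ is the mirror statement $a_+ + [y]_+ = \max(y + a_+, a_+)$ with $y = -c^{(t)}_{kj} - a_+ + a_-$. Both are immediate, so apart from the positivity input used above the argument is pure bookkeeping with $[-]_+$.
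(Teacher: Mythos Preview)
The paper does not prove this statement at all: it is quoted verbatim from \cite[Proposition~2.16]{FuGy19} and used as a black box. So there is no ``paper's own proof'' to compare against. That said, your argument is a correct proof of the identity, and it follows essentially the same route as the original Fujiwara--Gyoda argument: reduce to the $k$-th row via the $F$-polynomial recursion, use positivity of $F$-polynomial coefficients to rule out cancellation in $P_+ + P_-$ so that $\deg_{y_j}(P_+ + P_-)=\max(\deg_{y_j}P_+,\deg_{y_j}P_-)$, and then match the two sides using sign coherence and the relation $C^{-\bs;t_0}_t = C^{\bs;t_0}_t + B^{(t)}F^{\bs;t_0}_t$ from \cref{t:CGF_of_-bs}.

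One remark on the exposition: the positivity of the coefficients of $F$-polynomials that you invoke is itself a deep theorem (indeed, strictly stronger than sign coherence), so the proof is not as elementary as the final bookkeeping suggests. You flag this correctly, but it is worth being explicit that this is \emph{the} substantial input; everything else really is just manipulation of $[-]_+$. Also, strictly speaking you should note that $P_+$ and $P_-$ are both nonzero (clear, since each $F^{(t)}_l$ has constant term $1$), so that their leading $y_j$-coefficients are honest nonzero polynomials with non-negative coefficients---you say this, but it is the step where a careless reader could slip.
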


\subsection{Tropicalizations of the cluster ensemble}
Let $\mathbb{P}=(\mathbb{P},\oplus, \cdot)$ be a semifield.
For a torus $T_L$ with finite rank lattice $L$, we define $T_L(\bP) := L \otimes_\bZ \bP^\times$. Here $\bP^\times:=(\bP,\cdot)$ denotes the multiplicative group. 
A positive rational map $f:T_L \to T_{L'}$ naturally induces a map $f(\bP): T_L(\bP) \to T_{L'}(\bP)$.
For a more detailed (and generalized) correspondence, see  \cite{GHKK}.

Recall the character $\mathrm{ch}_{\ell^\vee} \in X^*(T_{L})$ associated with a point $\ell^\vee \in L^\vee$. It induces a group homomorphism $\mathrm{ch}_{\ell^\vee}(\mathbb{P}): T_L(\mathbb{P}) \to \mathbb{P}$ by $\psi \otimes p \mapsto \langle \psi,\mathrm{ch}_{\ell^\vee} \rangle p$. One can verify that it coincides with the evaluation map $L \otimes_\bZ \mathbb{P} \to \mathbb{P}$; $\lambda \otimes p \mapsto \ell^\vee(\lambda)p$.
Applying them to seed tori $\X_{(t)}$ and $\A_{(t)}$, we get $\X_{(t)}(\mathbb{P})= M^{(t)} \otimes \mathbb{P}$ and $\A_{(t)} = N^{(t)} \otimes \mathbb{P}$ equipped with functions 
\begin{align*}
    x^{(t)}_\indi:=\mathrm{ch}_{e^{(t)}_\indi}(\mathbb{P}): \X_{(t)}(\mathbb{P}) \to \mathbb{P}, \quad a^{(t)}_\indi:=\mathrm{ch}_{f^{(t)}_\indi}(\mathbb{P}): \A_{(t)}(\mathbb{P}) \to \mathbb{P}
\end{align*}
which we call the \emph{tropical cluster $\X$-} and \emph{$\A$-coordinates}. 
Since cluster transformations are positive rational maps, they induce maps between these sets. 

\begin{dfn}
We define the set of $\mathbb{P}$-valued points as $\X_{\bs}(\mathbb{P}) := \bigsqcup_{t \in \bT_I} \X_{(t)}(\mathbb{P}) \slash \sim$, where for each edge $\indedge$, two points $x \in \X_{(t)}(\mathbb{P})$ and $x' \in \X_{(t')}(\mathbb{P})$ are identified if $x' = \mu_\indk(\mathbb{P})(x)$. Similarly we define $\A_{\bs}(\mathbb{P})$.
\end{dfn}
We are mainly interested in the case $\mathbb{P}=\bZ^\trop$ or $\bR^\trop$. These semifields are defined to be the sets $\bZ$ and $\bR$ equipped with \emph{min-plus} operations $a \oplus b := \min\{a,b\}$, $a \cdot b := a+b$. 
In these cases, the tropicalized cluster transformations associated with an edge $\indedge$ of $\mathbb{T}_I$ are given by

\begin{align}\label{eq:trop x-cluster}
    (\mu_\indk^{x}(\mathbb{P}))^*x'_\indi =
\begin{cases}
    -x_\indk & \mbox{if $\indi=\indk$}, \\
    x_\indi-b_{\indi\indk}\min\{0,-\sgn(b_{\indi\indk})x_\indk\} & \mbox{if $\indi \neq \indk$}
\end{cases}
\end{align}
and
\begin{align}\label{eq:trop a-cluster}
    (\mu_\indk^{a}(\mathbb{P}))^*a'_\indi =
\begin{cases}
    -a_\indk+ \min \left\{\sum_{\indj \in I}[b_{\indk\indj}]_+a_\indj, \sum_{\indj \in I}[-b_{\indk\indj}]_+a_\indj \right\} & \mbox{if $\indi=\indk$}, \\
    a_\indi & \mbox{if $\indi \neq \indk$}.
\end{cases}
\end{align}
Here we abbreviate \cref{conv:coordinates} for tropical coordinates.
In particular, $\cX_\bs(\bR^\trop)$ and $\cA_\bs(\bR^\trop)$ are piecewise linear (PL for short) manifold.
The ensemble map $p_{(t)}: \A_{(t)} \to \X_{(t)}$ induces a linear map $p_{(t)}(\mathbb{P}): \A_{(t)}(\mathbb{P}) \to \X_{(t)}(\mathbb{P})$, which is given by $(p_{(t)}(\mathbb{P}))^* x_\indk = \sum_{\indi \in I} b_{\indk\indi}a_\indi$.
\section{Sign stability of mutation loops}
In this section, we introduce the sign stability of (horizontal) mutation loops and state some basic properties. 
\subsection{Definition of the sign stability}

In what follows, we mainly deal with the $\bR^\trop$ or $\bZ^\trop$-valued points of cluster varieties. Therefore we omit the symbol $\bR^\trop$ or $\bZ^\trop$ from the tropicalizations of positive maps, for notational simplicity. Moreover we omit the symbol \lq\lq $a$'' and \lq\lq $x$'' from the superscript when no confusion can occur.

In order to obtain the presentation matrices of the tropical cluster $\X$-transformation in the case $\mathbb{P}=\bR^\trop$, first we rewrite the formula \eqref{eq:trop x-cluster}.
For a real number $a \in \bR$, let $\sgn(a)$ denote its sign:
\[
\sgn(a):=
\begin{cases}
    + & \mbox{ if } a>0,\\
    0 & \mbox{ if } a=0,\\
    - & \mbox{ if } a<0.
\end{cases}
\]

\begin{lem}\label{l:x-cluster signed}
Fix a point $w \in \X_{(t)}(\bR^\trop)$. 
Then the tropical cluster $\X$-transformation  \eqref{eq:trop x-cluster} can be written as follows:
\begin{align}\label{eq:sign x-cluster}
    x'_\indi(\mu_\indk(w)) =
\begin{cases}
    -x_\indk(w) & \mbox{if $\indi=\indk$}, \\
    x_\indi(w)+[\sgn(x_\indk(w))b_{\indi\indk}]_+x_\indk(w) & \mbox{if $\indi \neq \indk$}.
\end{cases}
\end{align}
\end{lem}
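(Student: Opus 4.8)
The plan is to verify the claimed formula \eqref{eq:sign x-cluster} by starting from \eqref{eq:trop x-cluster} and doing a short case analysis on the sign of $x_\indk(w)$. This is a direct computation, so the main work is just bookkeeping of the $[-]_+$ and $\min$ notations. The only genuinely conceptual point is noticing that the two formulations agree even when $x_\indk(w)=0$, in which case both sides simply return $x_\indi(w)$ in the $\indi\neq\indk$ case.

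First I would dispose of the case $\indi=\indk$: in \eqref{eq:trop x-cluster} the value is $-x_\indk$, which matches the first branch of \eqref{eq:sign x-cluster} verbatim, so nothing needs to be done. So from now on assume $\indi\neq\indk$. Here \eqref{eq:trop x-cluster} reads $x'_\indi = x_\indi - b_{\indi\indk}\min\{0,-\sgn(b_{\indi\indk})x_\indk\}$, and I want to show this equals $x_\indi + [\sgn(x_\indk)b_{\indi\indk}]_+x_\indk$.

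The key algebraic identity is that for real numbers $b$ and $x$,
\begin{align*}
    -b\,\min\{0,-\sgn(b)x\} = [\sgn(x)b]_+\,x.
\end{align*}
To see this, split on the sign of $x$. If $x=0$ both sides vanish. If $x>0$ then $\sgn(x)=+$, so the right side is $[b]_+x$; on the left, $-\sgn(b)x$ has the opposite sign to $b$ when $b\neq 0$, so $\min\{0,-\sgn(b)x\} = -[b]_+ x / |b| \cdot \dots$ — more cleanly: if $b>0$ then $\min\{0,-x\}=-x$ and $-b(-x)=bx=[b]_+x$; if $b\le 0$ then $\min\{0,\sgn(b)\cdot(-x)\}$ with $-\sgn(b)x\ge 0$ gives $\min=0$ and the left side is $0=[b]_+x$. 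If $x<0$ then $\sgn(x)=-$, the right side is $[-b]_+x$; if $b>0$, $-\sgn(b)x=-x>0$ so $\min=0$, left side $0=[-b]_+x$ since $[-b]_+=0$; if $b<0$, $-\sgn(b)x=x<0$ so $\min=x$ and $-bx=[-b]_+x$ since $-b>0$. Plugging this identity in with $b=b_{\indi\indk}$ and $x=x_\indk(w)$ gives exactly \eqref{eq:sign x-cluster}, completing the proof.

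I do not anticipate any real obstacle here; the only thing to be careful about is keeping the sign conventions straight across the three cases for each of the two quantities, and making sure the $x_\indk(w)=0$ edge case is explicitly noted since that is where the reformulation is slightly non-obvious (the factor $[\sgn(x_\indk(w))b_{\indi\indk}]_+$ is then $[0]_+=0$ regardless of $b_{\indi\indk}$, matching $\min\{0,0\}=0$).
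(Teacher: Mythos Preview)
Your proof is correct and follows essentially the same elementary route as the paper: both reduce the $\indi\neq\indk$ case to verifying the scalar identity $-b\min\{0,-\sgn(b)x\} = x\,[\sgn(x)b]_+$. The only cosmetic difference is that the paper packages the computation via the symmetric identity $a[\sgn(a)b]_+ = b[\sgn(b)a]_+$ (applied after rewriting $-\min\{0,-y\}=[y]_+$), whereas you inline the case analysis on $\sgn(x)$ and $\sgn(b)$ directly; the content is the same.
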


\begin{proof}
Using the formula
$a[\sgn (a)b]_+= b[\sgn (b)a]_+$ for real numbers $a,b$, 
we get 
\begin{align*}
    -b_{\indi\indk}\min\{0,-\sgn(b_{\indi\indk})x_\indk(w)\} = b_{\indi\indk}[\sgn (b_{\indi\indk}) x_\indk(w)]_+ = x_\indk(w)[\sgn (x_\indk(w)) b_{\indi\indk}]_+.
\end{align*}
\end{proof}
With this lemma in mind, we consider the half-spaces
\begin{align*}
    \cH_{\indk,\epsilon}^{x,(t)}:= \{ w \in \X_{(t)}(\bR^\trop) \mid \epsilon x^{(t)}_\indk(w) \geq 0 \}
\end{align*}
for $\indk \in I$, $\epsilon \in \{+,-\}$ and $t \in \mathbb{T}_I$.

Let $\psi:V \to W$ be a PL map between two vector spaces with fixed bases. If $\psi$ is differentiable at $x \in V$, then the \emph{presentation matrix} of $\psi$ at $x$ is the presentation matrix of the tangent map $d\psi_x:T_x V \to T_{\psi(x)}W$ with respect to the given bases. When $V=\X_{(t)}(\bR^\trop)$ and $W=\X_{(t')}(\bR^\trop)$ for some $t,t' \in \bT_I$, we always consider the bases $(f_i^{(t)})_{i \in I}$ and $(f_i^{(t')})_{i \in I}$ respectively, unless otherwise specified.

Then we have the following immediate corollary of \cref{l:x-cluster signed}.
\begin{cor}\label{cor:x_and_E}
For $\epsilon \in \{+,-\}$, the tropical cluster $\X$-transformation $\mu_\indk: \X_{(t)}(\bR^\trop) \to \X_{(t')}(\bR^\trop)$ is differentiable at any point in $\interior\mathcal{H}_{\indk,\epsilon}^{x,(t)}$, and its presentation matrix is given by $E_{\indk,\epsilon}^{(t)}$ there.~ \footnote{Note that this is the presentation matrix of the signed mutation $(\widetilde{\mu}_k^\epsilon)^*: M^{(t')} \to M^{(t)}$. It should be understood as the transpose of the signed mutation $(\widetilde{\mu}_k^\epsilon)^*: N^{(t')} \to N^{(t)}$ in view of \cref{lem:EF_formulae} (1) and (3) with a notice that the lattice $N^{(t)}$ gives functions on $\X_{(t)}(\bR^\trop) \cong M^{(t)} \otimes \bR$.}
\end{cor}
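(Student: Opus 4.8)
\textbf{Proof plan for \cref{cor:x_and_E}.}

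The plan is to read off the tangent map of $\mu_\indk$ directly from the signed formula \eqref{eq:sign x-cluster} of \cref{l:x-cluster signed}. First I would fix a sign $\epsilon \in \{+,-\}$ and a point $w$ in the interior $\interior\mathcal{H}_{\indk,\epsilon}^{x,(t)}$, so that $\epsilon x^{(t)}_\indk(w) > 0$; in particular $x_\indk(w) \neq 0$, hence $\sgn(x_\indk(w)) = \epsilon$ and this sign is locally constant near $w$. Consequently, on a neighbourhood of $w$ the map $\mu_\indk$ is given by the \emph{linear} (in tropical arithmetic, affine-linear) formula obtained from \eqref{eq:sign x-cluster} by substituting $\sgn(x_\indk(w)) = \epsilon$:
\begin{align*}
    x'_\indk \circ \mu_\indk = -x_\indk, \qquad
    x'_\indi \circ \mu_\indk = x_\indi + [\epsilon b_{\indi\indk}]_+ \, x_\indk \quad (\indi \neq \indk).
\end{align*}
In particular $\mu_\indk$ is differentiable at $w$, which handles the first assertion.

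Next I would compute the presentation matrix of this linear map with respect to the chosen bases $(f_\indi^{(t)})_{\indi \in I}$ on $\X_{(t)}(\bR^\trop) \cong M^{(t)} \otimes \bR$ and $(f_\indi^{(t')})_{\indi \in I}$ on $\X_{(t')}(\bR^\trop)$. Since the coordinate $x_\indi$ is the linear functional dual to $f_\indi$, the displayed formulae express the pullbacks $(\mu_\indk)^* x'_\indi$ as linear combinations of the $x_\indj$; transposing, the matrix sending the $f^{(t)}$-coordinates of a vector to the $f^{(t')}$-coordinates of its image has $(\indi,\indj)$-entry equal to the coefficient of $x_\indj$ in $(\mu_\indk)^*x'_\indi$. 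Reading these off: the $(\indk,\indk)$-entry is $-1$; for $\indi \neq \indk$ the $(\indi,\indi)$-entry is $1$; for $\indi \neq \indk$ the $(\indi,\indk)$-entry is $[\epsilon b_{\indi\indk}^{(t)}]_+$; and all other entries vanish. Comparing with the definition of $E^{(t)}_{\indk,\epsilon}$ in \cref{subsec:seed_mut}, this is exactly $E^{(t)}_{\indk,\epsilon}$. (Equivalently, one may simply recall from \cref{subsec:seed_mut} that $(E^{(t)}_{\indk,\epsilon})^\tr$ is the presentation matrix of the signed seed mutation $(\widetilde{\mu}_\indk^\epsilon)^*: M^{(t')} \xrightarrow{\sim} M^{(t)}$, and observe that the linear map above coincides with $(\widetilde{\mu}_\indk^\epsilon)^*$ acting on coordinates.)

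This argument is essentially bookkeeping, so there is no serious obstacle; the only point requiring care is the orientation of the transposition — that is, whether one obtains $E^{(t)}_{\indk,\epsilon}$ or its transpose — which is why the convention stated just before the corollary (``bases $(f_i^{(t)})$ and $(f_i^{(t')})$'') and the footnote clarifying the relation with $(\widetilde{\mu}_\indk^\epsilon)^*$ via \cref{lem:EF_formulae} (1) and (3) are needed. I would make sure the indexing of rows and columns matches the convention under which \eqref{eq:mut_c-mat_2} was derived, so that the corollary is consistent with the later use of $E^{(t)}_{\indk,\epsilon}$ in the $C$-matrix recursion.
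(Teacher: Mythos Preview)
Your proof is correct and is exactly the unpacking the paper has in mind: the corollary is stated there as an immediate consequence of \cref{l:x-cluster signed} with no further argument, and your computation of the entries from the signed formula \eqref{eq:sign x-cluster} matches the definition of $E^{(t)}_{\indk,\epsilon}$ verbatim. Your caution about the transpose convention is well placed but resolved correctly by your reading of the $(i,j)$-entry as the coefficient of $x_j$ in $(\mu_\indk)^*x'_i$.
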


    
    

We are going to define the \emph{sign} of a path in $\bT_I$. In the sequel, we use the following notation.
\begin{conv}\label{path convention}
\begin{enumerate}
    \item For an edge path $\gamma:t_0 \overbar{k_0} t_1 \overbar{k_1} \cdots \overbar{k_{h-1}} t_h$ and 
$i=1,\dots,h$, let $\gamma_{\leq i}: t_0 \xrightarrow{(k_0,\dots,k_{i-1})} t_{i}$ be the sub-path of $\gamma$ from $t_0$ to $t_{i}$, and let $\gamma_{\leq 0}$ be the constant path at $t$.

\item Fixing the initial vertex $t_0 \in \bT_I$ in the sequel, 
we simply denote the coordinate expression of $\phi$ at  $t_0$ by $\phi:=\phi_{(t_0)}: \X_{(t_0)}(\bR^\trop) \to \X_{(t_0)}(\bR^\trop)$. For a point $w \in \X_{(t_0)}(\bR^\trop)$, let $E_\phi^{(t_0)}(w)$ denote the presentation matrix of $\phi$ at $w$.
\end{enumerate}
\end{conv}


\begin{dfn}[sign of a path]\label{d:sign}
Let the notation as above, and fix a point $w \in \X_{(t_0)}(\bR^\trop)$.
Then the \emph{sign} of $\gamma$ at $w$ is the sequence $\boldsymbol{\epsilon}_\gamma(w)=(\epsilon_0,\dots,\epsilon_{h-1}) \in \{+,0,-\}^h$ of signs defined by
\[\epsilon_{i} := \sgn(x^{(t_{i})}_{k_i}(\mu_{\gamma_{\leq {i}}} (w)))\]
for $i=0,\dots,h-1$.
\end{dfn}


Next lemma expresses the heart of \cref{d:sign}. 

\begin{lem}\label{lem:pres_mat_x}
Let $\boldsymbol{\epsilon}=(\epsilon_0,\dots,\epsilon_{h-1})$ be the sign of a path $\gamma$ at $w \in \cX_{(t_0)}(\bR^\trop)$. If it is strict, namely $\boldsymbol{\epsilon} \in \{+,-\}^h$, then the cluster $\X$-transformation $\mu_\gamma$ is differentiable at $w$, and the presentation matrix is given by $E^{\boldsymbol{\epsilon}}_\gamma := E_{k_{h-1},\epsilon_{h-1}}^{(t_{h-1})}\dots E_{k_1,\epsilon_1}^{(t_1)} E_{k_0,\epsilon_0}^{(t_0)}$.
\end{lem}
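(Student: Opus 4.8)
The statement is essentially the chain rule for piecewise-linear maps, and the plan is to prove it by induction on the length $h$ of the path $\gamma$. The base case $h=1$ is exactly \cref{cor:x_and_E}: if the sign $\epsilon_0 = \sgn(x^{(t_0)}_{k_0}(w))$ is nonzero, then $w$ lies in the interior of the half-space $\mathcal{H}^{x,(t_0)}_{k_0,\epsilon_0}$ (since $x^{(t_0)}_{k_0}(w) \neq 0$ means $\epsilon_0 x^{(t_0)}_{k_0}(w) > 0$, so $w$ is not on the boundary hyperplane $\{x^{(t_0)}_{k_0} = 0\}$), hence $\mu_{k_0}$ is differentiable at $w$ with presentation matrix $E^{(t_0)}_{k_0,\epsilon_0}$.

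For the inductive step, write $\gamma = \gamma_{\leq h-1} \overbar{k_{h-1}} t_h$ and set $w' := \mu_{\gamma_{\leq h-1}}(w) \in \cX_{(t_{h-1})}(\bR^\trop)$. By the inductive hypothesis applied to the strict sign $(\epsilon_0,\dots,\epsilon_{h-2})$ of $\gamma_{\leq h-1}$ at $w$, the map $\mu_{\gamma_{\leq h-1}}$ is differentiable at $w$ with presentation matrix $E^{(t_{h-2})}_{k_{h-2},\epsilon_{h-2}} \cdots E^{(t_0)}_{k_0,\epsilon_0}$. The last entry of the sign is $\epsilon_{h-1} = \sgn(x^{(t_{h-1})}_{k_{h-1}}(w')) \neq 0$, so as in the base case $w'$ lies in the interior of $\mathcal{H}^{x,(t_{h-1})}_{k_{h-1},\epsilon_{h-1}}$, and \cref{cor:x_and_E} gives that $\mu_{k_{h-1}}$ is differentiable at $w'$ with presentation matrix $E^{(t_{h-1})}_{k_{h-1},\epsilon_{h-1}}$. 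Since $\mu_\gamma = \mu_{k_{h-1}} \circ \mu_{\gamma_{\leq h-1}}$ (using \cref{r:endpoints} to identify the composite of edge maps with the map attached to the endpoints), the chain rule for PL maps — valid precisely because both factors are differentiable at the relevant points — yields that $\mu_\gamma$ is differentiable at $w$ with presentation matrix the product $E^{(t_{h-1})}_{k_{h-1},\epsilon_{h-1}} \cdot \big(E^{(t_{h-2})}_{k_{h-2},\epsilon_{h-2}} \cdots E^{(t_0)}_{k_0,\epsilon_0}\big) = E^{\boldsymbol{\epsilon}}_\gamma$, as claimed.

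The only subtle point — the place where one must be slightly careful rather than merely formal — is that composing differentiable maps requires that the intermediate point $w'$ be a point of differentiability of the \emph{second} map, and this is exactly guaranteed by the definition of the sign: $\epsilon_{h-1}$ is defined as $\sgn(x^{(t_{h-1})}_{k_{h-1}}(\mu_{\gamma_{\leq h-1}}(w)))$, i.e. it is read off at the already-transported point $w'$, not at $w$. Strictness of $\boldsymbol{\epsilon}$ then says precisely that at every stage the transported point avoids the wall where the corresponding tropical $\X$-transformation fails to be smooth. I would also note in passing that differentiability of a PL map at a point is equivalent to that point lying in the (relative) interior of a single linearity domain, so "differentiable at $w$ with presentation matrix $M$" literally means the map agrees with the linear map $M$ on a neighborhood of $w$; with this reading the chain-rule step is immediate from composing linear maps on overlapping neighborhoods. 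No genuine obstacle arises; the content is bookkeeping of signs and an appeal to \cref{cor:x_and_E}.
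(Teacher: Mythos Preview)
Your proof is correct and is precisely the argument the paper has in mind: the lemma is stated without proof as an immediate iterated application of \cref{cor:x_and_E}, and your induction on $h$ with the chain rule for PL maps is exactly that unpacking. The observation you flag as the only subtle point---that strictness of $\boldsymbol{\epsilon}$ guarantees each transported point $\mu_{\gamma_{\leq i}}(w)$ lies in the interior of the relevant half-space---is indeed the whole content.
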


Using the concept of the sign of a mutation sequence, now we define the sign stability.

\begin{dfn}[sign stability]\label{d:sign stability}
Let $\gamma$ be a path as above which represents a mutation loop $\phi:=[\gamma]_{\bs}$. Let $\cL \subset \X_{(t_0)}(\bR^\trop)$ be a subset which is invariant under the rescaling action of $\bR_{> 0}$. 
Then we say that $\gamma$ is \emph{sign-stable} on $\cL$ if there exists a sequence $\boldsymbol{\epsilon}^\stab_{\gamma,\cL} \in \{+,-\}^h$ of strict signs such that for each $w \in \cL \setminus \{0\}$, there exists an integer $n_0 \in \mathbb{N}$ satisfying   \[\boldsymbol{\epsilon}_\gamma(\phi^n(w)) = \boldsymbol{\epsilon}^\stab_{\gamma, \cL} \]
for all $n \geq n_0$. 
We call $\boldsymbol{\epsilon}_{\gamma,\cL}^\stab$ the \emph{stable sign} of $\gamma$ on $\cL$.
\end{dfn}
For example, if $\phi$ has an invariant ray $\bR_{\geq 0}w$ such that the sign $\boldsymbol{\epsilon}_\gamma(w)$ is strict, then $\gamma$ is sign-stable on $\cL:=\bR_{\geq 0}w$.
More interesting choices of $\cL$ would be the set $\bR_{>0}\cdot \X_{(t_0)}(\bZ^\trop)$ of integral points (cf. \cite{GN19}) or the union $\cL_{(t_0)}^{\mathrm{can}}$ of the positive and negative cones which will be introduced below.
See \cref{sec:example} for concrete examples. 
As a simple non-example, if $\phi$ has an invariant ray $\bR_{\geq 0}w$ such that the sign $\boldsymbol{\epsilon}_\gamma(w)$ is non-strict (\emph{i.e.}, contains $0$), then it cannot be sign-stable on any set $\cL$ which contains $\bR_{\geq 0}w$.

Sign stability in particular implies that the presentation matrix of $\phi$ at each point $w \in \cL$ stabilizes:

\begin{cor}\label{cor:asymptotic linearity}
Suppose $\gamma$ is a path as above which represents a mutation loop $\phi=[\gamma]_{\bs}$, and which is sign-stable on $\cL$.
Then there exists an integral $N \times N$-matrix $E^{(t_0)}_{\phi,\cL} \in GL_N(\bZ)$ such that for each $w \in \cL$, there exists an integer $n_0 \geq 0$ such that $E_\phi^{(t_0)}(\phi^n(w)) = E^{(t_0)}_{\phi,\cL}$ for all $n \geq n_0$.
\end{cor}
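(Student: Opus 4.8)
The plan is to combine the definition of sign stability (\cref{d:sign stability}) with the description of presentation matrices from \cref{lem:pres_mat_x}. First, let $\gamma: t_0 \xrightarrow{\mathbf{k}} t_h$ be the given representation path, with $\mathbf{k} = (k_0, \dots, k_{h-1})$, and let $\boldsymbol{\epsilon}^\stab := \boldsymbol{\epsilon}^\stab_{\gamma, \cL} \in \{+,-\}^h$ be the stable sign. The natural candidate for the stabilized presentation matrix is
\begin{equation*}
    E^{(t_0)}_{\phi, \cL} := i_{t',t}^* \circ E^{\boldsymbol{\epsilon}^\stab}_\gamma = E^{\boldsymbol{\epsilon}^\stab}_\gamma,
\end{equation*}
where $E^{\boldsymbol{\epsilon}^\stab}_\gamma = E^{(t_{h-1})}_{k_{h-1}, \epsilon^\stab_{h-1}} \cdots E^{(t_1)}_{k_1, \epsilon^\stab_1} E^{(t_0)}_{k_0, \epsilon^\stab_0}$ is the matrix from \cref{lem:pres_mat_x} (here $t_0 \sim_\bs t_h$, so the seed isomorphism $i_{t_h, t_0}^*$ is the identity matrix on the common basis, and $E^{\boldsymbol{\epsilon}^\stab}_\gamma \in GL_N(\bZ)$ since each factor lies in $GL_N(\bZ)$ by \cref{lem:EF_formulae} (1)).

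Next, fix $w \in \cL$. If $w = 0$ there is nothing to prove (both sides of the desired equality are read off at the fixed point, or one takes $n_0 = 0$ with the convention that the presentation matrix at $0$ is $E^{(t_0)}_{\phi,\cL}$; alternatively one notes $0 \notin \cL \setminus \{0\}$ and handles it separately as in the remarks following \cref{d:sign stability}). For $w \neq 0$, sign stability provides $n_0 \in \bN$ such that $\boldsymbol{\epsilon}_\gamma(\phi^n(w)) = \boldsymbol{\epsilon}^\stab$ for all $n \geq n_0$. Since $\boldsymbol{\epsilon}^\stab$ is strict, \cref{lem:pres_mat_x} applies to the path $\gamma$ at the point $\phi^n(w)$: the cluster $\X$-transformation $\mu_\gamma$ is differentiable there with presentation matrix $E^{\boldsymbol{\epsilon}^\stab}_\gamma$. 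Composing with the seed isomorphism $i_{t_h, t_0}$ (which is linear, hence equal to its own derivative) and recalling that $\phi_{(t_0)} = i^x_{t_h,t_0} \circ \mu_\gamma$ by \eqref{eq:coord_expression}, the chain rule gives that $E^{(t_0)}_\phi(\phi^n(w)) = E^{(t_0)}_{\phi,\cL}$ for all $n \geq n_0$, which is exactly the claim.

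The only genuinely substantive point is the passage from "the sign of $\gamma$ at the single point $\phi^n(w)$ is $\boldsymbol{\epsilon}^\stab$" to "the derivative of $\mu_\gamma$ at $\phi^n(w)$ equals $E^{\boldsymbol{\epsilon}^\stab}_\gamma$", and this is handled entirely by \cref{lem:pres_mat_x} together with \cref{cor:x_and_E}: strictness of the sign means that at each intermediate step the point $\mu_{\gamma_{\leq i}}(\phi^n(w))$ lies in the open half-space $\interior \cH^{x,(t_i)}_{k_i, \epsilon^\stab_i}$ where the $i$-th tropical $\X$-mutation is smooth with presentation matrix $E^{(t_i)}_{k_i, \epsilon^\stab_i}$; composing these linear pieces yields $E^{\boldsymbol{\epsilon}^\stab}_\gamma$. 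So there is no real obstacle here — the corollary is essentially a bookkeeping consequence of the definition once \cref{lem:pres_mat_x} is in hand, the only care needed being the (harmless) dependence of $n_0$ on $w$ and the treatment of the origin.
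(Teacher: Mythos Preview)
Your proposal is correct and follows exactly the approach the paper intends: the corollary is stated without proof precisely because it is an immediate consequence of \cref{d:sign stability} and \cref{lem:pres_mat_x}, and you have simply written out that deduction (defining $E^{(t_0)}_{\phi,\cL}:=E^{\boldsymbol{\epsilon}^\stab}_\gamma$ and invoking strictness of the stable sign to apply \cref{lem:pres_mat_x} at $\phi^n(w)$). The only cosmetic point is that the paper does not worry about $w=0$ at all, so you can drop that digression.
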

We will discuss a Perron--Frobenius property of the stable presentation matrix $E^{(t_0)}_{\phi,\cL}$ in \cref{subsec:PF property}.

Next lemma shows that the tropical sign for $c$-vectors can be regarded as a special case of the sign of a path $\gamma$ in $\bT_I$.

\begin{lem}\label{cor:C_as_pres_mat}
When the coordinates of $w \in \X_{(t_0)}(\bR^\trop)$ are positive, the sign $\boldsymbol{\epsilon}_\gamma(w)$ coincides with the sequence of tropical signs
\[\boldsymbol{\epsilon}^\trop_\gamma := (\epsilon^{(t_0)}_{k_0}, \dots, \epsilon^{(t_{h-1})}_{k_{h-1}} ).\]
Moreover, the PL action of $\phi$ is differentiable at any point in the interior of the non-negative cone
\[
    \cC^+_{(t_0)}:= \left\{ w \in \X_{(t_0)}(\bR^\trop) \mid x_i^{(t_0)}(w) \geq 0 \mbox{ for $i =1,\dots,N$} \right\},
\]
and its presentation matrix there coincides with the $C$-matrix $C^{\boldsymbol{s}; t_0}_{t_h}$:
\[
    \phi|_{\cC^+_{(t_0)}} = E^{\boldsymbol{\epsilon}^\trop_\gamma}_\gamma  = C^{\boldsymbol{s}; t_0}_{t_h}.
\]
Moreover, we have $\check{E}^{\boldsymbol{\epsilon}^\trop_\gamma}_\gamma  = G^{\boldsymbol{s}; t_0}_{t_h}$. 
\end{lem}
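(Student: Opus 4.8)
The plan is to deduce all three assertions from the separation formula \eqref{eq:separation_X} for cluster $\X$-variables together with the sign-coherence theorem \cref{thm:sign_coherence}; after these two inputs, everything reduces to bookkeeping with the elementary matrices $E^{(t)}_{k,\epsilon}$ of \cref{subsec:seed_mut}.

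First I would prove the sign identity. Fix $w\in\X_{(t_0)}(\bR^\trop)$ with strictly positive coordinates and an index $i\in\{0,\dots,h-1\}$. Each cluster $\X$-transformation is a positive (subtraction-free) rational map, so the tropical map $\mu_{\gamma_{\le i}}$ is the tropicalization of $\mu_{\gamma_{\le i}}^x$; and by \cref{t:separation}, using \cref{r:endpoints} so that only the endpoints $t_0$ and $t_i$ enter, $(\mu_{\gamma_{\le i}}^x)^*X_{k_i}^{(t_i)}=\prod_j(X_j^{(t_0)})^{c^{(t_i)}_{k_ij}}\cdot\prod_j F^{(t_i)}_j(X_1^{(t_0)},\dots,X_N^{(t_0)})^{b^{(t_i)}_{jk_i}}$. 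Each $F$-polynomial is a genuine polynomial with non-negative exponents and nonzero constant term, so on the non-negative cone its tropicalization is the minimum of $0$ (from the constant term) and finitely many expressions that are $\ge 0$ there, hence $0$; thus the $F$-factor tropicalizes to $0$, and evaluating at $w$ gives $x^{(t_i)}_{k_i}(\mu_{\gamma_{\le i}}(w))=\sum_j c^{(t_i)}_{k_ij}\,x_j(w)$. Since $\mathbf{c}^{(t_i)}_{k_i}$ is a nonzero row of $C^{\bs;t_0}_{t_i}\in GL_N(\bZ)$ which by \cref{thm:sign_coherence} is entrywise $\ge0$ or entrywise $\le0$, and since all $x_j(w)>0$, the sign of $\sum_j c^{(t_i)}_{k_ij}x_j(w)$ is exactly the tropical sign $\epsilon^{(t_i)}_{k_i}$; ranging over $i$, $\boldsymbol{\epsilon}_\gamma(w)=\boldsymbol{\epsilon}^\trop_\gamma$.

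The nonvanishing of the constant terms of the $F$-polynomials — the one point that is not pure bookkeeping — I would get by a short induction along $\gamma$ on the recursion of \cref{subsec:vectors}: by sign-coherence, on the side whose tropical sign is opposite to $\epsilon^{(t)}_k$ the factor $\prod_j y_j^{[\pm c_{kj}]_+}$ is $1$, so specializing $y=0$ in the numerator of $F^{(t')}_k$ leaves $0+\prod_l(F^{(t)}_l(0))^{[\mp b_{kl}]_+}=1$, which divided by $F^{(t)}_k(0)=1$ gives $F^{(t')}_k(0)=1$ (alternatively one may cite \cref{l:no cancellation}). For the matrix assertions, note that on $\interior\cC^+_{(t_0)}$ the sign $\boldsymbol{\epsilon}^\trop_\gamma$ is strict, so \cref{lem:pres_mat_x} (based on \cref{cor:x_and_E}) shows $\mu_\gamma$ is differentiable there with presentation matrix $E^{\boldsymbol{\epsilon}^\trop_\gamma}_\gamma=E^{(t_{h-1})}_{k_{h-1},\epsilon^{(t_{h-1})}_{k_{h-1}}}\cdots E^{(t_0)}_{k_0,\epsilon^{(t_0)}_{k_0}}$; the seed isomorphism $i^x_{t_h,t_0}$ identifies $f^{(t_h)}_i$ with $f^{(t_0)}_i$ and hence has the identity presentation matrix in the chosen bases, so $\phi=i^x_{t_h,t_0}\circ\mu_\gamma$ has the same presentation matrix on $\interior\cC^+_{(t_0)}$. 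Since this matrix is constant on the connected open cone $\interior\cC^+_{(t_0)}$ and $\phi$ is positively homogeneous, $\phi$ coincides there — and, by continuity, on all of $\cC^+_{(t_0)}$ — with the linear map $E^{\boldsymbol{\epsilon}^\trop_\gamma}_\gamma$. Iterating \eqref{eq:mut_c-mat_2} from $C^{\bs;t_0}_{t_0}=\mathrm{Id}$ then identifies this product with $C^{\bs;t_0}_{t_h}$; and applying $A\mapsto(A^\tr)^{-1}$ to it replaces each $E^{(t)}_{k,\epsilon}$ by $\check{E}^{(t)}_{k,\epsilon}$ via \cref{lem:EF_formulae}(3), which by \eqref{eq:mut_g-mat} iterated from $G^{\bs;t_0}_{t_0}=\mathrm{Id}$ equals $G^{\bs;t_0}_{t_h}$, so $\check{E}^{\boldsymbol{\epsilon}^\trop_\gamma}_\gamma=G^{\bs;t_0}_{t_h}$.

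The main obstacle is the input used in the third paragraph, that the $F$-polynomials tropicalize to $0$ on the positive cone (equivalently, that they have nonzero constant term); once that is in hand, the remainder is a direct matching of products of the elementary matrices $E^{(t)}_{k,\epsilon}$ with the recursions defining the $C$- and $G$-matrices, plus the observation that $i^x_{t_h,t_0}$ acts trivially on the chosen $\X$-coordinates.
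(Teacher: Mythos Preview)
Your proof is correct, but it takes a different route from the paper's. The paper argues by a bare-hands induction on the single point $l_{(t_0)}^+$ with coordinates $(1,\dots,1)$: it shows $\boldsymbol{x}_{(t_i)}(\mu_{\gamma_{\le i}}(l^+)) = C^{\bs;t_0}_{t_i}\cdot(1,\dots,1)^\tr$ by comparing the one-step presentation-matrix formula of \cref{cor:x_and_E} with the $C$-matrix recursion \eqref{eq:mut_c-mat_2}, using sign coherence only to identify $\sgn(\sum_j c^{(t_{i-1})}_{k_{i-1},j})$ with the tropical sign $\epsilon^{(t_{i-1})}_{k_{i-1}}$. No separation formula and no $F$-polynomials enter.

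Your approach via the separation formula \eqref{eq:separation_X} is more conceptual: it explains \emph{why} the $C$-matrix governs the linear action on $\cC^+_{(t_0)}$, namely because the $F$-factors tropicalize to zero there. The cost is that you invoke heavier inputs. In particular, when you write that the tropicalization of $F_j^{(t_i)}$ is ``the minimum of $0$ and finitely many expressions that are $\ge 0$'', you are implicitly using that $F_j^{(t_i)}$ has \emph{non-negative} coefficients (so that its tropicalization is the min over its monomial support); this is the positivity theorem for $F$-polynomials, which is true in the skew-symmetric setting of the paper but is a considerably deeper fact than anything the paper's own proof needs. Your induction showing $F^{(t)}_i(0)=1$ is fine and is the standard argument, but note that \cref{l:no cancellation} alone (non-divisibility by $y_j$) does not give a nonzero constant term. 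Apart from this implicit appeal to positivity, the argument is sound, and the deduction of the $C$- and $G$-matrix identities from \eqref{eq:mut_c-mat_2}, \eqref{eq:mut_g-mat} and \cref{lem:EF_formulae}(3) is exactly right.
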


\begin{proof}
For $t \in \bT_I$, $l_{(t)}^\pm \in \X_{(t)}(\bR^\trop)$ be the unique point determined by $\boldsymbol{x}_{(t)}(l_{(t)}^\pm)=(\pm 1,\dots,\pm 1)^\tr$. Here $\boldsymbol{x}_{(t)}:=(x_1^{(t)},\dots,x_N^{(t)})^\tr$. Then $l_{(t_0)}^+$ belongs to the interior of the cone $\cC^+_{(t_0)}$. 
We claim that
\begin{align}\label{eq:l-coordinate}
\boldsymbol{x}_{(t_{i})}(\mu_{\leq i}(l_{(t_0)}^+)) = C^{\bs;t_0}_{t_i} \cdot (1, \dots, 1)^\tr
\end{align}
holds for $i=0,\dots,h-1$. It clearly holds for $i=0$. For $i >0$, from \cref{cor:x_and_E} we have
\begin{align*}
    \boldsymbol{x}_{(t_{i})}(\mu_{\leq i}(l_{(t_0)}^+))&=E_{k_{i-1},\epsilon_{i-1}}^{(t_{i-1})}\cdot\boldsymbol{x}_{(t_{i-1})}(\mu_{\leq i-1}(l_{(t_0)}^+)) \\
    &=E_{k_{i-1},\epsilon_{i-1}}^{(t_{i-1})}\cdot C^{\bs;t_0}_{t_{i-1}} \cdot (1, \dots, 1)^\tr,
\end{align*}
where $\epsilon_{i-1}:=\sgn (x_{k_{i-1}}^{(t_{i-1})}(l_{(t_0)}^+)) = \sgn(\sum_{j \in I} c_{j, k_{i-1}}^{(t_{i-1})} ) = \epsilon_{k_{i-1}}^{(t_{i-1})}$ by the induction assumption and the definition of the tropical sign. Comparing with the mutation rule \eqref{eq:mut_c-mat_2}, we see that \eqref{eq:l-coordinate} holds. 
\end{proof}
We have the following \lq\lq negative'' version of \cref{lem:pres_mat_x}:

\begin{cor}\label{cor:pres_mat_x_negative}
The PL action of $\phi$ is differentiable at any point in the interior of the non-positive cone 
\begin{align*}
    \cC^-_{(t_0)}:= \left\{ w \in \X_{(t_0)}(\bR^\trop) \mid x_i^{(t_0)}(w) \leq 0 \mbox{ for $i =1,\dots,N$} \right\},
\end{align*}
and its presentation matrix there coincides with the $C$-matrix $C^{-\bs;t_0}_{t_h}$ for the opposite seed pattern $-\bs:t \mapsto (N^{(t)},-B^{(t)})=:(N^{(-t)},B^{(-t)})$. Namely, we have
\begin{align*}
    \phi|_{\cC^-_{(t_0)}} = C^{-\bs;t_0}_{t_h}.
\end{align*}
If we denote the sign of $\phi$ at any point in $\interior \cC^-_{(t_0)}$ by $\overline{\boldsymbol{\epsilon}}_\gamma^\trop$, we have
\begin{align*}
    C^{-\bs;t_0}_{t_h} = E_\gamma^{\overline{\boldsymbol{\epsilon}}_\gamma^\trop} \quad \mbox{and} \quad 
    G^{-\bs;t_0}_{t_h} = \check{E}_\gamma^{\overline{\boldsymbol{\epsilon}}_\gamma^\trop}.
\end{align*}
\end{cor}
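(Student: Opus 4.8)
The plan is to mirror the proof of \cref{cor:C_as_pres_mat} but starting from the interior of the non-positive cone instead of the non-negative one, and then invoke \cref{t:CGF_of_-bs} to identify the resulting matrix. First I would observe that differentiability of $\phi$ on $\interior\cC^-_{(t_0)}$ follows from \cref{lem:pres_mat_x} once we know the sign of $\gamma$ is strict at every point there; so the real content is to show that this sign is constant on $\interior\cC^-_{(t_0)}$ and to compute the resulting product matrix $E_\gamma^{\overline{\boldsymbol{\epsilon}}{}^\trop_\gamma}$. As in the positive case, I would track the orbit of the distinguished point $l_{(t_0)}^-$ with $\boldsymbol{x}_{(t_0)}(l_{(t_0)}^-)=(-1,\dots,-1)^\tr$, which lies in $\interior\cC^-_{(t_0)}$ because it has strictly negative coordinates.

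The key computational step is an induction showing
\begin{align*}
\boldsymbol{x}_{(t_i)}(\mu_{\leq i}(l_{(t_0)}^-)) = C^{-\bs;t_0}_{t_i}\cdot(1,\dots,1)^\tr
\end{align*}
for $i=0,\dots,h$. The base case $i=0$ is the definition together with $C^{-\bs;t_0}_{t_0}=\mathrm{Id}$ and the fact that the negated identity times $(1,\dots,1)^\tr$... — here one must be slightly careful: actually the cleanest normalization is to track $\boldsymbol{x}_{(t_i)}(\mu_{\leq i}(l_{(t_0)}^-)) = -\,C^{-\bs;t_0}_{t_i}\cdot(1,\dots,1)^\tr$, or equivalently to note that all coordinates of $l_{(t_0)}^-$ are $-1$ and the $C^{-\bs}_{t_0}$-matrix is the identity. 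In either case, for the inductive step I would apply \cref{cor:x_and_E}: the presentation matrix of a single mutation $\mu_{k_{i-1}}$ at $\mu_{\leq i-1}(l_{(t_0)}^-)$ is $E^{(t_{i-1})}_{k_{i-1},\epsilon_{i-1}}$ where $\epsilon_{i-1}=\sgn\bigl(x^{(t_{i-1})}_{k_{i-1}}(\mu_{\leq i-1}(l_{(t_0)}^-))\bigr)$. By the inductive hypothesis this sign equals $\sgn\bigl(\mp\sum_j c^{-\bs;(t_{i-1})}_{j,k_{i-1}}\bigr)$, and sign-coherence for $c$-vectors of the seed pattern $-\bs$ (\cref{thm:sign_coherence} applied to $-\bs$) tells us this is a well-defined sign $-\epsilon_{k_{i-1}}^{(-t_{i-1})}$, independent of the point chosen. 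Comparing with the mutation rule \eqref{eq:mut_c-mat_2} for the $C$-matrices of $-\bs$ closes the induction and simultaneously shows the sign sequence $\overline{\boldsymbol{\epsilon}}{}^\trop_\gamma$ is strict and constant on a neighborhood of $l_{(t_0)}^-$; since $\interior\cC^-_{(t_0)}$ is convex and the sign is locally constant where $\mu_\gamma$ is differentiable, it is constant on all of $\interior\cC^-_{(t_0)}$. Then \cref{lem:pres_mat_x} gives $\phi|_{\interior\cC^-_{(t_0)}}=E^{\overline{\boldsymbol{\epsilon}}{}^\trop_\gamma}_\gamma=C^{-\bs;t_0}_{t_h}$, and the $\check E$ identity $\check E^{\overline{\boldsymbol{\epsilon}}{}^\trop_\gamma}_\gamma=G^{-\bs;t_0}_{t_h}$ follows from \cref{lem:EF_formulae}(3) and \eqref{eq:mut_g-mat} exactly as in \cref{cor:C_as_pres_mat}.

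The main obstacle I anticipate is bookkeeping the sign conventions correctly: the point $l_{(t_0)}^-$ has negative coordinates, so the tropical sign read off from its orbit is the sign of a \emph{negated} column sum of a $C^{-\bs}$-matrix, and one must check this matches the recursion \eqref{eq:mut_c-mat_2} for $-\bs$ on the nose (in particular that the extra minus sign propagates consistently and does not spoil the comparison). A secondary subtlety is justifying that strictness and constancy of the sign on the single ray through $l_{(t_0)}^-$ upgrades to all of $\interior\cC^-_{(t_0)}$; this uses that $\mu_\gamma$ restricted to the locus where all the intermediate signs are strict is linear, hence its domain of a fixed sign is an open polyhedral cone, and $\interior\cC^-_{(t_0)}$ is connected and meets this cone, so being cut out by the same linear inequalities it is contained in it. Everything else is a routine transcription of the positive-cone argument with $\bs$ replaced by $-\bs$.
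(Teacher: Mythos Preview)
Your argument is correct and gives a valid proof, but it takes a different route from the paper. The paper does \emph{not} redo the induction for $l^-_{(t_0)}$; instead it observes that the monomial map $\iota_{(t)}:\X_{(t)}\to\X_{(-t)}$ given by $X_i^{(-t)}\mapsto (X_i^{(t)})^{-1}$ assembles into an isomorphism $\iota:\X_{\bs}\xrightarrow{\sim}\X_{-\bs}$ commuting with cluster transformations, whose tropicalization is $-\mathrm{Id}$ on each chart. This sends $\cC^-_{(t_0)}$ to the positive cone $\cC^+_{(-t_0)}$ for $-\bs$, so one simply conjugates the already-proved positive-cone statement (\cref{cor:C_as_pres_mat}) by $-\mathrm{Id}$. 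Your direct induction buys a more hands-on argument at the cost of the sign bookkeeping you flagged; the paper's argument is more conceptual and explains \emph{why} the negative cone for $\bs$ behaves like the positive cone for $-\bs$, but relies on constructing and checking properties of $\iota$.

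Two small comments on your version. First, the reference to \cref{t:CGF_of_-bs} in your plan is a red herring: your induction already identifies the presentation matrix with $C^{-\bs;t_0}_{t_h}$ directly, and that theorem (relating $C^{-\bs}$ to $C^{\bs}+B F$) is not needed. Second, your ``secondary subtlety'' about upgrading from the single ray to all of $\interior\cC^-_{(t_0)}$ is over-engineered: simply run the same induction for an arbitrary $w$ with strictly negative coordinates. At step $i$ the relevant coordinate is $-\sum_j c^{-\bs;(t_i)}_{k_i,j}\,|w_j|$, whose sign is determined by sign coherence for $-\bs$ independently of the positive weights $|w_j|$, so the sign sequence is strict and constant on the whole open cone without any connectedness argument.
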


\begin{proof}
Let us write $\X_{(-t)}:=T_{M^{(-t)}}$ so that $\X_{-\bs} = \cup_{t \in \bT_I} \X_{(-t)}$. Then one can easily see that the monomial isomorphisms $\iota_{(t)}: \X_{(t)} \xrightarrow{\sim} \X_{(-t)}$ given by $\iota_{(t)}^*X_i^{(-t)} := (X_i^{(t)})^{-1}$ for each $t \in \bT_I$ commute with cluster transformations, and hence combine to give an isomorphism $\iota: \X_{\bs} \xrightarrow{\sim} \X_{-\bs}$. See \cite[Lemma 2.1 (b)]{FG09II}. Moreover for two vertices $t,t' \in \bT_I$, we have $t \sim_{\bs} t'$ if and only if $t \sim_{-\bs} t'$. Hence the $\bs$-equivalence class of a path in $\bT_I$ is a $(-\bs)$-equivalence class, and vice versa. The monomial isomorphism $\iota: \X_{\bs} \xrightarrow{\sim} \X_{-\bs}$ is equivariant for the action of each mutation loop. 

The tropicalized map $\iota^\trop: \X_{\bs}(\bR^\trop) \xrightarrow{\sim} \X_{-\bs}(\bR^\trop)$ is represented as $-\mathrm{Id}$ on each chart, and hence sends the non-positive cone $\cC^-_{(t_0)}$ to the non-negative cone $\cC^+_{(-t_0)}$. Therefore from \cref{lem:pres_mat_x}, we get
\begin{align*}
    \phi(w_-) = (\iota^\trop)^{-1}(\phi (\iota^\trop(w_-))) = (- \mathrm{Id})^{-1} C^{-\bs;t_0}_{t_h} (-\mathrm{Id}) w_- = C^{-\bs;t_0}_{t_h}w_-
\end{align*}
for all $w_- \in \interior\cC^-_{(t_0)}$.
The remaining assertions follows from the tropical duality \eqref{eq:tropical duality}.
\end{proof}
In particular, we have:

\begin{prop}\label{prop:sign stab ray}
Let $\gamma: t_0 \to t$ be a path which represents a mutation loop. Then for any point $w \in \interior\cC_{(t_0)}^+$ (resp. $w \in \interior\cC_{(t_0)}^-$), the path $\gamma$ is sign-stable on $\interior\cC_{(t_0)}^+$ (resp. $\interior\cC_{(t_0)}^-$) if and only if it is sign-stable on the ray $\bR_{\geq 0}w$.
\end{prop}

\begin{rem}
We can define a similar sign sequence for tropical $\A$-transformations as follows. Fix a point $v \in \A_{(t)}(\bR^\trop)$. 
Then the tropical cluster $\A$-transformation  \eqref{eq:trop a-cluster} can be written as follows:
\begin{align}\label{eq:sign a-cluster}
  a'_\indi(\mu_\indk(v)) =
\begin{cases}
    -a_\indk(v)+ \sum_{\indj \in I}[-\sgn (x_\indk(p(v))) b_{\indk\indj}]_+a_\indj(v) & \mbox{if $\indi=\indk$}, \\
    a_\indi(v) & \mbox{if $\indi \neq \indk$}.
\end{cases}
\end{align}
Indeed, it follows from $\sum_{\indj \in I}[b_{\indk\indj}]_+ a_\indj - \sum_{\indj \in I}[-b_{\indk\indj}]_+ a_\indj = \sum_{\indj \in I} b_{\indk\indj}a_\indj = (p^\trop)^*x_\indk$. 

Consider the half-spaces
\begin{align*}
    \mathcal{H}_{\indk,\epsilon}^{a,(t)}:= \{ v \in \A_{(t)}(\bR^\trop) \mid \epsilon x_\indk(p(w)) \geq 0 \}
\end{align*}
for $\indk \in I$, $\epsilon \in \{+,-\}$ and $t \in \mathbb{T}_I$. Then for an edge $\edge$, the tropical cluster $\A$-transformation $\mu_\indk: \A_{(t)}(\bR^\trop) \to \A_{(t')}(\bR^\trop)$ is differentiable at any point in $\interior \mathcal{H}_{\indk,\epsilon}^{a,(t)}$ and its presentation matrix there is $\check{E}_{\indk,\epsilon}^{(t)}$.
\end{rem}



\subsection{Perron--Frobenius property}\label{subsec:PF property}
We say that 
a path $\gamma: t_0 \xrightarrow{\mathbf{k}=(k_0,\dots,k_{h-1})} t$ in $\bT_I$ is \emph{fully-mutating} if
\begin{align*}
    \{k_0,\dots,k_{h-1}\}=I.
\end{align*}
An $\bR_{\geq 0}$-invariant set $\Omega \subset \cX_{(t_0)}(\bR^\trop) \cong \cX_\bs(\bR^\trop)$ is said to be \emph{tame} if there exists $t \in \bT_I$ such that $\Omega \cap \interior \cC^+_{(t)} \neq \emptyset$.
The following is a fundamental result on the stable presentation matrix of a sign-stable mutation loop:

\begin{thm}[Perron--Frobenius property]\label{thm:PF property}
Suppose $\gamma$ is a path as above which represents a mutation loop $\phi=[\gamma]_{\bs}$, and which is sign-stable on a tame subset $\cL$. 
Then the spectral radius of $E^{(t_0)}_{\phi,\cL}$ is attained by a positive eigenvalue $\lambda^{(t_0)}_{\phi,\cL}$, and we have $\lambda^{(t_0)}_{\phi,\cL} \geq 1$.
Moreover if either $\gamma$ is fully-mutating or $\lambda^{(t_0)}_{\phi,\cL} >1$, then one of the corresponding eigenvectors is given by the coordinate vector $\boldsymbol{x}(w_{\phi,\cL})\in \bR^I$ for some $w_{\phi,\cL} \in \cC_\gamma^\stab \setminus \{0\}$.
\end{thm}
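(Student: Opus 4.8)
The strategy is to exploit the fact that sign stability produces a well-defined integral matrix $E := E^{(t_0)}_{\phi,\cL}$, and that on a tame set $\cL$ this matrix is obtained by iterating the map $\phi$ starting from a point that eventually lands in the interior of some $\cC^+_{(t)}$; on that cone the presentation matrix is a nonnegative $C$-matrix by \cref{cor:C_as_pres_mat}. First I would pick a witness point $w_* \in \cL \cap \interior \cC^+_{(t)}$ and track how the stable presentation matrix $E$ factors through the matrices $E_{k_i,\epsilon_i}$ prescribed by the stable sign $\boldsymbol{\epsilon}^\stab_{\gamma,\cL}$. The key point is that, because of tameness, one of the intermediate seeds $t_j$ in the (sufficiently high) iterate $\phi^n$ is $\bs$-equivalent to a vertex whose associated presentation matrix is the nonnegative $C$-matrix, so a large power of $E$ can be written as a product in which a genuinely nonnegative (indeed, eventually positive after using full-mutation) factor appears. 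This is what lets the Perron--Frobenius theorem enter.

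Next I would make the Perron--Frobenius input precise. The matrices $E_{k,\epsilon}^{(t)}$ are, up to sign on the $k$-th diagonal entry, upper/lower elementary matrices with nonnegative off-diagonal entries (see \cref{rem:elementary matrix}); conjugating by the diagonal sign matrix $\mathrm{diag}(\dots)$ that flips the $k$-th coordinate turns each such factor into a genuinely nonnegative matrix. So a suitable power $E^m$, or a conjugate of it by a diagonal $\pm 1$ matrix, is a nonnegative integral matrix. A nonnegative integral matrix has spectral radius $\geq 1$ unless it is nilpotent, and $E$ is invertible (it lies in $GL_N(\bZ)$ by \cref{cor:asymptotic linearity}), so $\rho(E) \geq 1$; moreover the spectral radius of a nonnegative matrix is always an eigenvalue with a nonnegative eigenvector (Perron--Frobenius), and this transfers back to $E$ after undoing the diagonal conjugation, giving a real eigenvalue $\lambda^{(t_0)}_{\phi,\cL} = \rho(E) \geq 1$ with an eigenvector whose coordinates are $\pm$-signed, i.e. all of one sign up to the diagonal twist. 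Care is needed that the eigenvalue of $E$ itself (not merely of the conjugated power) equals $\rho(E)$ and is real positive; this follows since conjugation preserves eigenvalues and taking $m$-th roots of a positive Perron eigenvalue of $E^m$ recovers a positive eigenvalue of $E$.

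For the last clause — that when $\gamma$ is fully-mutating or $\lambda^{(t_0)}_{\phi,\cL} > 1$ the Perron eigenvector is realized geometrically as $\boldsymbol{x}(w_{\phi,\cL})$ for some $w_{\phi,\cL} \in \cC^\stab_\gamma \setminus\{0\}$ — I would argue dynamically rather than purely linear-algebraically. Take any $w \in \cL \cap \interior\cC^+_{(t)}$, normalize, and consider the sequence $\phi^n(w)/\|\phi^n(w)\|$. For $n$ large the map is \emph{linear} near $\phi^n(w)$ with matrix $E$ (sign stability), so this is essentially the power iteration $E^n w / \|E^n w\|$; the full-mutation hypothesis is precisely what forces the relevant nonnegative matrix power to be \emph{primitive} (strictly positive after the diagonal twist), so the normalized iterates converge to the unique normalized Perron eigenvector, and the limit lies in the closed cone $\cC^\stab_\gamma$ (the set of points whose sign under $\gamma$ is the stable one) because each $\phi^n(w)$ does for large $n$ and this set is closed and scale-invariant. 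When instead $\lambda > 1$ but $\gamma$ need not be fully-mutating, I would use that the $\lambda$-eigenspace of a nonnegative matrix with spectral radius $> 1$ still contains a nonnegative vector by Perron--Frobenius applied to the possibly-reducible matrix, and that the same power-iteration argument (now only yielding subsequential convergence into the top Jordan block) still deposits a nonzero limit vector in $\cC^\stab_\gamma$, which must then be a $\lambda$-eigenvector since $\lambda$ strictly dominates.

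\textbf{Main obstacle.} The delicate step is the last one: passing from the abstract Perron--Frobenius eigenvector of the (twisted, powered) nonnegative matrix to an eigenvector of $E$ \emph{itself} that is geometrically realized inside the stable cone $\cC^\stab_\gamma$ — in particular controlling the non-primitive / reducible case, ensuring the limiting ray of the power iteration does not escape $\cC^\stab_\gamma$ or degenerate to $0$, and checking that the normalization used interacts correctly with the fact that $\phi$ acts only piecewise-linearly (so linearity of $\phi$ along the orbit is available only eventually, from some $n_0$ onward). Handling the interplay between the diagonal $\pm 1$ twist needed for nonnegativity and the requirement that the eigenvector coordinates be genuine (untwisted) coordinates $\boldsymbol{x}(w_{\phi,\cL})$ of an actual tropical point is the part that will need the most careful bookkeeping.
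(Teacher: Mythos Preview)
Your proposal has a concrete gap at the linear-algebraic core. The claim that ``conjugating by the diagonal sign matrix that flips the $k$-th coordinate turns each factor $E_{k,\epsilon}^{(t)}$ into a genuinely nonnegative matrix'' is false: diagonal conjugation $D A D^{-1}$ leaves every diagonal entry of $A$ unchanged, and $E_{k,\epsilon}^{(t)}$ has $-1$ at position $(k,k)$. No sign twist can repair this, and the same obstruction persists for the product $E_\gamma^{\boldsymbol{\epsilon}}$ (try $N=2$, $\ell=2$ in \cref{ex:kronecker}: $E_\gamma^{(+,+)}=\begin{psmallmatrix}3&2\\-2&-1\end{psmallmatrix}$ is not conjugate to a nonnegative matrix by any diagonal $\pm 1$). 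Even granting nonnegativity of some power $E^m$, your step ``taking $m$-th roots of the positive Perron eigenvalue of $E^m$ recovers a positive eigenvalue of $E$'' is not valid in general: a rotation by $\pi/2$ has $E^4=\mathrm{Id}$ nonnegative with Perron eigenvalue $1$, yet $E$ has no positive eigenvalue. So the reduction to classical nonnegative Perron--Frobenius does not go through.

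The paper's argument avoids nonnegativity entirely and instead uses the cone version of Perron--Frobenius (\cite[Chapter~1, Theorem~3.2]{BP}): if a linear map preserves a \emph{proper} cone (closed, full-dimensional, strictly convex), its spectral radius is an eigenvalue with an eigenvector in that cone. The stable cone $\cC_\gamma^\stab=\bigcap_{n\geq 1}\cC_{\gamma^n}^{(\boldsymbol{\epsilon}_\gamma^\stab)^n}$ is $E_\phi$-invariant by construction; tameness is used to show it is $N$-dimensional (some $\phi^{n_0}$ maps an entire $\cC^+_{(t)}$ into it). The real work, and what you are missing, is strict convexity: this is \cref{prop:str_conv}, proved by computing the dual cone $(\cC_\gamma^{\boldsymbol{\epsilon}})^\vee$ as a sum of rays $\bR_{\geq 0}\mathbf{c}_i$ and showing, via an inductive span argument using the row-operator structure of the $E_{k,\epsilon}$, that these rays span $\bR^N$ precisely when $\gamma$ is fully-mutating. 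The non-fully-mutating case with $\rho>1$ is then handled by a block decomposition of $E_\phi$ with respect to $\bR^{I(\gamma)}\oplus\bR^{I\setminus I(\gamma)}$, reducing to the fully-mutating sub-seed. The bound $\rho\geq 1$ follows simply from $\det E_\phi=\pm 1$.
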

The proof will be given in \cref{subsec:proof of PF property}. 
We have checked that the following conjecture holds true for a large number of examples, by using a computer. We do not know any counterexamples.

\begin{conj}\label{p:spec_same}
For any point $w \in \X_{(t_0)}(\bR^\trop)$ and a path $\gamma$ which represents a mutation loop, the characteristic polynomials of the matrices  $E^{\boldsymbol{\epsilon}_\gamma(w)}_\gamma$ and $\check{E}^{\boldsymbol{\epsilon}_\gamma(w)}_\gamma$ are the same up to an overall sign \footnote{The conjecture of this form is based on a suggestion by Yuma Mizuno.}. In particular, the spectral radii of these matrices are the same.
\end{conj}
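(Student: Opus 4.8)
The plan is to prove the equivalent statement that the characteristic polynomial $\chi_E$ of $E:=E^{\boldsymbol{\epsilon}_\gamma(w)}_\gamma$ is (anti-)reciprocal, i.e.\ that $\chi_E=\chi_{E^{-1}}$. This implies the conjecture: $\check E=(E^\tr)^{-1}$, transposition leaves the characteristic polynomial unchanged, $\chi_{M^{-1}}(\lambda)=\pm\lambda^{N}\chi_{M}(\lambda^{-1})/\det M$ for any invertible $M$, and $\det E=(-1)^{h(\gamma)}\neq 0$, so $\chi_{\check E}=\pm\chi_E$. In fact I would prove the reciprocity for every strict sign sequence $\boldsymbol{\epsilon}\in\{+,-\}^{h(\gamma)}$ at once, so the point $w$ plays no role.

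The starting point is the identity $E\,B^{(t_0)}\,E^\tr=B^{(t_0)}$. To obtain it, set $E_i:=E^{(t_i)}_{k_i,\epsilon_i}$, so that $E=E_{h-1}\cdots E_1E_0$; each $E_i$ is an involution by \cref{lem:EF_formulae}(1), and \cref{lem:EF_formulae}(3),(4) combine to give $B^{(t_i)}=E_i\,B^{(t_{i+1})}\,E_i^\tr$. Telescoping this along $\gamma$ yields $B^{(t_0)}=(E_0\cdots E_{h-1})\,B^{(t_h)}\,(E_0\cdots E_{h-1})^\tr$; since $\gamma$ represents a mutation loop we have $B^{(t_h)}=B^{(t_0)}$, and since the $E_i$ are involutions $(E_0\cdots E_{h-1})^{-1}=E_{h-1}\cdots E_0=E$, which gives the claimed identity. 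If $\det B^{(t_0)}\neq 0$ this already finishes the proof: the identity rearranges to $E^\tr=(B^{(t_0)})^{-1}E^{-1}B^{(t_0)}$, exhibiting $E^\tr$ --- which has the same characteristic polynomial as $E$ --- as conjugate to $E^{-1}$.

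For degenerate $B^{(t_0)}$ I would proceed as follows. A real skew-symmetric matrix is normal, hence diagonalizable over $\bC$, so $\ker B^{(t_0)}=\ker(B^{(t_0)})^{2}$ and $\bR^{N}=\im B^{(t_0)}\oplus\ker B^{(t_0)}$ with $\im B^{(t_0)}=(\ker B^{(t_0)})^{\perp}$. Rewriting the identity as $B^{(t_0)}E^\tr=E^{-1}B^{(t_0)}$, one reads off that $E$ preserves $\im B^{(t_0)}$, that $E^\tr$ preserves $\ker B^{(t_0)}$, and that $B^{(t_0)}$ induces an isomorphism $\bR^{N}/\ker B^{(t_0)}\xrightarrow{\sim}\im B^{(t_0)}$ intertwining the map induced by $E^\tr$ with $(E|_{\im B^{(t_0)}})^{-1}$. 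Comparing the two block factorizations of $\chi_E$ that result --- one from the $E$-invariant subspace $\im B^{(t_0)}$, the other from the $E^\tr$-invariant subspace $\ker B^{(t_0)}$ --- forces $\chi_{E|_{\im B^{(t_0)}}}$ to be reciprocal and leaves $\chi_E=\chi_{E|_{\im B^{(t_0)}}}\cdot\chi_{E^\tr|_{\ker B^{(t_0)}}}$. Hence everything reduces to showing that $E^\tr|_{\ker B^{(t_0)}}$ has reciprocal characteristic polynomial. This restriction is, up to inversion, the linearization along $\ker B^{(t_0)}$ (the kernel of the ensemble map) of the $\A$-transformation of the mutation loop; for the tropical sign it is the restriction of the $G$-matrix $G^{\boldsymbol{s};t_0}_{t_h}$ to $\ker B^{(t_0)}$, by \cref{cor:C_as_pres_mat}.

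The hard part is precisely this: showing that a mutation loop acts on $\ker B^{(t_0)}$ with reciprocal characteristic polynomial. The cleanest route I see is to prove this action has \emph{finite order}, for then its eigenvalues are roots of unity and, the matrix being real, their multiset is invariant under complex conjugation, which on the unit circle is $\lambda\mapsto\lambda^{-1}$. Finiteness is transparent in the surface case, where $\ker B^{(t_0)}$ is spanned by peripheral curve classes that a mapping class merely permutes; in general I would try to extract it from the Fujiwara--Gyoda identities (\cref{t:CGF_of_-bs}, \cref{t:FuGy}) relating the $C$-, $G$- and $F$-data of $\boldsymbol{s}$ and $-\boldsymbol{s}$, which become considerably simpler on $\ker B^{(t_0)}$, where $B^{(t_0)}$ vanishes. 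Everything preceding this step is formal, so I expect it to be the only genuine obstacle.
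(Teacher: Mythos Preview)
This statement is a \emph{conjecture} in the paper, not a theorem; the paper does not prove it. Immediately after stating it, the paper proves only the special case where $B^{(t_0)}$ is regular, by exactly your argument: iterating \cref{lem:EF_formulae}(4) along $\gamma$ gives $B^{(t_0)}\check{E}_\gamma^{\boldsymbol{\epsilon}}=E_\gamma^{\boldsymbol{\epsilon}}B^{(t_0)}$ (equivalently your $EB^{(t_0)}E^\tr=B^{(t_0)}$), and invertibility of $B^{(t_0)}$ then makes $E^\tr$ conjugate to $E^{-1}$. So on the part the paper does settle, your approach coincides with it.

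Your reduction in the degenerate case is correct and goes beyond what the paper establishes. Since $B^{(t_0)}$ is real skew-symmetric, $\bR^N=\im B^{(t_0)}\oplus\ker B^{(t_0)}$ orthogonally; the identity $B^{(t_0)}E^\tr=E^{-1}B^{(t_0)}$ shows that $E$ preserves $\im B^{(t_0)}$, that $E^\tr$ preserves $\ker B^{(t_0)}$, and that $B^{(t_0)}$ conjugates the map induced by $E^\tr$ on $\bR^N/\ker B^{(t_0)}$ to $E^{-1}|_{\im B^{(t_0)}}$. One also needs that the map induced by $E$ on $\bR^N/\im B^{(t_0)}$ has the same characteristic polynomial as $E^\tr|_{(\im B^{(t_0)})^\perp}=E^\tr|_{\ker B^{(t_0)}}$; with this, cancelling the common factor in $\chi_E=\chi_{E^\tr}$ yields $\chi_{E|_{\im B^{(t_0)}}}=\chi_{E^{-1}|_{\im B^{(t_0)}}}$, and the conjecture becomes equivalent to reciprocity of $\chi_{E^\tr|_{\ker B^{(t_0)}}}$.

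The remaining step is, as you acknowledge, the genuine obstacle, and your proposal does not close it. The finite-order heuristic is reasonable, but the two routes you suggest are both tenuous. In the surface case your picture applies to mapping classes and the tropical sign, whereas you are aiming at an arbitrary sign $\boldsymbol{\epsilon}_\gamma(w)$ (indeed you announce the stronger goal of handling every strict sign). The identities of \cref{t:CGF_of_-bs} and \cref{t:FuGy} concern $C$-, $G$-, and $F$-matrices taken along $\gamma$ with the \emph{tropical} signs; they do not obviously constrain $E_\gamma^{\boldsymbol{\epsilon}}$ for a general $\boldsymbol{\epsilon}$, so it is unclear how to extract finiteness of the action on $\ker B^{(t_0)}$ from them. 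In short: your reduction is a genuine simplification of the problem, but the proposal does not constitute a proof of the conjecture --- and neither does the paper claim one.
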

Note that for an $N \times N$-matrix $E$ with $\det E=\pm 1$, the characteristic polynomials of the matrices $E$ and $\check{E}$ are the same up to an overall sign if and only if the characteristic polynomial $P_E(\nu)$ of $E$ is (anti-)palindromic: $P_E(\nu^{-1}) = \pm \nu^{-N}P_E(\nu)$. 

\begin{prop}
Suppose $\gamma: t \to t'$ is a path which represents a mutation loop.
If the exchange matrix $B^{(t)}$ is regular, then the characteristic polynomials of $E_\gamma^{\boldsymbol{\epsilon}}$ and $\check{E}_\gamma^{\boldsymbol{\epsilon}}$ are the same for any sign $\boldsymbol{\epsilon} \in \{+,-\}^{h(\gamma)}$.
In particular, \cref{p:spec_same} is true.
\end{prop}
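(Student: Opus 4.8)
The plan is to reduce everything to the single matrix identity $B^{(t)}\check{E}^{(t)}_{\indk,\epsilon} = E^{(t)}_{\indk,\epsilon}B^{(t')}$ from \cref{lem:EF_formulae}~(4), composed along the path $\gamma$. First I would observe that for a path $\gamma: t \to t'$ representing a mutation loop, one has $B^{(t')} = B^{(t)}$ via the seed isomorphism $i_{t',t}$; writing $B := B^{(t)}$, composing the identity in (4) over the edges traversed by $\gamma$ with an arbitrary strict sign $\boldsymbol{\epsilon} \in \{+,-\}^{h(\gamma)}$ yields the clean conjugation relation
\begin{align*}
    B\,\check{E}_\gamma^{\boldsymbol{\epsilon}} = E_\gamma^{\boldsymbol{\epsilon}}\,B.
\end{align*}
Indeed, applying (4) edge by edge telescopes: the intermediate exchange matrices cancel in pairs, and only the endpoint matrices $B^{(t)}$ and $B^{(t')}$ survive, which coincide. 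This step is routine but should be written out carefully so that the role of the seed isomorphism (which is where ``represents a mutation loop'' enters) is transparent.

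Next, since $B$ is assumed regular (invertible), the relation above says $\check{E}_\gamma^{\boldsymbol{\epsilon}} = B^{-1} E_\gamma^{\boldsymbol{\epsilon}} B$, i.e. $E_\gamma^{\boldsymbol{\epsilon}}$ and $\check{E}_\gamma^{\boldsymbol{\epsilon}}$ are conjugate matrices. Conjugate matrices have equal characteristic polynomials (not merely up to sign), so the first claim follows immediately. For the last sentence, I would invoke \cref{cor:C_as_pres_mat} (or \cref{lem:pres_mat_x}): every matrix of the form $E_\gamma^{\boldsymbol{\epsilon}_\gamma(w)}$ arising as a presentation matrix of the tropical $\X$-transformation at a point $w \in \X_{(t_0)}(\bR^\trop)$ with strict sign is of this type $E_\gamma^{\boldsymbol{\epsilon}}$ for a strict $\boldsymbol{\epsilon}$, and the corresponding $\check{E}_\gamma^{\boldsymbol{\epsilon}}$ is exactly the $\check{}$ of that matrix by \cref{lem:EF_formulae}~(3) together with the definition $\check A = (A^\tr)^{-1}$ of \cref{conv:check}. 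Hence the hypothesis of \cref{p:spec_same} is verified, with the stronger conclusion that the characteristic polynomials literally agree (so in particular up to overall sign, and the spectral radii coincide).

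I do not expect a genuine obstacle here — the proof is short once (4) is recognized as the engine. The only point requiring minor care is the telescoping composition along $\gamma$: one must check that the signs and the base vertices match up so that (4) can indeed be chained, and that at the final edge the matrix $B^{(t')}$ that appears is identified with $B^{(t)}$ via $i_{t',t}$ rather than being a formally different object. A secondary small point is that $\check{E}_\gamma^{\boldsymbol{\epsilon}}$ as defined (the product $\check{E}_{k_{h-1},\epsilon_{h-1}}^{(t_{h-1})}\cdots \check{E}_{k_0,\epsilon_0}^{(t_0)}$) genuinely equals $(( E_\gamma^{\boldsymbol{\epsilon}})^\tr)^{-1}$; this is an immediate consequence of \cref{lem:EF_formulae}~(3) applied factor-by-factor and the reversal of order under transpose-inverse, but it is worth stating so that the notation $\check{}$ is unambiguous when applied to a product.
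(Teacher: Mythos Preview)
Your proposal is correct and follows essentially the same route as the paper: the paper's proof simply states that repeated use of \cref{lem:EF_formulae}~(4) yields $B^{(t)}\check{E}_\gamma^{\boldsymbol{\epsilon}} = E_\gamma^{\boldsymbol{\epsilon}} B^{(t)}$, from which the assertion follows by invertibility of $B^{(t)}$. Your additional remarks about the telescoping (which implicitly also uses \cref{lem:EF_formulae}~(1) to invert the reversed $\check{E}$-product back into the correct order) and about why $\check{E}_\gamma^{\boldsymbol{\epsilon}}$ really equals $((E_\gamma^{\boldsymbol{\epsilon}})^\tr)^{-1}$ are accurate and worth including for clarity, but the core argument is identical.
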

\begin{proof}
Let $\boldsymbol{\epsilon} \in \{+,-\}^{h(\gamma)}$.
Using \cref{lem:EF_formulae} (4) repeatedly
, we obtain
\[ B^{(t)} \check{E}_\gamma^{\boldsymbol{\epsilon}} =E_\gamma^{\boldsymbol{\epsilon}} B^{(t)}. \]
The assertion follows from this equation.
\end{proof}

It turns out that the sign stability on the set $\cL^{\mathrm{can}}_{(t_0)}:=\interior\cC^+_{(t_0)} \cup \interior\cC^-_{(t_0)}$ plays a crucial role in the sequel, and the corresponding eigenvalue $\lambda^{(t_0)}_{\phi,\cL_{(t_0)}^{\mathrm{can}}}$ is a canonical numerical invariant of the mutation loop $\phi$.

\begin{dfn}[Cluster stretch factor]\label{d:stretch factor}
Suppose $\phi=[\gamma]_{\bs}$ is a mutation loop, and the path $\gamma$ is sign-stable on the set $\cL_{(t_0)}^{\mathrm{can}}$. Then we denote the stable presentation matrix (\cref{cor:asymptotic linearity}) by $E^{(t_0)}_\phi:=E^{(t_0)}_{\phi,\cL_{(t_0)}^{\mathrm{can}}}$, and we call the spectral radius $\lambda_\phi^{(t_0)}:=\lambda^{(t_0)}_{\phi,\cL_{(t_0)}^{\mathrm{can}}}$ the \emph{cluster stretch factor} of $\phi$.
\end{dfn}
Note that from the definition, the cluster stretch factor is an algebraic integer of degree at most $N$.

\begin{rem}\label{rem:intrinsicality}
If moreover the path $\gamma$ as above is fully-mutating or $\lambda^{(t_0)}_{\phi,\cL} >1$, then the cluster stretch factor $\lambda_{\phi}^{(t_0)}$ only depends on the mutation loop $\phi$. Indeed, if $\phi$ also admits a representation path $\gamma':t'_0 \to t'$ which is sign-stable on $\Omega_{(t'_0)}^{\mathrm{can}}$, then choosing a path $\delta: t_0 \to t'_0$, we get the relation
\begin{align}\label{eq:PL_conjugation}
    (d\phi_{(t'_0)})_{\mu_\delta(w)} \circ (d\mu_\delta)_w = (d\mu_\delta)_{\phi_{(t_0)}(w)} \circ (d\phi_{(t_0)})_w 
\end{align}
for any point $w \in \X_{(t_0)}(\bR^\trop)$. When $w=w_{\phi,\Omega}$ with $\Omega:=\Omega_{(t_0)}^{\mathrm{can}}$, then we have $\phi_{(t_0)}(w_{\phi,\Omega})=\lambda_{\phi}^{(t_0)}. w_{\phi,\Omega}$ and in particular the points $w_{\phi,\Omega}$ and $\phi_{(t_0)}(w_{\phi,\Omega})$ have the same sign for any path $\delta$. Denote the common presentation matrix of the PL map $\mu_\delta$ at these points by $M$. Then \eqref{eq:PL_conjugation} implies that
\begin{align*}
    E_{\phi}^{(t'_0)} = M E_{\phi}^{(t_0)} M^{-1}, 
\end{align*}
hence the spectral radii $\lambda_{\phi}^{(t_0)}$ and $\lambda_{\phi}^{(t_0)}$ are the same. In this case, we simply write $\lambda_\phi:=\lambda_\phi^{(t_0)}$.
\end{rem}

\subsection{Proof of \cref{thm:PF property}}\label{subsec:proof of PF property}

Fix a mutation loop $\phi \in \Gamma_\bs$ and its representation path $\gamma: t_0 \xrightarrow{\mathbf{k}} t$ with $h(\gamma) = h$.
For a sign $\bep \in \{+,-\}^h$ we define the cone $\cC^{\bep}_{\gamma}$ as
\[\cC^{\bep}_{\gamma} := \mathrm{rel.cl}\{ w \in \cX_{(t_0)}(\bR^\trop) \mid \bep_\gamma(w) = \bep\} \subset \cX_{(t_0)}(\bR^\trop),\]
where $\mathrm{rel.cl}$ denotes the relative closure.

\begin{prop}
The sign cone $\cC^{\boldsymbol{\epsilon}}_{\gamma}$ is polyhedral and convex.
\end{prop}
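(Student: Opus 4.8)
The plan is to build the sign cone $\cC^{\bep}_\gamma$ as a finite intersection of closed half-spaces, by tracking the half-space conditions imposed at each step of the path. Recall from \cref{d:sign} that $\bep_\gamma(w) = (\epsilon_0, \dots, \epsilon_{h-1})$ with $\epsilon_i = \sgn(x^{(t_i)}_{k_i}(\mu_{\gamma_{\leq i}}(w)))$. So the open sign locus $\{w \mid \bep_\gamma(w) = \bep\}$ is exactly the set of $w$ such that for each $i = 0, \dots, h-1$, the point $\mu_{\gamma_{\leq i}}(w)$ lies in $\interior \cH^{x,(t_i)}_{k_i, \epsilon_i}$. The key observation is that, on this locus, \cref{lem:pres_mat_x} tells us that each intermediate map $\mu_{\gamma_{\leq i}}$ is the restriction of a fixed \emph{linear} map, namely the one with presentation matrix $E^{\bep_{\leq i}}_{\gamma_{\leq i}} = E^{(t_{i-1})}_{k_{i-1}, \epsilon_{i-1}} \cdots E^{(t_0)}_{k_0, \epsilon_0}$ (and $E^{\bep_{\leq 0}}_{\gamma_{\leq 0}} = \mathrm{Id}$).

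First I would make this precise by induction on $i$: assuming $w$ satisfies the half-space conditions for indices $0, \dots, i-1$, the point $\mu_{\gamma_{\leq i}}(w)$ equals $E^{\bep_{\leq i}}_{\gamma_{\leq i}} \cdot \boldsymbol{x}(w)$ (in coordinates), so the condition $\epsilon_i x^{(t_i)}_{k_i}(\mu_{\gamma_{\leq i}}(w)) > 0$ becomes the linear inequality $\epsilon_i \, \langle \text{row } k_i \text{ of } E^{\bep_{\leq i}}_{\gamma_{\leq i}},\, \boldsymbol{x}(w) \rangle > 0$ on $\boldsymbol{x}(w) \in \bR^I$. Hence the open sign locus is the intersection of $h$ open half-spaces through the origin, and taking the relative closure gives
\[
\cC^{\bep}_\gamma = \bigcap_{i=0}^{h-1} \left\{ w \in \cX_{(t_0)}(\bR^\trop) \;\middle|\; \epsilon_i \sum_{j \in I} \big(E^{\bep_{\leq i}}_{\gamma_{\leq i}}\big)_{k_i j}\, x^{(t_0)}_j(w) \geq 0 \right\},
\]
a finite intersection of closed linear half-spaces, which is by definition a polyhedral convex cone. (One small point: the relative closure of the open locus may in general be smaller than the intersection of the closed half-spaces when the open locus is empty, but then $\cC^{\bep}_\gamma = \emptyset$, which is still polyhedral and convex; and when the open locus is nonempty, its closure is exactly the closed cone since a nonempty open polyhedral cone is dense in its closure.)

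The main obstacle is the bookkeeping in the inductive step: one must verify that on the region carved out by the first $i$ strict inequalities, the composite $\mu_{\gamma_{\leq i}}$ genuinely agrees with the single linear map $E^{\bep_{\leq i}}_{\gamma_{\leq i}}$ — this is where \cref{cor:x_and_E} is applied repeatedly, each time checking that the image point lands in the correct open half-space $\interior \cH^{x,(t_i)}_{k_i,\epsilon_i}$ so that the next presentation matrix is $E^{(t_i)}_{k_i,\epsilon_i}$. This is essentially the content of \cref{lem:pres_mat_x}, so the real work is just assembling these pieces and being careful that the inequality at step $i$ is phrased in terms of the \emph{already-determined} matrix product $E^{\bep_{\leq i}}_{\gamma_{\leq i}}$, not something circular. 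Once that is in place, polyhedrality and convexity are immediate from the description as a finite intersection of closed half-spaces.
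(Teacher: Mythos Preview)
Your argument is correct and more direct than the paper's own proof. You show that the open sign locus $\{w \mid \bep_\gamma(w)=\bep\}$ coincides with the open polyhedral cone $\bigcap_{i=0}^{h-1}\{w \mid \epsilon_i\,(\text{row }k_i\text{ of }E^{\bep_{\leq i}}_{\gamma_{\leq i}})\cdot \boldsymbol{x}(w)>0\}$ by an induction on $i$ using \cref{cor:x_and_E}, and then take the closure. The paper instead proceeds by induction on $h(\gamma)$: it writes $\cC^{\bep}_\gamma = \cC^{\bep'}_{\gamma'}\cap \mu_{\gamma'}^{-1}(\cH_{h-1})$, pushes this forward by the PL map $\mu_{\gamma_{\leq h-2}}$, and argues geometrically that the bent locus of $\mu_{k_{h-2}}^{-1}(\cH_{h-1})$ lies in the boundary of $\cH_{h-2}$, so that after intersecting with $\cH_{h-2}$ one again has a polyhedral convex set. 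Your approach has the advantage of producing the explicit list of defining half-spaces in one shot; in fact, this description is exactly what the paper ends up using in its proof of \cref{prop:str_conv} (the strict-convexity statement), where the dual cone is computed from precisely these linear forms. The paper's route is slightly more geometric and avoids having to verify the two-sided containment $U=V$ between the sign locus and the open polyhedral cone, but at the cost of a more delicate induction step. Both are valid; yours is shorter and feeds more naturally into the subsequent dual-cone computation.
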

\begin{proof}
Let
\begin{align}\label{eq:gamma}
    \gamma: t_0 \overbar{k_0} t_1 \overbar{k_1} \cdots \overbar{k_{h-1}} t_h.
\end{align}
By definition, one can verify that
\begin{align}\label{eq:pb_exp_sign_cone}
\cC^{\boldsymbol{\epsilon}}_{\gamma} = \bigcap_{i=0}^{h-1} \mu_{\gamma_{\leq i}}^{-1} (\cH^{(t_{i})}_{k_{i}, \epsilon_{i}}),
\end{align}
where $\mu_{\leq 0} := \mathrm{id}$.
We will prove the claim by induction on the length of the path $\gamma$.

In the case $h(\gamma) = 1$,
the cone $\cC^{\epsilon_0}_{\gamma}$ is nothing but the half space $\cH^{(t_{0})}_{k_{0},\epsilon_0}$, so it is clearly polyhedral and convex.

In the case $h(\gamma) = h >1$,
we put $\boldsymbol{\epsilon}' := (\epsilon_0, \dots, \epsilon_{h-2})$,  $\gamma' := \gamma_{\leq h-1}$, $\gamma'' := \gamma_{\leq h-2}$ and $\cH_i := \cH^{(t_{i})}_{k_{i}, \epsilon_{i}}$ for $i= 0, \dots, h-1$.
Then,
\begin{align*}
    \cC^{\boldsymbol{\epsilon}}_{\gamma} 
    &= \cC^{\boldsymbol{\epsilon'}}_{\gamma'} \cap \mu^{-1}_{\gamma'}(\cH_{h-1})\\
    &= \mu_{\gamma''}^{-1}(\mu_{\gamma''}(\cC^{\boldsymbol{\epsilon'}}_{\gamma'}) \cap \mu_{k_{h-2}}^{-1}(\cH_{h-1}))\\
    &= \mu_{\gamma''}^{-1}(\mu_{\gamma''}(\cC^{\boldsymbol{\epsilon'}}_{\gamma'}) \cap \cH_{h-2} \cap \mu_{k_{h-2}}^{-1}(\cH_{h-1})).
\end{align*}
Here, the last equation follows from
\[
\mu_{\gamma''}(\cC^{\boldsymbol{\epsilon'}}_{\gamma'}) 
= \mu_{\gamma''}(\cH_0) \cap \mu_{\gamma''}(\mu^{-1}_{\gamma_{\leq 1}}(\cH_1)) \cap \cdots \cap \cH_{h-2}.
\]
Since $\mu_{\gamma''}$ is linear on the cone $\cC^{\boldsymbol{\epsilon'}}_{\gamma'}$, which is convex and polyhedral by the induction hypothesis, the image $\mu_{\gamma''}(\cC^{\boldsymbol{\epsilon'}}_{\gamma'})$ is also convex and polyhedral.
Since the bent locus of the boundary of $\mu_{k_{h-2}}^{-1}(\cH_{h-1})$
is contained in the boundary of $\cH_{h-2}$,
the intersection $\cH_{h-2} \cap \mu_{k_{h-2}}^{-1}(\cH_{h-1})$ is the same as the intersection of two half-spaces, so it is also convex and polyhedral.

Thus the intersection $\mu_{\gamma''}(\cC^{\boldsymbol{\epsilon'}}_{\gamma'}) \cap \cH_{h-2} \cap \mu_{k_{h-2}}^{-1}(\cH_{h-1})$ is a convex polyhedral cone, and so is the cone $\cC^{\boldsymbol{\epsilon}}_{\gamma} $ since it is the inverse image under the linear map $\mu_{k_{h-2}}|_{\cH_{h-2}}$.
\end{proof}

In order to prove \cref{thm:PF property}, we recall the notion of dual cones.
\begin{defi}[Dual cone]
Let $\cC$ be a convex cone in a finite dimensional vector space $V$.
\begin{enumerate}
    \item The dimension $\dim \cC$ of $\cC$ is the dimension of the smallest subspace of $V$ containing $\cC$.
    \item The dual cone $\cC^\vee$ of $\cC$ is a cone in the dual space $V^*$, defined by
    \[ \cC^\vee := \{ n \in V^* \mid \langle n, v \rangle \geq 0 \mbox{ for all $v \in \cC$} \}. \]
\end{enumerate}
\end{defi}
The following lemma is well-known.

\begin{lem}\label{lem:elem_cone}
\begin{enumerate}
\item A convex cone $\cC \subset V$ is strictly convex if and only if $\dim\cC^\vee = \dim V$.
\item For two convex cones $\cC_1, \cC_2 \subset V$, $(\cC_1 \cap \cC_2)^\vee = \cC_1^\vee + \cC_2^\vee$.
\end{enumerate}
\end{lem}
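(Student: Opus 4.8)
The plan is to deduce both parts from the bipolar theorem for closed convex cones, $(\cC^\vee)^\vee=\overline{\cC}$, together with the elementary identity $(\cC_1+\cC_2)^\vee=\cC_1^\vee\cap\cC_2^\vee$ for cones. Since every cone occurring in this paper (the half-spaces $\cH^{(t)}_{k,\epsilon}$, the sign cones $\cC^{\bep}_\gamma$, and their duals) is polyhedral, in particular closed, I would assume throughout that the cones in the statement are closed, so that $\cC^{\vee\vee}=\cC$.

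For (1), recall that $\cC$ is strictly convex exactly when its lineality space $\ell(\cC):=\cC\cap(-\cC)$ vanishes, so by the convention adopted for $\dim$ it suffices to prove $\mathrm{span}(\cC^\vee)=\ell(\cC)^\perp$. The inclusion ``$\subseteq$'' is immediate: if $n\in\cC^\vee$ and $v\in\ell(\cC)$ then $\langle n,v\rangle\ge 0$ and $\langle n,-v\rangle\ge 0$, whence $\langle n,v\rangle=0$. For ``$\supseteq$'' I would argue by contradiction: if $W:=\mathrm{span}(\cC^\vee)$ were a proper subspace of $\ell(\cC)^\perp$, then $W^\perp$ would strictly contain $\ell(\cC)$; but any $v\in W^\perp$ satisfies $\langle n,v\rangle=0$ for all $n\in\cC^\vee$, so both $v$ and $-v$ lie in $(\cC^\vee)^\vee=\cC$, forcing $W^\perp\subseteq\ell(\cC)$, a contradiction. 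Hence $\dim\cC^\vee=\dim V-\dim\ell(\cC)$, which equals $\dim V$ precisely when $\ell(\cC)=\{0\}$, i.e. when $\cC$ is strictly convex.

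For (2), the inclusion $\cC_1^\vee+\cC_2^\vee\subseteq(\cC_1\cap\cC_2)^\vee$ is trivial, since $\langle n_1+n_2,v\rangle=\langle n_1,v\rangle+\langle n_2,v\rangle\ge 0$ for $n_i\in\cC_i^\vee$ and $v\in\cC_1\cap\cC_2$. For the reverse inclusion I would dualize: $(\cC_1^\vee+\cC_2^\vee)^\vee=(\cC_1^\vee)^\vee\cap(\cC_2^\vee)^\vee=\cC_1\cap\cC_2$, and applying $(-)^\vee$ once more, together with the bipolar theorem applied to the convex cone $\cC_1^\vee+\cC_2^\vee$, yields $\overline{\cC_1^\vee+\cC_2^\vee}=(\cC_1\cap\cC_2)^\vee$. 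The closure is harmless because $\cC_1^\vee$ and $\cC_2^\vee$ are polyhedral (duals of polyhedral cones, by Minkowski--Weyl), so their Minkowski sum is polyhedral, hence closed.

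The only genuinely delicate point, and the one I would flag, is exactly this last closedness issue in (2): for general non-polyhedral closed convex cones one obtains only $(\cC_1\cap\cC_2)^\vee=\overline{\cC_1^\vee+\cC_2^\vee}$, so the clean equality as stated relies on polyhedrality, which is precisely the setting in which the lemma is applied in this paper. Everything else is routine bookkeeping with dual cones.
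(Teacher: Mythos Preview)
Your argument is correct. The paper itself does not supply a proof of this lemma: it is introduced with the phrase ``The following lemma is well-known'' and then used without further justification in the proof of \cref{prop:str_conv}. So there is no approach to compare against; you have simply filled in a standard proof that the authors chose to omit. Your caveat about polyhedrality in part~(2) is apt and matches the paper's usage, since the lemma is only ever applied to the polyhedral sign cones $\cC^{\boldsymbol{\epsilon}}_\gamma$ and the half-spaces $\cH^{(t_i)}_{k_i,\epsilon_i}$.
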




\begin{prop}\label{prop:str_conv}
The sign cone $\cC^{\boldsymbol{\epsilon}}_{\gamma}$ is strictly convex if $\gamma$ is fully-mutating.
\end{prop}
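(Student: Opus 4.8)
The goal is to show that when $\gamma$ is fully-mutating, the sign cone $\cC^{\boldsymbol{\epsilon}}_{\gamma}$ is strictly convex, equivalently (by Lemma~\ref{lem:elem_cone}(1)) that its dual cone $(\cC^{\boldsymbol{\epsilon}}_{\gamma})^\vee$ is full-dimensional. The plan is to exhibit enough linear functionals in the dual cone to span the whole dual space $M^{(t_0)} \otimes \bR$. The natural candidates come from the defining half-spaces: from the expression \eqref{eq:pb_exp_sign_cone}, $\cC^{\boldsymbol{\epsilon}}_{\gamma} = \bigcap_{i=0}^{h-1} \mu_{\gamma_{\leq i}}^{-1}(\cH^{(t_i)}_{k_i,\epsilon_i})$, so by Lemma~\ref{lem:elem_cone}(2) the dual cone contains the sum of the duals of the pulled-back half-spaces. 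Each half-space $\mu_{\gamma_{\leq i}}^{-1}(\cH^{(t_i)}_{k_i,\epsilon_i})$, being the preimage of a half-space under a map which is \emph{linear on $\cC^{\boldsymbol{\epsilon}}_{\gamma}$} (indeed the composite of the linear maps $E^{(t_j)}_{k_j,\epsilon_j}$ on that cone, by Lemma~\ref{lem:pres_mat_x}), has a dual cone containing the ray spanned by the functional $\epsilon_i \cdot (E^{\boldsymbol{\epsilon}}_{\gamma_{\leq i}})^\tr e^{(t_i)}_{k_i}$, where $E^{\boldsymbol{\epsilon}}_{\gamma_{\leq i}} = E^{(t_{i-1})}_{k_{i-1},\epsilon_{i-1}} \cdots E^{(t_0)}_{k_0,\epsilon_0}$ is the presentation matrix of the partial orbit.

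\medskip

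\textbf{Key steps.} First I would set up the linear algebra on the sign cone: on $\cC^{\boldsymbol{\epsilon}}_{\gamma}$ the map $\mu_{\gamma_{\leq i}}$ agrees with the linear map $E^{\boldsymbol{\epsilon}}_{\gamma_{\leq i}}$ (Lemma~\ref{lem:pres_mat_x} applied to the sub-path $\gamma_{\leq i}$), so $w \in \cC^{\boldsymbol{\epsilon}}_{\gamma}$ forces $\epsilon_i\, x^{(t_i)}_{k_i}(E^{\boldsymbol{\epsilon}}_{\gamma_{\leq i}} w) \geq 0$, i.e. $\langle n_i, w\rangle \geq 0$ where $n_i := \epsilon_i (E^{\boldsymbol{\epsilon}}_{\gamma_{\leq i}})^\tr e^{(t_i)}_{k_i}$. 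Thus $n_0,\dots,n_{h-1} \in (\cC^{\boldsymbol{\epsilon}}_{\gamma})^\vee$. Second, and this is the heart of the argument, I would show that $\{n_0,\dots,n_{h-1}\}$ spans $M^{(t_0)}\otimes\bR$. Here I would induct downward or peel off mutations one at a time. Note that $n_0 = \epsilon_0 e^{(t_0)}_{k_0}$ is a coordinate covector. For the inductive step, observe that $E^{(t_i)}_{k_i,\epsilon_i}$ is an elementary-type matrix (Remark~\ref{rem:elementary matrix}): its transpose differs from the identity only in the $k_i$-th row, and acting on covectors, $(E^{(t_i)}_{k_i,\epsilon_i})^\tr$ fixes every $f^{(t_i)}_j$ with $j \neq k_i$ and sends $f^{(t_i)}_{k_i} \mapsto -f^{(t_i)}_{k_i} + \sum_{j}[\epsilon_i b^{(t_i)}_{j k_i}]_+ f^{(t_i)}_j$. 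The crucial consequence: the span of $\{(E^{\boldsymbol{\epsilon}}_{\gamma_{\leq i+1}})^\tr e^{(t_{i+1})}_j : j \in I\}$ and $\{n_0,\dots,n_i\}$ equals the span of $\{(E^{\boldsymbol{\epsilon}}_{\gamma_{\leq i}})^\tr e^{(t_i)}_j : j \in I\}$ and $\{n_0,\dots,n_i\}$, because the only new basis direction introduced by the change of chart from $t_i$ to $t_{i+1}$ differs from the old $k_i$-th direction by adding $n_i$ (up to sign) — so passing through mutation $k_i$ never shrinks the span modulo the $n_j$'s collected so far, and it replaces dependence on the $k_i$-th coordinate direction by $n_i$. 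Running this from $i=0$ up to $i=h-1$, and using the fully-mutating hypothesis $\{k_0,\dots,k_{h-1}\} = I$ so that \emph{every} coordinate direction gets hit at least once, one concludes that $\mathrm{span}\{n_0,\dots,n_{h-1}\}$ contains $\mathrm{span}\{f^{(t_0)}_j : j \in I\} = M^{(t_0)}\otimes\bR$, hence is everything.

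\medskip

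\textbf{Conclusion.} Since $(\cC^{\boldsymbol{\epsilon}}_{\gamma})^\vee$ contains $h$ vectors spanning the ambient dual space, it is full-dimensional, so by Lemma~\ref{lem:elem_cone}(1) the cone $\cC^{\boldsymbol{\epsilon}}_{\gamma}$ is strictly convex. I expect the main obstacle to be the bookkeeping in the span-stability step: one must carefully track which coordinate directions are "covered" as the chart changes along $\gamma$, and verify that a mutation at index $k_i$ never destroys coverage of a direction already achieved (the $[\epsilon_i b^{(t_i)}_{j k_i}]_+ f^{(t_i)}_j$ terms only mix in already-present directions). A clean way to organize this is to prove by induction on $i$ the statement: $\mathrm{span}\{n_0,\dots,n_{i-1}\} \supseteq \mathrm{span}\{f^{(t_0)}_{k_0},\dots,f^{(t_0)}_{k_{i-1}}\}$ after transporting back to the $t_0$-chart — equivalently, that the "missing" part of the span at stage $i$ is exactly the span of the not-yet-mutated coordinate directions. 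The subtlety is that these coordinate directions live in different charts $N^{(t_i)}$, so one must consistently pull everything back to $t_0$ via the linear isomorphisms, which is exactly what the matrices $(E^{\boldsymbol{\epsilon}}_{\gamma_{\leq i}})^\tr$ do.
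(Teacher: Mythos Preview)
Your approach is correct and essentially identical to the paper's: both compute $(\cC^{\boldsymbol{\epsilon}}_\gamma)^\vee = \sum_i \bR_{\geq 0}\, \epsilon_i (E_{\gamma_{\leq i}}^{\boldsymbol{\epsilon}_{\leq i}})^\tr e^{(t_i)}_{k_i}$ via \eqref{eq:pb_exp_sign_cone} and Lemma~\ref{lem:elem_cone}(2), then prove by induction (using the elementary-matrix form of $E^{(t)}_{k,\epsilon}$, cf.\ Remark~\ref{rem:elementary matrix}) that the span of these generators equals $\mathrm{span}\{e^{(t_0)}_{k_0},\dots,e^{(t_0)}_{k_{h-1}}\}$, which is everything by the fully-mutating hypothesis. (Two minor slips to fix: the dual of $\X_{(t_0)}(\bR^\trop)=M^{(t_0)}\otimes\bR$ is $N^{(t_0)}\otimes\bR$ with basis $(e^{(t_0)}_j)$, not $(f^{(t_0)}_j)$; and $(E^{(t_i)}_{k_i,\epsilon_i})^\tr$ sends $e_{k_i}\mapsto -e_{k_i}$ and $e_j\mapsto e_j + [\epsilon_i b^{(t_i)}_{jk_i}]_+ e_{k_i}$ for $j\neq k_i$, the reverse of what you wrote---but this does not affect the span argument.)
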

\begin{proof}
Let $\gamma$ be as in \eqref{eq:gamma}.
For each $i=0, 1, \dots, h-1$, we have
\begin{align*}
\mu_{\gamma_{\leq i}}^{-1} (\cH^{(t_{i})}_{k_{i}, \epsilon_{i}})
&= \{ \mu_{\gamma_{\leq i}}^{-1}(w') \mid w' \in \X_{(t_i)}(\bR^\trop),\ \langle \epsilon_i \cdot e^{(t_{i})}_{k_{i}}, w' \rangle \geq 0 \}\\
&= \{ w \in \X_{(t_0)}(\bR^\trop) \mid \langle \epsilon_i \cdot e^{(t_{i})}_{k_{i}}, \mu_{\gamma_{\leq i}}(w) \rangle \geq 0 \}\\
&= \{ w \in \X_{(t_0)}(\bR^\trop) \mid \langle \epsilon_\nu \cdot e^{(t_{i})}_{k_{i}}, E_{\gamma_{\leq i}}^{\boldsymbol{\epsilon}_{\leq i}} \cdot w \rangle \geq 0 \}\\
&= \{ w \in \X_{(t_0)}(\bR^\trop) \mid \langle \epsilon_i \cdot (E_{\gamma_{\leq i}}^{\boldsymbol{\epsilon}_{\leq i}})^\tr \cdot e^{(t_{i})}_{k_{i}}, w \rangle \geq 0 \},
\end{align*}
where $\boldsymbol{\epsilon}_{\leq i} = (\epsilon_0, \dots, \epsilon_{i-1})$.
Since $(e^{(t_{i})}_{j})_j$ is the basis of $N^{(t_{i})}$,
$\mathbf{c}_i := (E_{\gamma_{\leq i}}^{\boldsymbol{\epsilon}_{\leq i}})^\tr \cdot e^{(t_{i})}_{k_{i}}$ is the $k_{i}$-th row vector of $E_{\gamma_{\leq i}}^{\boldsymbol{\epsilon}_{\leq i}}$.
Thus,
\begin{align*}
\hspace{3cm}
(\cC^{\boldsymbol{\epsilon}}_\gamma)^\vee 
&= \sum_{i=0}^{h-1} \Big( \mu_{\gamma_{\leq i}}^{-1}(\cH^{(t_{i})}_{k_{i}, \epsilon_{i}}) \Big)^\vee \qquad (\mbox{by \eqref{eq:pb_exp_sign_cone} and \cref{lem:elem_cone} (2)})\\
&= \sum_{i=0}^{h-1} \bR_{\geq 0} \epsilon_i \cdot \mathbf{c}_i
\end{align*}
Now we claim that 
\begin{align*}
    \mathrm{span}_{\bR} \{ \mathbf{c}_\nu\}_{\nu=0}^j = \mathrm{span}_{\bR} \{ e_{k_\nu}^{(t_0)}\}_{\nu=0}^j
\end{align*}
holds for $0 \leq j \leq h-1$. 
Indeed, with a notice that the matrices $E_{k, \epsilon}^{(t')}$ are row operator matrices (except for the $-1$ at the $(k,k)$-entry), one can easily see that the inclusion \lq\lq $\subseteq$" holds. To prove the converse inclusion \lq\lq $\supseteq$", we proceed by induction on $j$. For $j=0$, it is obvious since $e_{k_0}^{(t_0)}=\mathbf{c}_0$. Assume the $j$-th step of the induction. 
If $k_{j+1} \in \{k_0,\dots,k_j\}$, then the claim is also obvious. If $k_{j+1} \notin \{k_0,\dots,k_j\}$, then the matrix $E_{\gamma_{\leq j+1}}^{\boldsymbol{\epsilon}_{\leq j+1}}$ does not include the row operation on the $k_{j+1}$-th row,
and hence its $k_{j+1}$-th column vector is still the basis vector. Hence 
\begin{align*}
    \mathbf{c}_{j+1}-e_{k_{j+1}}^{(t_0)} \in  \mathrm{span}_{\bR} \{ e_{k_\nu}^{(t_0)}
    \}_{\nu=0}^j = \mathrm{span}_{\bR} \{ \mathbf{c}_\nu \}_{\nu=0}^j
\end{align*}
by the fact that $\mathbf{c}_{j+1} \in \mathrm{span}_\bR\{ e_{k_\nu}^{(t_0)}\}_{\nu=0}^{j+1}$ which we have confirmed and the induction hypothesis. Thus the claim is proved. 
Hence $\dim ((\cC^{\boldsymbol{\epsilon}}_\gamma)^\vee) = N$ by the fully-mutating condition, which implies the assertion by \cref{lem:elem_cone} (1).
\end{proof}

\begin{proof}[Proof of \cref{thm:PF property}]
We first prove that the cone
\begin{align*}
    \cC_\gamma^\stab:= \bigcap_{n \geq 1} \cC_{\gamma^n}^{(\bep_\gamma^\stab)^n},
\end{align*}
is $N$-dimensional.
Since $\Omega$ is tame, $\phi$ is sign-stable on $\cC^+_{(t)}$ for some vertex $t \in \bT_I$ by \cref{cor:C_as_pres_mat}.
Then, there exists $n_0 >0$ such that $\phi^{n_0} (\cC) \subset \cC^\stab_\gamma$, where $\cC \subset \cX_{(t_0)}(\bR^\trop)$ is the cone corresponding to $\cC_{(t)}^+$ by the coordinate transformation $\cX_{(t_0)}(\bR^\trop) \cong \cX_{(t)}(\bR^\trop)$.
Thus $\cC^\stab_\gamma$ is $N$-dimensional since $\phi$ is linear isomorphism on $\cC$ again by \cref{cor:C_as_pres_mat}.

Next we consider the case where the path $\gamma$ is fully-mutating. In this case, by \cref{prop:str_conv} the sign cone $\cC_\gamma^{\bep_\gamma^\stab}$ is strictly convex, and so is the cone
\begin{align*}
    \cC_\gamma^\stab:= \bigcap_{n \geq 1} \cC_{\gamma^n}^{(\bep_\gamma^\stab)^n},
\end{align*}
which is also $E_\phi$-stable by definition.
Here, $(\bep_\gamma^\stab)^n := (\underbrace{\bep_\gamma^\stab, \dots, \bep_\gamma^\stab}_{n})$.
Then by \cite[Chapter 1, Theorem 3.2]{BP}, the spectral radius $\rho(E^{(t_0)}_\phi)$ is an eigenvalue and the cone $\cC_\gamma^{\bep_\gamma^\stab}$ contains a corresponding eigenvector $w_{\phi,\cL}$. Since $\det E_\phi^{(t_0)}=\pm 1$, we have $\rho(E_\phi^{(t_0)}) \geq 1$ and the assertion is proved. 

Let us consider the case where the spectral radius $\rho(E_\phi^{(t_0)})>1$, and $\gamma$ is not necessarily fully-mutating. Write $\gamma$ as in \eqref{eq:gamma}. 
Let $I(\gamma):=\{k_0,\dots,k_{h-1}\} \subset I$, and $\overline{\bs}$ be the seed pattern with the seed 
\begin{align*}
    \Big(N^{(t_0)}|_{I(\gamma)}:=\bigoplus_{i \in I(\gamma)} \bZ e_i^{(t_0)}, B^{(t_0)}|_{I(\gamma)}:=(b_{ij}^{(t)})_{i,j \in I(\gamma)}\Big)
\end{align*}
at the initial vertex $t_0$. Then $\gamma$ can be regarded as an edge path in $\bT_{I(\gamma)}$, which is fully-mutating and represents a mutation loop $\overline{\phi}:=[\gamma]_{\overline{\bs}}$. 
From the form of the matrices $E_{k,\epsilon}^{(t')}$ (recall \cref{rem:elementary matrix}), we have the block-decomposition
\begin{align*}
    E_\phi^{(t_0)}=
    \begin{pmatrix}
    E_{\overline{\phi}}^{(t_0)} & 0\\
    X_\phi^{(t_0)} & 1
    \end{pmatrix}
\end{align*}
with respect to the direct sum decomposition $\X_{(t_0)}(\bR^\trop)=\bR^I = \bR^{I(\gamma)} \oplus \bR^{I \setminus I(\gamma)}$ for some $|I\setminus I(\gamma)| \times |I(\gamma)|$-matrix $X_\phi^{(t_0)}$ and the stable presentation matrix $E^{(t_0)}_{\overline{\phi}}$ of the mutation loop $\overline{\phi}$.  
In particular, $\rho(E_\phi^{(t_0)}) = \rho(E_{\overline{\phi}}^{(t_0)})$. Then by applying the argument above to $E^{(t_0)}_{\overline{\phi}}$, we see that $\rho(E^{(t_0)}_{\overline{\phi}})$ is an eigenvalue of $E^{(t_0)}_{\overline{\phi}}$, hence of $E^{(t_0)}_\phi$. One of the corresponding eigenvector of the latter is given by
\begin{align*}
    w_{\phi,\cL}:=\left(w_{\overline{\phi},\overline{\cL}}, \frac{X^{(t_0)}_\phi w_{\overline{\phi},\overline{\cL}}}{\lambda^{(t_0)}_\phi-1}\right),
\end{align*}
where $\overline{\cL} \subset \X_{\overline{\bs}}(\bR^\trop)$ is the image of $\cL$ under the projection $\pi: \X_{\bs}(\bR^\trop) \to \X_{\overline{\bs}}(\bR^\trop)$ determined by $\pi^* X_i^{(t_0)}:=X_i^{(t_0)}$ for $i \in I(\gamma)$. Thus the theorem is proved.
\end{proof}

\begin{rem}\label{rem:power_convexity}
From the proof, the statements in \cref{thm:PF property} still holds if the cone $\cC_{\gamma^n}^{(\boldsymbol{\epsilon}_\gamma^\stab)^n}$ is strictly convex for some $n \geq 1$. This generalization is useful when we deal with a general mutation loop including permutations of indices. See \cref{rem:FMloop_convexity}.
\end{rem}
\section{Algebraic entropy of cluster transformations}
Let us first recall the definition of the algebraic entropy following \cite{BV99}. 

For a rational function $f(u_1,\dots,u_N)$ over $\bQ$ on $N$ variables, write it as $f(u)=g(u)/h(u)$ for two polynomials $g$ and $h$ without common factors. Then the \emph{degree} of $f$, denoted by $\deg f$, is defined to be the maximum of the degrees of the constituent polynomials $g$ and $h$. 
For a homomorphism $\varphi^*:\bQ(u_1,\dots,u_N) \to \bQ(u_1,\dots,u_N)$ between the field of rational functions on $N$ variables, let $\varphi_i:=\varphi^*(u_i)$ for $i=1,\dots,N$. Since $\bQ(u_1,\dots,u_N)$ is the field of rational functions on the algebraic torus $\bG_m^N$ equipped with coordinate functions $u_1,\dots,u_N$, the homomorphism $\varphi^*$ can be regarded as the pull-back action via a rational map $\varphi:\bG_m^N \to \bG_m^N$ between algebraic tori. 
We define the degree of $\varphi$ to be the maximum of the degrees  $\deg\varphi_1,\dots,\deg\varphi_N$ and denote it by $\deg (\varphi)$. 
\begin{dfn}[Bellon--Viallet \cite{BV99}]\label{d:entropy}
The \emph{algebraic entropy} $\cE_\varphi$ of a rational map $\varphi:\bG_m^N \to \bG_m^N$ is defined as
\begin{align*}
    \cE_\varphi:= \limsup_{n \to \infty} \frac{1}{n}\log(\deg(\varphi^n)).
\end{align*}
\end{dfn}
Since $\deg(\varphi^n) \leq \deg(\varphi)^n$, the algebraic entropy is always finite. Here are basic properties:
\begin{itemize}
    \item For any rational map $\varphi: \bG_m^N \to \bG_m^N$ and an integer $m \geq 0$, we have 
    \begin{align}\label{eq:entropy_power}
        \cE_{\varphi^m} = m\cE_{\varphi}.
    \end{align}
    \item The algebraic entropy is conjugation-invariant. Namely, we have 
    \begin{align}\label{eq:entropy_conjugation}
        \cE_{f\varphi f^{-1}} =\cE_\varphi
    \end{align}
    for a rational map $\varphi: \bG_m^N \to \bG_m^N$ and a birational map $f: \bG_m^N \to \bG_m^N$.
\end{itemize}

Our aim is to compute the algebraic entropies of cluster transformations induced by a sign-stable mutation loop. 
For a mutation loop $\phi$, let us write $\cE_\phi^z:=\cE_{\phi^z}$ for $z=a,x$. 
Here is our main theorem:

\begin{thm}\label{t:entropy}
Let $\phi=[\gamma]_{\bs}$ be a mutation loop with a representation path $\gamma: t_0 \to t_h$ which is sign-stable on the set $\cL_{(t_0)}^{\mathrm{can}}$. Then we have
\begin{align*}
    \log \rho(\check{E}^{(t_0)}_\phi) &\leq \cE_\phi^a \leq \log R^{(t_0)}_\phi,\\
    \log \rho(E^{(t_0)}_\phi) &\leq \cE_\phi^x \leq \log R^{(t_0)}_\phi.
\end{align*}
Here $R^{(t_0)}_\phi:=\max\{\rho(E^{(t_0)}_\phi),\rho(\check{E}^{(t_0)}_\phi)\}$.
\end{thm}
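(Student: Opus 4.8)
\emph{Reduction.} First I would reduce everything to the growth of the $C$-, $G$- and $F$-matrices along $\gamma^n$. Since $\gamma^n$ represents $\phi^n$ and the seed isomorphisms only relabel coordinates, we have $\deg((\phi^x)^n)=\max_i\deg X_i^{(t_{nh})}$ and $\deg((\phi^a)^n)=\max_i\deg A_i^{(t_{nh})}$, where the degrees are taken with respect to the initial coordinates $(X_j^{(t_0)})_j$ (resp.\ $(A_j^{(t_0)})_j$). Feeding the separation formulae \eqref{eq:separation_A}, \eqref{eq:separation_X} of \cref{t:separation} into the elementary bounds $\deg(fg)\le\deg f+\deg g$ and $\deg(f^m)=|m|\deg f$, and bounding the total degree of $F_i^{(t)}$ by $\sum_j f_{ij}^{(t)}$, one obtains $\deg X_i^{(t)}\le C(\|C^{\bs;t_0}_t\|+\|F^{\bs;t_0}_t\|)$ and $\deg A_i^{(t)}\le C(\|G^{\bs;t_0}_t\|+\|F^{\bs;t_0}_t\|)$ for a constant $C$ depending only on $N$ and $B^{(t_0)}$, where $\|\cdot\|$ denotes the largest absolute value of an entry. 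Thus the upper bounds in the theorem follow once I show that $\|C^{\bs;t_0}_{t_{nh}}\|$, $\|G^{\bs;t_0}_{t_{nh}}\|$ and $\|F^{\bs;t_0}_{t_{nh}}\|$ grow exponentially at rates $\le\log\rho(E^{(t_0)}_\phi)$, $\le\log\rho(\check E^{(t_0)}_\phi)$ and $\le\log R^{(t_0)}_\phi$ respectively.

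\emph{Matrix asymptotics.} By \cref{cor:C_as_pres_mat} applied to the loop $\phi^n$ with representation path $\gamma^n$, the matrix $C^{\bs;t_0}_{t_{nh}}$ is the presentation matrix of $\phi^n$ at any point of $\interior\cC^+_{(t_0)}$, and by \cref{cor:pres_mat_x_negative} the matrix $C^{-\bs;t_0}_{t_{nh}}$ is its presentation matrix on $\interior\cC^-_{(t_0)}$. On the other hand, sign stability on $\cL^{\mathrm{can}}_{(t_0)}$ together with \cref{cor:asymptotic linearity} and \cref{lem:pres_mat_x} shows that for a fixed point $w$ in $\interior\cC^+_{(t_0)}$ (resp.\ $\interior\cC^-_{(t_0)}$) there is $n_0$ such that the sign of $\gamma^{n-n_0}$ at $\phi^{n_0}(w)$ is the constant stable sign, hence $\phi^{n-n_0}$ is differentiable there with presentation matrix $(E^{(t_0)}_\phi)^{n-n_0}$; composing with $\phi^{n_0}$ by the chain rule gives
\[ C^{\pm\bs;t_0}_{t_{nh}}=(E^{(t_0)}_\phi)^{\,n-n_0}\,D^{\pm}\qquad(n\ge n_0), \]
with $D^{\pm}\in GL_N(\bZ)$ independent of $n$. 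Since $\rho(E^{(t_0)}_\phi)$ is an eigenvalue of $E^{(t_0)}_\phi$ by \cref{thm:PF property}, $\|C^{\pm\bs;t_0}_{t_{nh}}\|$ grows exactly at rate $\log\rho(E^{(t_0)}_\phi)$; by the tropical duality \eqref{eq:tropical duality} and the multiplicativity of $A\mapsto\check A$, the same argument shows $\|G^{\pm\bs;t_0}_{t_{nh}}\|$ grows at rate $\log\rho(\check E^{(t_0)}_\phi)$. For the $F$-matrix I would iterate the recursion of \cref{t:FuGy} along $\gamma^n$: sign stability forces its transition matrices $\check E^{(t_m)}_{k_m,\epsilon^{(t_m)}_{k_m}}$ to be eventually periodic, with one period multiplying to $\check E^{(t_0)}_\phi$, while the inhomogeneous terms are entrywise bounded by $\|C^{-\bs;t_0}_{t_m}\|$, which grows at rate $\log\rho(E^{(t_0)}_\phi)$ by the previous step. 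Solving this eventually periodic inhomogeneous linear recursion yields $\|F^{\bs;t_0}_{t_{nh}}\|\le\mathrm{poly}(n)\cdot(R^{(t_0)}_\phi)^{n}$, i.e.\ rate $\le\log R^{(t_0)}_\phi$. Combined with the reduction step, the upper bounds follow.

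\emph{Lower bounds.} For $\cE^x_\phi$, write $X_i^{(t_{nh})}=\big(\prod_j X_j^{c_{ij}^{(t_{nh})}}\big)\prod_j F_j^{(t_{nh})}(X)^{b_{ji}^{(t_{nh})}}$ as a reduced fraction. By \cref{l:no cancellation} each $F_j^{(t_{nh})}$ is coprime to every coordinate $X_k$ and has nonzero constant term, so there is no cancellation against the monomial part $\prod_j X_j^{c_{ij}^{(t_{nh})}}$; hence $\deg X_i^{(t_{nh})}\ge\max\{\sum_j[c_{ij}^{(t_{nh})}]_+,\ \sum_j[-c_{ij}^{(t_{nh})}]_+\}\ge\|\mathbf{c}_i^{(t_{nh})}\|_\infty$, and taking the maximum over $i$, $\deg((\phi^x)^n)\ge\|C^{\bs;t_0}_{t_{nh}}\|$, whose growth rate is $\log\rho(E^{(t_0)}_\phi)$. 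For $\cE^a_\phi$, positivity of the $F$-polynomials — hence of the Laurent expansion of $A_i^{(t_{nh})}=\big(\prod_j A_j^{g_{ij}^{(t_{nh})}}\big)F_i^{(t_{nh})}(p^*X)$ — guarantees that the monomial $\prod_j A_j^{g_{ij}^{(t_{nh})}}$, coming from the constant term $1$ of $F_i^{(t_{nh})}$, is not cancelled, so $\deg A_i^{(t_{nh})}\ge\|\mathbf{g}_i^{(t_{nh})}\|_\infty$ and $\deg((\phi^a)^n)\ge\|G^{\bs;t_0}_{t_{nh}}\|$, with growth rate $\log\rho(\check E^{(t_0)}_\phi)$. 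Together with the upper bounds this completes the proof.

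\emph{Main obstacle.} The delicate point is the bound on the $F$-matrix: the crude Gronwall-type estimate obtained directly from the \cref{t:FuGy} recursion loses a spurious exponential factor, so one must genuinely exploit that sign stability makes the transition matrices in that recursion \emph{eventually periodic} with period-product $\check E^{(t_0)}_\phi$ (not merely of bounded norm), and then analyse the resulting inhomogeneous linear recursion carefully, noting that the Jordan-block polynomial factors do not affect the exponential rate. A second, smaller point is that the lower bound for $\cE^a_\phi$ rests on positivity of $F$-polynomials to rule out accidental cancellation of the $\mathbf{g}$-monomial, whereas on the $\X$-side \cref{l:no cancellation} plays this role; one should also check that all the estimates above are uniform over the intermediate vertices $t_m$ of $\gamma^n$, which holds because the loop involves only finitely many distinct exchange matrices.
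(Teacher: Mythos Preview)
Your proposal is correct and follows the paper's overall strategy: bound degrees via the separation formulae in terms of the $C$-, $G$-, and $F$-matrix norms, then control those norms using sign stability on $\cL^{\mathrm{can}}_{(t_0)}$ and the recursion of \cref{t:FuGy}. Two points of comparison are worth recording.

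For the lower bound on $\cE^a_\phi$ you take a genuinely shorter route than the paper. You invoke positivity of the $F$-polynomials (hence of the Laurent expansion of $A_i^{(t)}$) to conclude that the $\mathbf g$-monomial survives and $\deg A_i^{(t)}\ge\|\mathbf g_i^{(t)}\|_\infty$; then the growth of $\|G^{(n)}\|$ gives the bound. The paper avoids positivity entirely: it carries out a case analysis of the possible cancellations in the separation formula and lands on the estimate $\deg A_i^{(t)}\ge\tfrac12\bigl|g_{ij}^{(t)}+\sum_k f_{ik}^{(t)}b_{kj}^{(t)}\bigr|$, which via \cref{t:CGF_of_-bs} equals $\tfrac12|\overline g_{ij}^{(t)}|$, the $G$-matrix entry for the opposite seed pattern $-\bs$. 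Your argument is cleaner once positivity is granted; the paper's is more self-contained. For the $F$-matrix upper bound, the paper packages the estimate using a norm adapted to the Jordan form of $E_\phi$ (\cref{l:spec_radius}), whereas you allow a harmless $\mathrm{poly}(n)$ factor; both give the same entropy.

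One small correction: the reason $\|C^{\pm\bs;t_0}_{t_{nh}}\|=\|(E_\phi)^{n-n_0}D^{\pm}\|$ grows at \emph{exactly} rate $\log\rho(E_\phi)$ is not that $\rho(E_\phi)$ is an eigenvalue (\cref{thm:PF property}), but that $D^{\pm}=C^{\pm\bs;t_0}_{t_{n_0h}}$ is invertible, so its column vectors are linearly independent and at least one of them hits the top Lyapunov space; this is precisely \cref{cor:Lyapunov_independent}, which the paper invokes at this step.
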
  
Note that if \cref{p:spec_same} holds true, then we get $R_\phi^{(t_0)}=\rho(\check{E}_\phi^{(t_0)})=\rho(E_\phi^{(t_0)})=\lambda_\phi^{(t_0)}$. In particular we obtain \cref{intro: main cor}. 

Before proceeding to the proof, let us prepare some notations. Recall that for a representation path $\gamma: t_0 \xrightarrow{\mathbf{k}} t_h$ of the mutation loop $\phi$, the path $\gamma^n:t_0 \xrightarrow{\mathbf{k}} t_h \xrightarrow{\mathbf{k}} \dots \xrightarrow{\mathbf{k}} t_{nh}$ represents the mutation loop $\phi^n$. We denote the data attached to the vertex $t_{nh}$ with a superscript $(n)$. For instance, $A_\indi^{(n)}:=A_\indi^{(t_{nh})}$, $X_\indi^{(n)}:=X_\indi^{(t_{nh})}$, and so on. 
We simply write $E_\phi:=E_\phi^{(t_0)}$ and $R_\phi:=R_\phi^{(t_0)}$. 

\subsection{Estimate of the entropy from below}\label{sec:from below}
For an $N\times N$-matrix $M=(m_{ij})_{i,j=1,\dots,N}$, let $\|M\|_{\max}:=\max_{i,j=1,\dots,N} \{|m_{ij}|\}$ denote the uniform norm.
On the other hand, since any two norms on $\bR^N$ are equivalent, there exists a universal constant $K >0$ such that $K^{-1} \|\cdot \|_\infty \leq \|\cdot\|_1 \leq K \|\cdot \|_\infty$ on $\bR^N$. 

\begin{lem}\label{l:frombelow_1} 
For any $n \geq 0$, we have $\degr((\phi^x)^n) \geq K^{-1} \|C^{(n)}\|_{\max}$.
\end{lem}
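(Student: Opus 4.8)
The plan is to make the separation formula \eqref{eq:separation_X} do the work: write the $n$-th iterate of $\phi^x$ explicitly as a product of a Laurent monomial governed by the $C$-matrix and a ratio of $F$-polynomials, and then show that under reduction to lowest terms the monomial part cannot be ``absorbed'' into the $F$-polynomial part. This will give $\degr\big(((\phi^x)^n)^\ast X_\indi^{(t_0)}\big)\ge \|\mathbf{c}^{(n)}_\indi\|_1$ for every $\indi$, and the claimed bound follows by taking the maximum over $\indi$ and applying the norm equivalence constant $K$.

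First I would observe that the seed isomorphism $i^x_{t_{nh},t_0}$ only renames coordinates, so $\degr((\phi^x)^n)=\max_{\indi\in I}\degr\big(((\phi^x)^n)^\ast X_\indi^{(t_0)}\big)$, and by \cref{t:separation} this pullback equals $\prod_\indj X_\indj^{c^{(n)}_{\indi\indj}}\cdot\prod_\indj F_\indj^{(n)}(X_1,\dots,X_N)^{b^{(t_{nh})}_{\indj\indi}}$ as a rational function in $X_\indj=X_\indj^{(t_0)}$. Fix $\indi$. By the sign-coherence theorem \cref{thm:sign_coherence}, the row $\mathbf{c}^{(n)}_\indi$ is either entirely nonnegative or entirely nonpositive, so after possibly inverting, the monomial factor is an \emph{honest} monomial $M_\indi$ (not just a Laurent monomial) of degree $\|\mathbf{c}^{(n)}_\indi\|_1$. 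Grouping the $F$-factors with positive and negative exponents into polynomials $G_\indi$ and $H_\indi$, I would write the pullback as $M_\indi G_\indi/H_\indi$ in the nonnegative case, and $G_\indi/(M'_\indi H_\indi)$ in the nonpositive case.

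The key step — and the one I expect to be the main obstacle — is the no-cancellation argument. Here I would invoke \cref{l:no cancellation}: no $F^{(n)}_\indj$ is divisible by any variable, hence (working in the UFD $\bQ[X_1,\dots,X_N]$) neither $G_\indi$ nor $H_\indi$ is divisible by any $X_\indj$. Passing to the reduced form $g/h$ of $M_\indi G_\indi/H_\indi$ and setting $D:=\gcd(M_\indi G_\indi,H_\indi)$, I would argue that $D$ contains no variable in its factorization (since $D\mid H_\indi$) and that all its other irreducible factors lie in $G_\indi$ rather than in the monomial $M_\indi$; hence $D\mid G_\indi$, so $\degr D\le\degr G_\indi$ and $\degr\big(((\phi^x)^n)^\ast X_\indi^{(t_0)}\big)\ge\degr g=\degr(M_\indi G_\indi)-\degr D\ge\|\mathbf{c}^{(n)}_\indi\|_1$. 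The nonpositive case is symmetric with the roles of $G_\indi$ and $H_\indi$ exchanged, yielding $\degr(\cdot)\ge\degr h\ge\|\mathbf{c}^{(n)}_\indi\|_1$. It is precisely the combination of sign-coherence (which makes $M_\indi$ a genuine monomial) with the non-divisibility \cref{l:no cancellation} that prevents the monomial part from cancelling; without either of these the estimate would fail. Finally, since $\|\mathbf{c}^{(n)}_\indi\|_1\ge K^{-1}\|\mathbf{c}^{(n)}_\indi\|_\infty$ and $\|C^{(n)}\|_{\max}=\max_{\indi}\|\mathbf{c}^{(n)}_\indi\|_\infty$, taking the maximum over $\indi\in I$ gives $\degr((\phi^x)^n)\ge K^{-1}\|C^{(n)}\|_{\max}$, as desired.
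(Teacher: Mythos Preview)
Your proof is correct and follows essentially the same route as the paper's: the separation formula \eqref{eq:separation_X}, the non-divisibility \cref{l:no cancellation} to prevent cancellation of the monomial part, and the norm comparison $\|\cdot\|_1\ge K^{-1}\|\cdot\|_\infty$. The paper's argument is terser, simply asserting $\degr(X_\indi^{(n)})\ge\degr(\prod_\indj X_\indj^{c^{(n)}_{\indi\indj}})=\|\mathbf{c}_\indi^{(n)}\|_1$; you have made explicit two points the paper leaves implicit, namely the appeal to sign-coherence (\cref{thm:sign_coherence}) needed for the equality $\degr(\prod_\indj X_\indj^{c_{\indi\indj}})=\|\mathbf{c}_\indi\|_1$, and the UFD argument showing the common factor with $H_\indi$ (or $G_\indi$) must miss the monomial entirely.
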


\begin{proof}
Recall the separation formula \eqref{eq:separation_X}. With the help of \cref{l:no cancellation}, we get 
\begin{align*}
    \degr(X_\indi^{(n)}) \geq \degr(X_1^{c_{\indi 1}^{(n)}}\dots X_N^{c_{\indi N}^{(n)}}) = \| \mathbf{c}_\indi^{(n)}\|_1 \geq K^{-1} \|\mathbf{c}_\indi^{(n)}\|_\infty. 
\end{align*}
Hence we have
\begin{align*}
    \degr ((\phi^x)^n)=\max_{\indi=1,\dots,N} \degr(X_\indi^{(n)}) \geq K^{-1} \max_{\indi=1,\dots,N}\|\mathbf{c}_\indi^{(n)}\|_\infty = K^{-1}\|C^{(n)}\|_{\max}.
\end{align*}
\end{proof}

Let $\overline{C}^{(n)}$ (resp. $\overline{G}^{(n)}$) denote the $C$-matrix (resp. $G$-matrix) associated to the seed pattern $-\bs: t \mapsto (N^{(t)}, -B^{(t)})$.

\begin{lem}
For any $n \geq 0$, we have $\degr((\phi^a)^n) \geq    \|\overline{G}^{(n)}\|_{\max} /2$.
\end{lem}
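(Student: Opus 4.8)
\emph{Proof plan.} The plan is to imitate the proof of \cref{l:frombelow_1}, but to read the separation formula for the cluster $\A$-transformation off from the \emph{opposite} seed pattern $-\bs$, so that the $G$-matrix $\overline{G}^{(n)}$ of $-\bs$ appears as the monomial part. First I would record the elementary observation that the cluster $\A$-transformation $\mu_k^a$ is insensitive to the sign of the exchange matrix: its only non-trivial component $\mu_k^* A_k' = A_k^{-1}\big(\prod_j A_j^{[b_{kj}]_+} + \prod_j A_j^{[-b_{kj}]_+}\big)$ is symmetric under $b_{kj}\mapsto -b_{kj}$. Together with the fact that $\bs$-equivalence coincides with $(-\bs)$-equivalence (used already in the proof of \cref{cor:pres_mat_x_negative}), this shows that the coordinate expression $((\phi^a)^n)_{(t_0)}$ of \eqref{eq:coord_expression}, and all the seed isomorphisms entering it, are literally the same rational map whether one works over $\bs$ or over $-\bs$. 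Hence I may apply the separation formula \eqref{eq:separation_A} for the seed pattern $-\bs$ at $t_0$: writing $A_j := A_j^{(t_0)}$, setting $\overline{g}^{(n)}_{ij}:=(G^{-\bs;t_{nh}}_{t_{nh}})_{ij}$, and using $F^{-\bs;t_{nh}}_i = F^{\bs;t_{nh}}_i =: F^{(n)}_i$ from \cref{t:CGF_of_-bs}, one gets
\begin{align*}
((\phi^a)^n)^* A_i = \Big(\prod_{j=1}^N A_j^{\overline{g}^{(n)}_{ij}}\Big)\cdot F^{(n)}_i\big((p^{-\bs})^*X_1,\dots,(p^{-\bs})^*X_N\big),
\end{align*}
where $(p^{-\bs})^*X_k = \prod_j A_j^{-b^{(t_0)}_{kj}}$ is a Laurent monomial in the $A_j$; by the Laurent phenomenon the whole expression is a Laurent polynomial.

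Next I would isolate the monomial $\prod_j A_j^{\overline{g}^{(n)}_{ij}}$ inside this Laurent polynomial. Every $F$-polynomial has constant term $1$ and, by positivity of $F$-polynomials (a consequence of \cite{GHKK}), all of its coefficients are positive; substituting the Laurent monomials $(p^{-\bs})^*X_k$ therefore produces a Laurent polynomial all of whose coefficients are positive and whose constant term (collecting the contributions with $\prod_k ((p^{-\bs})^*X_k)^{p_k}=1$) is $\ge 1$. Consequently $\prod_j A_j^{\overline{g}^{(n)}_{ij}}$ occurs in $((\phi^a)^n)^*A_i$ with positive coefficient; that is, $\overline{\mathbf g}^{(n)}_i\in\bZ^I$ lies in the support of $((\phi^a)^n)^*A_i$.

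Finally I would convert this into a degree estimate. Write the Laurent polynomial $((\phi^a)^n)^*A_i$ as a reduced fraction $N_i/A^{\beta_i}$ with $N_i\in\bQ[A_1,\dots,A_N]$ and $\beta_i\in\bZ_{\ge0}^I$; here $\beta_{i,m}$ is (minus) the minimum of the $m$-th coordinate over the support, so $\beta_{i,m}+\overline{g}^{(n)}_{im}\ge 0$ for every $m$. Thus $A^{\beta_i}\cdot\prod_j A_j^{\overline{g}^{(n)}_{ij}}$ is an honest monomial of $N_i$, whence $\degr N_i\ge \sum_m(\beta_{i,m}+\overline{g}^{(n)}_{im})$, while $\degr(A^{\beta_i})=\sum_m\beta_{i,m}$. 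Using $\degr\big(((\phi^a)^n)^*A_i\big)=\max(\degr N_i,\degr A^{\beta_i})\ge\tfrac12(\degr N_i+\degr A^{\beta_i})$ together with the elementary inequality $2\beta_{i,m}+\overline{g}^{(n)}_{im}\ge|\overline{g}^{(n)}_{im}|$ (valid since $\beta_{i,m}\ge\max(0,-\overline{g}^{(n)}_{im})$), I obtain $\degr\big(((\phi^a)^n)^*A_i\big)\ge\tfrac12\sum_m|\overline{g}^{(n)}_{im}|\ge\tfrac12\|\overline{\mathbf g}^{(n)}_i\|_\infty$. Taking the maximum over $i$ gives $\degr((\phi^a)^n)\ge\tfrac12\|\overline{G}^{(n)}\|_{\max}$, as claimed; the factor $\tfrac12$ is exactly the loss in $\max\ge\text{average}$.

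The step I expect to be the most delicate is the middle one, i.e.\ ruling out cancellation at the monomial $\prod_j A_j^{\overline{g}^{(n)}_{ij}}$. Unlike the $\X$-case in \cref{l:frombelow_1}, here the hatted variables are substituted by genuine Laurent \emph{monomials}, so distinct monomials of $F^{(n)}_i$ can collide, and one must know that they cannot cancel; positivity of $F$-polynomials handles this cleanly. Alternatively one could attempt to run the argument using only \cref{l:no cancellation} in combination with the identity $\overline{G}^{(n)}=G^{(n)}+F^{(n)}B^{(t_0)}$ of \cref{t:CGF_of_-bs}.
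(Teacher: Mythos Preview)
Your proof is correct and takes a genuinely different route from the paper's.

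The paper works with the separation formula \eqref{eq:separation_A} for the \emph{original} seed pattern $\bs$, writes out an explicit almost-reduced expression for $A_i^{(t)}$ as a ratio of polynomials, and then performs a four-way case analysis (on the sign of $g_{ij}^{(t)}$ and whether the denominator shift $\delta_{ij}^{(t)}$ vanishes) to bound the single-variable degree $\deg_{A_j}(A_i^{(n)})$ from below by $\tfrac12\big|g_{ij}^{(n)}+\sum_k f_{ik}^{(n)}b_{kj}^{(n)}\big|$; only at the very end is this recognised as $\tfrac12|\overline{g}_{ij}^{(n)}|$ via the identity $\overline{G}^{(n)}=G^{(n)}+F^{(n)}B^{(t_0)}$ from \cref{t:CGF_of_-bs}. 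Your approach short-circuits this by observing at the outset that $\mu_k^a$ is invariant under $B\mapsto -B$, so one may apply the separation formula for $-\bs$ directly, making $\overline{G}^{(n)}$ appear as the monomial part from the start. You then replace the case analysis by a single appeal to positivity of $F$-polynomials (from \cite{GHKK}) to rule out cancellation, and finish with a clean total-degree estimate rather than a per-variable one. Your argument is shorter and more conceptual; the price is that it invokes the positivity theorem, whereas the paper's hands-on computation is (in principle) more elementary. Either way the constant $\tfrac12$ arises the same way, from $\max\ge\text{average}$. Two small typos: you write $G^{-\bs;t_{nh}}_{t_{nh}}$ where $G^{-\bs;t_0}_{t_{nh}}$ is meant, and similarly for the $F$-polynomials.
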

\begin{proof}
Here note that the separation formula \eqref{eq:separation_A} itself does not give a reduced expression for the rational function $A_i^{(t)}$. 
For $t \in \bT_I$ and $i \in I$, we have
\begin{align*}
    F_i^{(t)} = \sum_{ \mathbf{m} \in M_i^{(t)}} y_k^{m_k},
\end{align*}
where the set $M_i^{(t)}$ consists of integer vectors $\mathbf{m}=(m_1,\dots,m_N)$ with non-negative coordinates such that $0 \leq m_j \leq f_{ij}^{(t)}$ for all $j \in I$.
Note that $\mathbf{f}_i^{(t)} \in M_i^{(t)}$. 
Using $\delta_{ij}^{(t)}:=\max \bigl\{[-\sum_k m_k b_{kj}^{(t)}]_+ \ \bigl| \  \mathbf{m} \in M_i^{(t)} \bigr\}$, 
the reduced expression of $F_i^{(t)}(p^*X_1, \dots, p^*X_N)$ can be written as
\begin{align*}
    F_i^{(t)}(p^*X_1, \dots, p^*X_N) = \frac{\sum_{\mathbf{m} \in M_i^{(t)}} \prod_j A_j^{ \sum_k m_k b_{kj}^{(t)}+\delta_{ij}^{(t)}} }{\prod_j A_j^{\delta_{ij}^{(t)}}}.
\end{align*}
Then the expression 
\begin{align*}
    A_i^{(t)} = \prod_j\frac{A_j^{[g_{ij}^{(t)}]_+}}{A_j^{[-g_{ij}^{(t)}]_+}} \cdot \frac{\sum_{\mathbf{m} \in M_i^{(t)}} \prod_j A_j^{ \sum_k m_k b_{kj}^{(t)}+\delta_{ij}^{(t)}} }{\prod_j A_j^{\delta_{ij}^{(t)}}},
\end{align*}
which is obtained from \eqref{eq:separation_A}, may fail to be a reduced expression only for the reason that the numerator $\prod_j A_j^{[g_{ij}^{(t)}]_+}$ may have a common factor with the denominator $\prod_j A_j^{\delta_{ij}^{(t)}}$. 
For each $j \in I$, let $\deg_{A_j}$ denote the degree as a rational function of $A_j$, other variables being regarded as coefficients. 
\begin{description}
    \item[The case $\delta_{ij}^{(t)} = 0$] 
    In this case, the monomial $A_j^{g^{(t)}_{ij}}$ has no common factors with other terms. When $g_{ij}^{(t)} \geq 0$,  
    \begin{align*}
        \deg_{A_j}(A_i^{(t)}) 
        &\geq g^{(t)}_{ij} + \sum_k f^{(t)}_{ik}b^{(t)}_{kj}=\Bigl|g^{(t)}_{ij} + \sum_k f^{(t)}_{ik}b^{(t)}_{kj}\Bigr|. 
    \end{align*}
    When $g_{ij}^{(t)} \leq 0$,
    \begin{align*}
        \deg_{A_j}(A_i^{(t)}) 
        &\geq \max\Bigl\{|g^{(t)}_{ij}|,\ \sum_k f^{(t)}_{ik}b^{(t)}_{kj}\Bigr\} \\
        &\geq \frac{1}{2}\Bigl(|g^{(t)}_{ij}|+ \Bigl|\sum_k f^{(t)}_{ik}b^{(t)}_{kj}\Bigr|\Bigr) 
        \geq \frac{1}{2}\Bigl|g^{(t)}_{ij} + \sum_k f^{(t)}_{ik}b^{(t)}_{kj}\Bigr|. 
    \end{align*}
    \item[The case $\delta_{ij}^{(t)} > 0$]
    When $g_{ij} \leq 0$, still there cannot be a cancellation, and hence
    \begin{align*}
         \deg_{A_j}(A_i^{(t)}) 
        &\geq \max\Bigl\{|g^{(t)}_{ij}-\delta_{ij}^{(t)}|,\ \Bigl|\sum_k f^{(t)}_{ik}b^{(t)}_{kj}+\delta_{ij}^{(t)}\Bigr| \Bigr\} \\
        &\geq \frac{1}{2}\Bigl(|g^{(t)}_{ij}-\delta_{ij}^{(t)}|+ \Bigl|\sum_k f^{(t)}_{ik}b^{(t)}_{kj} + \delta_{ij}^{(t)}\Bigr| \Bigr) 
        \geq \frac{1}{2}\Bigl|g^{(t)}_{ij} + \sum_k f^{(t)}_{ik}b^{(t)}_{kj} \Bigr|. 
    \end{align*}
    When $g_{ij} \geq 0$, a reduction can occur but still we have
    \begin{align*}
        \deg_{A_j}(A_i^{(t)}) 
        &\geq \begin{dcases}
        (g^{(t)}_{ij}-\delta_{ij}^{(t)})+ \Bigl(\sum_k f^{(t)}_{ik}b^{(t)}_{kj}+\delta_{ij}^{(t)} \Bigr) & \mbox{if $g^{(t)}_{ij}-\delta_{ij}^{(t)} \geq 0$}, \\
        \max\Bigl\{|g^{(t)}_{ij}-\delta_{ij}^{(t)}|,\ \sum_k f^{(t)}_{ik}b^{(t)}_{kj}+\delta_{ij}^{(t)}\Bigr\} & \mbox{if $g^{(t)}_{ij}-\delta_{ij}^{(t)} \leq 0$}
        \end{dcases} \\
        &\geq \frac{1}{2}\Bigl(|g^{(t)}_{ij}-\delta_{ij}^{(t)}|+ \Bigl|\sum_k f^{(t)}_{ik}b^{(t)}_{kj}+\delta_{ij}^{(t)}\Bigr|\Bigr) 
        \geq \frac{1}{2}\Bigl|g^{(t)}_{ij} + \sum_k f^{(t)}_{ik}b^{(t)}_{kj}\Bigr|.
    \end{align*}
\end{description}
By summarizing above inequalities and applying to the case $t = t_{nh}$, we get
\[\deg_{A_j}(A_i^{(n)}) \geq \frac{1}{2}\Bigl|g^{(n)}_{ij} + \sum_k f^{(n)}_{ik}b^{(n)}_{kj}\Bigr|.\]
Thus we have
\begin{align*}
    \deg ((\phi^a)^n)= \max_{i\in I}\deg(A_i^{(n)}) 
    \geq \max_{i,j \in I}\bigl\{ \deg_{A_j}(A^{(n)}_i)\bigr\}
    \geq \frac{1}{2}\bigl\| G^{(n)} + F^{(n)}B^{(n)} \bigr\|_{\max} = \frac{1}{2}\bigl\| \overline{G}^{(n)} \bigr\|_{\max}.
\end{align*}
Here the last equation follows from the second equation given in \cref{t:CGF_of_-bs}.
\end{proof}

\begin{rem}
A similar estimate of $\deg ((\phi^a)^n)$ using $D$-matrices (which have $d$-vectors as column vectors) might be easier. However we do not know if the linear independence of $d$-vectors holds in general, which we would need in the proof of \cref{p:estimate_below} below.
\end{rem}

\begin{lem}\label{l:frombelow_2}
Suppose that the representation path $\gamma$ of $\phi$ is sign-stable on $\cL_{(t_0)}^{\mathrm{can}}$. Then there exists an integer $n_0 \geq 0$ such that
\begin{align*}
    C^{(n+1)}&=E_\phi C^{(n)}, \quad \overline{C}^{(n+1)}=E_\phi \overline{C}^{(n)}, \\ 
    G^{(n+1)}&=\check{E}_\phi G^{(n)}, \quad \overline{G}^{(n+1)}=\check{E}_\phi \overline{G}^{(n)}
\end{align*}
for all $n \geq n_0$.
\end{lem}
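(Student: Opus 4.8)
We first record that, by \cref{cor:C_as_pres_mat} applied to the representation path $\gamma^n$ of $\phi^n$, the matrix $C^{(n)}$ is exactly the (constant) presentation matrix of $\phi^n$ on the open cone $\interior\cC^{+}_{(t_0)}$, so that $\phi^n$ acts there by $w\mapsto C^{(n)}w$; likewise, by \cref{cor:pres_mat_x_negative}, $\overline{C}^{(n)}$ is the presentation matrix of $\phi^n$ on $\interior\cC^{-}_{(t_0)}$, so $\phi^n$ acts there by $w\mapsto\overline{C}^{(n)}w$. The plan is then to compute these presentation matrices at one point of each cone using the factorization $\phi^{n+1}=\phi\circ\phi^{n}$, and to read off the claimed recursions.

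Fix $l^{+}_{(t_0)}\in\interior\cC^{+}_{(t_0)}$, say the point with all $\X$-coordinates equal to $+1$; it lies in $\cL^{\mathrm{can}}_{(t_0)}\setminus\{0\}$. Since $\phi^{n}(l^{+}_{(t_0)})=C^{(n)}l^{+}_{(t_0)}$, sign stability on $\cL^{\mathrm{can}}_{(t_0)}$ (\cref{d:sign stability}) provides $n_1$ such that $\bep_\gamma(\phi^{n}(l^{+}_{(t_0)}))=\bep^{\stab}_\gamma$ for all $n\ge n_1$. This sign is strict, so $\phi^{n}(l^{+}_{(t_0)})$ lies in the interior of the sign cone $\cC^{\bep^{\stab}_\gamma}_\gamma$, where $\phi$ is differentiable (\cref{lem:pres_mat_x}) with presentation matrix $E^{\bep^{\stab}_\gamma}_\gamma$, which equals the stable presentation matrix $E_\phi$ by \cref{cor:asymptotic linearity}. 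Consequently, on a small neighbourhood of $l^{+}_{(t_0)}$ inside $\interior\cC^{+}_{(t_0)}$ the map $\phi^{n+1}=\phi\circ\phi^{n}$ acts by $w\mapsto E_\phi C^{(n)}w$; since its presentation matrix there is $C^{(n+1)}$ (again by \cref{cor:C_as_pres_mat}), we get $C^{(n+1)}=E_\phi C^{(n)}$ for $n\ge n_1$. Running the identical argument with the point $l^{-}_{(t_0)}\in\interior\cC^{-}_{(t_0)}$ whose $\X$-coordinates are all $-1$, and using \cref{cor:pres_mat_x_negative} in place of \cref{cor:C_as_pres_mat}, yields $\overline{C}^{(n+1)}=E_\phi\overline{C}^{(n)}$ for $n\ge n_1'$. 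Set $n_0:=\max\{n_1,n_1'\}$.

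The two $G$-matrix identities now follow formally: by the tropical duality \eqref{eq:tropical duality} (valid for both $\bs$ and $-\bs$) we have $G^{(n)}=\check{C}^{(n)}$ and $\overline{G}^{(n)}=\check{\overline{C}}^{(n)}$, and applying $(\,\cdot\,)\mapsto((\,\cdot\,)^{\tr})^{-1}$ to $C^{(n+1)}=E_\phi C^{(n)}$ (resp.\ to $\overline{C}^{(n+1)}=E_\phi\overline{C}^{(n)}$) gives $G^{(n+1)}=\check{E}_\phi G^{(n)}$ (resp.\ $\overline{G}^{(n+1)}=\check{E}_\phi\overline{G}^{(n)}$) for all $n\ge n_0$, using $((AB)^{\tr})^{-1}=\check{A}\,\check{B}$. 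Alternatively, one can rerun the argument of the second paragraph verbatim using the $G$-matrix recursion \eqref{eq:mut_g-mat} together with \cref{cor:pres_mat_x_negative}.

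The only point requiring care — and the reason the set $\cL^{\mathrm{can}}_{(t_0)}$ is the right object here rather than a single cone — is that all four recursions must involve the \emph{same} matrix $E_\phi$; this is exactly what sign stability on the \emph{union} $\interior\cC^{+}_{(t_0)}\cup\interior\cC^{-}_{(t_0)}$ guarantees, since by definition it furnishes one common stable sign $\bep^{\stab}_\gamma$ for orbits starting in either cone. A secondary bookkeeping point, implicit in identifying $\bep_\gamma(\phi^n(l^{\pm}_{(t_0)}))$ with the signs one actually meets while iterating, is that the exchange matrices along successive copies of $\gamma$ agree because $\gamma$ represents a loop; this is what keeps the presentation-matrix computations consistent across the chart identifications $t_{nh+i}\sim_{\bs}t_i$.
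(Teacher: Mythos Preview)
Your proof is correct and follows essentially the same route as the paper's: identify $C^{(n)}$ (resp.\ $\overline{C}^{(n)}$) with the presentation matrix of $\phi^n$ on $\interior\cC^{+}_{(t_0)}$ (resp.\ $\interior\cC^{-}_{(t_0)}$) via \cref{cor:C_as_pres_mat} and \cref{cor:pres_mat_x_negative}, then use sign stability on $\cL^{\mathrm{can}}_{(t_0)}$ to force the factor contributed by the last copy of $\gamma$ to be $E_\phi$ for large $n$. The only cosmetic difference is that you deduce the $G$-matrix recursions from the $C$-matrix ones via tropical duality $G^{(n)}=\check{C}^{(n)}$, whereas the paper simply says the $G$-matrix case follows from the same line of arguments; your shortcut is a perfectly valid (and slightly cleaner) alternative.
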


\begin{proof}
From \cref{cor:C_as_pres_mat}, the $C$-matrix $C^{(n)}$ assigned to the endpoint of the $n$-th iterated path $\gamma^n$ is given by the matrix $E_\gamma^{\boldsymbol{\epsilon}_{\gamma^n}^\trop}$. Moreover, the sequence $\boldsymbol{\epsilon}_{\gamma^n}^\trop$ of  tropical signs coincides with the sign at any point $w$ in $\interior\cC^+_{(t_0)}$. In particular, we have the recurrence relation $C^{(n+1)}= E_\gamma^{\boldsymbol{\epsilon}_\gamma(\phi^n(w))} C^{(n)}$ for all $n \geq 0$. Since $\gamma$ is assumed to be sign-stable on $\interior\cC^+_{(t_0)}$, there exists an integer $n_1 \geq 0$ such that $\boldsymbol{\epsilon}_\gamma(\phi^n(w))=\boldsymbol{\epsilon}_\gamma^\stab$ for all $n \geq n_1$. Thus we have $C^{(n+1)}= E_\phi C^{(n)}$ for all $n \geq n_1$. 

Similarly from \cref{cor:pres_mat_x_negative}, we get  $\overline{C}^{(n+1)}= E_\gamma^{\boldsymbol{\epsilon}_\gamma(\phi^n(w_-))} \overline{C}^{(n)}$ for any $w_- \in \interior\cC^-_{(t_0)}$. Since $\phi$ is also  sign-stable on $\interior\cC^-_{(t_0)}$, there exists an integer $n_2 \geq 0$ such that $\boldsymbol{\epsilon}_\gamma(\phi^n(w_-))=\boldsymbol{\epsilon}_\gamma^\stab$ and $\overline{C}^{(n+1)}= E_\phi \overline{C}^{(n)}$ for $n \geq n_2$. Putting $n_0:=\max\{n_1,n_2\}$, we get the desired assertion for $C$-matrices. The proof of the assertions for $G$-matrices follows from the same line of arguments.
\end{proof}
Combining \cref{l:frombelow_1,l:frombelow_2}, we get an estimate of the algebraic entropies $\cE_\phi^x$ and $\cE_\phi^a$ from below:

\begin{prop}\label{p:estimate_below}
We have $\cE_\phi^x \geq \log \rho(E_\phi)$ and $\cE_\phi^a \geq \log \rho(\check{E}_\phi)$.
\end{prop}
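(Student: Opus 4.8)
The plan is to bootstrap the two lower estimates already established (\cref{l:frombelow_1} and the $G$-matrix estimate $\degr((\phi^a)^n) \geq \tfrac12\|\overline{G}^{(n)}\|_{\max}$) into statements about spectral radii, using the stabilized recursions of \cref{l:frombelow_2} to pin down the exponential growth rates of the matrix sequences $C^{(n)}$ and $\overline{G}^{(n)}$.

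First I would fix the integer $n_0$ produced by \cref{l:frombelow_2}, so that $C^{(n+1)} = E_\phi C^{(n)}$ and $\overline{G}^{(n+1)} = \check{E}_\phi \overline{G}^{(n)}$ hold for all $n \geq n_0$; iterating gives $C^{(n)} = E_\phi^{\,n-n_0} C^{(n_0)}$ and $\overline{G}^{(n)} = \check{E}_\phi^{\,n-n_0}\overline{G}^{(n_0)}$ for $n \geq n_0$. The point I would stress is that $C^{(n_0)}$ and $\overline{G}^{(n_0)}$ lie in $GL_N(\bZ)$: by \eqref{eq:mut_c-mat_2} (resp.\ \eqref{eq:mut_g-mat}) every $C$-matrix (resp.\ $G$-matrix) is a finite product of the elementary matrices $E_{k,\epsilon}^{(t)}$ (resp.\ $\check{E}_{k,\epsilon}^{(t)}$), each of which is an involution of determinant $\pm 1$ by \cref{lem:EF_formulae}(1). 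Hence right-multiplication by the fixed invertible matrix $(C^{(n_0)})^{-1}$ gives $E_\phi^{\,n-n_0} = C^{(n)}(C^{(n_0)})^{-1}$, and therefore, working with the operator norm subordinate to $\|\cdot\|_\infty$ and invoking equivalence of norms on the finite-dimensional space of $N \times N$ matrices, $\|E_\phi^{\,n-n_0}\| \leq c\,\|C^{(n)}\|_{\max}$ for a constant $c$ independent of $n$; the same holds with $\check{E}_\phi$ and $\overline{G}^{(n)}$.

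Next I would apply Gelfand's spectral radius formula $\lim_{m\to\infty}\|E_\phi^{\,m}\|^{1/m} = \rho(E_\phi)$ to conclude $\limsup_{n\to\infty}\tfrac1n\log\|C^{(n)}\|_{\max} \geq \log\rho(E_\phi)$, and likewise $\limsup_{n\to\infty}\tfrac1n\log\|\overline{G}^{(n)}\|_{\max} \geq \log\rho(\check{E}_\phi)$. Feeding the first into \cref{l:frombelow_1} and the second into the estimate $\degr((\phi^a)^n) \geq \tfrac12\|\overline{G}^{(n)}\|_{\max}$, the universal constants $K^{-1}$ and $\tfrac12$ are killed by the $\tfrac1n\log$, so that $\cE_\phi^x = \limsup_{n\to\infty}\tfrac1n\log\degr((\phi^x)^n) \geq \log\rho(E_\phi)$ and $\cE_\phi^a \geq \log\rho(\check{E}_\phi)$, which is the claim.

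I do not expect a serious obstacle here: this is a short bootstrapping argument once \cref{l:frombelow_2} is in hand. The only delicate point is the invertibility of $C^{(n_0)}$ and $\overline{G}^{(n_0)}$ with a bound on the inverse that is uniform in $n$ (it need only be uniform because $n_0$ is fixed); without it the inequality $\|E_\phi^{\,n-n_0}\| \lesssim \|C^{(n)}\|_{\max}$ — and hence the reduction of the spectral radius to the degree growth — could fail. This is precisely the reason the $\A$-estimate was phrased via the $G$-matrices of $-\bs$, whose invertibility is automatic, rather than via $D$-matrices, as noted in the remark following that estimate.
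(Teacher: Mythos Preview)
Your argument is correct and follows essentially the same skeleton as the paper's proof: both start from \cref{l:frombelow_2} to obtain $C^{(n)} = E_\phi^{\,n-n_0}C^{(n_0)}$ (and the analogous identity for $\overline{G}^{(n)}$), and both exploit the invertibility of $C^{(n_0)}$ to pass from the growth of $\|C^{(n)}\|_{\max}$ to the spectral radius. The difference lies only in how that last step is executed. You right-multiply by $(C^{(n_0)})^{-1}$ to isolate $E_\phi^{\,n-n_0}$ and invoke Gelfand's formula $\rho(E_\phi)=\lim_m\|E_\phi^{\,m}\|^{1/m}$; the paper instead views $\|E_\phi^{\,n-n_0}C^{(n_0)}\|_{\max}$ as the maximum over the columns $\widetilde{\mathbf{c}}_j^{(n_0)}$ of the quantity $\|E_\phi^{\,n-n_0}\widetilde{\mathbf{c}}_j^{(n_0)}\|_\infty$, interprets the resulting $\limsup$ as a Lyapunov exponent, and appeals to \cref{cor:Lyapunov_independent} (since the columns of an invertible matrix form a basis, the maximum of their Lyapunov exponents is $\log\rho(E_\phi)$).

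Your route is a touch more self-contained in that it avoids the Lyapunov-exponent machinery of Appendix~A; the paper's route is more in keeping with the viewpoint developed there and makes the role of linear independence (as opposed to merely a norm bound on the inverse) explicit. Either way the substance is the same, and your closing remark about why the $\A$-side estimate is routed through $\overline{G}^{(n)}$ rather than $D$-matrices matches the paper's own rationale.
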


\begin{proof}
From \cref{l:frombelow_2}, there exists $n_0 \geq 0$ such that $C^{(n)} = E_\phi^{n-n_0}C^{(n_0)}$. Combining with the first estimate in \cref{l:frombelow_1}, we get
\begin{align*}
    \cE_\phi^x 
    &= \limsup_{n \to \infty} \frac{1}{n}\log \degr (\phi^x)^n \\
    &\geq \limsup_{n \to \infty} \left(\frac{1}{n}\log  \bigl\|C^{(n)}\bigr\|_{\max} + \frac{1}{n}\log K^{-1} \right) \\
    &= \limsup_{n \to \infty} \frac{1}{n}\log \bigl\|E_\phi^{n-n_0}C^{(n_0)}\bigr\|_{\max} \\ 
    &= \mathsf{\Lambda}_{E_\phi}(C^{(n_0)}).
\end{align*}
Here $\mathsf{\Lambda}_{E_\phi}(C^{(n_0)}):=\max_{j=1,\dots,N} \mathsf{\Lambda}_{E_\phi}(\widetilde{\mathbf{c}}_j^{(n_0)})$ with $C^{(n_0)}=(\widetilde{\mathbf{c}}_1^{(n_0)},\dots,\widetilde{\mathbf{c}}_N^{(n_0)})$, and $\mathsf{\Lambda}_{E_\phi}(-)$ denotes the Lyapunov exponent. See \cref{subsec:Lyapunov}. 
Since $C^{(n_0)}$ is invertible, the column vectors $\widetilde{\mathbf{c}}_j^{(n_0)}$ are linearly independent. Hence the maximum of their Lyapunov exponents attains the logarithm of the spectral radius $\rho(E_\phi)$ from \cref{cor:Lyapunov_independent}. Thus we get $\cE_\phi^x \geq \log \rho(E_\phi)$. The proof of the second estimate follows from the same line of arguments.
\end{proof}

\subsection{Estimate of the entropy from above}\label{sec:from above}

\begin{lem}\label{l:fromabove_1}
Let $K':=KN\max_{\indi, \indj} |b_{\indj \indi}| \geq K >0$. Then for any $n \geq 0$, we have
\begin{align*}
    \degr((\phi^x)^n) &\leq K' (\|C^{(n)}\|_{\max} + \|F^{(n)}\|_{\max}), \\
    \degr((\phi^a)^n) &\leq K' (\|G^{(n)}\|_{\max} + \|F^{(n)}\|_{\max}). 
\end{align*}
\end{lem}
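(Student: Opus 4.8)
The plan is to bound $\deg(X^{(n)}_i)$ and $\deg(A^{(n)}_i)$ coordinate-by-coordinate from the separation formulae \eqref{eq:separation_X} and \eqref{eq:separation_A}, and then maximize over $i\in I$. Note first that, since $\gamma^n$ again represents a mutation loop, $B^{(n)}:=B^{(t_{nh})}=B^{(t_0)}$, so $N\max_{i,j}|b^{(n)}_{ij}|=N\max_{i,j}|b_{ij}|=K'/K$; this is the only place where the mutation-loop hypothesis enters, and it is essential, since exchange-matrix entries can otherwise grow along a mutation sequence. I will use repeatedly: (i) $\deg(fg)\le\deg f+\deg g$ and $\deg(f_1\cdots f_r)\le\sum_\nu\deg f_\nu$ for rational functions; (ii) any presentation $f=g/h$ as a ratio of polynomials, reduced or not, satisfies $\deg f\le\max(\deg g,\deg h)$, so the (non-reduced) separation formulae still furnish valid upper bounds --- unlike in the lower estimates, no cancellation analysis is needed here; (iii) a Laurent monomial $\prod_j u_j^{a_j}$ has degree $\le\|\mathbf a\|_1\le K\|\mathbf a\|_\infty$, and a genuine polynomial has degree equal to its total degree. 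I also record that each $F^{(t)}_i$ has total degree $\le\sum_j f^{(t)}_{ij}=\|\mathbf f^{(t)}_i\|_1\le K\|F^{(t)}\|_{\max}$, since $y_j$ occurs in $F^{(t)}_i$ to degree $\le f^{(t)}_{ij}$.

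For the $\X$-transformation, \eqref{eq:separation_X} reads $X^{(n)}_i=\big(\prod_j X_j^{c^{(n)}_{ij}}\big)\cdot\prod_j F^{(n)}_j(X_1,\dots,X_N)^{b^{(n)}_{ji}}$. The monomial factor has degree $\le\|\mathbf c^{(n)}_i\|_1\le K\|C^{(n)}\|_{\max}$. Each $F^{(n)}_j(X_1,\dots,X_N)^{b^{(n)}_{ji}}$ is a polynomial (when $b^{(n)}_{ji}\ge0$) or the reciprocal of one (when $b^{(n)}_{ji}<0$), hence has degree $\le|b^{(n)}_{ji}|\,\|\mathbf f^{(n)}_j\|_1\le|b^{(n)}_{ji}|\,K\|F^{(n)}\|_{\max}$; therefore their product over $j$ has degree $\le\big(\sum_j|b^{(n)}_{ji}|\big)K\|F^{(n)}\|_{\max}\le(K'/K)\cdot K\|F^{(n)}\|_{\max}=K'\|F^{(n)}\|_{\max}$. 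Adding the two contributions and using $K\le K'$ gives $\deg(X^{(n)}_i)\le K'\big(\|C^{(n)}\|_{\max}+\|F^{(n)}\|_{\max}\big)$; maximizing over $i$ proves the first inequality.

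For the $\A$-transformation, \eqref{eq:separation_A} reads $A^{(n)}_i=\big(\prod_j A_j^{g^{(n)}_{ij}}\big)\cdot F^{(n)}_i(p^*X_1,\dots,p^*X_N)$, whose monomial factor has degree $\le\|\mathbf g^{(n)}_i\|_1\le K\|G^{(n)}\|_{\max}$. It remains to bound $\deg\big(F^{(n)}_i(p^*X_1,\dots,p^*X_N)\big)$ by $K'\|F^{(n)}\|_{\max}$. Substituting $p^*X_l=\prod_j A_j^{b^{(n)}_{lj}}$ and using the presentation of $F^{(n)}_i(p^*X_1,\dots,p^*X_N)$ as a ratio of polynomials recorded in the proof of the lower estimate for $\deg((\phi^a)^n)$ --- numerator $\sum_{\mathbf m\in M^{(n)}_i}\prod_j A_j^{\sum_k m_k b^{(n)}_{kj}+\delta^{(n)}_{ij}}$ over denominator $\prod_j A_j^{\delta^{(n)}_{ij}}$, where $\delta^{(n)}_{ij}:=\max_{\mathbf m\in M^{(n)}_i}[-\sum_k m_k b^{(n)}_{kj}]_+=\sum_k[-b^{(n)}_{kj}]_+ f^{(n)}_{ik}$ --- one sees that the denominator has degree $\sum_k f^{(n)}_{ik}\sum_j[-b^{(n)}_{kj}]_+$ and the numerator has degree at most $\sum_k f^{(n)}_{ik}\big([\sum_j b^{(n)}_{kj}]_+ + \sum_j[-b^{(n)}_{kj}]_+\big)$. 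The decisive point is that $[\sum_j b^{(n)}_{kj}]_+\le\sum_j[b^{(n)}_{kj}]_+$, which collapses the parenthesis to $\le\sum_j|b^{(n)}_{kj}|\le K'/K$; hence both the numerator and denominator degrees are $\le\|\mathbf f^{(n)}_i\|_1\,(K'/K)\le K\|F^{(n)}\|_{\max}\,(K'/K)=K'\|F^{(n)}\|_{\max}$, so $\deg\big(F^{(n)}_i(p^*X_1,\dots,p^*X_N)\big)\le K'\|F^{(n)}\|_{\max}$. Adding the monomial contribution, using $K\le K'$, and maximizing over $i$ proves the second inequality.

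The only step requiring genuine care is the last one: estimating the numerator and denominator degrees of $F^{(n)}_i(p^*X_1,\dots,p^*X_N)$ \emph{separately} --- each by $\|\mathbf f^{(n)}_i\|_1\,(K'/K)$, then adding --- would produce a spurious factor $2$ on the $\|F^{(n)}\|_{\max}$ term; instead one must group, at each index $k$, the numerator's row-sum contribution with the corresponding denominator contribution and invoke $[\sum_j b^{(n)}_{kj}]_+\le\sum_j[b^{(n)}_{kj}]_+$. Everything else is routine bookkeeping with the norm-equivalence constant $K$ and the degree inequalities (i)--(iii).
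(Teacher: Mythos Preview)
Your proof is correct and follows the same strategy as the paper's: bound each coordinate via the separation formulae, split into monomial and $F$-polynomial contributions, and pass from $\ell^1$ to $\ell^\infty$ via the constant $K$. Your treatment of the $\A$-side is in fact more explicit than the paper's terse ``$\deg(F_i^{(n)})\deg(p_{(t_{nh})})$''; one small imprecision is that your asserted equality $\delta^{(n)}_{ij}=\sum_k[-b^{(n)}_{kj}]_+f^{(n)}_{ik}$ is in general only the inequality $\le$ (the maximizing lattice point of the box need not lie in $M^{(n)}_i$), but replacing $\delta^{(n)}_{ij}$ by this larger quantity still gives a valid polynomial-over-polynomial presentation and your degree bounds go through unchanged.
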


\begin{proof}
From the separation formula \eqref{eq:separation_X}, we get
\begin{align*}
    \degr X_\indi^{(n)} 
    &\leq \degr(X_1^{c_{\indi 1}^{(n)}}\cdots X_N^{c_{\indi N}^{(n)}}) + \sum_{\indj=1}^N |b_{\indj \indi}| \degr(F_\indj^{(n)}) \\
    &\leq \|\mathbf{c}_\indi^{(n)}\|_1 + \max_{\indk} |b_{\indk \indi}|\cdot \sum_{\indj=1}^N \|\mathbf{f}_\indj^{(n)}\|_1 \\
    &\leq K\|\mathbf{c}_\indi^{(n)}\|_\infty + \max_{\indk} |b_{\indk \indi}|\cdot \sum_{\indj=1}^N K\|\mathbf{f}_\indj^{(n)}\|_\infty \\
    &\leq K'\max_{\indj=1,\dots,N}\|\mathbf{c}_\indj^{(n)}\|_\infty + K' \max_{\indj=1,\dots,N} \|\mathbf{f}_\indj^{(n)}\|_\infty \\
    &= K'(\|C^{(n)}\|_{\max} + \|F^{(n)}\|_{\max}).
\end{align*}
Similarly from the separation formula \eqref{eq:separation_A}, we get 
\begin{align*}
    \degr A_\indi^{(n)} 
    &\leq \degr(A_1^{g_{\indi 1}^{(n)}}\cdots A_N^{g_{\indi N}^{(n)}}) +  \degr(F_\indi^{(n)})\deg (p_{(t_{nh})}) \\
    &\leq \|\mathbf{g}_\indi^{(n)}\|_1 + \|\mathbf{f}_\indi^{(n)}\|_1\max_{\indk} \sum_{\indj}|b_{\indk \indj}|  \\
    &\leq K\|\mathbf{g}_\indi^{(n)}\|_\infty + K' \|\mathbf{f}_\indi^{(n)}\|_\infty \\
    &\leq K'(\|G^{(n)}\|_{\max} + \|F^{(n)}\|_{\max}).
\end{align*}
Thus we get the desired assertion.
\end{proof}
Now we only need to give an estimate for the growth of  $\|F^{(n)}\|_\infty$ from above. 

\begin{lem}\label{l:F-matrix_mutation_stable}
Suppose that the representation path $\gamma$ of $\phi$ is sign-stable on $\cL_{(t_0)}^{\mathrm{can}}$. Then there exists an integer $n_0 \geq 0$ such that 
\begin{align*}
    F^{(n+1)} = \check{E_\phi}F^{(n)} + \left( \sum_{m=0}^{h-1} \check{E}_h \cdots \check{E}_{h-m} \cdot \epsilon_{h-m-1} \Delta_{k_{h-m-1}} \cdot E_{h-m-2}\cdots E_{-1} \right) \overline{C}^{(n)}
\end{align*}
for all $n \geq n_0$. Here
\begin{itemize}
    \setlength{\itemsep}{3pt}
    \item $\mathbf{k} = (k_0, \dots, k_{h-1})$,
    \item $\boldsymbol{\epsilon}_\gamma^\stab=(\epsilon_1,\dots,\epsilon_h)$ is the stable sign,
    \item $E_r = E_{k_r, \epsilon_r}^{(t_{nh+r})}$ and $\check{E}_r = \check{E}_{k_r, \epsilon_r}^{(t_{nh+r})}$ for $r = 0, \dots, h-1$,
    \item $E_{-1} = \check{E}_h := \mathrm{Id}$, and
    \item $\Delta_k := \mathrm{diag}(0, \dots, 0, \overset{k}{1}, 0, \dots, 0)$.
\end{itemize}

\end{lem}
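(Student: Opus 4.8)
The plan is to iterate the one-edge mutation formula of \cref{t:FuGy} over a single $\gamma$-block of the long path $\gamma^{n+1}$, namely over the sub-path $t_{nh}=s_0 \overbar{k_0} s_1 \overbar{k_1} \cdots \overbar{k_{h-1}} s_h=t_{(n+1)h}$, and then to remove the resulting truncations and intermediate matrices by means of sign-stability. Along this block \cref{t:FuGy} reads $F_{s_{r+1}}=\check{E}^{(s_r)}_{k_r,\epsilon^{(s_r)}_{k_r}}F_{s_r}+[\epsilon^{(s_r)}_{k_r}\overline{C}_{s_r}]_+^{k_r\bullet}$, where $\epsilon^{(s_r)}_{k_r}$ is the tropical sign of $\bs$ at $s_r$ and $\overline{C}_{s_r}:=C^{-\bs;t_0}_{s_r}$. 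By \cref{cor:C_as_pres_mat} this tropical sign equals the corresponding entry of $\bep_{\gamma^{n+1}}(w)$ for $w\in\interior\cC^+_{(t_0)}$, hence — since that entry sits in the last $\gamma$-block and thus equals the corresponding entry of $\bep_\gamma(\phi^n(w))$ — it equals $\epsilon_r$ for all $n\ge n_0$, by sign-stability on $\interior\cC^+_{(t_0)}\subset\cL^{\mathrm{can}}_{(t_0)}$; the matrices occurring are then exactly the $\check{E}_r$, $E_r$ of the statement. In the same way, using \cref{cor:pres_mat_x_negative} and sign-stability on $\interior\cC^-_{(t_0)}$, the matrices $\overline{C}_{s_r}$ mutate along the block by $\overline{C}_{s_{r+1}}=E_r\overline{C}_{s_r}$ for $n\ge n_0$ (the intermediate form of a recurrence already recorded in \cref{l:frombelow_2}), so that $\overline{C}_{s_r}=E_{r-1}\cdots E_0\,\overline{C}^{(n)}$.

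The heart of the argument, and the step I expect to need the most care, is to identify the truncated block $[\epsilon_r\overline{C}_{s_r}]_+^{k_r\bullet}$ with the honest linear term $\epsilon_r\Delta_{k_r}\overline{C}_{s_r}$. Only the $k_r$-th row survives $[\,\cdot\,]^{k_r\bullet}$, and that row of $\overline{C}_{s_r}$ is a $c$-vector of the opposite seed pattern $-\bs$, hence has a definite sign by the sign-coherence theorem (\cref{thm:sign_coherence}) applied to $-\bs$; what must be checked is that, once $n\ge n_0$, multiplying this row by $\epsilon_r$ leaves it entrywise non-negative, so that the entrywise positive-part operation leaves it unchanged. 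This reduces to comparing, in the stable regime, the tropical sign of $-\bs$ at $s_r$ with the stable sign $\epsilon_r$, which I would carry out in the same spirit as the identification $\overline{C}^{(n+1)}=E_\phi\overline{C}^{(n)}$: conjugate by the tropicalized isomorphism $\iota^\trop=-\mathrm{Id}$ from the proof of \cref{cor:pres_mat_x_negative} and use sign-stability on the whole of $\cL^{\mathrm{can}}_{(t_0)}$ — this is precisely the point where both cones $\interior\cC^+_{(t_0)}$ and $\interior\cC^-_{(t_0)}$, not just one of them, are genuinely used.

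Granting this, the rest is formal bookkeeping. Composing the one-edge recursion over $r=0,\dots,h-1$ gives
\[
F^{(n+1)}=(\check{E}_{h-1}\cdots\check{E}_0)\,F^{(n)}+\sum_{r=0}^{h-1}\check{E}_{h-1}\cdots\check{E}_{r+1}\cdot\epsilon_r\Delta_{k_r}\cdot E_{r-1}\cdots E_0\,\overline{C}^{(n)}.
\]
Here $\check{E}_{h-1}\cdots\check{E}_0=\check{E}_\phi$: iterating \eqref{eq:mut_g-mat} over the block gives $G^{(n+1)}=(\check{E}_{h-1}\cdots\check{E}_0)G^{(n)}$, and comparing with $G^{(n+1)}=\check{E}_\phi G^{(n)}$ from \cref{l:frombelow_2} together with the invertibility of $G^{(n)}$ yields the claim (equivalently, $A\mapsto\check A=(A^\tr)^{-1}$ is multiplicative and $E_{h-1}\cdots E_0=E_\phi$). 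Finally, substituting $m:=h-1-r$ and using the boundary conventions $\check{E}_h=E_{-1}=\mathrm{Id}$ to absorb the empty products occurring at $r=h-1$ and $r=0$, the double product turns into exactly the sum displayed in the lemma. Taking $n_0$ to be the largest of the finitely many thresholds invoked above finishes the proof.
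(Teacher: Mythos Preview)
Your approach is the paper's approach: iterate \cref{t:FuGy} across the block $t_{nh}\to t_{(n+1)h}$, use sign stability on $\interior\cC^+_{(t_0)}$ to replace the tropical signs by the stable sign, and use sign stability on $\interior\cC^-_{(t_0)}$ together with \cref{cor:pres_mat_x_negative} to linearize the extra term and to rewrite $\overline{C}_{s_r}=E_{r-1}\cdots E_0\,\overline{C}^{(n)}$. The bookkeeping at the end (the $m=h-1-r$ substitution and the identification $\check{E}_{h-1}\cdots\check{E}_0=\check{E}_\phi$) is exactly right.

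The one step you rightly single out as delicate --- identifying $[\epsilon_r\overline{C}_{s_r}]_+^{k_r\bullet}$ with $\epsilon_r\Delta_{k_r}\overline{C}_{s_r}$ --- is where both your sketch and the paper's proof are too quick. Carrying out the comparison you propose gives the \emph{opposite} conclusion. Indeed, via $\iota^\trop=-\mathrm{Id}$ the sign of $\gamma$ at a point of $\interior\cC^-_{(t_0)}$ equals the \emph{negative} of the tropical sign of $-\bs$ (at step $r=0$ this is immediate: the $k_0$-th row of $\overline{C}_{t_{nh}}$ is a $c$-vector, whereas the $k_0$-th coordinate of a point in $\interior\cC^-_{(t_0)}$ is negative, and the presentation matrix on $\cC^-$ equals $\overline{C}$). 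Hence in the stable range the $k_r$-th row of $\overline{C}_{s_r}$ has sign $-\epsilon_r$, so $\epsilon_r$ times that row is entrywise non\emph{positive} and $[\epsilon_r\overline{C}_{s_r}]_+^{k_r\bullet}=0$. The honest recursion is therefore $F^{(n+1)}=\check{E}_\phi F^{(n)}$ with no second term; one can verify this directly for the Kronecker loop of \cref{ex:kronecker} (e.g.\ $F^{(1)}=\begin{psmallmatrix}1&0\\2&1\end{psmallmatrix}$, $F^{(2)}=\begin{psmallmatrix}3&2\\4&3\end{psmallmatrix}=\check{E}_\phi F^{(1)}$, while the displayed second term is nonzero). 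This is harmless for the intended application --- \cref{l:fromabove_2} only needs an upper bound on $\|F^{(n)}\|$, which is of course easier when the extra term vanishes --- but as an identity the formula you are aiming for does not hold, and your key step would reveal this rather than confirm it.
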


\begin{proof}
This is a consequence of an iterated application of \cref{t:FuGy}. Here note that the sign stability on the set $\cL_{(t_0)}^{\mathrm{can}}$ implies that the two sign sequences $\boldsymbol{\epsilon}_{\gamma}(\phi^n(w))$ and $\boldsymbol{\epsilon}_{\gamma}(\phi^n(w_-))$ for $w \in \interior \cC^+_{(t_0)}$ and $w_- \in \interior \cC^-_{(t_0)}$ stabilize to the same stable sign $\boldsymbol{\epsilon}_\gamma^\stab$ for large $n$. Since the former one is the tropical sign and the latter is the sign of the row vectors of $C_t^{-\bs;t_0}$ (cf. the definition of the tropical sign), we see that the second term of the equation given in \cref{t:FuGy} becomes $(C_t^{-\bs;t_0})^{k\bullet}$ in the stable range. With a notice that 
\begin{align*}
    C^{-\bs;t_0}_{t_{nh+h-m-1}} = E_{h-m-2} \cdots E_{0} E_{-1}\, C^{-\bs;t_0}_{t_{nh}},
\end{align*}
the assertion follows from a direct computation.
\end{proof}

A \lq\lq rotated'' uniform norm is suited for our computation in the sequel.  
For a real invertible matrix $E \in GL_N(\bR)$, consider its real Jordan normal form: $S^{-1}ES = J(\nu_1,m_1) \oplus \cdots \oplus J(\nu_r,m_r)$
for some real invertible matrix $S \in GL_N(\bR)$. See \cref{subsec:real Jordan}. Let $\mathbf{e}_1,\dots,\mathbf{e}_N$ be the corresponding real Jordan basis. Then for a vector $v=x_1\mathbf{e}_1+\dots+ x_N\mathbf{e}_N \in \bR^N$, we define $\|v\|_\infty^E:=\max_{k=1,\dots,N} |x_k|$. Then clearly $\|\cdot\|_\infty^E$ defines a norm on $\bR^N$, which has the following nice property:

\begin{lem}\label{l:spec_radius}  Suppose $\det E =\pm 1$.
Then for any $v \in \bR^N$, we have the inequalities $\|Ev\|_\infty^E \leq \rho(E)\|v\|_\infty^E$ and $\|\check{E}v\|_\infty^E \leq \rho(\check{E})\|v\|_\infty^E$. Here recall \cref{conv:check}.
\end{lem}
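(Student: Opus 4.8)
The plan is to work in the real Jordan coordinates of $E$ fixed in \cref{subsec:real Jordan}. Writing $S=(\mathbf e_1\mid\dots\mid\mathbf e_N)$ for the chosen real Jordan basis, we have $S^{-1}ES=J:=J(\nu_1,m_1)\oplus\dots\oplus J(\nu_r,m_r)$, and by definition $\|v\|_\infty^E=\|S^{-1}v\|_\infty$ is the coordinatewise maximum norm in this basis. Consequently $\|Ev\|_\infty^E=\|J(S^{-1}v)\|_\infty$, so the first assertion is exactly the $\ell^\infty$-operator-norm bound $\|J\|_\infty\le\rho(E)$; and since $J$ is block-diagonal it suffices to bound each real Jordan block in $\ell^\infty$-operator norm by $\rho(E)$. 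Thus the whole proof reduces to a block-by-block computation.

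First I would treat a block for a real eigenvalue $\nu_s$: here $J(\nu_s,m_s)$ is $\nu_s\mathrm{Id}$ plus the nilpotent super-diagonal part normalized in \cref{subsec:real Jordan}, so for $x=(x_1,\dots,x_{m_s})$ that normalization yields $\|J(\nu_s,m_s)x\|_\infty\le|\nu_s|\,\|x\|_\infty$, and $|\nu_s|\le\rho(E)$. For a block associated with a complex conjugate pair $\nu_s,\bar\nu_s$, $|\nu_s|=r_s\le\rho(E)$, the diagonal $2\times2$ sub-blocks act as $r_s$ times a rotation and the coupling between consecutive sub-blocks is again the normalized super-diagonal part; here one uses the norm on each $2\times2$ piece prescribed in \cref{subsec:real Jordan} (with respect to which a scaled rotation acts by the exact factor $r_s$), the naive coordinate $\ell^\infty$ being unsuitable since a rotation is not an $\ell^\infty$-isometry. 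Taking the maximum over all blocks yields $\|Ev\|_\infty^E\le\rho(E)\|v\|_\infty^E$.

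For the second inequality I would use \cref{conv:check}: $\check E=(E^\tr)^{-1}$ is carried by the same change of basis to $\check J=(J^\tr)^{-1}$, which is block-diagonal with blocks the inverses of the transposed Jordan blocks — each lower-triangular, with diagonal entries $1/\nu_s$ (hence spectral radius $1/|\nu_s|\le\rho(\check E)$) and off-diagonal entries again of controlled size by the normalization of the basis — and then run the identical block-wise estimate with the reciprocal eigenvalues and $\rho(\check E)$ in the roles of $\nu_s$ and $\rho(E)$. The hypothesis $\det E=\pm1$ is used only to ensure that $\check E$ is a well-defined element of $GL_N(\bR)$, so that $\rho(\check E)$ makes sense.

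The step I expect to be the main obstacle is controlling the nilpotent (super-diagonal) part of each Jordan block: a genuine Jordan block has $\|J(\nu,m)^n\|$ growing like $n^{m-1}\rho(E)^n$, so these couplings are precisely what could push $\|Jx\|_\infty$ above $\rho(E)\|x\|_\infty$, and the whole estimate hinges on the quantitative normalization of the real Jordan basis carried out in \cref{subsec:real Jordan}; the complex-eigenvalue blocks add the secondary technical point, noted above, that one must not use the naive coordinate $\ell^\infty$ there.
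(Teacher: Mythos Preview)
Your overall reduction --- pass to the real Jordan coordinates so that $\|Ev\|_\infty^E=\|J(S^{-1}v)\|_\infty$ and bound block by block --- is exactly the paper's approach. The gaps lie in the block estimates and in how the hypothesis is used.

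You claim $\|J(\nu_s,m_s)x\|_\infty\le|\nu_s|\,\|x\|_\infty$ for real blocks, and an analogous bound for the complex pieces, by appealing to a ``normalization'' of the nilpotent coupling and to a special norm on the $2$-planes that you locate in \cref{subsec:real Jordan}. No such normalization or special norm appears there: the appendix uses the standard real Jordan form with $1$'s on the superdiagonal (and the standard $2\times 2$ rotation sub-blocks), so for any block of size at least $2$ the $\ell^\infty$ operator norm of $J(\nu,m)$ strictly exceeds $|\nu|$. The paper does not claim the bound $|\nu|$; it records instead the per-block estimate $\|J(\nu,m)\|_\infty\le\max(|\nu|,1)$.

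This feeds into the second gap: you misidentify the role of $\det E=\pm 1$. It is not needed to make $\check E=(E^\tr)^{-1}$ well-defined --- any invertible $E$ will do. The paper uses it to deduce $\rho(E)\ge 1$ (the moduli of the eigenvalues multiply to $1$, so the largest is at least $1$), and this is precisely what bridges the per-block bound $\max(|\nu|,1)$ to the target: $\max(|\nu|,1)\le\max(\rho(E),1)=\rho(E)$. Your argument never invokes $\rho(E)\ge 1$; once the correct block bound replaces your claimed $|\nu_s|$, the proof does not close without it. For the second inequality the paper proceeds identically, using $S^{-1}\check E S=\check J(\nu_1,m_1)\oplus\cdots\oplus\check J(\nu_r,m_r)$ together with $\|\check J(\nu,m)\|_\infty=\|J(\nu,m)\|_\infty$ and $\rho(\check E)\ge 1$.
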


\begin{proof}
The first statement follows from the inequality  $\|J(\nu,m)\|_\infty \leq \max(|\nu|,1)$ for a real Jordan block $J(\nu,m)$, and the fact that $\rho(E) \geq 1$. The second statement follows from $S^{-1}\check{E}S = \check{J}(\nu_1,m_1) \oplus \cdots \oplus \check{J}(\nu_r,m_r)$ and $\|\check{J}(\nu,m)\|_\infty=\|J(\nu,m)\|_\infty$.
\end{proof}
For a matrix $M=(m_{ij})_{i,j=1}^N$ with column vectors $\mathbf{m}_j:=(m_{ij})_{i=1}^N$, let $\|M\|_{\max}^E := \max_{j=1,\dots,N} \|\mathbf{m}_j\|_\infty^E$. We also use the operator norm 
\begin{align*}
    \|M\|_{\mathrm{op}}^E:= \sup_{v \in \bR^N\setminus \{0\}}\frac{\|Mv\|_\infty^E}{\|v\|_\infty^E}.
\end{align*}

We are going to use these norms for $E=E_\phi$. 

\begin{lem}\label{l:fromabove_2}
Let the notations as in \cref{l:F-matrix_mutation_stable}. 
Let 
\begin{align*}
    \const:=h\max_{m=0,\dots,h-1} \bigl\|\check{E}_h \cdots \check{E}_{h-m} \cdot \epsilon_{h-m-1} \Delta_{k_{h-m-1}} \cdot E_{h-m-2}\cdots E_{-1} \bigr\|_{\mathrm{op}}^{E_\phi}.
\end{align*}
and $\constt:=\const \rho(E_\phi)^{-n_0-1}\big\|C^{(n_0)}_{-\bs}\big\|_{\max}^{E_\phi}$.
\vspace{1mm}Let $(s_n)_{n \geq n_0}$ be the sequence such that $s_{n+1}=s_n + \constt$ and $s_{n_0}=\rho(E_\phi)^{-n_0}\big\|F^{(n_0)}\big\|_{\max}^{E_\phi}$. Then we have
\begin{align*}
    \bigl\|F^{(n)}\bigr\|_{\max}^{E_\phi} \leq s_n R_\phi^n
\end{align*}
for all $n \geq n_0$.
\end{lem}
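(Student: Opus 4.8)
The plan is to distill the affine matrix recursion of \cref{l:F-matrix_mutation_stable} into a scalar recursion for $\|F^{(n)}\|_{\max}^{E_\phi}$ and then solve that recursion by an elementary induction on $n$.

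First I would record two ingredients. Writing the recursion of \cref{l:F-matrix_mutation_stable} as $F^{(n+1)} = \check{E}_\phi F^{(n)} + D^{(n)}\overline{C}^{(n)}$, where $D^{(n)}$ denotes the bracketed matrix $\sum_{m=0}^{h-1}\check{E}_h\cdots\check{E}_{h-m}\cdot\epsilon_{h-m-1}\Delta_{k_{h-m-1}}\cdot E_{h-m-2}\cdots E_{-1}$, the triangle inequality for $\|\cdot\|_{\mathrm{op}}^{E_\phi}$ gives $\|D^{(n)}\|_{\mathrm{op}}^{E_\phi} \leq \const$ directly from the definition of $\const$. On the other hand, \cref{l:frombelow_2} gives $\overline{C}^{(n)} = E_\phi^{\,n-n_0}\,\overline{C}^{(n_0)}$ for all $n \geq n_0$ (taking $n_0$ large enough that \cref{l:frombelow_2} also applies), and applying \cref{l:spec_radius} once to each column yields $\|\overline{C}^{(n)}\|_{\max}^{E_\phi} \leq \rho(E_\phi)^{\,n-n_0}\,\|C^{(n_0)}_{-\bs}\|_{\max}^{E_\phi}$, since $\overline{C}^{(n_0)} = C^{(n_0)}_{-\bs}$.

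Next I would assemble the scalar recursion. Interpreting $\|\cdot\|_{\max}^{E_\phi}$ as a column-wise maximum, the triangle inequality bounds the $j$-th column of $F^{(n+1)}$ by $\|\check{E}_\phi \mathbf{f}^{(n)}_j\|_\infty^{E_\phi} + \|D^{(n)}\overline{\mathbf{c}}^{(n)}_j\|_\infty^{E_\phi}$; applying \cref{l:spec_radius} to the first term and submultiplicativity of $\|\cdot\|_{\mathrm{op}}^{E_\phi}$ to the second, then taking the maximum over $j$, one obtains, for all $n\ge n_0$,
\[
\|F^{(n+1)}\|_{\max}^{E_\phi} \leq \rho(\check{E}_\phi)\,\|F^{(n)}\|_{\max}^{E_\phi} + \const\,\rho(E_\phi)^{\,n-n_0}\,\|C^{(n_0)}_{-\bs}\|_{\max}^{E_\phi}.
\]
Because $R_\phi \geq \rho(\check{E}_\phi)$, $R_\phi \geq \rho(E_\phi)$, and $\const\,\rho(E_\phi)^{\,n-n_0}\,\|C^{(n_0)}_{-\bs}\|_{\max}^{E_\phi} = \constt\,\rho(E_\phi)^{\,n+1} \leq \constt\,R_\phi^{\,n+1}$ by the definition of $\constt$, this simplifies to
\[
\|F^{(n+1)}\|_{\max}^{E_\phi} \leq R_\phi\,\|F^{(n)}\|_{\max}^{E_\phi} + \constt\,R_\phi^{\,n+1}.
\]

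Finally I would prove $\|F^{(n)}\|_{\max}^{E_\phi} \leq s_n R_\phi^{\,n}$ by induction on $n \geq n_0$. The base case is $s_{n_0}R_\phi^{\,n_0} = (R_\phi/\rho(E_\phi))^{n_0}\,\|F^{(n_0)}\|_{\max}^{E_\phi} \geq \|F^{(n_0)}\|_{\max}^{E_\phi}$, valid since $R_\phi \geq \rho(E_\phi)$ and $n_0 \geq 0$; and for the inductive step, feeding the hypothesis into the last display yields $\|F^{(n+1)}\|_{\max}^{E_\phi} \leq s_n R_\phi^{\,n+1} + \constt R_\phi^{\,n+1} = (s_n+\constt)R_\phi^{\,n+1} = s_{n+1}R_\phi^{\,n+1}$. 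The whole computation is routine once the norms are set up; I expect the only delicate points to be the bookkeeping — consistently reading $\|\cdot\|_{\max}^{E_\phi}$ column by column so that \cref{l:spec_radius} applies to each column separately, and matching the exponents of $\rho(E_\phi)$ and $R_\phi$ with the precise definitions of $\const$, $\constt$ and $s_{n_0}$ — rather than any conceptual difficulty.
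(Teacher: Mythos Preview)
Your proposal is correct and follows essentially the same approach as the paper: both derive the scalar recursion $\|F^{(n+1)}\|_{\max}^{E_\phi} \leq \rho(\check{E}_\phi)\|F^{(n)}\|_{\max}^{E_\phi} + \const\,\rho(E_\phi)^{n-n_0}\|C^{(n_0)}_{-\bs}\|_{\max}^{E_\phi}$ from \cref{l:F-matrix_mutation_stable}, \cref{l:frombelow_2} and \cref{l:spec_radius}, and then close the induction using $R_\phi \geq \rho(E_\phi),\rho(\check{E}_\phi)$. Your write-up is slightly more explicit about the base case and the column-wise reading of $\|\cdot\|_{\max}^{E_\phi}$; the only minor wording slip is ``applying \cref{l:spec_radius} once to each column'' where you in fact iterate it $n-n_0$ times per column.
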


\begin{proof}
The initial condition for the sequence $(s_n)$ is chosen so that the assertion is true for $n=n_0$. Let us proceed by induction on $n \geq n_0$. 
Since the sign stabilizes for $n \geq n_0$, we have
\begin{align*}
    \bigl\|F^{(n+1)}\bigr\|_{\max}^{E_\phi} 
    &\leq \bigl\| \check{E}_\phi F^{(n)}\bigr\|_{\max}^{E_\phi}\\
    & \quad+ \sum_{m=0}^{h-1} \bigl\|\check{E}_h \cdots \check{E}_{h-m} \cdot \epsilon_{h-m-1} \Delta_{k_{h-m-1}} \cdot E_{h-m-2}\cdots E_{-1} C^{(n)}_{-\bs}\bigr\|_{\max}^{E_\phi} \\
    &\leq \rho(\check{E}_\phi)\bigl\|F^{(n)}\bigr\|_{\max}^{E_\phi} + \const \bigl\|C^{(n)}_{-\bs}\bigr\|_{\max}^{E_\phi} \qquad \qquad \qquad \mbox{(\cref{l:spec_radius})} \\
    &\leq s_n\rho(\check{E}_\phi)R_\phi^n + \const\rho(E_\phi)^{n-n_0}\bigl\|C^{(n_0)}_{-\bs}\bigr\|_{\max}^{E_\phi} \\
    &=s_n \rho(\check{E}_\phi)R_\phi^n + \constt\rho(E_\phi)^{n+1} \\
    &\leq (s_n+\constt)R_\phi^{n+1}= s_{n+1}R_\phi^{n+1}.
\end{align*}
Thus the assertion is proved. 
\end{proof}
Combining \cref{l:fromabove_1,l:fromabove_2}, we get an estimate of the algebraic entropies $\cE_\phi^x$ and $\cE_\phi^a$ from above:

\begin{prop}\label{p:estimate_above}
We have $\cE_\phi^x \leq \log R_\phi$ and $\cE_\phi^a \leq \log R_\phi$.
\end{prop}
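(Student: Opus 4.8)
The plan is to assemble \cref{l:fromabove_1} and \cref{l:fromabove_2} together with the growth estimates for the $C$- and $G$-matrices coming from sign stability, and then to observe that the only non-exponential factor that appears is linear in $n$, hence harmless after applying $\tfrac1n\log(-)$.

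First I would enlarge the integer $n_0$ so that the conclusions of \cref{l:frombelow_2}, \cref{l:F-matrix_mutation_stable} and \cref{l:fromabove_2} all hold simultaneously from $n_0$ on. Then \cref{l:frombelow_2} gives $C^{(n)} = E_\phi^{\,n-n_0}C^{(n_0)}$ and $G^{(n)} = \check{E}_\phi^{\,n-n_0}G^{(n_0)}$ for $n \geq n_0$. Applying \cref{l:spec_radius} iteratively in the rotated norm $\|\cdot\|_\infty^{E_\phi}$ and using $R_\phi = \max\{\rho(E_\phi),\rho(\check{E}_\phi)\} \geq 1$ (the lower bound being \cref{thm:PF property}), one gets
\begin{align*}
    \bigl\|C^{(n)}\bigr\|_{\max}^{E_\phi} \leq \rho(E_\phi)^{n-n_0}\bigl\|C^{(n_0)}\bigr\|_{\max}^{E_\phi} \leq R_\phi^{\,n}\bigl\|C^{(n_0)}\bigr\|_{\max}^{E_\phi},
\end{align*}
and likewise $\|G^{(n)}\|_{\max}^{E_\phi} \leq R_\phi^{\,n}\|G^{(n_0)}\|_{\max}^{E_\phi}$. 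Meanwhile \cref{l:fromabove_2} gives $\|F^{(n)}\|_{\max}^{E_\phi} \leq s_n R_\phi^{\,n}$, where by construction $s_{n+1} = s_n + \constt$, so $s_n = s_{n_0} + (n-n_0)\constt$ grows only linearly in $n$. Since all norms on $\bR^N$ are equivalent, there is a constant $c>0$ with $\|M\|_{\max} \leq c\,\|M\|_{\max}^{E_\phi}$ for every $N\times N$ matrix $M$; feeding the three estimates above into \cref{l:fromabove_1} therefore yields a constant $D>0$, independent of $n$, with
\begin{align*}
    \degr\bigl((\phi^x)^n\bigr) \leq D\,(s_n+1)\,R_\phi^{\,n}, \qquad \degr\bigl((\phi^a)^n\bigr) \leq D\,(s_n+1)\,R_\phi^{\,n}
\end{align*}
for all $n \geq n_0$. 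As $s_n+1$ is linear in $n$, we have $\tfrac1n\log\bigl(D(s_n+1)R_\phi^{\,n}\bigr) = \tfrac1n\log\bigl(D(s_n+1)\bigr) + \log R_\phi \to \log R_\phi$, and hence
\begin{align*}
    \cE_\phi^x = \limsup_{n\to\infty}\tfrac1n\log\degr\bigl((\phi^x)^n\bigr) \leq \log R_\phi, \qquad \cE_\phi^a = \limsup_{n\to\infty}\tfrac1n\log\degr\bigl((\phi^a)^n\bigr) \leq \log R_\phi.
\end{align*}

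There is essentially no obstacle remaining: the substantive inputs — the separation-formula degree bounds (\cref{l:fromabove_1}), the $F$-matrix recursion in the stable range (\cref{l:F-matrix_mutation_stable}), and the choice of the $E_\phi$-adapted norm in \cref{l:fromabove_2} which makes the spectral radius control the growth — have already been set up. The only point requiring care is the bookkeeping: ensuring a single $n_0$ serves all the stabilization statements at once, and tracking that the inhomogeneous term built from $\overline{C}^{(n)}$ in the $F$-recursion, although it grows like $R_\phi^{\,n}$, contributes only the linear-in-$n$ correction already absorbed into the sequence $(s_n)$. That linear correction disappears under $\tfrac1n\log(-)$, which is exactly why the upper bound is $\log R_\phi$ on the nose.
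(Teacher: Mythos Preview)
Your proof is correct and follows essentially the same approach as the paper: both feed the separation-formula degree bounds (\cref{l:fromabove_1}) together with the $F$-matrix estimate (\cref{l:fromabove_2}) into the $\limsup$, noting that the arithmetic sequence $(s_n)$ contributes nothing in the limit. The only cosmetic difference is that the paper splits the $\limsup$ via the identity $\limsup\tfrac1n\log(A_n+B_n)=\max\{\limsup\tfrac1n\log A_n,\limsup\tfrac1n\log B_n\}$ and handles the $C^{(n)}$ term by invoking the Lyapunov-exponent computation from the proof of \cref{p:estimate_below}, whereas you bound $\|C^{(n)}\|$ and $\|G^{(n)}\|$ directly with \cref{l:spec_radius} and package everything into a single inequality $\deg\leq D(s_n+1)R_\phi^{\,n}$ before taking the limit.
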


\begin{proof}
Note that
\[\limsup_{n \to \infty}\frac{1}{n} \log (A_n + B_n) = \max\left\{\limsup_{n \to \infty} \frac{1}{n} \log A_n,\ \limsup_{n \to \infty} \frac{1}{n} \log B_n \right\}\]
for sequences $(A_n),(B_n)$ of positive numbers. From \cref{l:fromabove_1}, we get
\begin{align*}
    \cE_\phi^x &= \limsup_{n \to \infty} \frac{1}{n}\log \degr(\phi^x)^n \\
    &\leq \limsup_{n \to \infty}\left( \frac{1}{n}\log (\|C^{(n)}\|_{\max} + \|F^{(n)}\|_{\max})  + \frac{1}{n}\log K' \right) \\
    &= \max \left\{\limsup_{n \to \infty} \frac{1}{n}\log \|C^{(n)}\|_{\max},\  \limsup_{n \to \infty}\frac{1}{n}\log \|F^{(n)}\|_{\max}  \right\}.
\end{align*}
The first term gives $\mathsf{\Lambda}_{E_\phi}(C^{(n_0)}) = \log \rho(E_\phi)$ by the proof of \cref{p:estimate_below}. On the second term, we replace the uniform norm with the rotated norm $\big\|\cdot\big\|_{\max}^{E_\phi}$ and compute
\begin{align*}
    \limsup_{n \to \infty}\frac{1}{n}\log \|F^{(n)}\|_{\max} 
    &=\limsup_{n \to \infty}\frac{1}{n}\log \big\|F^{(n)}\big\|_{\max}^{E_\phi} \\
    &\leq \limsup_{n \to \infty}\left(\frac{1}{n}\log R_\phi^n + \frac{1}{n}\log s_n \right) \\
    &=R_\phi.
\end{align*}
Here we used \cref{l:fromabove_2} and the fact that the sequence $(s_n)$ is an arithmetic sequence. The proof of the second statement follows from the same line of arguments.
\end{proof}
Combining \cref{p:estimate_below,p:estimate_above}, we get a proof of \cref{t:entropy}.



\subsection{Two-sided sign stability}
Let us consider the case where a path $\gamma : t_0 \to t$ which represents a mutation loop $\phi = [\gamma]_\bs$ is not sign-stable on $\cL_{(t_0)}^{\mathrm{can}} = \interior \cC^+_{(t_0)} \cup \interior \cC^-_{(t_0)}$ but is sign-stable on each of $\interior \cC^+_{(t_0)}$ and $\interior \cC^-_{(t_0)}$.
When it satisfies a suitable condition, we can still compute the algebraic entropy of the cluster transformations induced by such a  mutation loop.

\begin{defi}\label{def:two-sided sign-stab}
Let $\gamma: t_0 \to t$ be a path which represents a mutation loop $\phi = [\gamma]_{\bs}$.
The path $\gamma$ is said to be \emph{two-sided sign-stable} if it is sign-stable on each of $\interior \cC^+_{(t_0)}$ and $\interior \cC^-_{(t_0)}$ with stable signs $\boldsymbol{\epsilon}^+_\gamma$ and $\boldsymbol{\epsilon}^-_\gamma$ respectively, and satisfies the following conditions:
\begin{itemize}
    \item $\boldsymbol{\epsilon}^+_\gamma = - \boldsymbol{\epsilon}^-_\gamma$, and 
    \item the spectral radii of $E^{\boldsymbol{\epsilon}^+_\gamma}_\gamma$ and $E^{\boldsymbol{\epsilon}^-_\gamma}_\gamma$ are the same.
\end{itemize}

\end{defi}

When $\gamma$ is two-sided sign-stable, we denote the spectral radii $\lambda_{\phi, \interior\cC^+_{(t_0)}} = \lambda_{\phi, \interior\cC^-_{(t_0)}}$ by $\lambda_\phi^{(t_0)}$ and still call it the  \emph{cluster stretch factor} of the mutation loop $\phi$.

We will give an example of a two-sided sign-stable mutation loop in \cref{ex:two-sided sign-stab}.

\begin{cor}\label{cor:two-sided}
Let $\phi=[\gamma]_{\bs}$ be a mutation loop with a representation path $\gamma$ which is two-sided sign-stable. Assuming that \cref{p:spec_same} holds true, we have
\begin{align*}
    \cE_\phi^a=\cE_\phi^x = \log \lambda_\phi^{(t_0)}.
\end{align*}
\end{cor}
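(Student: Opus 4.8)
The plan is to reproduce the proof of \cref{t:entropy}, i.e.\ \cref{p:estimate_below} and \cref{p:estimate_above}, but carrying along the two stable presentation matrices $E_\phi^{\pm}:=E^{\boldsymbol{\epsilon}^{\pm}_\gamma}_\gamma$ (and their transpose-inverses $\check E_\phi^{\pm}=((E_\phi^{\pm})^{\tr})^{-1}$) attached to $\interior\cC^+_{(t_0)}$ and $\interior\cC^-_{(t_0)}$ respectively. The first thing I would record is that, by the two conditions in \cref{def:two-sided sign-stab} together with \cref{p:spec_same}, all four numbers $\rho(E_\phi^{+}),\rho(E_\phi^{-}),\rho(\check E_\phi^{+}),\rho(\check E_\phi^{-})$ coincide with the cluster stretch factor $\lambda_\phi^{(t_0)}$: the defining conditions give $\rho(E_\phi^{+})=\rho(E_\phi^{-})=\lambda_\phi^{(t_0)}$, and \cref{p:spec_same} applied to a point of $\interior\cC^{+}_{(t_0)}$ (resp.\ $\interior\cC^{-}_{(t_0)}$) gives $\rho(\check E_\phi^{+})=\rho(E_\phi^{+})$ (resp.\ $\rho(\check E_\phi^{-})=\rho(E_\phi^{-})$).

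For the lower bound I would rerun \cref{l:frombelow_2} on each cone separately: from \cref{cor:C_as_pres_mat} and the recursion $C^{(n+1)}=E^{\boldsymbol{\epsilon}_\gamma(\phi^{n}(w))}_\gamma C^{(n)}$ for $w\in\interior\cC^{+}_{(t_0)}$, sign stability on $\interior\cC^{+}_{(t_0)}$ gives $C^{(n+1)}=E_\phi^{+}C^{(n)}$ and, via tropical duality \eqref{eq:tropical duality}, $G^{(n+1)}=\check E_\phi^{+}G^{(n)}$ for $n\ge n_0$; dually, \cref{cor:pres_mat_x_negative} and sign stability on $\interior\cC^{-}_{(t_0)}$ give $\overline C^{(n+1)}=E_\phi^{-}\overline C^{(n)}$ and $\overline G^{(n+1)}=\check E_\phi^{-}\overline G^{(n)}$. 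Feeding the $C$-recursion into \cref{l:frombelow_1} and the $\overline G$-recursion into the estimate $\deg((\phi^a)^{n})\ge\tfrac12\|\overline G^{(n)}\|_{\max}$ from \cref{sec:from below}, and then invoking the Lyapunov-exponent argument of \cref{p:estimate_below} (legitimate since $C^{(n_0)}$ and $\overline G^{(n_0)}$ are invertible, so \cref{cor:Lyapunov_independent} applies), I get $\cE_\phi^{x}\ge\log\rho(E_\phi^{+})=\log\lambda_\phi^{(t_0)}$ and $\cE_\phi^{a}\ge\log\rho(\check E_\phi^{-})=\log\lambda_\phi^{(t_0)}$.

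For the upper bound the only new ingredient is a two-sided analogue of \cref{l:F-matrix_mutation_stable}. In \cref{t:FuGy} the homogeneous term is governed by the tropical sign, which (being the sign of $\phi^{n}$ on $\interior\cC^{+}_{(t_0)}$) stabilizes to $\boldsymbol{\epsilon}^{+}_\gamma$, whereas the inhomogeneous term $[\epsilon^{(t)}_{k}C^{-\bs;t_0}_t]^{k\bullet}_{+}$ is controlled by the sign of the $k$-th row of $C^{-\bs;t_0}_t$, which (being the sign of $\phi^{n}$ on $\interior\cC^{-}_{(t_0)}$) stabilizes to the matching component of $\boldsymbol{\epsilon}^{-}_\gamma=-\boldsymbol{\epsilon}^{+}_\gamma$; hence in the stable range $\epsilon^{(t)}_k$ times that row has all non-positive entries, so $[\,\cdot\,]^{k\bullet}_{+}=0$ and the inhomogeneous term disappears. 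Iterating gives $F^{(n+1)}=\check E_\phi^{+}F^{(n)}$ for $n\ge n_0$, so from the real Jordan form (as in \cref{l:spec_radius}) $\limsup_{n}\frac1n\log\|F^{(n)}\|_{\max}\le\log\rho(\check E_\phi^{+})=\log\lambda_\phi^{(t_0)}$, with no rotated norm or arithmetic-sequence bookkeeping needed. Plugging this together with $\limsup_{n}\frac1n\log\|C^{(n)}\|_{\max}=\log\rho(E_\phi^{+})$ and $\limsup_{n}\frac1n\log\|G^{(n)}\|_{\max}=\log\rho(\check E_\phi^{+})$ into \cref{l:fromabove_1} and arguing as in \cref{p:estimate_above} yields $\cE_\phi^{x}\le\log\lambda_\phi^{(t_0)}$ and $\cE_\phi^{a}\le\log\lambda_\phi^{(t_0)}$. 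Combining the two halves gives the claim.

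I expect the delicate point to be precisely this two-sided version of \cref{l:F-matrix_mutation_stable}: one must be careful that the homogeneous term of \cref{t:FuGy} sees the tropical sign coming from $\interior\cC^{+}_{(t_0)}$ while the inhomogeneous term sees the row signs of $C^{-\bs;t_0}_t$ coming from $\interior\cC^{-}_{(t_0)}$, so that the hypothesis $\boldsymbol{\epsilon}^{+}_\gamma=-\boldsymbol{\epsilon}^{-}_\gamma$ really does annihilate the inhomogeneous term in the stable range — and, relatedly, that the two lower estimates extracted from the $\X$- and $\A$-sides, $\rho(E_\phi^{+})$ and $\rho(\check E_\phi^{-})$, get identified with the single invariant $\lambda_\phi^{(t_0)}$ only through the combination of both defining conditions of two-sided sign stability and \cref{p:spec_same}.
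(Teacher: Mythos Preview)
Your proposal is correct and follows essentially the same approach as the paper: the paper likewise observes that all estimates from \cref{sec:from below,sec:from above} go through cone-by-cone, and replaces \cref{l:F-matrix_mutation_stable} by the simpler recursion $F^{(n+1)}=\check E_\phi^{+}F^{(n)}$, proved exactly by your argument that the tropical sign (from $\interior\cC^{+}_{(t_0)}$) and the row-sign of $C^{-\bs;t_0}_t$ (from $\interior\cC^{-}_{(t_0)}$) are opposite, killing the inhomogeneous term in \cref{t:FuGy}. Your identification of the delicate point is spot on, and your remark that the rotated-norm/arithmetic-sequence bookkeeping becomes unnecessary matches the paper's observation that the replacement lemma is ``rather simpler than the original one.''
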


\begin{proof}
Observe that all the statements in \cref{sec:from below,sec:from above} still hold, except for \cref{l:F-matrix_mutation_stable}. Indeed, they only depends on the sign stability on each of the cones $\interior \cC^+_{(t_0)}$ and $\interior \cC^-_{(t_0)}$. 
The replacement of \cref{l:F-matrix_mutation_stable} is rather simpler than the original one:
\begin{lem}\label{l:two-sided}
Suppose that the representation path $\gamma: t_0 \to t$ of $\phi$ is two-sided sign-stable. Then there exists an integer $n_0 \geq 0$ such that 
\begin{align*}
    F^{(n+1)} = \check{E}_\phi^{(t_0)}F^{(n)}
\end{align*}
for all $n \geq n_0$.
\end{lem}
\begin{proof}
By definition of the two-sided sign stability,
the stable sign $\boldsymbol{\epsilon}^+_\gamma$ on $\interior \cC^+_{(t_0)}$ is the minus of the stable sign $\boldsymbol{\epsilon}^-_\gamma$ on $\interior \cC^-_{(t_0)}$:
$\boldsymbol{\epsilon}^+_\gamma = - \boldsymbol{\epsilon}^-_\gamma$. The former sign is the sequence of tropical signs by \cref{cor:C_as_pres_mat}, and the latter gives the signs of row vectors of the matrices $C^{-\bs;t_0}_{t}$ by \cref{cor:pres_mat_x_negative}. 
Hence the second term in the right-hand side of \cref{t:FuGy} vanishes at vertex $t_m$ along the path $\gamma$, for sufficiently large integer $m$.
\end{proof}
Now \cref{cor:two-sided} can be proved by following the same line as the proof of \cref{t:entropy}, where the statements derived from \cref{l:F-matrix_mutation_stable} are replaced with simpler ones corresponding to \cref{l:two-sided}.
\end{proof}

\section{Check methods and examples}\label{sec:example}
In this section we give several methods for checking sign stability and demonstrate them in concrete examples.
We denote by $\bS \cX_{(t_0)}(\bR^\trop)$ the quotient of  $\cX_{(t_0)}(\bR^\trop)$ by the rescaling $\bR_{>0}$-action, and sometimes we identify it with the sphere $\{x_1^2 + \cdots + x_N^2 = 1\} \subset \cX_{(t_0)}(\bR^\trop)$. When we deal with concrete examples, it is useful to represent a skew-symmetric matrix $B=(b_{ij})_{i,j \in I}$ by a quiver $Q$. It has vertices parametrized by the set $I$ and $|b_{ij}|$ arrows from $i$ to $j$ (resp. $j$ to $i$) if $b_{ij} >0$ (resp. $b_{ji} >0$). Note that that the quiver $Q$ has no loops and $2$-cycles, and the matrix $B$ can be reconstructed from such a quiver.

\subsection{An inductive check method.}
Here we give a method for checking sign stability, assuming one uses a computer. 
First of all, we fix a mutation loop $\phi = [\gamma]_\bs$ with $h(\gamma) = h$.
\begin{enumerate}
\item For $n \geq 1$, examine the following inductive process:
\begin{description}
    \item[\rm(A${}_n$)]
    Decompose $\cX_{(t_0)}(\bR^\trop)$ into the cones $\cC_{\gamma^n}^{\boldsymbol{\epsilon}}$ for $\boldsymbol{\epsilon} \in \{+,-\}^{nh}$. 
    \item[\rm(B${}_n$)]
    For each cone $\cC_{\gamma^n}^{\boldsymbol{\epsilon}}$ such that $\dim \cC_{\gamma^n}^{\boldsymbol{\epsilon}} = N$, check whether it satisfies $\phi^n(\cC_{\gamma^n}^{\boldsymbol{\epsilon}}) \subset \mathrm{int}\, \cC_{\gamma^n}^{\boldsymbol{\epsilon}}$.
    If such a cone is found, then the process terminates. Note that  in this case, the sign sequence $\boldsymbol{\epsilon}$ has the form $(\boldsymbol{\epsilon}_0,\dots,\boldsymbol{\epsilon}_0) \in \{+,-\}^{hn}$. 
    Otherwise, proceed to the step (A${}_{n+1}$). 
\end{description}
If this process terminates at $n=n_0$, then proceed to the next process (2). (If it does not terminate, then this method is not effective for checking if $\phi$ is sign-stable or not.)
\item Chase the orbits of the points $l_{(t_0)}^\pm \in \cL_{(t_0)}^{\mathrm{can}}$ under the action of $\phi$.
If each of them goes to the interior of a common cone $\cC_{\gamma^n}^{\boldsymbol{\epsilon}}$ among those found in (1), then this mutation loop is sign-stable with the stable sign $\boldsymbol{\epsilon}_0 \in \{+,-\}^h$, where $\boldsymbol{\epsilon} = (\boldsymbol{\epsilon}_0,\dots,\boldsymbol{\epsilon}_0) \in \{+,-\}^{hn_0}$ by \cref{prop:sign stab ray}.
\end{enumerate}

Obviously this method can misses a sign-stable mutation loop, but it detects many examples.
We demonstrate these steps below:

\begin{ex}[Markov quiver]\label{ex:Markov}
Here, we demonstrate the inductive check method for a concrete example.
Let $I = \{1,2,3\}$ and $\bs: t \mapsto (N^{(t)}, B^{(t)})$ be a seed pattern such that
\[B^{(t_0)} = 
\begin{pmatrix}
0 & 2 & -2\\
-2 & 0 & 2\\
2 & -2 & 0
\end{pmatrix}
\]
for a vertex $t_0 \in \bT_I$.
The quiver corresponding to this matrix is called Markov quiver \cref{fig:Markov_quiv}.

\begin{figure}[h]
    \centering
    \begin{tikzpicture}[>=latex, scale=0.7]
    \node [draw, circle, inner sep=2] (v1) at (-0.5,1.5) {};
    \node [draw, circle, inner sep=2] (v2) at (-2.5,-1.7) {};
    \node [draw, circle, inner sep=2] (v3) at (1.5,-1.7) {};
    \node at (-0.5,2) {1};
    \node at (-3,-2) {2};
    \node at (2,-2) {3};
    \draw [->>, thick] (v1) edge (v2);
    \draw [->>, thick] (v2) edge (v3);
    \draw [->>, thick] (v3) edge (v1);
    \end{tikzpicture}
    \caption{Markov quiver.}
    \label{fig:Markov_quiv}
\end{figure}

Let us consider the horizontal mutation loop $\phi$ represented by an edge path
\[\gamma: t_0 \overbar{1} t_1 \overbar{2} t_2 \overbar{3} t_3 \overbar{1} t_4 \overbar{2} t_5 \overbar{3} t_6, \]
which is fully-mutating. 
\begin{figure}[h]
    \centering
    \begin{overpic}[width=11cm]{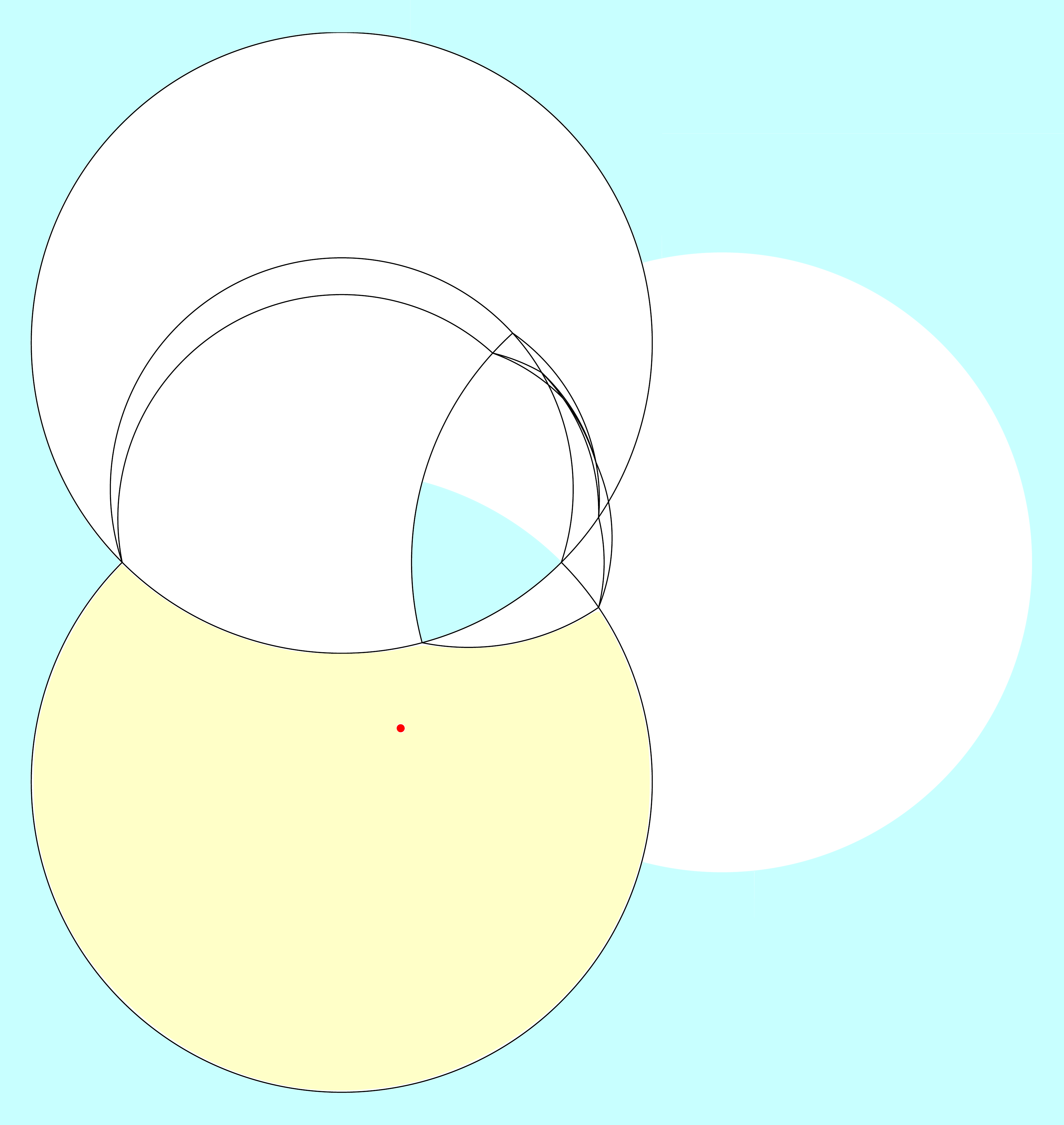}
    \put(130,163){\textcolor{blue}{$\cC^-_{(t_0)}$}}
    \put(250,50){\textcolor{blue}{$\cC^+_{(t_0)}$}}
    \put(55,55){$\cC^{(+,-,+,-,+,-)}_\gamma$}
    \put(125,115){\textcolor{red}{$p_+$}}
    \end{overpic}
    \caption{Cones in $\cX_{(t_0)}(\bR^\trop)$ associated to signs and $\cC^\pm_{(t_0)}$.}
    \label{fig:cones_Markov}
\end{figure}
Then $\cX_{(t_0)}(\bR^\trop)$ is decomposed into 17 full-dimensional cones.
\cref{fig:cones_Markov} shows the image of the intersection of $\bS \cX_{(t_0)}(\bR^\trop)$ and the following objects under the stereographic projection from $[(1,1,1)] \in \bS \cX_{(t_0)}(\bR^\trop)$:
\begin{itemize}
    \item boundary of the cones associated to sign sequences (black lines),
    \item the fixed point
    \[ p_+ = \left[\left(1, -\frac{1+\sqrt{5}}{2}, \frac{-1+\sqrt{5}}{2} \right)\right] \in \cX_{(t_0)}(\bR^\trop) \]
    with the stretch factor $\lambda_\phi = 9+4\sqrt{5}$ (red point),
    \item the cones $\cC^-_{(t_0)}$ and $\cC^+_{(t_0)}$ (blue regions),
    \item the cone associated to the sign sequence $(+,-,+,-,+,-)$ (yellow region).
\end{itemize}
The cone $\cC_\gamma^{(+,-,+,-,+,-)}$ satisfies the condition in (B${}_n$) with $n=1$ and the conditions in (2).
Therefore, we conclude that $\phi$ is sign-stable with stable sign $(+,-,+,-,+,-)$.
Its stable presentation matrices $E_\phi^{(t_0)}$ and $\check{E}_\phi^{(t_0)}$ are given by
\[
E_\phi^{(t_0)} =
\begin{pmatrix}
  9&   6&   4\\
-12&  -7&  -4\\
  4&   2&   1
\end{pmatrix},\quad
\check{E}_\phi^{(t_0)}=
\begin{pmatrix}
 -7&  -4&  12\\
-12&  -7&  20\\
-20& -12&  33
\end{pmatrix}.
\]
Their characteristic polynomials are the same:
\[(\nu - 1) \cdot (\nu^2 - 18\nu + 1).\]
Hence, \cref{p:spec_same} holds true for this example, so the algebraic entropies of the cluster transformations induced by $\phi$ are
\[ \cE^a_\phi = \cE^x_\phi = \log(9+4\sqrt{5}) = 2.88727095035762\dots \]
\end{ex}

\begin{rmk}
In fact, the matrix $B^{(t_0)}$ in \cref{ex:Markov} arises from a triangulation of a once punctured torus.
Moreover, the horizontal mutation loop $\phi$ in \cref{ex:Markov} is actually the third power of the mutation loop $\phi_{LR}$ corresponding to the mapping class called the ``$LR$-transformation'' which gives the smallest stretch factor among the mapping classes on a once punctured torus.
This mutation loop $\phi_{LR}$ is represented by two mutations $(1,2)$ and a permutation $1 \mapsto 2 \mapsto 3 \mapsto 1$.
We can regard the stable sign sequence $(+,-,+,-,+,-)$ as  $((+,-),(+,-),(+,-))$, where each sign sequence $(+,-)$ is the stable sign sequence for the mutation loop $\phi_{LR}$.
\end{rmk}

\subsection{A heuristic check method.}
Fix a mutation loop $\phi$ with $h(\gamma) = h$.
As we mentioned above, the process (1) of the inductive check method may not terminate in finitely many steps.
As an alternative method, when the mutation loop $\phi$ is simple enough, it is well worth trying to find the cone satisfying the conditions in (B${}_n$) with $n=1$ by inspection.
Here is our second method:
\begin{enumerate}
    \item Decompose $\cX_{(t_0)}(\bR^\trop)$ into the cones $\cC^{\boldsymbol{\epsilon}}_\gamma$ for $\boldsymbol{\epsilon} \in \{+,-\}^h$. 
    \item Find a cone $\cC \subset \cX_{(t_0)}(\bR^\trop)$ such that
    \begin{itemize}
        \item $\dim \cC = N$,
        \item $\cC \subset \cC^{\boldsymbol{\epsilon}_0}_\gamma$ for a sign sequence $\boldsymbol{\epsilon}_0 \in \{+,-\}^h$, 
        \item $\phi(\cC) \subset \cC$ and $\phi^{n_0}(\cC \setminus \{0\}) \subset \mathrm{int}\, \cC$ for some $n_0 \geq 1$.
    \end{itemize}
    \item Chase the orbit of the points $l_{(t_0)}^\pm \in \cL_{(t_0)}^{\mathrm{can}}$ under the action of $\phi$.
    If each of them goes to the interior of the cone $\cC$ found in (2), then this mutation loop is sign-stable with the stable sign $\boldsymbol{\epsilon}_0$ by \cref{prop:sign stab ray}.
\end{enumerate}
We will refer to the cone $\cC$ as an \emph{invariant cone} of $\phi$.
Clearly, the choice of an invariant cone is not unique: for example, we don't need to take the maximal one.

\begin{ex}[Kronecker quiver]\label{ex:kronecker}
Let $I:=\{1,2\}$, and $\bs: t \mapsto (N^{(t)}, B^{(t)})$ be a seed pattern such that
\[ B^{(t_0)} = \begin{pmatrix}0 & -\ell \\ \ell & 0\end{pmatrix} \]
for a vertex $t_0 \in \bT_I$ and an integer $\ell \geq 2$.
The quiver corresponding to this matrix is called Kronecker quiver \cref{fig:kronecker_quiv}.
\begin{figure}[h]
    \centering
    \begin{align*}
    \tikz[>=latex]{\draw(0,0) circle(2pt) node[above]{$1$}; \draw(2,0) circle(2pt) node[above]{$2$}; \qarrow{2,0}{0,0} \node at (1,0.3){$\ell$};}
    \end{align*}
    \caption{Kronecker quiver.}
    \label{fig:kronecker_quiv}
\end{figure}
Let us consider the horizontal mutation loop $\phi$ represented by an edge path
\[\gamma: t_0 \overbar{1} t_1 \overbar{2} t_2, \]
which is fully-mutating. 
The tropical cluster $\cX$-variety $\cX_{(t_0)}(\bR^\trop)$ is decomposed into the following four domains of linearity $\cC^{\boldsymbol{\epsilon}}_\gamma$ for $\phi$ with the presentation matrix $E^{\boldsymbol{\epsilon}}_\gamma$ for $\boldsymbol{\epsilon} \in \{+, -\}^2$:
\begin{table}[h]
    \centering
    \caption{Domains of linearity.}
    \vspace{-5pt}
    \begin{tabular}{c||c|c}
        \hline
        $\boldsymbol{\epsilon}$ & $\cC^{\boldsymbol{\epsilon}}_\gamma$ & $E^{\boldsymbol{\epsilon}}_\gamma$ \\
        \hline \hline
        $(+,+)$ & $\{x_1 \geq 0,\ \ell x_1 + x_2 \geq 0\}$ & $\begin{pmatrix}\ell^2-1 & \ell \\ -\ell & -1\end{pmatrix}$\\
        \hline
        $(+,-)$ & $\{x_1 \geq 0,\ \ell x_1 + x_2 \leq 0\}$ & $\begin{pmatrix}-1 & 0 \\ -\ell & -1\end{pmatrix}$\\
        \hline
        $(-,+)$ & $\{x_1 \leq 0,\ x_2 \geq 0\}$ & $\begin{pmatrix}-1 & \ell \\ 0 & -1\end{pmatrix}$\\
        \hline
        $(-,-)$ & $\{x_1 \leq 0,\ x_2 \leq 0\}$ & $\begin{pmatrix}-1 & 0 \\ 0 & -1\end{pmatrix}$
    \end{tabular}
    \label{tab:dom_kronecker_l}
\end{table}

\begin{figure}[h]
    \centering
    \begin{tikzpicture}[scale=1.2]
    \coordinate (v1) at (0,0) {};
    \coordinate (v3) at (0,3) {};
    \coordinate (v2) at (-3,0) {};
    \coordinate (v4) at (0,-3) {};
    \coordinate (v5) at (1.5,-3) {};
    \coordinate (v6) at (3,-3) {};
    \filldraw [draw=yellow!35, fill=yellow!35] (v3) -- (v1) -- (v6) -- (3,3) -- (v3);
    \draw  (v1) edge (v2);
    \draw  (v1) edge (v3);
    \draw  (v1) edge (v4);
    \draw  (v1) edge (v5);
    \node at (-1.5,1.5) {$\cC^{(-,+)}_\gamma$};
    \node at (-1.5,-1.5) {$\cC^{(-,-)}_\gamma$};
    \node at (0.6,-2.5) {$\cC^{(+,-)}_\gamma$};
    \node at (1.75,0.15) {$\cC^{(+,+)}_\gamma$};
    \node[color=orange] at (1.7,1.3) {$\cC$};
    \coordinate (v7) at (2,-3) {} {};
    \coordinate (v8) at (3,-2) {} {};
    \draw[red, thick]  (v1) edge (v7);
    \draw[red, thick]  (v1) edge (v8);
    \node[red] at (2,-2.5) {$p_-$};
    \node[red] at (2.6,-1.3) {$p_+$};
    \end{tikzpicture}
    \caption{Domains of linearity and an invariant cone (yellow region).}
    \label{fig:cones_kroc}
\end{figure}
See \cref{fig:cones_kroc}. 
In this case, the process (1) in the inductive check method does not terminate.
However, we can take an invariant cone $\cC$ as 
\[ \cC = \{ x_1 \geq 0,\ x_1 + x_2 \geq 0 \} \subset \cC^{(+,+)}_\gamma. \]
Indeed, for each point $(x_1,x_2) \in \cC$, putting $(x'_1, x'_2) := \phi(x_1,x_2)$ we have
\begin{align*}
    x'_1 &= (\ell^2-1)x_1 + \ell x_2 \geq (\ell^2 - \ell - 1)x_1 \geq 0,\\
    x'_1 + x'_2 &= (\ell^2 - \ell -1) x_1 + (\ell - 1) x_2 \geq (\ell^2 -2 \ell) x_1 \geq 0.
\end{align*}
Hence we have $\phi(\cC) \subset \cC$. One can also see from the above computation that $\phi(\cC \setminus \{0\}) \subset \mathrm{int}\, \cC$. 
Furthermore, by a direct calculation, one can check that
\[\phi(\cC_\gamma^{(+,-)}) = \cC^{(-,+)}_\gamma,\ \phi(\cC_\gamma^{(-,+)} \cup \cC_\gamma^{(-,-)}) \subset \cC.\]
Hence we can conclude that $\phi$ is sign stable.
The stable presentation matrices $E_\phi$ and $\check{E}_\phi$ have the same characteristic polynomial
\[ \nu^2 + (-\ell^2 + 2)\nu +1. \]
Hence \cref{p:spec_same} holds true in this case, so the algebraic entropies of the cluster transformations induced by $\phi$ are obtained as
\[ \cE^x_\phi = \cE^a_\phi = \log\left( \frac{\ell^2 - 2 + \ell \sqrt{\ell^2 - 2} }{2} \right). \]
\end{ex}

\begin{rmk}
The mutation loop in \cref{ex:kronecker} is so simple that we can describe the entire dynamics on $\cX_{(t_0)}(\bR^\trop)$.
First, the action of $\phi$ has only two fixed points $p_+, p_-$ in $\bS \cX_{(t_0)}(\bR^\trop)$ with stretch factors $\lambda_+, \lambda_-$, respectively:
\[ p_\pm = \left[\left(1, \frac{-\ell \pm\sqrt{\ell^2 -4}}{2} \right)\right] ,\  \lambda_\pm = \frac{\ell^2 - 2 \pm \ell \sqrt{\ell^2 - 2} }{2}. \]
Moreover we have $p_\pm \in \bS \cC^{(+,+)}$ (see \cref{fig:cones_kroc}).
Let we consider the following two cones:
\begin{align*}
    \cC^{(+,+)}_+ &:= \left\{ x_1 \geq 0,\ x_1 + \frac{-\ell -\sqrt{\ell^2 -4}}{2} x_2 >0 \right\},\\
    \cC^{(+,+)}_- &:= \left\{ \ell x_1 + x_2 \geq 0,\ x_1 + \frac{-\ell -\sqrt{\ell^2 -4}}{2} x_2 <0 \right\}.
\end{align*}
so that $\cC^{(+,+)}_\gamma \setminus p_- = \cC^{(+,+)}_+ \sqcup \cC^{(+,+)}_- $. 
By a direct calculation, one can see that each point in $\bS \cC^{(+,+)}_+$ converges to $p_+$ and that each point in $\bS \cC^{(+,+)}_-$ leaves this region and travels $\bS\cC^{(+,-)}_\gamma, \bS\cC^{(-,+)}_\gamma$, and $\bS\cC^{(-,-)}_\gamma$ in this order, and finally converges to $p_+$. Namely, the action of $\phi$ has the \emph{north-south dynamics} on $\bS \X_{(t_0)}(\bR^\trop)$ with the attracting (resp. repelling) point $p_+$ (resp. $p_-$). 
Therefore $\cC^{(+,+)}_+$ is the maximal invariant cone.
\end{rmk}

\subsection{Mutation loops of length one}
Here we give a family of examples for which the heuristic check method can be effectively applied. 
Let $I:=\{1,\dots,N\}$, and consider a seed pattern $\bs: t \mapsto (N^{(t)}, B^{(t)})$. Fix a vertex $t_0 \in \bT_I$ and put $B:=B^{(t_0)}$. Let us consider an edge path $\gamma: t_0 \overbar{1} t_1$ and a permutation $\sigma$ on $I$ which is given by $\sigma(i):=i-1$ mod $N$ for $i=1,\dots,N$. The following classification theorem is due to Fordy--Marsh:

\begin{thm}[Fordy--Marsh \cite{FM11}]\label{thm:FM}
The condition
\begin{align}\label{eq:period one}
    \sigma.B^{(t_1)} = B
\end{align}
holds if and only if there exists an integer vector $\mathbf{a}=(a_1,\dots,a_{N-1})$ such that $a_j = a_{N-j}$ for $j=1,\dots,N-1$ and the skew-symmetric matrix $B=(b_{ij})_{i,j \in I}$ satisfies the following conditions:
\begin{align}
    b_{i,j+1}=a_j, \quad b_{i+1,j+1} = b_{ij} + a_i [-a_j]_+ -a_j [-a_i]_+ \label{eq:FM_cond}
\end{align}
for all $i,j \in \{1,\dots,N-1\}$. 
\end{thm}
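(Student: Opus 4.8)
The plan is to unwind both implications directly from the matrix-mutation rule at $k=1$ together with the cyclic relabeling. Write $B=B^{(t_0)}=(b_{ij})$ and $B'=B^{(t_1)}=(b'_{ij})=\mu_1(B)$. Since $\sigma$ relabels the index $i$ as $i-1$, the period-one condition \eqref{eq:period one} is equivalent to the system of scalar equations $b'_{ij}=b_{i-1,j-1}$ for all $i,j\in I$, with subscripts read in $\{1,\dots,N\}$ modulo $N$ (the opposite cyclic direction appears under the other normalization of the permutation action, which makes no difference to the argument). The essential feature is that this couples rows and columns $1$ and $N$. Throughout, skew-symmetry gives $b_{i1}=-b_{1i}$, and the only nontrivial algebraic input is the elementary identity $[b]_+[c]_+-[-b]_+[-c]_+=[b]_+c+b[-c]_+$ (the $\epsilon=+$ case of the identity used just before \eqref{eq:mut_c-mat_2}), which I will apply with $b=-a_{i-1}$ and $c=a_{j-1}$.

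For the ``only if'' direction I would assume \eqref{eq:period one} and \emph{define} $a_j:=b_{1,j+1}$ for $j=1,\dots,N-1$, so that $b_{1j}=a_{j-1}$ and $b_{i1}=-a_{i-1}$ for $i,j\ge 2$. Substituting these into the ``otherwise'' branch of the mutation rule, the equation $b'_{ij}=b_{i-1,j-1}$ for $i,j\ge 2$ becomes, after the identity above, precisely the recursion in \eqref{eq:FM_cond} with its indices shifted by one. So the recursive shape of $B$ is automatic, and it remains only to extract the palindromic constraint $a_j=a_{N-j}$. For this I would specialize $b'_{ij}=b_{i-1,j-1}$ to $j=1$, $i=m+1$: the left side $b'_{m+1,1}$ equals $-b_{m+1,1}=a_m$, while the right side is $b_{m,N}$, so $b_{m,N}=a_m$. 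On the other hand the recursion (now known) computes $b_{m,N}$ by starting from $b_{1,N-m+1}=a_{N-m}$ and iterating $m-1$ times, giving
\[
b_{m,N}=a_{N-m}+\sum_{l=1}^{m-1}\bigl(a_l[-a_{N-m+l}]_+-a_{N-m+l}[-a_l]_+\bigr).
\]
An induction on $m$ then finishes it: assuming $a_l=a_{N-l}$ for all $l<m$, the $l$-th and $(m-l)$-th summands are negatives of each other and cancel in pairs (the self-paired middle term, when $m$ is even, vanishes on its own), so the sum is zero and $a_m=a_{N-m}$ follows from $b_{m,N}=a_m$.

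For the ``if'' direction I would run the same computations in reverse: given a palindromic $\mathbf{a}$ and the matrix $B$ it defines via \eqref{eq:FM_cond}, verify $b'_{ij}=b_{i-1,j-1}$ case by case. For $i,j\ge 2$ this is the identity-plus-recursion computation above. For the coupled cases one has $b'_{1j}=-b_{1j}=-a_{j-1}$ for $j\ge 2$ (the case $j=1$, $i\ge 2$ being its transpose), which must be matched with $b_{N,j-1}=-b_{j-1,N}$; iterating the recursion down the diagonal expresses $b_{j-1,N}$ as $a_{N-j+1}$ plus a quadratic sum that vanishes by the palindromic condition, and $a_{N-j+1}=a_{j-1}$ again by the palindrome, so $b_{j-1,N}=a_{j-1}$ and the equality holds. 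The case $i=j=1$ is $0=0$.

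The part I expect to be the real obstacle is the bookkeeping of the wrap-around. The index arithmetic modulo $N$ forces rows and columns $1$ and $N$ to interact, and it is exactly at those corner entries that the palindromic condition is both \emph{forced} (in the ``only if'' direction) and \emph{needed} (in the ``if'' direction). Keeping track of which segment of the recursion and which instance of the identity are in play, and arranging the telescoping sum by the $l\leftrightarrow m-l$ pairing so that the cancellation is manifest, is the delicate bit; the rest is a mechanical unwinding of the matrix-mutation formula.
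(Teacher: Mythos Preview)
The paper does not prove this theorem at all: it is stated as a result of Fordy--Marsh with a citation to \cite{FM11}, and the text immediately moves on to use it. So there is no ``paper's own proof'' to compare against.

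That said, your argument is essentially the standard direct verification and is correct in outline. A couple of minor comments. First, the permutation action: with $\sigma(i)=i-1$ the condition $\sigma.B'=B$ reads $(\sigma.B')_{ij}=b'_{\sigma^{-1}(i),\sigma^{-1}(j)}=b'_{i+1,j+1}=b_{ij}$, which is exactly your $b'_{ij}=b_{i-1,j-1}$; there is no ambiguity and you can drop the hedge about ``the other normalization''. Second, the induction step is clean: with the hypothesis $a_l=a_{N-l}$ for $l<m$ you have both $a_{N-m+l}=a_{m-l}$ and $a_{N-l}=a_l$, so the $l$-th and $(m-l)$-th summands are literally negatives of one another and the self-paired term at $l=m/2$ (when $m$ is even) reads $a_{m/2}[-a_{m/2}]_+-a_{m/2}[-a_{m/2}]_+=0$. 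The ``if'' direction is the same computation read backwards, and you should also note (or absorb into skew-symmetry) that the first condition in \eqref{eq:FM_cond} is $b_{1,j+1}=a_j$, which fixes the first row and hence the first column; the remaining entries are then determined by the recursion, so checking the wrap-around equalities $b'_{1j}=b_{N,j-1}$ and $b'_{i1}=b_{i-1,N}$ is indeed the only nontrivial part.
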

The condition \eqref{eq:period one} can be paraphrased that the mutation sequence given by the edge path $\gamma$ followed by the permutation $\sigma$ defines a mutation loop $\phi$. Although it is not a horizontal mutation loop and slightly sticks out our scope in this paper, the $N$-th power $\phi^N$ is a horizontal mutation loop represented by the fully-mutating edge path 
\begin{align*}
    \gamma^N:t_0 \overbar{1} t_1 \overbar{2} t_2 \overbar{3}\dots \overbar{N} t_N.
\end{align*}
Sign stability can be naturally generalized to this situation as follows. The sign of $\gamma$ at $w \in \X_{(t_0)}(\bR^\trop)$ is defined to be $\epsilon_\gamma(w):=\sgn (x_1^{(t_0)}(w))$. The presentation matrix of $\phi$ at $w$ is given by $E_\gamma^\epsilon:=P_\sigma E_{1,\epsilon}^{(t_0)}$, where $\epsilon:=\epsilon_\gamma(w)$ and $P_\sigma:=(\delta_{i,\sigma(j)})_{i,j \in I}$ is the presentation matrix of $\sigma$. 
For a scaling invariant subset $\cL \subset \X_{(t_0)}(\bR^\trop)$, $\phi$ is said to be sign-stable if there exists a strict sign $\epsilon_\gamma^\stab \in \{+,-\}$ such that for each $w \in \cL\setminus \{0\}$, there exists an integer $n_0 \in \bN$ such that $\epsilon_\gamma(\phi^n(w)) = \epsilon_\gamma^\stab$ for $n \geq n_0$. 
If $\phi$ is sign-stable, then so is $\phi^N$. See \cite{IK20} for a general framework. Since the sign-stability for $\phi$ is easier to check than that for $\phi^N$, we are going to work with $\phi$. 
Explicitly, the presentation matrix $E_\gamma^\epsilon$ is given as follows:
\begin{align*}
    E_\gamma^\epsilon = 
    \begin{pmatrix}
    0&1&& & \\
     &0&1& & \\
     & &\ddots&\ddots& \\
     & & &0&1\\
    1& & & &0 
    \end{pmatrix}
    \begin{pmatrix}
    -1& & & & \\
    [\epsilon a_1]_+&1& & & \\
    [\epsilon a_2]_+& &1& & \\
    \vdots& & &\ddots& \\
    [\epsilon a_{N-1}]_+& & & &1
    \end{pmatrix}
    =
    \begin{pmatrix}
    [\epsilon a_1]_+&1& & & \\
    [\epsilon a_2]_+&0&1& & \\
    \vdots& &0&\ddots& \\
    [\epsilon a_{N-1}]_+& & &\ddots&1\\
    -1& & & &0
    \end{pmatrix}.
\end{align*}
Then we can find an invariant cone as follows:

\begin{lem}\label{l:FMloops_invcone}
\begin{enumerate}
    \item The cone
\begin{align*}
    \cC_{[+]}:=\{x_1 \geq 0, ~[a_{i-1}]_+x_1 + x_i \geq 0 \mbox{ for $i=2,\dots,N$}\}
\end{align*}
satisfies $\phi(\cC_{[+]}) \subset \cC_{[+]}$ if and only if $a_1 \geq 2$. Moreover in this case, we have $\phi^N(\cC_{[+]}\setminus\{0\}) \subset \mathrm{int}\,\cC_{[+]}$. 
\item The cone
\begin{align*}
    \cC_{[-]}:=\{x_1 \leq 0, ~[-a_{i-1}]_+x_1 + x_i \leq 0 \mbox{ for $i=2,\dots,N$}\}
\end{align*}
satisfies $\phi(\cC_{[-]}) \subset \cC_{[-]}$ if and only if $a_1 \leq -2$. Moreover in this case, we have $\phi^N(\cC_{[-]}\setminus\{0\}) \subset \mathrm{int}\,\cC_{[-]}$. 
\end{enumerate}
\end{lem}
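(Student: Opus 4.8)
The plan is to reduce part~(2) to part~(1). The opposite seed pattern $-\bs$ has Fordy--Marsh parameters $-\mathbf{a}$, and the sign-reversing isomorphism $\iota$ of \cref{cor:pres_mat_x_negative} is $\phi$-equivariant and carries $\cC_{[-]}$ for $\bs$ onto the corresponding cone for $-\bs$, turning the condition $a_1\le -2$ into $(-a_1)\ge 2$; so it suffices to prove~(1). The first observation is that $\cC_{[+]}\subseteq\{x_1\ge 0\}$, which is exactly the region where the sign $\boldsymbol{\epsilon}_\gamma(\cdot)$ equals $+$; hence $\phi$ restricted to $\cC_{[+]}$ is the \emph{linear} map given by the explicit matrix $E_\gamma^+$ displayed just above, and the whole statement becomes a question about a single linear endomorphism of $\bR^N$ and a single polyhedral cone.

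Next I would set up a finite criterion for $E_\gamma^+(\cC_{[+]})\subseteq\cC_{[+]}$. Write $\cC_{[+]}=\bigcap_{j=1}^{N}\{\ell_j\ge 0\}$, where $\ell_1=x_1$ and $\ell_i=[a_{i-1}]_+x_1+x_i$ for $i=2,\dots,N$; these $N$ covectors are linearly independent and generate the dual cone $\cC_{[+]}^\vee$. Since $w\in\cC_{[+]}\iff\langle\ell_j,w\rangle\ge 0$ for all $j$ and $\langle\ell_j,E_\gamma^+w\rangle=\langle (E_\gamma^+)^\tr\ell_j,w\rangle$, the inclusion holds if and only if each $(E_\gamma^+)^\tr\ell_j$ lies in $\cC_{[+]}^\vee$, i.e.\ is a \emph{nonnegative} combination of $\ell_1,\dots,\ell_N$; as the $\ell_k$ form a basis, the coefficients are uniquely determined, so this is a finite and explicit list of sign conditions.

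I would then compute $(E_\gamma^+)^\tr\ell_j$ for each $j$, using the shape of $E_\gamma^+$ (the superdiagonal of $1$'s together with the first column $([a_1]_+,\dots,[a_{N-1}]_+,-1)^\tr$). One finds that for every $j$ coming from a facet other than the last one, $(E_\gamma^+)^\tr\ell_j$ is visibly a nonnegative combination of two of the $\ell_k$, using only $[a_i]_+\ge 0$; for the remaining index the coefficient of $\ell_1$ reduces, after invoking the symmetry $a_{N-1}=a_1$ from \cref{thm:FM}, to an explicit linear expression in $a_1$ which is $\ge 0$ exactly when $a_1\ge 2$ (recall $a_1\in\bZ$). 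This gives the sufficiency direction. For necessity, when $a_1\le 1$ one evaluates the $\ell_k$ on the extremal ray of $\cC_{[+]}$ cut out by the equalities $\ell_k=0$ over the facets that appear with positive coefficient; on that ray $\langle(E_\gamma^+)^\tr\ell_{j_0},w\rangle$ equals the (now negative) critical coefficient times $\ell_1(w)>0$, so that facet inequality is violated on $\phi(w)$ and $\phi(\cC_{[+]})\not\subseteq\cC_{[+]}$.

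Finally, to promote $\phi(\cC_{[+]})\subseteq\cC_{[+]}$ (when $a_1\ge 2$) to $\phi^N(\cC_{[+]}\setminus\{0\})\subseteq\interior\cC_{[+]}$, note that $\phi^N$ is linear on $\cC_{[+]}$, equal to $(E_\gamma^+)^N$, and that $\gamma^N$ is fully-mutating. One route is to refine the computation above and track which defining inequality becomes \emph{strict}: the $i$-th one does so on $\phi^m(\cC_{[+]}\setminus\{0\})$ once the mutation at index $i$ has occurred along $\gamma^m$, so after one full cycle all $N$ inequalities are strict. An alternative is to combine strict convexity of the stable sign cone of $\gamma^N$ (\cref{prop:str_conv}) with a primitivity argument -- no proper face of $\cC_{[+]}$ is $\phi^N$-stable -- and the Perron--Frobenius input \cite[Chapter~1, Theorem~3.2]{BP}, used exactly as in the proof of \cref{thm:PF property}. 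I expect this last step, controlling which facets become strict over one period, to be the main obstacle; isolating the single critical inequality in the previous paragraph is the only other delicate point, the rest being routine linear algebra.
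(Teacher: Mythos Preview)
Your overall strategy coincides with the paper's: both reduce to a single linear map $E_\gamma^+$ acting on a simplicial cone, verify the defining inequalities $\ell_j\ge 0$ on $\phi(w)$ one by one, exhibit an explicit point for necessity, and track which facet becomes strict at each step of $\phi,\phi^2,\dots,\phi^N$ (your ``first route'' is exactly the paper's chain $\Phi^+_N\to\Phi^+_{N-1}\to\cdots$). The paper dismisses part~(2) with ``similarly proved'' rather than using $\iota$, and for necessity it simply writes down the point $(2,-1,0,\dots,0)$ rather than locating an extremal ray abstractly.

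There is, however, a genuine gap in your dual-cone step. You assert that the critical obstruction is ``the coefficient of $\ell_1$'' in $(E_\gamma^+)^{\tr}\ell_{j_0}$ being $\ge 0$ exactly when $a_1\ge 2$. But carrying out the expansion in the $\ell$-basis gives, for the last facet,
\[
\ell_N\circ\phi \;=\; [a_{N-1}]_+\,x'_1 + x'_N \;=\; [a_{N-1}]_+\bigl([a_1]_+x_1+x_2\bigr) - x_1 \;=\; [a_{N-1}]_+\,\ell_2 \;-\;\ell_1,
\]
so the $\ell_1$-coefficient is $-1$ \emph{independently of $a_1$}; your nonnegativity-of-coefficients criterion is therefore never satisfied at this index, and the argument as you have sketched it cannot close. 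The paper does not pass to the $\ell$-basis here: it keeps the expression in the original $x$-coordinates and argues by comparing $\ell_N\circ\phi$ directly with $\ell_2$, using $a_{N-1}=a_1$ and $a_1\ge 2$. This is the point where the two write-ups actually diverge, and it is precisely the step you flagged as delicate; your dual reformulation is clean for the other $N-1$ facets but does not survive the last one, so you would need to fall back on a direct $x$-coordinate inequality there, as the paper does.

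Your Perron--Frobenius ``alternative'' for the strict-inclusion part is not what the paper does and is also not obviously viable: \cref{prop:str_conv} concerns the sign cone $\cC_{\gamma^N}^{\boldsymbol{\epsilon}}$, not the ad hoc cone $\cC_{[+]}$, and the ``no proper face is $\phi^N$-stable'' input would itself require something like the facet-tracking you proposed first. Stick with the first route.
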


\begin{proof}
First suppose $a_1 \geq 2$. For a point $w \in \cC_{[+]}$, let $w':=\phi(w)$. Let $\mathbf{x}=(x_1,\dots,x_N)$, $\mathbf{x}'=(x'_1,\dots,x'_N)$ be the coordinate vectors defined by $x_i:=x_i^{(t_0)}(w)$ and $x'_i:=x_i^{(t_0)}(w')$ for $i=1,\dots,N$. Then from $(\mathbf{x}')^T = E_\gamma^+ \mathbf{x}^T$, we get
\begin{align*}
    &x'_1 = a_1 x_1 + x_2 \geq 0, \\
    &[a_{i-1}]_+x'_1 + x'_i = [a_{i-1}]_+(a_1 x_1 + x_2) + ([a_i]_+ x_1 + x_{i+1}) \geq 0, \\
    &[a_{N-1}]_+x'_1 + x'_N =a_1(a_1 x_1 + x_2) -x_1 
    = (a_1^2-1)x_1 + x_2 > a_1 x_1 + x_2 \geq 0.
\end{align*}
Here $i=2,\dots,N-1$. Thus we have $\phi(\cC_{[+]}) \subset \cC_{[+]}$. Furthermore, let $\Phi^+_1:=\cC_{[+]} \cap \{x_1=0\}$ and $\Phi^+_i:=\cC_{[+]} \cap \{[a_{i-1}]_+x_1+x_i=0\}$ for $i=2,\dots,N$ be facets of the cone $\cC_{[+]}$. Then the above computation shows that $\phi(\cC_{[+]}) \subset \cC_{[+]} \setminus \Phi_N^+$,
\[
\begin{tikzcd}
    \cC_{[+]} \setminus \Phi_N^+ \ar[r, "E_\gamma^+"', "\phi"] & \cC_{[+]} \setminus \Phi_{N-1}^+ \ar[r, "E_\gamma^+"', "\phi"] &
    \cdots \ar[r, "E_\gamma^+"', "\phi"] &  \cC_{[+]} \setminus \Phi_2^+,
\end{tikzcd}
\]
and $\phi(\cC_{[+]} \setminus \Phi_2^+) \subset \mathrm{int}\,\cC_{[+]}$.
Thus we have $\phi^N(\cC_{[+]}) \subset \mathrm{int}\,\cC_{[+]}$. 
On the other hand, when $a_1 \leq 1$, $\mathbf{x}=(2,-1,0,\dots,0) \in \cC_{[+]}$ satisfies $\mathbf{x}' \notin \cC_{[+]}$. 
The second assertion can be similarly proved. 
\end{proof}

\begin{cor}\label{c:FMloops_stability}
Let $\epsilon \in \{+,-\}$ be a sign. 
If $\epsilon a_1 \geq 2$ and $\epsilon a_i \geq 0$ for $i=2,\dots,N-1$ holds, then $\phi$ is sign-stable on the set $\cL_{(t_0)}^{\mathrm{can}}$ with the stable sign $\epsilon$. Moreover \cref{p:spec_same} holds true in this case. 
The cluster stretch factor is the positive solution of the equation $\nu^N - \sum_{i=1}^{N-1}\epsilon a_i \nu^{N-i} +1 = 0$ which is largest among the solutions in absolute value.  
\end{cor}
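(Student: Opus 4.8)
The plan is to reduce to the case $\epsilon=+$ and then read everything off from the invariant cone $\cC_{[+]}$ of \cref{l:FMloops_invcone} together with an explicit description of the $\phi$-dynamics on the negative cone. For the reduction: $-\bs$ is again of Fordy--Marsh type with vector $-\mathbf{a}$, and the monomial isomorphism $\iota: \X_\bs\xrightarrow{\sim}\X_{-\bs}$ from the proof of \cref{cor:pres_mat_x_negative} intertwines the two mutation loops, carries $\cL^{\mathrm{can}}_{(t_0)}$ to $\cL^{\mathrm{can}}_{(-t_0)}$, and reverses all signs; so it suffices to assume $a_1\geq 2$ and $a_i\geq 0$ for $i=2,\dots,N-1$ (whence $[a_i]_+=a_i$, and also $a_{N-1}=a_1\geq 2$ by $a_j=a_{N-j}$) and to show the stable sign is $+$.

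First I would verify sign stability on $\cL^{\mathrm{can}}_{(t_0)}=\interior\cC^+_{(t_0)}\cup\interior\cC^-_{(t_0)}$. On $\interior\cC^+_{(t_0)}$ this follows from \cref{l:FMloops_invcone}(1): one checks $\cC^+_{(t_0)}\subset\cC_{[+]}$ directly from the defining inequalities, the cone $\cC_{[+]}$ is $\phi$-invariant, and $\phi^N(\cC_{[+]}\setminus\{0\})\subset\interior\cC_{[+]}$, so every nonzero $w\in\interior\cC^+_{(t_0)}$ has $\phi^n(w)\in\interior\cC_{[+]}$ (where $x_1>0$, hence $\epsilon_\gamma=+$) for all $n\geq N$. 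On $\interior\cC^-_{(t_0)}$ I would use that there $\phi$ is linear, given by the presentation matrix $E_\gamma^-$ at sign $-$; under our hypotheses $[-a_i]_+=0$ for all $i$, so $E_\gamma^-$ is the companion matrix with characteristic polynomial $\nu^N+1$, i.e. $E_\gamma^-:(x_1,\dots,x_N)\mapsto(x_2,\dots,x_N,-x_1)$ with $(E_\gamma^-)^N=-\mathrm{Id}$. Iterating from $w\in\interior\cC^-_{(t_0)}$, the first coordinate stays negative for $N$ steps (so $\epsilon_\gamma$ stays $-$ and the linear formula persists), and $\phi^N(w)=-w\in\interior\cC^+_{(t_0)}$, which reduces to the previous case; hence $\epsilon_\gamma(\phi^n(w))=+$ for all $n\geq 2N$. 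Both pieces give stable sign $+$, so $\phi$ is sign-stable on $\cL^{\mathrm{can}}_{(t_0)}$ with stable sign $+$, and $E^{(t_0)}_\phi=E_\gamma^+$.

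Next I would compute the characteristic polynomial. Since $E_\gamma^\epsilon$ is the companion matrix with first column $([\epsilon a_1]_+,\dots,[\epsilon a_{N-1}]_+,-1)^{\tr}$ and unit superdiagonal, a cofactor expansion along the last row gives $P_\epsilon(\nu)=\nu^N-\sum_{i=1}^{N-1}[\epsilon a_i]_+\,\nu^{N-i}+1$, which under our hypotheses equals $\nu^N-\sum_{i=1}^{N-1}\epsilon a_i\,\nu^{N-i}+1$. Because $a_j=a_{N-j}$, the coefficient list of $P_\epsilon$ is palindromic, so $\nu^N P_\epsilon(\nu^{-1})=P_\epsilon(\nu)$; together with $\det E_\gamma^\epsilon=\pm 1$ this forces $E_\gamma^\epsilon$ and $\check{E}_\gamma^\epsilon$ to have the same characteristic polynomial, and the same holds for the opposite sign, so \cref{p:spec_same} holds for this mutation loop and $\rho(E^{(t_0)}_\phi)=\rho(\check{E}^{(t_0)}_\phi)$. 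Finally, the cone $\cC_{[+]}$ is proper (closed, full-dimensional, and pointed, since its $N$ defining covectors are linearly independent) and invariant under $E_\gamma^+$, so by \cite[Chapter 1, Theorem 3.2]{BP}, exactly as in the proof of \cref{thm:PF property}, $\rho(E_\gamma^+)$ is an eigenvalue with an eigenvector in $\cC_{[+]}$, and $\rho(E_\gamma^+)\geq 1$ because $\det E_\gamma^+=\pm 1$; thus the cluster stretch factor $\lambda^{(t_0)}_\phi=\rho(E^{(t_0)}_\phi)$ is the largest-modulus root of $\nu^N-\sum_{i=1}^{N-1}\epsilon a_i\,\nu^{N-i}+1$, and it is positive.

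The step I expect to be the main obstacle is the negative-cone half of the sign-stability argument: one must confirm that the orbit of a point of $\interior\cC^-_{(t_0)}$ takes \emph{exactly} $N$ mutations, with the sign unchanged throughout, to enter $\interior\cC^+_{(t_0)}$, so that the stable signs coming from the two cones in $\cL^{\mathrm{can}}_{(t_0)}$ genuinely agree and sign stability holds on all of $\cL^{\mathrm{can}}_{(t_0)}$ rather than merely on each piece. The remaining computations --- the cofactor expansion, the palindromy, and the cone-invariance argument --- are routine.
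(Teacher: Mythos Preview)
Your proof is correct and follows essentially the same route as the paper's: reduce to $\epsilon=+$, observe $\cC^+_{(t_0)}\subset\cC_{[+]}$, show that $\phi^N$ sends $\interior\cC^-_{(t_0)}$ to $\interior\cC^+_{(t_0)}$ via the explicit shift action of $E_\gamma^-$ (using $[-a_i]_+=0$), then invoke \cref{l:FMloops_invcone} and the palindromy of the characteristic polynomial. Your explicit reduction via $\iota$ and the direct pointedness check for $\cC_{[+]}$ are mild embellishments, but the core argument matches the paper's.
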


\begin{proof}
It is enough to consider the case $\epsilon=+$. We need to check that the cones $\cC^+_{(t_0)}$ and $\cC_{(t_0)}^-$ are send into the cone $\cC_{[+]}$ by an iterated action of $\phi$. It is clear from the definition of $\cC_{[+]}$ that we already have $\cC_{(t_0)}^+ \subset \cC_{[+]}$. For a point $w_- \in \cC_{(t_0)}^-$, let $x_i:=x_i^{(t_0)}(w_-) \leq 0$ be its coordinates for $i=1,\dots,N$. Since the action of $\phi$ is presented by the matrix 
\begin{align*}
    E_\gamma^-= \begin{pmatrix}
    0&1& & & \\
    0&0&1& & \\
    \vdots& &0&\ddots& \\
    0& & &\ddots&1\\
    -1& & & &0
    \end{pmatrix}
\end{align*}
whenever the first coordinate of a point is non-positive, we can compute the $\phi$-orbit of $w_-$ as 
\begin{align*}
    \begin{pmatrix}x_1\\x_2\\ \vdots\\x_{N-1}\\x_N \end{pmatrix}
    \xmapsto{E_\gamma^-}
    \begin{pmatrix}x_2\\x_3\\ \vdots\\x_N\\-x_1 \end{pmatrix}
    \xmapsto{E_\gamma^-}
    \begin{pmatrix}x_3\\x_4\\ \vdots\\-x_1\\-x_2 \end{pmatrix}
    \xmapsto{E_\gamma^-}\dots \xmapsto{E_\gamma^-}
    \begin{pmatrix}-x_1\\-x_2\\ \vdots\\-x_{N-1}\\-x_N \end{pmatrix}.
\end{align*}
Hence $\phi^N(\cC_{(t_0)}^-) = \cC_{(t_0)}^+ \subset \cC_{[+]}$. Combining with \cref{l:FMloops_invcone}, we get $\phi^{2N}(\cL_{(t_0)}^{\mathrm{can}}\setminus \{0\}) \subset \mathrm{int}\,\cC_{[+]}$. 
Thus $\phi$ is sign-stable on the set $\cL_{(t_0)}^{\mathrm{can}}$ with the stable sign $(+)$. 

The characteristic polynomial of $E_\gamma^+$ is given by $P_\gamma^+(\nu):=\nu^N - \sum_{i=1}^{N-1}a_i \nu^{N-i} +1$, which is palindromic. Hence \cref{p:spec_same} holds true in this case. The case $\epsilon=-$ is similarly proved.
\end{proof}
\cref{c:FMloops_stability} gives a partial confirmation of \cite[Conjecture 3.1]{FH14}. 

\begin{rem}
The mutation loop considered in \cref{ex:kronecker} is obtained as the square of the mutation loop for the vector $\mathbf{a}=(-\ell)$. In the case $\ell=1$, we get the seed pattern of type $A_2$ and hence $\phi$ is periodic. In particular it has no invariant cone other than $\{0\}$, and not sign-stable. This example indicates a reason why we need the condition $|a_1| \geq 2$ in \cref{l:FMloops_invcone,c:FMloops_stability}.
\end{rem}

\begin{rem}\label{rem:FMloop_convexity}
Since the path $\gamma^N$ is fully-mutating, by \cref{thm:PF property} and \cref{rem:power_convexity}, an eigenvector corresponding to the cluster stretch factor can be found in the stable cone $\cC_\gamma^\stab$. Hence by \cref{rem:intrinsicality}, the cluster stretch factor is intrinsic to the mutation loop $\phi$.
\end{rem}

Finally, we give an example of two-sided sign-stable mutation loop. See \cref{def:two-sided sign-stab}.

\begin{ex}[{\cite[Example 3.7]{FH14}}]\label{ex:two-sided sign-stab}
Let $I = \{1,2,3,4,5,6\}$ and $\bs: t \mapsto (N^{(t)}, B^{(t)})$ be the seed pattern with the initial exchange matrix
\[
B^{(t_0)} = 
\begin{pmatrix}
0& -2& 2& 4& 2& -2\\
2& 0& -6& -6& 0& 2\\
-2& 6& 0& -6& -6& 4\\
-4& 6& 6& 0& -6& 2\\
-2& 0& 6& 6& 0& -2\\
2& -2& -4& -2& 2& 0
\end{pmatrix}.
\]

\begin{figure}[h]
    \centering
    \begin{tikzpicture}[>=latex, scale=1.2]
    \node [draw, circle, inner sep=2pt] (v2) at (0,3.5) {};
    \node [draw, circle, inner sep=2pt] (v1) at (-1.95,2.5) {};
    \node [draw, circle, inner sep=2pt] (v3) at (-1.95,0) {};
    \node [draw, circle, inner sep=2pt] (v4) at (0,-1) {};
    \node [draw, circle, inner sep=2pt] (v5) at (1.95,0) {};
    \node [draw, circle, inner sep=2pt] (v6) at (1.95,2.5) {};
    \node at (0,4) {1};
    \node at (-2.45,2.85) {2};
    \node at (-2.45,-0.35) {3};
    \node at (0,-1.5) {4};
    \node at (2.45,-0.35) {5};
    \node at (2.45,2.85) {6};
    \draw [thick, ->>] (v1) edge (v2);
    \draw [thick, ->>] (v2) edge (v3);
    \draw [thick, ->>>>] (v2) edge (v4);
    \draw [thick, ->>] (v2) edge (v5);
    \draw [thick, ->>] (v6) edge (v2);
    \draw [thick, ->>>>>>] (v3) edge (v1);
    \draw [thick, ->>>>>>] (v4) edge (v1);
    \draw [thick, ->>] (v1) edge (v6);
    \draw [thick, ->>>>>>] (v4) edge (v3);
    \draw [thick, ->>>>>>] (v5) edge (v3);
    \draw [thick, ->>>>] (v3) edge (v6);
    \draw [thick, ->>>>>>] (v5) edge (v4);
    \draw [thick, ->>] (v4) edge (v6);
    \draw [thick, ->>] (v6) edge (v5);
    \end{tikzpicture}
    \caption{The quiver corresponding to $B^{(t_0)}$ in \cref{ex:two-sided sign-stab}.}
    \label{fig:my_label}
\end{figure}

This seed pattern satisfies the condition \eqref{eq:FM_cond} in \cref{thm:FM}, and hence the edge path $\gamma: t_0 \overbar{1} t_1$ followed by the permutation $\sigma: i \mapsto i-1 \mod 6$ gives a mutation loop $\phi$.
Let $E^\pm_\gamma$ denote the presentation matrix of $\phi$ on the cone $\{ \pm x_1 \geq 0\}$:
\[
E^+_\gamma = 
\begin{pmatrix}
 0&  1&  0&  0&  0&  0\\
 2&  0&  1&  0&  0&  0\\
 4&  0&  0&  1&  0&  0\\
 2&  0&  0&  0&  1&  0\\
 0&  0&  0&  0&  0&  1\\
-1&  0&  0&  0&  0&  0
\end{pmatrix},\quad
E^-_\gamma = 
\begin{pmatrix}
 2&  1&  0&  0&  0&  0\\
 0&  0&  1&  0&  0&  0\\
 0&  0&  0&  1&  0&  0\\
 0&  0&  0&  0&  1&  0\\
 2&  0&  0&  0&  0&  1\\
-1&  0&  0&  0&  0&  0
\end{pmatrix}.
\]
\begin{lem}
Let $\cC_{[+]}$ and $\cC_{[-]}$ be the cones defined by
\begin{align*}
    \cC_{[+]} &:= \Bigg\{
    \begin{array}{c}
         x_1 \geq 0,\quad x_2 \geq 0,\quad 2x_1 + x_3 \geq 0,\quad 4x_1 + x_2 + x_4 \geq 0 , \\
         6 x_1 + 4 x_2 + 2 x_3 + x_5 \geq 0,\quad 16 x_1 + 6 x_2 + 4 x_3 + 2 x_4 + x_6 \geq 0
    \end{array}
    \Bigg\},\\
    \cC_{[-]} &:= \Bigg\{
    \begin{array}{c}
         x_1 \leq 0,\quad x_1 + x_2 \leq 0,\quad 2 x_1 + x_2 + x_3 \leq 0,\quad 4 x_1 + 2 x_2 + x_3 + x_4 \leq 0, \\
         8 x_1 + 4 x_2 + 2 x_3 + x_4 + x_5 \leq 0,\quad 16 x_1 + 8 x_2 + 4 x_3 + 2 x_4 + x_5 + x_6 \leq 0
    \end{array}
    \Bigg\}.
\end{align*}
Then for each $\epsilon \in \{+,-\}$ we have $\phi(\cC_{[\epsilon]}) = E^\epsilon_\gamma(\cC_{[\epsilon]}) \subset \cC_{[\epsilon]}$.
Moreover, for any point $x \in \cC^\epsilon_{(t_0)}$ there exist an integer $n_0 >0$ such that $\phi^n(x) \in \cC_{[\epsilon]}$ for all $n > n_0$.
\end{lem}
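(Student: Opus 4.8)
The plan is to follow the template of \cref{l:FMloops_invcone}, reducing the statement to a finite linear-algebra computation with the two $6\times 6$ matrices $E^{+}_\gamma$ and $E^{-}_\gamma$. The first observation is that $\cC_{[\epsilon]}$ is contained in the half-space $\{\epsilon x_1 \ge 0\}$, since $\epsilon x_1 \ge 0$ is one of its defining inequalities. On that half-space the PL map $\phi$ --- the composite near $t_0$ of the mutation at $1$ and the permutation $\sigma$ --- is linear and is represented by $E^{\epsilon}_\gamma = P_\sigma E^{(t_0)}_{1,\epsilon}$, the presentation matrix of a length-one mutation loop recalled before \cref{l:FMloops_invcone} (which in turn rests on \cref{cor:x_and_E}). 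Hence $\phi(\cC_{[\epsilon]}) = E^{\epsilon}_\gamma(\cC_{[\epsilon]})$ exactly, and once $E^{\epsilon}_\gamma(\cC_{[\epsilon]}) \subseteq \cC_{[\epsilon]}$ is known, the iterate $\phi^n$ restricted to $\cC_{[\epsilon]}$ is simply $(E^{\epsilon}_\gamma)^n$, so there is nothing piecewise left to worry about inside $\cC_{[\epsilon]}$.

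The heart of the proof is therefore the inclusion $E^{\epsilon}_\gamma(\cC_{[\epsilon]}) \subseteq \cC_{[\epsilon]}$. Write $\cC_{[\epsilon]} = \{ w \mid \langle n^{\epsilon}_j, w\rangle \ge 0,\ j = 1,\dots,6\}$, where $n^{\epsilon}_1,\dots,n^{\epsilon}_6$ are the six functionals listed in the statement (with the $\le$ signs of the $\epsilon=-$ cone absorbed into a sign). By \cref{lem:elem_cone} the dual cone is $\cC_{[\epsilon]}^{\vee} = \sum_{i} \bR_{\ge 0}\, n^{\epsilon}_i$, and the desired inclusion is equivalent to the assertion that $(E^{\epsilon}_\gamma)^{\tr} n^{\epsilon}_j \in \cC_{[\epsilon]}^{\vee}$ for every $j$ --- that is, each functional $\langle n^{\epsilon}_j, E^{\epsilon}_\gamma(-)\rangle$ is a nonnegative real combination of the $\langle n^{\epsilon}_i, -\rangle$. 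I would verify this by plugging the coordinates of $E^{\epsilon}_\gamma w$ into each of the six functionals and exhibiting the combination. As in \cref{l:FMloops_invcone}, the most transparent bookkeeping is along the facet structure: one checks that $E^{\epsilon}_\gamma$ sends the facet $\Phi^{\epsilon}_j := \cC_{[\epsilon]}\cap\{\langle n^{\epsilon}_j, -\rangle = 0\}$ into $\cC_{[\epsilon]}$ and records which facet, or the interior, it lands on; this yields at once the invariance $\phi(\cC_{[\epsilon]}) \subseteq \cC_{[\epsilon]}$ and the strict statement $\phi^{6}(\cC_{[\epsilon]}\setminus\{0\}) \subseteq \interior\cC_{[\epsilon]}$, the latter being what is actually needed afterwards to deduce that $\gamma$ is two-sided sign-stable, so I would record it as well.

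For the absorption statement, note that every defining functional of $\cC_{[\epsilon]}$ has all its $x_i$-coefficients of sign $\epsilon$, so the orthant $\cC^{\epsilon}_{(t_0)}$ is already contained in $\cC_{[\epsilon]}$; together with the invariance this gives $\phi^n(x) \in \cC_{[\epsilon]}$ for every $x \in \cC^{\epsilon}_{(t_0)}$ and every $n \ge 1$, so any $n_0 \ge 1$ will do. (If in a variant of the example the direct inclusion $\cC^{\epsilon}_{(t_0)} \subseteq \cC_{[\epsilon]}$ were to fail for some functional, one would instead chase the $\phi$-orbit of $\cC^{\epsilon}_{(t_0)}$ through the finitely many sign cones until it re-enters $\cC_{[\epsilon]}$, exactly as in the proof of \cref{c:FMloops_stability}.) The $\epsilon = -$ case can either be carried out by the same computation or be deduced formally from the $\bs \leftrightarrow -\bs$ symmetry of \cref{cor:pres_mat_x_negative} (under which $\cC^{\pm}_{(t_0)}$, $\cC_{[\pm]}$ and $E^{\pm}_\gamma$ are interchanged). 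I expect the only real obstacle to be bookkeeping: pinning down, for each of the twelve functionals, the correct nonnegative combination --- equivalently, the facet shift induced by $E^{\epsilon}_\gamma$ --- and making sure no cancellation spoils a needed inequality; conceptually nothing beyond \cref{l:FMloops_invcone} is involved.
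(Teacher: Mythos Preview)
Your approach matches the paper's: both verify $E^\epsilon_\gamma(\cC_{[\epsilon]})\subseteq\cC_{[\epsilon]}$ by substituting the coordinates of $w'=E^\epsilon_\gamma w$ into each defining functional and expressing the result as a nonnegative combination of the original functionals; the paper carries this out explicitly for $\epsilon=+$, recording the facet shift $n^+_j(w')=n^+_{j+1}(w)$ for $j=1,\dots,5$ and a nonnegative combination for $j=6$, and refers the $\epsilon=-$ case to an analogous computation. Where you differ is the absorption step: your observation that every defining functional of $\cC_{[\epsilon]}$ has all its coefficients of sign $\epsilon$, so that $\cC^\epsilon_{(t_0)}\subseteq\cC_{[\epsilon]}$ directly and any $n_0\ge 0$ works, is correct and cleaner than the paper's approach, which instead chases the $\phi$-orbits of the (signed) standard basis vectors of $\X_{(t_0)}(\bR^\trop)$ into $\cC_{[\epsilon]}$. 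Your additional claim that $\phi^{6}(\cC_{[\epsilon]}\setminus\{0\})\subset\interior\cC_{[\epsilon]}$ is not part of the lemma as stated and is not asserted in the paper's proof, so you should not expect the facet bookkeeping to yield it automatically; the paper only uses the invariance together with the second statement to conclude sign stability on each $\cC^\epsilon_{(t_0)}$.
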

\begin{proof}
Let us first consider the case $\epsilon=+$. 
For a point $w \in \cC_{[+]}$, let $w':=\phi(w)$.
Let $\mathbf{x}=(x_1,\dots,x_6)$, $\mathbf{x}'=(x'_1,\dots,x'_6)$ be the coordinate vectors defined by $x_i:=x_i^{(t_0)}(w)$ and $x'_i:=x_i^{(t_0)}(w')$ for $i=1,\dots,6$.
Then,
\begin{align*}
    &x'_1 = x_2 \geq 0,\\
    &x'_2 = 2x_1 + x_3 \geq 0,\\
    &2x'_1 + x'_3 = 4x_1 + x_2 + x_4 \geq 0,\\
    &4x'_1 + x'_2 + x'_4 = 6 x_1 + 4 x_2 + 2 x_3 + x_5 \geq 0 ,\\
    &6 x'_1 + 4 x'_2 + 2 x'_3 + x'_5 = 16 x_1 + 6 x_2 + 4 x_3 + 2 x_4 + x_6 \geq 0,\\
    &16 x'_1 + 6 x'_2 + 4 x'_3 + 2 x'_4 + x'_6 = 31x_1 + 16x_2 + 6x_3 + 4x_4 + 2x_5\\
    & \hspace{5.1cm} = 2(6 x_1 + 4 x_2 + 2 x_3 + x_5) + 2(2x_1 + x_3) + 15x_1 + 8x_2 \geq 0.
\end{align*}
That is, $\phi(\cC_{[+]}) \subset \cC_{[+]}$.
The case $\epsilon=-$ can be proved by a similar computation. 
The second statement can be checked by chasing the orbit of (the minus of) the standard basis vectors of $\cX_{(t_0)}(\bR^\trop)$.
\end{proof}
Thus the mutation loop $\phi$ is sign-stable on each of the cones $\cC^+_{(t_0)}$ and $\cC^-_{(t_0)}$.
The characteristic polynomial of the presentation matrix $E^\epsilon_\gamma$ is given by
\[(\nu^2 +\epsilon \nu + 1) \cdot (\nu^4 - \nu^3 - 2 \nu^2 - \nu + 1)\]
for $\epsilon \in \{+,-\}$, which is palindromic. In particular \cref{p:spec_same} holds true on each cone $\cC^\epsilon_{(t_0)}$.
The spectral radii of the matrices $E^+_\gamma$ and $E^-_\gamma$ are the same, which is the largest solution $\lambda_{\max}=2.0810189966245\dots$ to the equation $\nu^4 - \nu^3 - 2 \nu^2 - \nu + 1=0$.
Therefore the mutation loop $\phi$ is two-sided sign-stable, so the algebraic entropies of the induced cluster transformations are given by
\[ \cE^a_\phi = \cE^x_\phi = \log \lambda_{\max} = 0.73285767597364\dots \]
\end{ex}

\appendix

\section{Terminology from discrete linear dynamical systems}
We recollect here some basic terminology concerning discrete dynamical systems on $\bR^N$ obtained as the iteration of a real $N \times N$-matrix $E$. We refer the reader to \cite{CK} for proofs.

\subsection{Real Jordan normal form of a real matrix}\label{subsec:real Jordan}
First we recall the real Jordan normal form of a real matrix. 
The \emph{real Jordan block} $J(\nu,m)$ of algebraic multiplicity $m$ with eigenvalue $\nu \in \bC$ is defined to be
\begin{align*}
     J(\nu,m):= 
    \begin{pmatrix}
    \nu&1& & \\
     &\ddots&\ddots& \\
     & &\ddots&1 \\
     & & &\nu
    \end{pmatrix}
\end{align*}
if $\nu$ is real, and 
\begin{align*}
     J(\nu,m):= 
    \begin{pmatrix}
    \xi&-\eta&1&0& & & & & &\\
    \eta&\xi&0&1& & & & & &\\
     & &\xi&-\eta& & & & & &\\
     & &\eta&\xi& & & & & &\\
     & & & &\ddots& & & &\\
     & & & & & &\xi&-\eta&1&0\\
     & & & & & &\eta&\xi&0&1\\
     & & & & & & & &\xi&-\eta\\
     & & & & & & & &\eta&\xi
    \end{pmatrix}
\end{align*}
if $\nu$ is non-real. Here we write $\nu=\xi\pm i\eta$ and $\eta>0$. Note that the size of the matrix $J(\nu,m)$ is $2m$ if $\nu$ is non-real. 

Recall that if a complex number $\nu$ is an eigenvalue of a real matrix $E$, then so is its complex conjugate $\overline{\nu}$. Therefore we can pick the one with positive imaginary part.

\begin{thm}[{\cite[Theorem 1.2.3]{CK}}]\label{t:real Jordan}
For any real matrix $E$, let $\nu_1,\dots,\nu_{r}$ be its eigenvalues whose imaginary part is non-negative. 
Then there exists an invertible real matrix $S \in GL_N(\bR)$ such that $S^{-1}ES = J(\nu_1,m_1) \oplus \cdots \oplus J(\nu_r,m_r)$, where each $m_k$ is the algebraic multiplicity of the eigenvalue $\nu_k$. We call the right-hand side the \emph{real Jordan normal form} of $E$. We call the column vectors of $S$ the \emph{real Jordan basis vectors}.
\end{thm}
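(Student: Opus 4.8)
The plan is to deduce the real Jordan normal form from the complex one. First I would invoke the usual (complex) Jordan decomposition: there is $P \in GL_N(\bC)$ such that $P^{-1}EP$ is a direct sum of complex Jordan blocks. Since $E$ has real entries, applying entrywise complex conjugation shows that $\overline{P}^{-1}E\overline{P}$ is again a Jordan form of $E$; by the uniqueness of the Jordan form (up to permutation of blocks), the blocks attached to a non-real eigenvalue $\nu$ and to its conjugate $\overline{\nu}$ occur with the same sizes, and conjugating a Jordan chain for $\nu$ yields a Jordan chain for $\overline{\nu}$ of the same length. Thus it suffices to produce, block by block, a real basis realizing the asserted normal form, and then to assemble these vectors into the columns of a matrix $S$.

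For a real eigenvalue $\nu$, the generalized eigenspace $\ker_{\bR}(E - \nu\,\mathrm{Id})^{N} \subset \bR^N$ is an $E$-invariant real subspace on which $E - \nu\,\mathrm{Id}$ acts as a nilpotent real operator; applying the Jordan theory for nilpotent operators over $\bR$ directly produces a real basis in which $E$ restricts to a direct sum of upper-triangular blocks $J(\nu,m)$ of the first type. This accounts for all real Jordan blocks.

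For a conjugate pair $\nu = \xi + i\eta$, $\overline{\nu} = \xi - i\eta$ with $\eta > 0$, fix a complex Jordan chain $v_1, \dots, v_m$ for $\nu$, normalized so that $(E - \nu\,\mathrm{Id})v_1 = 0$ and $(E - \nu\,\mathrm{Id})v_j = v_{j-1}$ for $j = 2, \dots, m$. Set $a_j := \tfrac{1}{2}(v_j + \overline{v_j})$ and $b_j := \tfrac{1}{2i}(v_j - \overline{v_j})$, the real and imaginary parts. Expanding $Ev_j = \nu v_j + v_{j-1}$ (with $v_0 := 0$) and separating real and imaginary parts gives
\begin{align*}
Ea_j &= \xi a_j - \eta b_j + a_{j-1}, \\
Eb_j &= \eta a_j + \xi b_j + b_{j-1},
\end{align*}
with $a_0 = b_0 = 0$; ordering the real vectors as $(a_1, b_1, a_2, b_2, \dots, a_m, b_m)$, one reads off exactly the real Jordan block $J(\nu,m)$ of the second type. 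It remains to check that these $2m$ real vectors are linearly independent over $\bR$: since $v_1, \overline{v_1}, \dots, v_m, \overline{v_m}$ lie in generalized eigenspaces for the distinct eigenvalues $\nu \neq \overline{\nu}$ and are independent within each chain, they are $\bC$-linearly independent, and $\{a_j, b_j\}$ spans the same $2m$-dimensional complex subspace while having $2m$ elements, hence is $\bR$-linearly independent. Carrying this out for every block and concatenating the resulting bases yields a basis of $\bR^N$; letting $S \in GL_N(\bR)$ have these vectors as columns gives $S^{-1}ES = J(\nu_1, m_1) \oplus \cdots \oplus J(\nu_r, m_r)$ as claimed.

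I expect the main obstacle to be the bookkeeping in the conjugate-pair case: verifying that the passage from $\{v_j, \overline{v_j}\}$ to $\{a_j, b_j\}$ preserves linear independence across all chains simultaneously, not merely within a single chain, and matching the resulting matrix to the precise ordering convention used in the definition of $J(\nu,m)$. Everything else is standard once the complex Jordan form and its conjugation symmetry are in hand; for these I would simply cite \cite{CK}.
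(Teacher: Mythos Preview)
The paper does not actually prove this theorem: it is quoted from \cite[Theorem 1.2.3]{CK}, and the appendix explicitly refers the reader there for proofs. So there is no ``paper's own proof'' to compare against; your proposal supplies a proof where the paper gives none.

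Your argument is the standard one and is essentially correct. One minor point: with the ordering $(a_1,b_1,\dots,a_m,b_m)$ and $\nu=\xi+i\eta$, the relations you derive give the first column of the matrix as $(\xi,-\eta,0,\dots)^\tr$, whereas the block $J(\nu,m)$ as written in the paper has first column $(\xi,\eta,0,\dots)^\tr$. This is exactly the ordering/sign bookkeeping you already flagged as a potential obstacle; it is fixed by swapping $a_j \leftrightarrow b_j$ in the ordering (or equivalently by working with the chain for $\overline{\nu}$ instead of $\nu$). Beyond that, the linear-independence argument across chains goes through because the complex generalized eigenspaces for distinct eigenvalues are in direct sum in $\bC^N$, and for each conjugate pair the real span of your $\{a_j,b_j\}$ coincides with the real subspace $\widetilde{V}_\nu$ defined just after the theorem.
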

We have the corresponding \emph{real generalized eigenspace decomposition} 
\begin{align*}
    \bR^N = \widetilde{V}_{\nu_1} \oplus \cdots \oplus \widetilde{V}_{\nu_r}.
\end{align*}
Here 
\begin{align*}
    \widetilde{V}_{\nu_k}:= \{ \Re(v) \mid v \in \mathrm{ker}(E^\bC-\nu_k)^{m_k} \} + \{ \Im(v) \mid v \in \mathrm{ker}(E^\bC-\nu_k)^{m_k} \}
\end{align*}
for an eigenvalue $\nu_k$ of algebraic multiplicity $m_k$, where we let $E$ act on the complex vector space $\bC^N$. Note that an eigenvector of $E$ with real eigenvalue $\nu_k$ is real. Hence $\widetilde{V}_{\nu_k}= \mathrm{ker}(E-\nu_k)^{m_k}$ in this case, which recovers the usual generalized eigenspace.

\subsection{Lyapunov spaces and Lyapunov exponents}\label{subsec:Lyapunov}
Let $E$ be a real $N \times N$-matrix. 
Let $\nu_1,\dots,\nu_{r}$ be the eigenvalues of $E$ whose imaginary part is non-negative. Denote the distinct modulus $|\nu_k|$ of eigenvalues $\nu_k$ by $\lambda_j$, and order them as $\lambda_1 >\dots > \lambda_l$ with $1 \leq l \leq r$. Namely, we have $\{|\nu_1|,\dots,|\nu_r|\}= \{\lambda_1,\dots,\lambda_l\}$. 
The largest modulus is denoted by $\rho(E):=\lambda_1$, and called the \emph{spectral radius} of $E$. 
We define the \emph{Lyapunov space} of $\lambda_j$ to be 
\begin{align*}
    L(\lambda_j) := \bigoplus_{k:~ |\nu_k|=\lambda_j} \widetilde{V}_{\nu_k}.
\end{align*}
Then we have the direct sum decomposition $\bR^N = L(\lambda_1) \oplus \dots \oplus L(\lambda_\ell)$. 
Let $1 \leq p \leq r$ be the largest integer such that $\widetilde{V}_{\lambda_p} \subset L(\lambda_1)$. Namely, $\rho(E)=|\nu_1| = \dots =|\nu_p| > |\nu_{p+1}| \geq \dots \geq |\nu_r|$.

For an invertible real matrix $E \in GL_N(\bR)$ and a point $v \in \bR^N \setminus \{0\}$, the \emph{Lyapunov exponent} of the orbit $\{E^n(v)\}_{n \geq 0}$ is defined to be
\begin{align*}
    \mathsf{\Lambda}_E(v):= \limsup_{n \to \infty} \frac{1}{n}\log \| E^n(v) \|,
\end{align*}
where $\|\cdot \|$ is any norm on $\bR^N$, and the result does not depend on this choice. 

The Lyapunov exponents can be computed in terms of the eigenvalues.

\begin{thm}[{\cite[Theorem 1.5.6]{CK}}]\label{t:Lyapunov}
For a non-zero vector $v = c_1v^1+ \dots +c_\ell v^\ell \in L(\lambda_1)\oplus \dots \oplus L(\lambda_\ell)$, the Lyapunov exponent of the orbit $\{E^n(v)\}_{n \geq 0}$ is expressed as
\begin{align*}
    \mathsf{\Lambda}_E(v) = \max_{j:~c_j \neq 0} \log\lambda_j.
\end{align*}
\end{thm}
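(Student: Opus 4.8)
The plan is to reduce everything to the growth of $\|E^n w\|$ when $w$ lies in a single Lyapunov space $L(\lambda_j)$, and then to recombine using the $E$-invariance of the decomposition $\bR^N = L(\lambda_1)\oplus\dots\oplus L(\lambda_\ell)$. For the single-space estimate I would invoke \cref{t:real Jordan} for the restriction $E|_{L(\lambda_j)}$, all of whose eigenvalues have modulus exactly $\lambda_j$: from the explicit powers of a real Jordan block $J(\nu,m)$ — whose entries are, up to bounded oscillatory factors, binomial coefficients times powers of $\nu$ — one reads off $\|(E|_{L(\lambda_j)})^n\| \le C_j\,n^{d_j}\lambda_j^n$ for suitable $C_j>0$ and $d_j\ge 0$. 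Since $E\in GL_N(\bR)$, the same reasoning applied to $E^{-1}$ (which preserves $L(\lambda_j)$ with eigenvalues of modulus $\lambda_j^{-1}$) gives $\|(E|_{L(\lambda_j)})^{-n}\|\le C_j'\,n^{d_j}\lambda_j^{-n}$. Writing $\|w\| = \|(E|_{L(\lambda_j)})^{-n}E^nw\|$ then yields the two-sided bound $(C_j')^{-1}n^{-d_j}\lambda_j^n\|w\| \le \|E^nw\| \le C_j n^{d_j}\lambda_j^n$, and hence $\lim_{n}\tfrac1n\log\|E^nw\| = \log\lambda_j$ for every $w\in L(\lambda_j)\setminus\{0\}$, the polynomial factors being killed in the limit.

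Next I would decompose $v = w_1 + \dots + w_\ell$ with $w_j := c_jv^j \in L(\lambda_j)$, so that $w_j \ne 0$ exactly when $c_j\ne 0$. The upper bound $\limsup_n\tfrac1n\log\|E^nv\| \le \max_{j:\,c_j\ne 0}\log\lambda_j$ follows immediately from the triangle inequality $\|E^nv\|\le\sum_{j:\,w_j\ne0}\|E^nw_j\|$ together with the single-space estimate. For the lower bound I would use the projections $\pi_j:\bR^N\to L(\lambda_j)$ attached to the direct sum; because each $L(\lambda_j)$ is $E$-invariant, every $\pi_j$ commutes with $E$. Taking $j_0$ minimal with $c_{j_0}\ne0$ (so $\lambda_{j_0}=\max\{\lambda_j:c_j\ne0\}$), one has $\pi_{j_0}(E^nv) = E^nw_{j_0}$ with $w_{j_0}\ne0$, and boundedness of $\pi_{j_0}$ gives $\|E^nv\| \ge c\,\|E^nw_{j_0}\| \ge c'\, n^{-d_{j_0}}\lambda_{j_0}^n$ for constants $c,c'>0$; thus $\limsup_n\tfrac1n\log\|E^nv\| \ge \log\lambda_{j_0}$. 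Combining the two bounds proves the formula, and independence of the norm is automatic from equivalence of norms on $\bR^N$.

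The one genuinely delicate point is ruling out cancellation. Cancellation \emph{between} distinct Lyapunov spaces is exactly what the projection argument addresses — the projections $\pi_j$ exist and commute with $E$ precisely because the $L(\lambda_j)$ are $E$-invariant summands, so the top component of $E^nv$ cannot be annihilated by the lower ones. Cancellation \emph{within} a single space (e.g. among several eigenvalues of the same modulus $\lambda_j$, possibly complex conjugates rotating against one another) is handled by the two-sided estimate in Step~1, where invertibility of $E|_{L(\lambda_j)}$ forces $\|E^nw\|$ to stay above $\lambda_j^n$ up to a polynomial correction; this is the step I expect to require the most care to write out cleanly, since it is where the real-Jordan-block bookkeeping actually enters.
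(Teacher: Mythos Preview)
The paper does not give its own proof of this statement: it is recorded in the appendix as \cite[Theorem 1.5.6]{CK} and simply quoted, so there is nothing to compare against. Your argument is correct and is essentially the standard proof one finds in dynamical systems texts: the two-sided polynomial-times-$\lambda_j^n$ estimate on each Lyapunov space via real Jordan blocks of $E$ and $E^{-1}$, then the triangle inequality for the upper bound and the $E$-equivariant projection onto the top nonzero Lyapunov component for the lower bound. The only cosmetic remark is that the constant in the lower bound should be written as $\|\pi_{j_0}\|_{\mathrm{op}}^{-1}$ rather than an unspecified $c$, but you already flag that this comes from boundedness of $\pi_{j_0}$.
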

As a special case, we have the following:

\begin{cor}\label{cor:Lyapunov_independent}
Suppose that vectors $v_1,\dots,v_N \in V$ form a basis of $V$. Then we have
\begin{align*}
    \max_{\indi=1,\dots,N} \mathsf{\Lambda}_E(v_\indi) = \log \rho(E).
\end{align*}
\end{cor}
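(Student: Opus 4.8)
The plan is to deduce the statement directly from \cref{t:Lyapunov} together with a linear-algebra observation about the Lyapunov decomposition $\bR^N = L(\lambda_1) \oplus \dots \oplus L(\lambda_\ell)$. First I would fix, for each $\indi=1,\dots,N$, the expansion $v_\indi = v_\indi^1 + \dots + v_\indi^\ell$ with $v_\indi^j \in L(\lambda_j)$, which is well defined since the sum is direct. Applying \cref{t:Lyapunov} to each $v_\indi$ (note $v_\indi \neq 0$ because the $v_\indi$ form a basis), we get $\mathsf{\Lambda}_E(v_\indi) = \max\{\log \lambda_j \mid v_\indi^j \neq 0\}$, and since $\lambda_1 \geq \lambda_j$ for all $j$ this immediately yields $\mathsf{\Lambda}_E(v_\indi) \leq \log \lambda_1 = \log \rho(E)$ for every $\indi$, hence $\max_\indi \mathsf{\Lambda}_E(v_\indi) \leq \log \rho(E)$.

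For the reverse inequality I would argue that some basis vector must have a nonzero component in the top Lyapunov space $L(\lambda_1)$. Indeed, let $\pi: \bR^N \to L(\lambda_1)$ be the projection associated with the direct sum decomposition. If $\pi(v_\indi) = v_\indi^1 = 0$ for all $\indi$, then $\mathrm{span}_\bR\{v_1,\dots,v_N\} \subseteq \ker \pi = L(\lambda_2) \oplus \dots \oplus L(\lambda_\ell) \subsetneq \bR^N$, contradicting the assumption that the $v_\indi$ span $V = \bR^N$. Hence there is an index $\indi_0$ with $v_{\indi_0}^1 \neq 0$, and then \cref{t:Lyapunov} gives $\mathsf{\Lambda}_E(v_{\indi_0}) = \max\{\log\lambda_j \mid v_{\indi_0}^j \neq 0\} = \log \lambda_1 = \log\rho(E)$, so $\max_\indi \mathsf{\Lambda}_E(v_\indi) \geq \log\rho(E)$. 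Combining the two inequalities proves the corollary.

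I do not anticipate a genuine obstacle here: the only point requiring a little care is the observation that a spanning set cannot be contained in a proper subspace, which is what forces at least one basis vector to see the spectral radius; everything else is a formal application of \cref{t:Lyapunov}. If one wanted to be slightly more self-contained one could also note that $L(\lambda_1) \neq 0$ since $\rho(E)$ is by definition the modulus of an actual eigenvalue, so the projection $\pi$ is onto a nonzero space and the argument is not vacuous.
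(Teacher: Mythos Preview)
Your proposal is correct and is exactly the kind of argument the paper has in mind: the paper states \cref{cor:Lyapunov_independent} as an immediate consequence (``As a special case'') of \cref{t:Lyapunov} without writing out a proof, and your two-step argument---bounding each $\mathsf{\Lambda}_E(v_\indi)$ above by $\log\rho(E)$ and then observing that a basis cannot lie entirely in the proper subspace $L(\lambda_2)\oplus\dots\oplus L(\lambda_\ell)$---is precisely the unpacking of that implication.
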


\end{document}